\documentclass[a4paper, 11 pt]{article}
\usepackage{a4wide, amsmath, amssymb, mathtools, yfonts}
\usepackage{comment}
\usepackage{tikz}
\usepackage{tikz-cd}
\usepackage[all]{xy}
\usepackage[utf8]{inputenc}
\usepackage{amsthm, mathrsfs}
\usepackage[english]{babel}
\usepackage{hyperref}
\usepackage{authblk}
\usepackage[OT2,T1]{fontenc}
\DeclareSymbolFont{cyrletters}{OT2}{wncyr}{m}{n}
\DeclareMathSymbol{\Sha}{\mathalpha}{cyrletters}{"58}

\numberwithin{equation}{section}
\newtheorem{lemma}{Lemma}[section]
\newtheorem*{claim}{Claim}
\newtheorem*{lemma*}{Lemma}
\newtheorem{theorem}[lemma]{Theorem}
\newtheorem{proposition}[lemma]{Proposition}
\newtheorem{corollary}[lemma]{Corollary}

\newtheorem{conjecture}{Conjecture}
\newtheorem{remark}[lemma]{Remark}

\theoremstyle{definition}
\newtheorem{mydef}[lemma]{Definition}

\newcommand{\Z}{\mathbb{Z}}
\newcommand{\Q}{\mathbb{Q}}
\newcommand{\C}{\mathbb{C}}

\newcommand{\R}{\mathbb{R}}
\newcommand{\FF}{\mathbb{F}}

\newcommand{\Hom}{\mathrm{Hom}}
\newcommand{\Frob}{\textup{Frob}}

\newcommand\Gal{\mathrm{Gal}}

\newcommand\Sym{\mathrm{Sym}}

\newcommand{\res}{\textup{res}}
\newcommand{\inv}{\textup{inv}}

\title{\vspace{-\baselineskip}\sffamily\bfseries Chowla's non-vanishing conjecture over $\FF_q(T)$}
\author[1]{Peter Koymans\thanks{Mathematisch Instituut, Universiteit Utrecht, Postbus 80.010, 3508 TA Utrecht, The Netherlands, p.h.koymans@uu.nl}}
\author[2]{Carlo Pagano\thanks{Department of Mathematics and Statistics, Concordia University, Quebec H3G 1M8, Canada, carlein90@gmail.com; Google DeepMind, carlopagano@google.com}}
\author[3]{Mark Shusterman\thanks{The Dr. A. Edward Friedmann Career Development Chair in Mathematics at the Faculty of Mathematics and Computer Science, Weizmann Institute of Science, 234 Herzl Street, Rehovot 76100, Israel}}
\affil[1]{Utrecht University}
\affil[2]{Concordia University and Google DeepMind}
\affil[3]{Weizmann Institute of Science}

\begin{document}
\maketitle

\begin{abstract}
Let $q \equiv 1 \bmod 8$ be a prime power. We prove that $L\left(\tfrac{1}{2}, \chi\right) \neq 0$ for 100\% of the imaginary quadratic Dirichlet characters of $\FF_q(T)$.
\end{abstract}

\section{Introduction}
Chowla has conjectured in \cite{Cho65} that for all (primitive) quadratic Dirichlet characters $\chi$, the value $L\left(\frac{1}{2}, \chi \right)$ is non-zero. This conjecture, numerically supported by \cite{Rum93}, has been studied in \cite{OS99} by \"{O}zl\"{u}k and Snyder. Assuming GRH, they show non-vanishing for $93.75\%$ of these quadratic characters. Under the same assumptions, an improvement to more than $94.27\%$ has been obtained by Katz and Sarnak. The difficulty of obtaining further numerical improvements is discussed in \cite[Appendix A]{ILS00}.

These works approach the question of vanishing via a statistic of the zeros of $L(s, \chi)$ called $1$-level density. This statistic involves a sum of our quadratic characters over primes; a very natural randomness assumption (going beyond GRH) on the values $\{\chi(p)\}_{\chi,p}$, implies $100\%$ non-vanishing. 

Unconditional results have been obtained by Jutila in \cite{Jut81}, using estimates for low moments of $L \left(\frac{1}{2}, \chi \right)$. In the pioneering work \cite{Sou00}, Soundararajan succeeded in estimating mollified versions of low moments, which allowed him to unconditionally deduce that $L \left(\frac{1}{2}, \chi_{8d} \right) \neq 0$ for at least $87.5\%$ of the odd squarefree integers $d$.

For points $s \neq \frac{1}{2}$ on the critical line $\mathrm{Re}(s) = \frac{1}{2}$, it is sometimes possible to find quadratic $\chi$ with $L(s, \chi) = 0$, yet all the results above admit variants (with possibly different percentages) valid at such $s$. Moreover, $100\%$ non-vanishing is expected to hold for any fixed $s$.   

In this work we consider the function field analog of the above non-vanishing problem. A variant of the randomness assumption above suggests the following $100\%$ non-vanishing folklore conjecture.

\begin{conjecture}
Let $q$ be an odd prime power, and let $s$ be a point on the critical line. For a nonnegative integer $n$ define
\[
\delta_q(s;n) = 
\frac{\#\{ D \in \mathbb{F}_q[T] : \deg(D) = n, \ D \ \textup{is squarefree}, \ L \left( s, \chi_D \right) \neq 0  \}}{\#\{ D \in \mathbb{F}_q[T] : \deg(D) = n, \ D \ \textup{is squarefree}  \}}
\]
where $\chi_D(f)$ is the Jacobi symbol $\left( \frac{D}{f} \right)$, and 
\[
L(s, \chi_D) = \sum_{\substack{f \in \mathbb{F}_q[T] \\ f \ \textup{is monic}}} \chi_D(f)|f|^{-s}, \quad |f| = |\mathbb{F}_q[T]/(f)| = q^{\deg(f)}.
\]
Then
\[
\lim_{n \to \infty} \delta_q(s; n) = 1.
\]
\end{conjecture} 

As shown by Li in \cite{Li18}, the central values of $L$-functions of (primitive) quadratic Dirichlet characters over $\mathbb F_q[T]$ with $q$ odd can vanish infinitely often, so in this setting Chowla's non-vanishing conjecture can only be stated (and approached) statistically.

We establish the following. 

\begin{theorem}
\label{tChowla}
There are absolute constants $C, c > 0$ such that the following holds. Let $q \equiv 1 \bmod 8$ be a prime power and let $n \in \Z_{\geq 1}$ be odd. Then we have
$$
\# \left\{D \in \FF_q[T] : \deg(D) = n, \ D \textup{ is squarefree}, \ L\left(\tfrac{1}{2}, \chi_D \right) = 0\right\} \leq \frac{C q^{n + 1}}{n^c}.
$$
\end{theorem}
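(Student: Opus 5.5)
Since $n$ is odd, the curve $C_D : y^2 = D(T)$ is hyperelliptic of genus $g = (n-1)/2$ with a single rational point at infinity. Its $L$-function is $L(s,\chi_D) = P_D(q^{-s})$, where $P_D$ is the numerator of the zeta function of $C_D$. The plan is to turn vanishing at $s = \tfrac12$ into an arithmetic statement about the Jacobian $J_D$ and then count. The functional equation has sign $+1$, so a zero at the central point has even order, hence order at least $2$. Under the Birch--Swinnerton-Dyer philosophy over function fields, the vanishing is detected by Selmer groups of the constant abelian variety attached to $J_D$ over a suitable base. We would use instead a Tate--Honda style reformulation. The eigenvalue $\sqrt q$ of Frobenius occurs in $H^1(C_D)$ exactly when $J_D$ has an isogeny factor on which Frobenius satisfies $(F-\sqrt q)$. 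Because $q \equiv 1 \bmod 8$, the number $\sqrt q$ is a rational integer up to sign issues, or at least $q$ is a square mod suitable $2$-powers. The aim is to show that such a factor forces unusually large $2^k$-torsion in a twisted Frobenius-fixed module. Concretely, vanishing implies that the kernel of $F - \sqrt q$ acting on $J_D[2^k](\overline{\FF}_q)$ is nontrivial for every $k$. Equivalently, the cokernel of $F-\sqrt q$ on $T_2 J_D$ is infinite. So $\#\bigl(J_D[2^\infty]/(F-\sqrt q)\bigr)$ is unbounded, and in particular its $2$-rank is at least $1$ (really at least $2$, by the parity of the order).

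The key steps run in this order. First, interpret $J_D[2]$ via the roots of $D$: it is the even-cardinality subsets of the roots modulo the full set, so it is a Galois module governed by the factorization of $D$. Second, interpret the higher $2^k$-structure of $J_D[2^\infty]/(F - \sqrt q)$ as a class-group/Selmer-type object of the quadratic extension $\FF_q(T)(\sqrt D)$ twisted by the square root of $q$. Since $q\equiv 1 \bmod 8$, $\sqrt{q}$ twisting preserves the needed $\mu_8$ structure, and this is where that hypothesis enters. The resulting object is essentially a $2^\infty$-class group of an imaginary quadratic function field, with the Frobenius-twist playing the role of the genus theory reflection. Third, prove that the distribution of these $2^\infty$-groups, for squarefree $D$ of degree $n$, follows Cohen--Lenstra--Gerth type statistics with an effective error. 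We would obtain this by a Smith-style method (Rédei matrices, higher Rédei symbols, additive combinatorics over the space of $D$), or by Fouvry--Klüners moments for the $4$-rank followed by Smith's bootstrapping to $2^k$. Fourth, in those statistics the probability that the relevant module has infinite $2$-part (i.e. $2^k$-rank $\ge 1$ for all $k$) tends to $0$. With effective dependence on $k$ one gets decay like $2^{-k}$ or better. Taking $k \asymp \log n$ against the error terms yields the bound $Cq^{n+1}/n^{c}$.

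The main obstacle is the third step. It requires an effective equidistribution of $2^k$-Selmer/class groups for all $k$ up to about $\log n$ simultaneously, uniformly in $q$. Smith's method gives savings only of size $(\log\log)$-type in the number field setting. Over $\FF_q[T]$, with $n \to \infty$ (large degree, fixed $q$ allowed), we would exploit that the number of prime factors of a typical $D$ is about $\log n$. This supplies the combinatorial room needed to make the savings polynomial in $n$, and we accept a small exponent $c$ for that reason. The reflection/governing-field input, i.e. the reciprocity laws for higher Rédei symbols over $\FF_q(T)$, should be simpler than over $\Q$ because Artin reciprocity is cleaner there. We would also handle the exceptional set of $D$ with few prime factors by the trivial bound, which is compatible with $n^{-c}$ savings.
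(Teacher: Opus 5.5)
Your first step matches the paper's starting point. Vanishing at $s=\tfrac12$ gives a Frobenius eigenvector with eigenvalue $\pm q_0\in\Z_2$, and hence an infinite tower of $2^k$-Selmer-type lifts (Propositions \ref{prop: field-theory of non-vanishing} and \ref{pNonCrit}). Both signs must be tracked, which the paper does with $i\in\{1,-1\}$, and $\sqrt q$ need not be rational; what is used is $q_0\in\Z_2$. Your heuristic that the roughly $\log n$ prime factors of a typical $D$ are what make $n^{-c}$ savings possible is also the right one.

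The genuine gap is your third step. Identifying the object with a $2^\infty$-class group is only right at the first level, via genus theory and the R\'edei matrix (Proposition \ref{pRedei}). Beyond that the half twist matters. Since $\sqrt q$ is fixed by Poincar\'e duality $\alpha\mapsto q/\alpha$, the module is a half Tate twist of $\Z_2$, twisted by $\chi_D$, with a perfect Galois-equivariant pairing into $\Z_2(1)$. Consequently \emph{every} Artin pairing $\mathrm{Art}^{(i)}_{D,n+1}$ is symmetric (Theorem \ref{thm: symmetry}), and Cohen--Lenstra--Gerth statistics are not the right model.

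To make $C_n^{(i)}(\chi_D)^\circ$ shrink you must control the diagonal values $\mathrm{Art}^{(i)}_{D,m}(\chi_a,\Sym(\chi_D)(\chi_a))$. When $C_{m-1}^{(i)}(\chi_D)^\circ=\langle\chi_a\rangle$ is one-dimensional, this single diagonal entry is the whole pairing. Smith's reflection principle gives nothing here: summing a diagonal pairing over a cube of the dimension Smith uses yields an involution spin $(\overline{\alpha_a}/\alpha_a)_{F(C),2}$, which is identically $1$ \cite[Theorem 5.5]{KP}. Fouvry--Kl\"uners-type moments only reach the $4$-rank, and Smith's bootstrapping is exactly the tool that breaks down. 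So step 3 as described fails, and step 4 has nothing to rest on.

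What is missing is a mechanism for diagonal pairings, which is the heart of the paper. The steps are:
\begin{enumerate}
\item The paper sums over cubes one dimension lower. The sum of $\mathrm{Art}^{(h)}_{x,s+2}(\chi_a,\chi_a)$ then becomes a second-level object, a R\'edei symbol over $F(C)$ (Theorem \ref{theorem: reflection principle}).
\item R\'edei reciprocity turns this into a quartic spin symbol. Quartic reciprocity, where $q\equiv 1\bmod 8$ is used again, splits it after isolating three extra prime factors of $a$. The result is individually uncontrollable terms plus bilinear quadratic cross terms (Theorem \ref{theorem: Applying quartic reciprocity}).
\item Working one dimension lower forces one to detect a compatibility condition $(D)$ on the cube. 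The paper expresses it through $\hat\phi$-expansion maps (Theorems \ref{theorem: profitability} and \ref{theorem: splitting the Redei symbol}).
\item A group-theoretic independence result (Proposition \ref{prop: linear independence of hat phi and phi}) ensures that this condition does not cancel the surviving bilinear term.
\item Repeated Cauchy--Schwarz (Lemmas \ref{lRCS} and \ref{lBoundaries}) strips off the uncontrollable factors, and the large sieve (Lemma \ref{lLS}) supplies the oscillation.
\end{enumerate}
Full equidistribution is also unnecessary. A bounded first moment of $|C_1^{(i)}(\chi_D)^\circ|$ plus a descent probability of at least $\tfrac12$ at each level suffices (Theorems \ref{t4Rank} and \ref{tReduction}). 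Without some substitute for this diagonal-pairing argument, your outline does not reach the theorem.
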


As a consequence of Theorem \ref{tChowla}, we obtain:

\begin{corollary}
Let $q \equiv 1 \bmod 8$ be a prime power. For 100\% of the imaginary quadratic Dirichlet characters $\chi$ of $\FF_q(T)$, ordered by their genus, we have $L\left(\tfrac{1}{2}, \chi\right) \neq 0$. In other words, we have that
\[
\lim_{k \to \infty} \delta_q\left(\frac{1}{2}; 2k + 1\right) = 1.
\]
\end{corollary}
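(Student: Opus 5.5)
The plan is to deduce the corollary from Theorem \ref{tChowla} by dividing its bound by an exact count of the denominator of $\delta_q(\tfrac12; n)$, and then to explain why the resulting limit is the same as the ``100\% of imaginary quadratic characters, ordered by genus'' formulation.

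\textbf{Step 1: identify the family.} The imaginary quadratic Dirichlet characters of $\FF_q(T)$ correspond to squarefree $D$ of odd degree $n = 2k+1$. Odd degree means the place at infinity ramifies in $\FF_q(T)(\sqrt{D})$. The genus of this extension is $g = k$, so ordering the characters by genus is the same as ordering them by the odd degree $n = 2g+1$. The second formulation in the corollary therefore implies the first, and it suffices to prove $\lim_{k\to\infty}\delta_q(\tfrac12; 2k+1) = 1$.

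\textbf{Step 2: count the denominator.} For $n \geq 2$, the number of monic squarefree polynomials of degree $n$ in $\FF_q[T]$ is $q^n - q^{n-1}$. This is the standard generating-function identity $\zeta(u)/\zeta(u^2)$ with $\zeta(u) = (1-qu)^{-1}$. Allowing an arbitrary nonzero leading coefficient multiplies this by $q-1$. Hence the denominator of $\delta_q(\tfrac12; n)$ is $(q-1)^2 q^{n-1}$.

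\textbf{Step 3: apply the theorem.} For odd $n = 2k+1 \geq 3$, Theorem \ref{tChowla} gives
\[
1 - \delta_q\left(\tfrac12; n\right) \leq \frac{C q^{n+1}}{n^c (q-1)^2 q^{n-1}} = \frac{C}{n^c}\left(\frac{q}{q-1}\right)^2 \leq \frac{4C}{n^c},
\]
where the last inequality uses $q/(q-1) \leq 2$. Since $c > 0$, the right-hand side tends to $0$ as $k \to \infty$. As $\delta_q \leq 1$ trivially, the limit equals $1$.

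There is no genuine obstacle here: all the difficulty lies in Theorem \ref{tChowla}, and the corollary only requires the elementary squarefree count and the genus/degree dictionary.
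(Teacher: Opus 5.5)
Your proposal is correct and is exactly the intended deduction: the paper states the corollary as an immediate consequence of Theorem \ref{tChowla}, and dividing its bound $Cq^{n+1}/n^c$ by the number $(q-1)^2q^{n-1}$ of squarefree polynomials of degree $n$ (all leading coefficients) is all that is needed. One small point about Step 1: each imaginary character of genus $g$ corresponds to exactly $(q-1)/2$ squarefree $D$ of degree $2g+1$, differing by nonzero square constants, so the correspondence is not a bijection; since these fibres all have the same size, the two proportions still agree.
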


\subsection{Comparison with the literature}
The `large finite field' version of the conjecture, stating that as $q \to \infty$ we have
\[
\delta_q(s; n) = 1 + O_n \left(q^{-\frac{1}{2}} \right)
\]
for any fixed $n$, is a consequence of the work \cite{Cha97} by Chavdarov, building on a big monodromy theorem of Yu. The role of the monodromy group of a family of $L$-functions in this and other problems (such as moments and $1$-level density mentioned above) was put on a firm ground by Katz and Sarnak in \cite{KS99}. Their work provides a broad conjectural picture, unifying number fields and function fields, where results in the large finite field limit can sometimes be obtained using a monodromy computation combined with Deligne's equidistribution theorem.

One major attribute of the situation over function fields is that GRH, for $L$-functions of characters, is unconditionally known, due to Weil. This allowed Bui and Florea in \cite{BF18} to unconditionally obtain the aforementioned non-vanishing percentage of Katz--Sarnak, building on (and refining) the earlier work \cite{Rud10} on $1$-level density by Rudnick. There has been significant recent work on developing new techniques for quadratic families \cite{AK, Florea, GZ, LOP}, extending non-vanishing results to higher-order characters \cite{CDD, DDDS, DFL, DFL2}, and studying the related problem of vanishing for twists of $L$-functions of elliptic curves \cite{CDLL}.

Another perspective on Weil's result, and on vanishing in general, is given, for any prime $\ell \nmid q$ and positive integer $k$, by Grothendieck's congruence between $q^{-s(\deg(D) - 1)}L(-s, \chi_D)$ and the characteristic polynomial of $\mathrm{Frob}_q$ acting on the $\ell^k$-torsion of the Jacobian of (the smooth completion $C_D$ of) the curve $y^2 = D(T)$ over $\overline {\mathbb{F}_q}$. In the previously discussed approaches, non-vanishing of $L(s, \chi_D)$ is closely tied with showing that $|L(s, \chi_D)|$ is not too small in the archimedean sense, whereas in this perspective, it is the $\ell$-adic nature of $L(s, \chi_D)$ that one pursues.

The above perspective, for odd primes $\ell$ and $k = 1$, is taken in the work \cite{ELS19} of Ellenberg, Li, and Shusterman. Building on geometric (and topological) results of Ellenberg, Venkatesh, and Westerland from \cite{EVW16} and of Lipnowski and Tsimerman from \cite{LT19}, it is proven in \cite{ELS19} that
\[
\liminf_{n \to \infty} \delta_q(s; n) = 1 + O \left( q^{-1/276} \right), \quad q \to \infty.
\]
The starting point of the proof of Theorem \ref{tChowla} is to examine the situation for $\ell = 2$ and $k$ arbitrary.

\subsection{Method of proof}
Our first step is to construct for each character $\chi$ a decreasing sequence of $\mathbb{F}_2$-vector spaces along with pairings\footnote{For each $\chi$ there will be two such sequences that we also index somewhat differently in the later sections. We shall omit both details in this introduction.}
$$
\text{Art}_{m, \chi}: V_m(\chi)^\circ \times W_m(\chi)^\circ \to \mathbb{F}_2,
$$
defined for all $m \in \Z_{\geq 1}$. The main two features of this sequence of spaces and pairings are:
\begin{enumerate}
    \item We have
    $$
    \dim_{\mathbb{F}_2} \, V_m(\chi)^\circ = 0 \ \text{for} \ m \gg 0 \Longrightarrow L\left( \tfrac{1}{2}, \chi \right) \neq 0.
    $$
    \item The left and right kernels of $\text{Art}_{m, \chi}$ are $V_{m + 1}(\chi)^\circ$ and $W_{m + 1}(\chi)^\circ$.
\end{enumerate}
The spaces $V_m(\chi)^\circ$ and $W_m(\chi)^\circ$ and the pairings $\text{Art}_{m, \chi}$ are best understood as generalized Selmer spaces equipped with Cassels--Tate pairings. One could build these pairings using Flach's theory \cite{Flach} of Bloch--Kato Selmer groups \cite{BK}, but we opted for a direct construction translating $L\left(\tfrac{1}{2}, \chi \right)=0$ via the Weil conjectures as the presence of an eigenspace of $\sqrt{q}$ on $\text{Jac}(C_\chi)[2^{\infty}]$: in turn this translates into the existence of a tower of Galois extensions of $\mathbb{F}_q(T)(\chi)$ with prescribed Galois group and ramification behavior. The characters in 
$$
V_\infty(\chi)^\circ := \bigcap_{m \geq 1} V_m(\chi)^\circ
$$
are precisely the $\mathbb{F}_2$-characters at the bottom of such towers. 

One can show that the spaces $V_1(\chi)^\circ$ and $W_1(\chi)^\circ$ naturally arise as left and right kernel of a R\'edei-like matrix of mutual Legendre symbols of primes ramifying in $\mathbb{F}_q(T)(\chi)$. In particular, using techniques dating back to Heath-Brown \cite{HB, HB2} and Fouvry--Kl\"uners \cite{FK}, one can show that $\dim_{\mathbb{F}_2} \, V_1(\chi)^\circ$ has a distribution. This allows us to view the sequence $\dim_{\mathbb{F}_2} \, V_m(\chi)^\circ$ as a non-increasing Markov chain on the nonnegative integers and thus reduces our task to showing that the chain decreases at each step with positive probability.
\begin{figure}
\centering
\begin{tikzpicture}[
  >=stealth,
  semithick,
  state/.style={circle,draw,minimum size=7mm,inner sep=0pt}
]

  \node[state] (zero)  at (-3,0)    {$0$};
  \node[state] (one)   at (-3,1.15) {$1$};
  \node[state] (two)   at (-3,2.30) {$2$};
  \node        (dots)  at (-3,3.45) {$\vdots$};
  \node[state] (j)     at (-3,4.45) {$j$};

  \node[state] (zero1) at (3,0)    {$0$};
  \node[state] (one1)  at (3,1.15) {$1$};
  \node[state] (two1)  at (3,2.30) {$2$};
  \node        (dots1) at (3,3.45) {$\vdots$};
  \node[state] (j1)    at (3,4.45) {$j$};

  \path[->]
    (zero)  edge[loop right,min distance=5mm] (zero)
    (one)   edge[loop right,min distance=5mm] (one)
            edge (zero)
    (two)   edge[loop right,min distance=5mm] (two)
            edge[bend right=28] (one)
            edge[bend right=48] (zero)
            edge (one)
    (j)     edge[loop right,min distance=5mm] (j)
            edge[bend right=48] (two)
            edge[bend right=68] (one)
            edge[bend right=88] (zero);

  \draw[->] (j.south) -- (dots.north);
  \draw[->] (dots.south) -- (two.north);

  \path[->]
    (zero1) edge[loop right,min distance=5mm] (zero1)
    (one1)  edge[loop right,min distance=5mm] (one1)
            edge (zero1)
    (two1)  edge[loop right,min distance=5mm] (two1)
            edge (one1)
    (j1)    edge[loop right,min distance=5mm] (j1);

  \draw[->] (j1.south) -- (dots1.north);
  \draw[->] (dots1.south) -- (two1.north);

  \pgfresetboundingbox
  \path (-3.4,-0.4) rectangle (3.4,4.8);

\end{tikzpicture}
\caption{Two Markov chains}
\label{fMarkov}
\end{figure}
This calls for a strategy determining the initial distribution of the Markov chain and all of the transition probabilities, yielding a full distribution of all of the dimensions $\dim_{\mathbb{F}_2} \, V_m(\chi)^\circ$, as depicted on the left side of Figure \ref{fMarkov}.

On the surface level, this strategy is strongly reminiscent of Smith's approach to Goldfeld's conjecture \cite{Smi1, Smi2, Smi3} and one could hope to adapt his methods to this sequence of Selmer spaces. However, we emphasize that the method faces a fundamental roadblock whenever even the first pairing $\text{Art}_{1, \chi}$ is symmetric. This is for instance the case for the distribution of $2 \cdot \text{Cl}(K)[2^{\infty}]$ when $K$ is a random quadratic extension of $\Q(i)$, which is one of the most fundamental open cases left out by Smith's methods. As we shall now explain, in our case \emph{all} of the pairings $\text{Art}_{m, \chi}$ are symmetric, putting us outside of the direct scope of \cite{Smi1, Smi2, Smi3}. 

Indeed, the number $\sqrt{q}$ is a self-dual Weil-number, namely it is a fixed point under Poincar\'e duality
$$
\alpha \mapsto \frac{q}{\alpha}.
$$
Such a duality reflects algebraically into the fact that our Selmer groups come from a \emph{fractional} Tate-twist along with a perfect Galois equivariant pairing $\Z_2(1/2)^{\otimes 2} \to \Z_2(1)$. This translates into a fundamental piece of extra structure, which is an isomorphism
$$
\text{Sym}(\chi): V_1(\chi)^\circ \to W_1(\chi)^\circ
$$
such that the induced pairing $\text{Art}_{m, \chi} \circ (\text{id}, \text{Sym})$ is a \emph{symmetric pairing} for each $m \in \Z_{\geq 1}$. A framework to study how such self-duality propagates at the level of Cassels--Tate pairings has been developed in great generality in Morgan--Smith \cite{MS}.

The fundamental obstruction in using the techniques from Smith \cite{Smi1, Smi2, Smi3} comes down to handling the \emph{diagonal pairings}
$$
\text{Art}_{m,\chi}(a, \text{Sym}(\chi)(a)).
$$
In fact, for the purpose of showing that $V_\infty(\chi)^\circ$ vanishes almost always, it suffices to control only such pairings. More precisely, it suffices to show that the first moment of $|V_1(\chi)^\circ|$ is bounded and prove that, from each positive state, the chain descends down by $1$ with probability at least $\frac{1}{2}$. This is the strategy we execute (starting at $j := \dim_{\FF_2} V_1(\chi)^\circ$) as depicted on the right of Figure \ref{fMarkov}.

The only work that has previously addressed the distribution of Selmer groups in the presence of such diagonal pairings is the work of the first two authors on the negative Pell equation \cite{KP}. However, not only did their techniques exploit the presence of an additional systematic class specific to that family, but one can also show that the technique developed there can \emph{never work} for a sequence of pairings that is symmetric at all levels (such as $\mathrm{Art}_{m, \chi}$). In the context of \cite{KP}, only the first pairing turns out to be symmetric. 

This calls for developing a new method to handle diagonal pairings extending Smith's techniques \cite{Smi1, Smi2, Smi3}: this is one of the main accomplishments of the present work, which we now briefly overview.

A crucial ingredient in Smith's work to prove equidistribution for the pairings $\text{Art}_{m,\chi}(a,b)$ is to compare $2^m$ fields selected in the ``cube''
$$
C := \{\pi_1(1), \pi_1(2)\} \times \ldots \times \{\pi_m(1), \pi_m(2)\} \times \{d\},
$$
where $a, b \mid d$. It turns out that under suitable conditions on the lower-dimensional faces of this cube, one can express $\sum_{x \in C} \text{Art}_{m, x}(a, b)$ as a first Artin pairing over the field 
$$
F(C) := \mathbb{F}_q(T)(\{\sqrt{\pi_1(1)\pi_1(2)}, \ldots, \sqrt{\pi_m(1)\pi_m(2)}\}),
$$
which is essentially a Legendre symbol
$$
\left(\frac{\overline{\alpha_a}}{\alpha_b} \right)_{F(C),2},
$$
where $\alpha_a, \alpha_b$ are products of primes of $F(C)$ lying above the primes of $a$ and $b$ respectively, and $\overline{\alpha}$ denotes a conjugate element under any choice of an automorphism of $\Gal(F(C)/\mathbb{F}_q(T))$. If $a$ and $b$ are sufficiently disjoint, one can hope to apply large sieve inequalities to show oscillation of this symbol. This is for example the approach taken in \cite{Smi1, Smi2}. Instead when $a = b$, one gets a so-called \emph{involution spin} and the work of the first two authors \cite[Theorem 5.5]{KP} shows that 
$$
\left(\frac{\overline{\alpha_a}}{\alpha_a} \right)_{F(C), 2 } =1.
$$
Our new approach is to work one dimension lower and instead sum $2^{m - 1}$ Artin pairings in this case. It turns out that for cubes of the shape
$$
C' := \{\pi_1(1), \pi_1(2)\} \times \ldots \times \{\pi_{m - 1}(1), \pi_{m - 1}(2)\} \times \{d\},
$$
one can express $\sum_{x \in C'} \text{Art}_{m, x}(a, a)$ as a second Artin pairing over $F(C')$. It is well-known that such second Artin pairings are related to R\'edei symbols. In fact, this sum is essentially equal to $\left[\overline{\alpha_a}, \alpha_a, \alpha_a\right]_{F(C')}$ (see Theorem \ref{theorem: reflection principle} for a precise statement), which after applying R\'edei's reciprocity \cite{KS} can be rewritten as 
$$
\left( \frac{\overline{\alpha_a}}{\alpha_a} \right)_{F(C'), 4},
$$
i.e.~a \emph{quartic spin symbol}. Crucially, such a symbol satisfies a twisted multiplicative relation, namely quartic reciprocity (recall that $q \equiv 1 \bmod 8$) yields
$$
\left( \frac{\overline{\alpha_{a_1a_2}}}{\alpha_{a_1a_2}} \right)_{F(C'), 4} \approx \left( \frac{\overline{\alpha_{a_1}}}{\alpha_{a_1}} \right)_{F(C'), 4} \left(\frac{\overline{\alpha_{a_2}}}{\alpha_{a_2}}\right)_{F(C'), 4} \left( \frac{\overline{\alpha_{a_2}}}{\alpha_{a_1}} \right)_{F(C'), 2}.
$$
One of our insights is to leverage the fact that our $a$ will typically consist of many prime factors, and exploit once more large sieve inequalities to establish oscillation of the quadratic cross term in the equation above. Indeed, a fundamental idea underpinning our approach is that the symbol $(\overline{\alpha_{a_i}}/\alpha_{a_i})_{F(C'), 4}$ is very hard to control, but fortunately it can be absorbed as a coefficient in a large sieve type result for $(\overline{\alpha_{a_2}}/\alpha_{a_1})_{F(C'), 2}$.
 
In order to make this approach work, it is vital to have the additional flexibility to bring in extra prime divisors of $a$ after summing the Artin pairings over the cube. In order to achieve this, a key new analytic ingredient in our approach is the use of repeated Cauchy--Schwarz. This idea was previously exploited in \cite{KS} for the second Artin pairing (in the context of $3$-Selmer groups of the elliptic curve $x^3 + y^3 = n$), and we are deeply grateful to Alexander Smith for explaining to us how to extend this to higher-dimensional cubes. An additional benefit of this technique is that it provides much stronger savings (i.e.~small log power savings) than previously possible.

\subsection{Layout of the paper}
Section \ref{Section:preliminaries} introduces the relevant Selmer groups, Artin pairings and translates their eventual vanishing into $L\left(\frac{1}{2},\chi \right) \neq 0$. Section \ref{Section: first Artin pairing} expresses the first Artin pairing as a matrix of Legendre symbols and also establishes symmetry of all of the higher Artin pairings. 

Section \ref{section: expansion maps} transfers to function fields the basics on expansion maps and in Section \ref{Section: phi hat} we introduce a new category of expansion maps (called $\hat{\phi}$-expansion maps). The main result is a certain linear independence result needed to prove that the final symbol coming from the Artin pairing does not get canceled by the conditions coming from the lower-dimensional faces of the cube; since we work one dimension lower than usual, this is substantially more subtle than in the previous works in the literature.

Section \ref{Section: redei} establishes the basics on R\'edei symbols over general function fields adapting \cite{KS}. Section \ref{Section: large sieve} covers squarefree polynomials in grids (i.e.~convenient product spaces of sets of primes) and gives the large sieve inequality used at the end of the proof. 

Section \ref{Section: reflection principles} establishes our new reflection principles, chiefly the one involving the quartic spin symbol. Section \ref{sCS} executes the Cauchy--Schwarz approach and concludes the proof of our main result. 

\subsection{Human-AI collaboration}
All of the new ideas of this paper are due to the authors and were conceived and developed starting from June $2019$ until April $2026$. The execution of these ideas are also due to the authors with the following exceptions, where artificial intelligence was used for minor technical points:
\begin{itemize} 
\item parts of the proof of Theorem \ref{tGrid} were originally executed by \emph{GPT 5.5} in instant mode and then rewritten by the authors,
\item some parts of the proof of Lemma \ref{lFuncSeqEq} were originally executed by \emph{GPT 5.6 Sol pro} and then rewritten by the authors,
\item the elegant trick employed in the proof of Theorem \ref{tReduction} to expand a characteristic function via an interpolating polynomial to prove the reduction to sets $Y(R, \mathbf{T}, \mathbf{v})$ with fixed possible type sequences is due to \emph{GPT 5.6 Sol pro}. 
\end{itemize}
\noindent We have extensively used \emph{GPT 5.6 Sol pro} and \emph{Fable} to proofread this work. 

\subsection*{Acknowledgements}
The authors have discussed parts of this paper at several institutes and semester programs, and are grateful for the hospitality of these institutions. These include: the Hausdorff  Trimester Program ``Definability, decidability, and computability'', CRM special semester ``Universal statistics in number theory'', the Max Planck Institute, and the Institute for Theoretical Studies (ETH). The authors are also grateful to the organizers of the following conferences: Bonn (``Enumerative Arithmetic and the Cohen-Lenstra heuristics''), Oberwolfach (``Arithmetic Statistics for Algebraic Objects''), Providence (``Number Field Counting in the LMFDB'' at ICERM), Paderborn (``Arithmetic Statistics'') and Lausanne (``Fouvry 73'').

We would also like to thank Marco d'Addezio, Henri Darmon, Jordan Ellenberg, Adam Morgan and Will Sawin for valuable discussions. We are especially grateful to Alexander Smith for explaining to us the higher-dimensional version of the repeated Cauchy--Schwarz technique. 

PK gratefully acknowledges the support of the Dutch Research Council (NWO) through the Veni grant ``New methods in arithmetic statistics'' and the ERC Starting Grant ``CEDE''. MS is co-funded by the European Union (ERC, Function Fields, 101161909).
\section{Preliminaries} \label{Section:preliminaries}
\subsection{Notation}
\label{ssNot}
We begin by fixing some important conventions that will be used throughout.

In the rest of this paper $q$ will denote an odd prime power. All our finite separable extensions of $\mathbb{F}_q(T)$ will be chosen inside a fixed separable closure $\mathbb{F}_q(T)^{\text{sep}}$ of $\mathbb{F}_q(T)$. A finite extension of $\mathbb{F}_q(T)$ chosen inside this separable closure will be called a \emph{function field}. We use the following notation
\begin{itemize}
\item for $n \in \Z_{\geq 0}$, we denote by $[n]$ the set of integers between $1$ and $n$;

\item we define $G_K := \Gal(\mathbb{F}_q(T)^{\text{sep}}/K)$ for $K$ a finite separable extension of $\mathbb{F}_q(T)$;

\item we define $\Omega_{\mathbb{F}_q(T)}$ to be the set of equivalence classes of non-trivial absolute values on $\mathbb{F}_q(T)$; these correspond to monic irreducible polynomials in $\mathbb{F}_q[T]$ plus the place at $\infty$;

\item we write $\Omega_{\mathbb{F}_q(T)}^{\text{fin}}$ for the subset of $\Omega_{\mathbb{F}_q(T)}$ corresponding to the monic irreducible polynomials $\pi(T)$ in $\mathbb{F}_q[T]$, and we denote by $\infty$ the unique remaining place of $\Omega_{\mathbb{F}_q(T)}$;

\item for every place $v \in \Omega_{\mathbb{F}_q(T)}$, we fix once and for all a separable closure $\mathbb{F}_q(T)_v^{\text{sep}}$ of $\mathbb{F}_q(T)_v$, which comes with a unique extension of the absolute value $| \cdot |_v$;

\item for a function field $K$, we write $\Omega_K$ for the set of places of $K$;

\item for every field $L$ of which we have fixed a separable closure $L^{\text{sep}}$, we will denote by $G_L$ the absolute Galois group $\Gal(L^{\text{sep}}/L)$;

\item for every place $v \in \Omega_{\mathbb{F}_q(T)}$, we fix once and for all an embedding $i_v \colon \mathbb{F}_q(T)^{\text{sep}} \to \mathbb{F}_q(T)_v^{\text{sep}}$ and we denote the corresponding map on Galois groups by $i_v^\ast \colon G_{\mathbb{F}_q(T)_v} \to G_{\mathbb{F}_q(T)}$, which is an injection in view of Krasner's lemma. Its image is the decomposition subgroup attached to the chosen prolongation of $v$ to $\mathbb F_q(T)^{\mathrm{sep}}$;

\item we fix once and for all a separable closure $\mathbb{F}_q^{\text{sep}}$ of $\mathbb{F}_q$, living inside $\mathbb{F}_q(T)^{\text{sep}}$. Hence we have $\mathbb{F}_q^{\text{sep}}(T) \subseteq \mathbb{F}_q(T)^{\text{sep}}$;

\item we denote by $\Frob_q$ the topological generator of $G_{\mathbb{F}_q}$, given by the automorphism of $\mathbb{F}_q^{\text{sep}}$ sending each $\alpha$ to $\alpha^q$;

\item for each $v$ in $\Omega_{\mathbb{F}_q(T)}$, we put $q_v := |i_v(\mathbb{F}_q^{\text{sep}}) \cap \mathbb{F}_q(T)_v|$. This is the size of the residue field at $v$, and is independent of $i_v$. We denote by $\deg(v)$ the unique positive integer such that $q^{\deg(v)} = q_v$ and we call it the degree of $v$;

\item for a profinite group $G$, we denote by $G(2)$ the maximal pro-$2$ quotient of $G$, and we denote by $\text{proj} \colon G \twoheadrightarrow G(2)$ the natural continuous surjective group homomorphism;

\item finite groups are always viewed as discrete topological groups; 

\item for a function field $K$ and a finite abelian $2$-group $M$ (viewed as a discrete $G_K$-module with trivial action), we denote by $\Gamma_M(K)$ the group
$$
\text{Hom}_{\text{top.gr.}}(G_K, M) = \text{Hom}_{\text{top.gr.}}(G_K(2), M).
$$
For each $\chi$ in $\Gamma_{\mathbb{F}_2}(K)$, we put 
$$
K(\chi) := (K^{\text{sep}})^{\text{ker}(\chi)}.
$$
More generally, for $A$ a finite subset of $\Gamma_{\mathbb{F}_2}(K)$, we denote by $K(A)$ the compositum of all the fields $K(\chi)$, as $\chi$ runs in $A$. We put $\text{Ram}(A) := \{v \in \Omega_K : v \text{ ramifies in } K(A)/K\}$ and $\text{Ram}(\chi) := \text{Ram}(\{\chi\})$;

\item for a function field $K$, a discrete topological space $X$ and a continuous function $f\colon G_K \to X$, one can show that there exists an open normal subgroup $N$ of $G_K$ such that the map $f$ factors through $G_K/N$. The largest such $N$ is called the group of definition of $f$, denoted by $N(f)$, and the field
$$
L(f) := (K^{\text{sep}})^{N(f)}
$$
is called the field of definition of $f$. In case $f$ is a quadratic character, we will use both notations $L(f)$ and $K(f)$ interchangeably depending on the circumstances;

\item for a function field $K$ and an element $a$ in $K^\ast$, we denote by $\chi_a$ the element of $\Gamma_{\mathbb{F}_2}(K)$ coming from adjoining a square root of $a$ to $K$. For $A$ a finite subset of $K^\ast$, we define the continuous $1$-cochain
$$
\chi_A:=\prod_{a \in A}\chi_a,
$$
where the product is performed in $\mathbb{F}_2$;

\item we denote by $\mathbb{F}_{q^n}$ the unique subfield of degree $n$ over $\mathbb{F}_q$ living inside $\mathbb{F}_q^{\text{sep}}$; 

\item we make once and for all a choice $(\epsilon_{\mathbb{F}_{q^{n}}})_{n \geq 1}$ of a norm compatible set, where $\epsilon_{\mathbb{F}_{q^n}}$ is a generator of the quotient group $\mathbb{F}_{q^{n}}^\ast/\mathbb{F}_{q^{n}}^{\ast 2}$;

\item for a profinite group $G$ and a discrete $G$-module $A$, the group $H^i(G, A)$ denotes the usual continuous Galois cohomology as in \cite[Chapter 1]{NSW}; it is the $i$th derived functor of taking $G$-invariants;

\item we denote by $\Gamma_{\mathbb{F}_2}^{\text{im}}(\mathbb{F}_q(T))$ the subset of those characters $\chi$ in $\Gamma_{\mathbb{F}_2}(\mathbb{F}_q(T))$ for which $\infty$ is in $\text{Ram}(\chi)$. We call these characters imaginary; 

\item for each $v$ in $\Omega_{\mathbb{F}_q(T)}^{\text{fin}}$, we denote by $\chi_v$ the element of $\Gamma_{\mathbb{F}_2}(\mathbb{F}_q(T))$ corresponding to the quadratic extension of $\mathbb{F}_q(T)$ obtained by adjoining a square root of $\pi_v(T)$, where $\pi_v(T)$ is the unique monic irreducible polynomial in $\mathbb{F}_q[T]$ for which $v(\pi_v(T)) = 1$. For $v=\infty$ instead, we put $\chi_{\infty}:=\chi_{\epsilon_{\mathbb{F}_q}}$;

\item we identify $G_{\mathbb{F}_q}$ and $\Gal(\mathbb{F}_q^{\text{sep}}(T)/\mathbb{F}_q(T))$ via the natural restriction map. In particular, we view $\Frob_q$ as an element of $\Gal(\mathbb{F}_q^{\text{sep}}(T)/\mathbb{F}_q(T))$ and we fix once and for all a lift to $G_{\mathbb{F}_q(T)}$, which we will also denote by $\Frob_q$, committing an abuse of notation that should nevertheless not generate confusion. Furthermore, we rigidify our choice of lift $\Frob_q$ by demanding that $\chi_v(\Frob_q) = 0$ for each $v$ in $\Omega_{\mathbb{F}_q(T)}^{\text{fin}}$; 

\item by abuse of notation we will denote the element $\text{proj}(\Frob_q)$ of $G_{\mathbb{F}_q(T)}(2)$ by $\Frob_q$ as well;

\item we denote by $\chi_{\Frob}\colon G_{\mathbb{F}_q(T)} \twoheadrightarrow \Z_2$ the canonical surjection characterized by the formula
$$
g \cdot \alpha = (\Frob_q)^{\chi_{\Frob}(g)} \cdot \alpha,
$$
for each $g$ in $G_{\mathbb{F}_q(T)}$ and $\alpha \in \cup_{n \geq 0} \mathbb{F}_{q^{2^n}} =: \mathbb{F}_{q^{2^{\infty}}}$;

\item for a place $v$ of $\Omega_{\mathbb{F}_q(T)}$, we denote by $I_v$ the largest subgroup of $G_{\mathbb{F}_q(T)_v}$ that acts trivially on $i_v(\mathbb{F}_q^{\text{sep}})$ and we call it the inertia subgroup at $v$;

\item for $q$ a prime power with $q \equiv 1 \bmod 8$, we denote by $q_0$ the unique element of $\Z_2$ with $q_0^2 = q$ and $q_0 \equiv 1 \bmod 4$;

\item for a function field $K$, a character $\chi$ in $\Gamma_{\mathbb{F}_2}(K)$ and a $G_K$-module $A$, we denote by $A(\chi)$ the quadratic twist of $A$ by $\chi$, namely the $G_K$-module given by the abelian group $A$ with $G_K$-action provided by the formula
$$
g \cdot_{\chi} a := (-1)^{\chi(g)} \cdot g \cdot a,
$$
for all $g \in G_K$ and $a \in A$;

\item for an integer $n$, a function field or a local field $K$ and a $G_K$-module $M$, we denote by $M(n)$ the $G_K$-module given by the $n$-th Tate twist;

\item for a function field or a local field $K$ and a finite (discrete) $G_K$-module $M$, we denote by $M^\ast := \Hom_{\text{ab.gr.}}(M, \Q/\Z(1))$ its Tate dual;

\item for a function field $K$ and a continuous $1$-cochain $\phi: G_K \rightarrow \mathbb{F}_2$, we define a new $1$-cochain $\frac{\phi}{4}: G_K \rightarrow \Q_2/\Z_2$ by
$$
\frac{\phi}{4}(g) :=
\begin{cases}
1/4 &\text{if } \phi(g) = 1, \\
0 &\text{if } \phi(g) = 0;
\end{cases}
$$

\item for a product of finite sets $X := X_1 \times \dots \times X_n$, we define for each $i \in [n]$ the map $\mathrm{pr}_i\colon X \rightarrow X_i$, which is projection on the $i$th coordinate;

\item for a vector $\mathbf{a} := (a_i)_{1 \leq i \leq n} \in \{1, 2\}^n$, we define 
$$
\mathrm{sg}(\mathbf{a}) = (-1)^{|\{1 \leq i \leq n : a_i = 2\}|}.
$$
\end{itemize}

\subsection{Basic facts}
Let us recall the structure of the local pro-$2$ completions and how we view them as subgroups of $G_{\mathbb{F}_q(T)}(2)$. 

\begin{proposition} 
\label{Structure of tame inertia}
Let $v$ be a place in $\Omega_{\mathbb{F}_q(T)}$. The image of $I_v$ in $G_{\mathbb{F}_q(T)_v}(2)$ gives an isomorphism between $I_v(2)$ and its image. This image is a free pro-$2$ group of rank $1$ (isomorphic to $\mathbb{Z}_2$), sitting as a normal subgroup of $G_{\mathbb{F}_q(T)_v}(2)$, which by abuse of notation we still denote by $I_v(2)$. Furthermore, this yields a split exact sequence given by
$$
1 \to I_v(2) \to G_{\mathbb{F}_q(T)_v}(2) \to \Gal(\mathbb{F}_q(T)_v \cdot i_v(\mathbb{F}_{q^{2^{\infty}}}(T))/\mathbb{F}_q(T)_v) \to 1,
$$
where the last non-trivial term is also a free pro-$2$ group of rank $1$, freely generated by the element of that Galois group which acts as $q_v$-powering on $i_v(\mathbb{F}_{q^{2^{\infty}}})$. The action of any lift of such an element on $I_v(2)$ is multiplication by $q_v$.  
\end{proposition}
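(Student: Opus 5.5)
The plan is to reduce to the classical structure theory of the Galois group of the maximal tamely ramified extension of a local field. Let $F := \mathbb{F}_q(T)_v$, a local field of characteristic $p \neq 2$ with residue field $\mathbb{F}_{q_v}$.

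The classical input is this. Let $P_v$ be the wild inertia subgroup. It is a pro-$p$ group, normal in $G_F$, and it dies in every pro-$2$ quotient. The quotient $G_F/P_v$ is the Galois group of the maximal tame extension. As a profinite group it is generated by an element $\sigma$ topologically generating tame inertia $I_v/P_v \cong \prod_{\ell \neq p} \Z_\ell(1)$, together with a lift $\phi$ of Frobenius, subject to the single relation $\phi \sigma \phi^{-1} = \sigma^{q_v}$ (Iwasawa's theorem, see \cite{NSW}).

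\textbf{Step 1: the maximal pro-$2$ quotient.} Since $P_v$ is pro-$p$ with $p$ odd, $G_F(2) = (G_F/P_v)(2)$. The maximal pro-$2$ quotient of the tame group is obtained by replacing $\prod_{\ell\neq p}\Z_\ell(1)$ by $\Z_2(1)$ and $\hat{\Z}$ by $\Z_2$. This gives $G_F(2) \cong \Z_2(1) \rtimes \Z_2$, with the generator of the right-hand $\Z_2$ acting by multiplication by $q_v$. To justify this, I will check that this semidirect product is pro-$2$ and is a quotient of the tame group. It is pro-$2$ because $q_v$ is a $2$-adic unit, and the action of $\Z_2$ factors through $1+2\Z_2$ or $\Z_2^\times$, a pro-$2$ group. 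Conversely, any continuous map from the tame group to a finite $2$-group kills the prime-to-$2$ part of tame inertia and the prime-to-$2$ part of $\hat{\Z}$, so it factors through this semidirect product.

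\textbf{Step 2: the claims about $I_v(2)$.} The image of $I_v$ in $G_F(2)$ is then the normal subgroup $\Z_2(1)$, and it is free pro-$2$ of rank $1$. The quotient by it is the maximal pro-$2$ quotient of $G_{\mathbb{F}_{q_v}} \cong \hat{\Z}$, namely $\Z_2$. Because this quotient is free pro-$2$, any lift of a generator defines a continuous section, so the sequence splits. The quotient field corresponds to adjoining $\mathbb{F}_{q^{2^\infty}}$, the maximal unramified pro-$2$ extension. Its canonical generator is the $q_v$-power Frobenius, and the conjugation action of a lift on $I_v(2)$ is multiplication by $q_v$ by the tame relation.

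It remains to see that $I_v(2)$, the maximal pro-$2$ quotient of $I_v$ itself, maps isomorphically onto its image. Both are $\Z_2$, since $I_v(2) = (I_v/P_v)(2) = \Z_2(1)$. The map between them is surjective by Step 1. A surjection $\Z_2 \to \Z_2$ is an isomorphism.

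\textbf{Step 3: the global embedding.} The statement that $i_v^\ast$ is injective, and hence that we may view these groups inside $G_{\mathbb{F}_q(T)}$, was already recorded in the notation. So nothing further is needed there.

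\textbf{Main obstacle.} The only delicate point is the compatibility in Step 1, namely that taking the pro-$2$ quotient of the full tame group gives exactly the semidirect product and loses no relations. I will handle this by the universal-property argument above, testing against finite $2$-group quotients.
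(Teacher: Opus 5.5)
Your proposal is correct and follows the same route as the paper, whose proof consists solely of citing Iwasawa's description of the Galois group of the maximal tamely ramified extension of a local field, \cite[Theorem 7.5.3]{NSW}. Your Steps 1 and 2 make explicit the passage to the maximal pro-$2$ quotient that the paper leaves implicit: killing wild inertia and the prime-to-$2$ parts of tame inertia and of $\hat{\mathbb{Z}}$, then using that a surjection $\mathbb{Z}_2 \to \mathbb{Z}_2$ is an isomorphism.
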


\begin{proof}
This is a well-known result of Iwasawa \cite[Theorem 7.5.3]{NSW}.
\end{proof}

Proposition \ref{Structure of tame inertia} says that the maximal pro-$2$ quotient of the local Galois group is the quotient of the free pro-$2$ group on two symbols $\sigma$ and $F$ by the relation 
$$
F\sigma F^{-1} = \sigma^{q_v},
$$
where $\sigma$ maps to a free generator for the image of inertia, while $F$ maps to a lift of the local Frobenius operator. We aim to fix, once and for all, global choices, sitting inside $G_{\mathbb{F}_q(T)}(2)$, of such images of $\sigma$ and $F$: these will play an important role in constructing normalized expansion maps, and also in evaluating the Artin pairing. Before doing so, we clarify how $G_{\mathbb{F}_q(T)_v}(2)$ maps into $G_{\mathbb{F}_q(T)}(2)$ in the following proposition. 

\begin{proposition} 
\label{pGv}
The natural homomorphism from $G_{{\mathbb{F}_q(T)}_v}(2)$ to $\textup{proj} \circ i_v^\ast(G_{{\mathbb{F}_q(T)}_v})$ is an isomorphism of profinite groups.     
\end{proposition}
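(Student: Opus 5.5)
We need to show that the map $G_{\mathbb{F}_q(T)_v}(2) \to G_{\mathbb{F}_q(T)}(2)$ induced by $\mathrm{proj}\circ i_v^\ast$ is injective; surjectivity onto the image is tautological. Both groups are profinite, so a continuous bijection onto a closed image is a homeomorphism, and it suffices to prove injectivity.

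The plan is to use the explicit description from Proposition \ref{Structure of tame inertia}. The group $G_{\mathbb{F}_q(T)_v}(2)$ is $\mathbb{Z}_2 \rtimes \mathbb{Z}_2$, generated by $\sigma$ (topologically generating $I_v(2)$) and $F$, subject to $F\sigma F^{-1} = \sigma^{q_v}$. Every element can be written uniquely as $\sigma^a F^b$ with $a,b \in \mathbb{Z}_2$.

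The key step is to show that for every $n$ there is a global $2$-power extension of $\mathbb{F}_q(T)$ whose completion at $v$ contains the local extension cut out by $\langle \sigma^{2^n}, F^{2^n}\rangle$. We use two ingredients.

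\emph{The $F$-direction.} The constant field extension $\mathbb{F}_{q^{2^m}}(T)/\mathbb{F}_q(T)$ is a global $\mathbb{Z}/2^m$-extension. Locally at $v$ it gives the unramified extension of degree $2^m/\gcd(2^m,\deg v)$. Taking $m$ large, this recovers every finite quotient of the unramified pro-$2$ quotient $\mathbb{Z}_2$ generated by $F$.

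\emph{The $\sigma$-direction.} We need a global cyclic $2$-power extension that is totally (tamely) ramified at $v$ with large ramification index $2^n$. Take the Kummer extension $K_n(\sqrt[2^n]{\pi_v})$, where $K_n = \mathbb{F}_{q^{2^{N}}}(T)$ and $N$ is large enough that $\mu_{2^n} \subset K_n$. For $v=\infty$, use $\sqrt[2^n]{1/T}$ instead. This extension is Galois over $\mathbb{F}_q(T)$, since $\pi_v$ is Galois-stable up to $2^n$-th powers and $\mu_{2^n}\subset K_n$, and it has $2$-power degree. Its completion at $v$ has ramification index $2^n$, because $\pi_v$ is a uniformizer.

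The compositum $L_n$ of these two extensions is a global Galois $2$-extension, so it factors through $G_{\mathbb{F}_q(T)}(2)$. Its local Galois group at $v$ surjects onto a quotient of $G_{\mathbb{F}_q(T)_v}(2)$ in which the images of $\sigma$ and $F$ have order at least $2^n$ and $2^{n'}$ respectively, with $n, n' \to \infty$.

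Now let $g = \sigma^a F^b$ be a nontrivial element of the kernel. First suppose $b \neq 0$. Then $g$ acts nontrivially on some constant field extension, which is global, so its image in $G_{\mathbb{F}_q(T)}(2)$ is nontrivial, a contradiction. Hence $b = 0$ and $g = \sigma^a$ with $a \neq 0$. Then $g$ acts nontrivially on $\pi_v^{1/2^n}$ for $n > v_2(a)$: the inertia acts on $\pi_v^{1/2^n}$ through the tame character with values in $\mu_{2^n}$, and this character is surjective. This again contradicts $g$ lying in the kernel. So the kernel is trivial.

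The main obstacle is verifying that the Kummer extensions built from $\pi_v$ really see all of $I_v(2)$, i.e.\ that the tame character $\sigma \mapsto \sigma(\pi_v^{1/2^n})/\pi_v^{1/2^n}$ is an isomorphism $I_v(2)/2^n \cong \mu_{2^n}$ after the constant extension. This holds because over the unramified extension $\pi_v$ remains a uniformizer, and a Kummer extension by a $2^n$-th root of a uniformizer is totally ramified of degree $2^n$. The other concern is ensuring the global extensions are Galois over $\mathbb{F}_q(T)$ itself, not just over $K_n$. This follows because Galois conjugates of $\sqrt[2^n]{\pi_v}$ differ by roots of unity, all of which lie in $K_n$.
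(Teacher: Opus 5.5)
Your proof is correct and takes essentially the paper's route. The paper argues at the level of fields: the local maximal pro-$2$ extension is $i_v(L)\cdot\mathbb{F}_q(T)_v$, where $L$ is the compositum of the splitting fields of $x^{2^n}-\pi$ for a uniformizer $\pi\in\mathbb{F}_q(T)$ (these also supply the unramified part via $\mu_{2^n}$); you make the same point element by element via $g=\sigma^aF^b$, using constant field extensions for the $F$-part and the global Kummer extensions $\mathbb{F}_{q^{2^N}}(T)(\sqrt[2^n]{\pi_v})$ for the inertia part.
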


\begin{proof}
Surjectivity is by definition, so it suffices to prove injectivity. To this end, we claim that the field extension $M/\mathbb{F}_q(T)_v$ corresponding to $\text{ker}(G_{{\mathbb{F}_q(T)}_v} \to G_{{\mathbb{F}_q(T)}_v}(2))$ equals $i_v(L) \cdot \mathbb{F}_q(T)_v$ for some Galois pro-$2$ extension $L$ of $\mathbb{F}_q(T)$. Observe that $M$ can be explicitly described as the extension coming from adding all of the splitting fields of the Eisenstein polynomials $x^{2^n} - \pi$, as $n$ runs through the positive integers, and for any fixed uniformizer $\pi$. In particular, if $\pi$ is in $\mathbb{F}_q(T)$, then this extension comes entirely from a pro-$2$ extension of $\mathbb{F}_q(T)$. This establishes the claim. 

Having shown the claim, let us derive injectivity. Suppose we have an element $g$ of $G_{{\mathbb{F}_q(T)}_v}(2)$ that is in the kernel. Then $g$ must act in particular trivially on $i_v(L)$ by definition. But then it acts certainly trivially on $i_v(L) \cdot \mathbb{F}_q(T)_v = M$. By definition of $M$, this means that $g$ is the trivial element of the group $G_{{\mathbb{F}_q(T)}_v}(2)$, giving the desired injectivity. 
\end{proof}

We fix once and for all a topological generator $\sigma_v$ of $\text{proj} \circ i_v^\ast(I_v)$. This is possible since $\text{proj} \circ i_v^\ast$ induces an isomorphism between $I_v(2)$ and $(\text{proj} \circ i_v^\ast)(I_v)$, in view of Proposition \ref{pGv}, and hence the latter is a free pro-$2$ group on one generator, in view of Proposition \ref{Structure of tame inertia}. 

We fix once and for all an element $\Frob_v$ of $\text{proj} \circ i_v^\ast(G_{{\mathbb{F}_q(T)}_v})$ with the property that $\chi_{\Frob}(\Frob_v) = \deg(v)$. By Proposition \ref{Structure of tame inertia}, this element generates 
$$
\frac{\text{proj}\circ i_v^\ast(G_{{\mathbb{F}_q(T)}_v})}{\text{proj} \circ i_v^\ast(I_v)}.
$$
With these notations set, the unique homomorphism from the free pro-$2$ group on two generators $\sigma$ and $F$, given by
$$
\sigma \mapsto \sigma_v, \quad F \mapsto \Frob_v,
$$
yields a surjective homomorphism to the group $\text{proj} \circ i_v^\ast(G_{{\mathbb{F}_q(T)}_v})$. The kernel of this homomorphism is precisely the smallest closed normal subgroup containing $F \sigma F^{-1} \sigma^{-q_v}$.

Finally, the following piece of notation will be useful in computing Artin symbols on top of fields changing with quadratic characters $\chi$ in $\Gamma_{\mathbb{F}_2}(\mathbb{F}_q(T))$. For such a character $\chi$ and for a place $v$ in $\Omega_{\mathbb{F}_q(T)}$ lying inside $\text{Ram}(\chi)$, we define
$$
\Frob_v(\chi),
$$ 
to be the unique element $F_1$ in $\{\Frob_v, \Frob_v\sigma_v\}$ satisfying the equation
$
\chi(F_1) = 0.
$
Observe that precisely one of these two elements satisfies this equation due to $\chi(\sigma_v) = 1$. 

We observe that the normalized choice of lift $\Frob_q$ made in Section \ref{ssNot} can be done by picking $\Frob_q := \Frob_{\infty}(\chi_T)$. We fix this choice for the remainder of the paper.

We conclude this subsection by collecting the above choices into a set of topological generators for $G_{\mathbb{F}_q(T)}(2)$. We put
$$
\mathfrak{G} := \{\sigma_v : v \in \Omega_{\mathbb{F}_q(T)}^{\text{fin}}\} \cup \{\Frob_q\}.
$$
For a pro-$2$ group $\mathcal{G}$, we say that a basis $\mathcal{B}$ of $H^1(\mathcal{G},\mathbb{F}_2)$ and a subset $X$ of $\mathcal{G}$ are \emph{dual to each other}, if there is a bijection $f: \mathcal{B} \rightarrow X$ such that $\chi(x) = 1$ if and only if $x = f(\chi)$. As we will recall in the proof of the following proposition, this implies that $X$ is a minimal set of topological generators for $\mathcal{G}$. 

\begin{proposition} 
\label{Topological generators}
The set $\{\chi_v : v \in \Omega_{\mathbb{F}_q(T)}^{\textup{fin}}\} \cup \{\chi_{\epsilon_{\mathbb{F}_q}}\}$ is a basis for $\Gamma_{\mathbb{F}_2}(\mathbb{F}_q(T))$ and the set $\mathfrak{G}$ is a minimal set of topological generators for $G_{\mathbb{F}_q(T)}(2)$. Furthermore, these sets are dual to each other.
\end{proposition}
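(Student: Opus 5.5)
The plan has three steps: compute the values of each character on each generator to get duality, use Kummer theory to show the characters form a basis, and then pass from duality to minimal generation via the Burnside basis theorem.

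\textbf{Step 1: the duality table.} For each pair consisting of a character and a generator, I compute the value directly.

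For $v$ finite and $w$ finite with $w \neq v$, the field $\mathbb{F}_q(T)(\sqrt{\pi_v})$ is unramified at $w$. Since $\sigma_w$ lies in the image of inertia at $w$, this gives $\chi_v(\sigma_w)=0$.

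At $w=v$, the polynomial $\pi_v$ is a uniformizer, so the extension is ramified at $v$. Any topological generator of the pro-$2$ inertia group $I_v(2) \cong \Z_2$ therefore maps nontrivially to the inertia group of the quadratic extension, so $\chi_v(\sigma_v)=1$.

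The value $\chi_v(\Frob_q)=0$ holds by the rigidification chosen in Section~\ref{ssNot}.

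For $\chi_{\epsilon_{\mathbb{F}_q}}$, note that it is a constant field extension, hence unramified everywhere. Thus $\chi_{\epsilon_{\mathbb{F}_q}}(\sigma_v)=0$ for all $v$. On the other hand, $\Frob_q$ acts on $\sqrt{\epsilon_{\mathbb{F}_q}}$ by $q$-powering. Since $\epsilon_{\mathbb{F}_q}$ is a non-square in $\mathbb{F}_q$, we get $\sqrt{\epsilon}^{\,q}=\epsilon^{(q-1)/2}\sqrt{\epsilon}=-\sqrt{\epsilon}$, so $\chi_{\epsilon_{\mathbb{F}_q}}(\Frob_q)=1$.

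Together these values give duality via the bijection $\chi_v\mapsto\sigma_v$ and $\chi_{\epsilon_{\mathbb{F}_q}}\mapsto \Frob_q$.

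\textbf{Step 2: the characters form a basis.} By Kummer theory ($q$ odd), we have $\Gamma_{\mathbb{F}_2}(\mathbb{F}_q(T)) \cong \mathbb{F}_q(T)^\ast/\mathbb{F}_q(T)^{\ast2}$. Unique factorization in $\mathbb{F}_q[T]$ gives
$$\mathbb{F}_q(T)^\ast = \mathbb{F}_q^\ast\times\bigoplus_v \pi_v^{\Z}.$$
Reducing modulo squares, this is an $\mathbb{F}_2$-vector space with basis $\{\epsilon_{\mathbb{F}_q}\}\cup\{\pi_v\}$, using $\mathbb{F}_q^\ast/\mathbb{F}_q^{\ast 2}=\langle \epsilon_{\mathbb{F}_q}\rangle$. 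Translating back through Kummer theory, the set $\{\chi_v\}\cup\{\chi_{\epsilon_{\mathbb{F}_q}}\}$ is a basis. Here "basis" is meant in the algebraic sense: every continuous character is a finite combination of basis elements.

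Step 1 also gives linear independence directly, by evaluating on the dual elements.

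\textbf{Step 3: minimal generation.} Let $H$ be the closed subgroup of $\mathcal{G}=G_{\mathbb{F}_q(T)}(2)$ generated by $\mathfrak{G}$. Let $\Phi(\mathcal{G})=\overline{\mathcal{G}^2[\mathcal{G},\mathcal{G}]}$ be the Frattini subgroup.

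Suppose $H\Phi(\mathcal{G})\neq\mathcal{G}$. Then the elementary abelian quotient $\mathcal{G}/H\Phi(\mathcal{G})$ is nontrivial, so there is a nonzero continuous character $\chi\colon\mathcal{G}\to\mathbb{F}_2$ vanishing on $H$. By Step 2, $\chi$ is a finite nonzero combination of basis elements. Evaluating on the dual elements using Step 1 shows that some basis coefficient must be nonzero, giving $\chi(x)\neq0$ for some $x\in\mathfrak{G}$. This contradicts $\chi|_H=0$.

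Therefore $H\Phi(\mathcal{G})=\mathcal{G}$. For pro-$2$ groups, the Frattini argument (Burnside basis theorem for pro-$p$ groups, e.g.\ \cite{NSW}) then gives $H=\mathcal{G}$.

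Minimality follows from duality. If we remove some $x=f(\chi)$ from $\mathfrak{G}$, the character $\chi$ vanishes on all remaining generators. Hence the remaining elements generate a closed subgroup contained in $\ker\chi\neq\mathcal{G}$, so they do not generate $\mathcal{G}$.

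\textbf{Main obstacle.} The only delicate points are the evaluations $\chi_v(\sigma_v)=1$ and the vanishing on $\Frob_q$. The first needs that the image of $\sigma_v$ generates inertia in the quadratic extension, which is Propositions~\ref{Structure of tame inertia} and~\ref{pGv}. The second is by construction, since $\Frob_q=\Frob_\infty(\chi_T)$ and the rigidification in Section~\ref{ssNot} imposes $\chi_v(\Frob_q)=0$ for every finite $v$.
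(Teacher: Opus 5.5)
Your proof is correct and follows the same route as the paper. The basis and duality statements are the ones the paper calls clear, and you supply the Kummer-theory and evaluation details; generation then comes from the Frattini/Burnside basis argument, since duality forces $\mathfrak{G}$ to generate the maximal multiquadratic quotient $G_{\mathbb{F}_q(T)}(2)/\Phi(G_{\mathbb{F}_q(T)}(2)) = \Gal(M/\mathbb{F}_q(T))$. Your contrapositive phrasing, using a nonzero character vanishing on the closed subgroup generated by $\mathfrak{G}$, is just the dual form of the paper's statement.
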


\begin{proof}
It is clear that $\{\chi_v : v \in \Omega_{\mathbb{F}_q(T)}^{\textup{fin}}\} \cup \{\chi_{\epsilon_{\mathbb{F}_q}}\}$ is a basis for $\Gamma_{\mathbb{F}_2}(\mathbb{F}_q(T))$ and it is also clear that the sets $\{\chi_v : v \in \Omega_{\mathbb{F}_q(T)}^{\textup{fin}}\} \cup \{\chi_{\epsilon_{\mathbb{F}_q}}\}$ and $\mathfrak{G}$ are dual to each other.

In order to show that $\mathfrak{G}$ is a minimal set of topological generators for $G_{\mathbb{F}_q(T)}(2)$, we recall that a subset $S$ generates a finite group $G$ if and only if the image of $S$ generates modulo the Frattini $G/\Phi(G)$ (and that $\Phi(G) = G^2[G, G] = G^2$ for $2$-groups $G$). This implies that $\mathfrak{G}$ is a minimal set of topological generators for $G_{\mathbb{F}_q(T)}(2)$ if and only if its image is a minimal set of topological generators for $\Gal(M/\mathbb{F}_q(T))$, where $M$ is the maximal multiquadratic extension of $\mathbb{F}_q(T)$. But this last condition is clearly implied by $\mathfrak{G}$ being dual to $\{\chi_v : v \in \Omega_{\mathbb{F}_q(T)}^{\textup{fin}}\} \cup \{\chi_{\epsilon_{\mathbb{F}_q}}\}$, which we have already verified to be true at the start of the proof.
\end{proof}

\subsection{The twisting module}
For the remainder of this section, we assume that $q \equiv 1 \bmod 8$. The assignments $\Frob_q \mapsto q_0$ and $\Frob_q \mapsto -q_0$ extend uniquely to continuous group homomorphisms
$$
G_{\mathbb{F}_q} \to \Z_2^{\ast},
$$
turning $\Z_2$ into a $G_{\mathbb{F}_q}$-module in two different ways. Recall that we identified $G_{\mathbb{F}_q}$ with $\Gal(\mathbb{F}_q^{\text{sep}}(T)/\mathbb{F}_q(T))$. Hence, by inflation, we have endowed $\Z_2$ with two different $G_{\mathbb{F}_q(T)}$-module structures. We denote them respectively by
$$
T_1, \quad T_{-1}.
$$
For $i \in \{1, -1\}$, we consider also the $G_{\mathbb{F}_q(T)}$-modules
$$
V_i := T_i \otimes_{\Z_2} \Q_2, \quad N_i := \frac{V_i}{T_i}.
$$
Denote the respective quadratic twists by $T_i(\chi)$, $V_i(\chi)$ and $N_i(\chi)$.

\begin{remark} 
\label{remark: -1 module is twisting by epsilon}
We have the equalities of $G_{\mathbb{F}_q(T)}$-modules 
$$
T_{-1} = T_1(\chi_{\epsilon_{\mathbb{F}_q}}), \quad V_{-1} = V_1(\chi_{\epsilon_{\mathbb{F}_q}}), \quad N_{-1} = N_1(\chi_{\epsilon_{\mathbb{F}_q}}).
$$
More generally, for $\chi$ in $\Gamma_{\mathbb{F}_2}(\mathbb{F}_q(T))$, we have the equalities
$$
T_{-1}(\chi) = T_1(\chi + \chi_{\epsilon_{\mathbb{F}_q}}), \quad V_{-1}(\chi) = V_1(\chi + \chi_{\epsilon_{\mathbb{F}_q}}), \quad N_{-1}(\chi) = N_1(\chi + \chi_{\epsilon_{\mathbb{F}_q}}).
$$
\end{remark}

Observe that the action of $G_{\mathbb{F}_q(T)}$ on $T_i(\chi), V_i(\chi), N_i(\chi)$ factors through $G_{\mathbb{F}_q(T)}(2)$ for each $\chi \in \Gamma_{\mathbb{F}_2}(\mathbb{F}_q(T))$ and $i \in \{1, -1\}$. For $\chi \in \Gamma_{\mathbb{F}_2}(\mathbb{F}_q(T))$ and $i \in \{1, -1\}$, we denote by
$$
B_i(\chi) := \text{Cocy}_{\text{unr.}}(G_{\mathbb{F}_q(T)}, N_i(\chi)),
$$
where the right hand side consists of elements $\psi \in \text{Cocy}(G_{\mathbb{F}_q(T)},N_i(\chi))$ satisfying
\begin{equation}
\label{eBidef}
\psi(\sigma_v) = 0 \quad \quad \text{ for all } v \in \Omega_{\mathbb{F}_q(T)} \setminus \text{Ram}(\chi).
\end{equation}
Observe that this evaluation on an element of $G_{\mathbb{F}_q(T)}(2)$ makes sense, as 
$$
\text{Cocy}(G_{\mathbb{F}_q(T)},N_i(\chi))=\text{Cocy}(G_{\mathbb{F}_q(T)}(2),N_i(\chi)),
$$
in virtue of the fact that $N_i(\chi)$ is (inflated from) a discrete $2$-primary torsion $G_{\mathbb{F}_q(T)}(2)$-module. 

Identifying the groups $\mathbb{F}_2$ and $\{\pm 1\} \subseteq \operatorname{GL}_1(\mathbb C)$ we can view each element $\chi$ of $\Gamma_{\mathbb{F}_2}(\mathbb{F}_q(T))$ as a Galois representation, and thus associate to it an Artin $L$-function
\[
L(s, \chi) = \prod_{v \in \Omega_{\mathbb F_q(T)} \setminus \operatorname{Ram}(\chi)} \ \frac{1}{1 - \chi(\operatorname{Frob}_v)q_v^{-s}}. 
\]

We now explain the relevance of the modules $B_i(\chi)$ in the context of Chowla's non-vanishing conjecture. The first step is given by the following proposition. 

\begin{proposition} 
\label{prop: field-theory of non-vanishing}
Let $q \equiv 1 \bmod 8$. Let $\chi$ be an element of $\Gamma_{\mathbb{F}_2}(\mathbb{F}_q(T))$ whose restriction to $G_{\mathbb{F}_q^{\textup{sep}}(T)}$ is non-trivial. Then the Artin $L$-function of $\chi$ vanishes at the central point, namely
$$
L\left(\tfrac{1}{2},\chi \right) = 0,
$$
only if there exists an everywhere unramified $\Z_2$-extension $L/\mathbb{F}_q^{\textup{sep}}(T)(\chi)$ that is Galois over $\mathbb{F}_q(T)$ such that $\Frob_q$ acts on $\Gal(L/\mathbb{F}_q^{\textup{sep}}(T)(\chi))$ by multiplication by $q_0$ or by $-q_0$. 
\end{proposition}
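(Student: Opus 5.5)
We plan to argue through the Weil conjectures for the smooth projective curve $C_\chi$ attached to $\mathbb{F}_q(T)(\chi)$. Since $\chi$ is non-trivial on $G_{\mathbb{F}_q^{\textup{sep}}(T)}$, the field $\mathbb{F}_q(T)(\chi)$ has full constant field $\mathbb{F}_q$, so $C_\chi$ is geometrically connected. Write $J$ for its Jacobian. The zeta function of $C_\chi$ factors as $\zeta_{\mathbb{F}_q(T)}(s) L(s,\chi)$, where $L(s,\chi)$ is the Artin $L$-function with the local factors at $\textup{Ram}(\chi)$ trivial, exactly as defined above.

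By Grothendieck--Lefschetz, $L(s,\chi)$ equals $\det(1 - \Frob_q q^{-s} \mid H^1_{\textup{\'et}}(C_{\chi,\overline{\mathbb{F}}_q}, \Q_2))$. Here we use that the $\chi$-isotypic part of $H^1$ of $\mathbb{P}^1$ vanishes, so the whole of $H^1(C_\chi)$ is the $\chi$-part. Equivalently, $L(s,\chi) = \det(1 - \Frob_q q^{-s} \mid V_2 J)$, using the dual/contragredient identification, which is harmless since the set of eigenvalues is stable under $\alpha \mapsto q/\alpha$.

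So $L(\tfrac12,\chi)=0$ means that $q^{1/2}$ is an eigenvalue of $\Frob_q$ on $V_2J = T_2J\otimes\Q_2$, for one of the two complex square roots of $q$. Via an embedding of $\Q_2(\sqrt q)=\Q_2$ (note $q\equiv 1 \bmod 8$, so $q$ is a $2$-adic square) this becomes: $q_0$ or $-q_0$ is an eigenvalue of $\Frob_q$ acting on $V_2 J$. The characteristic polynomial has $\Z$ coefficients, and its roots equal to $\pm\sqrt q$ in $\C$ correspond to the roots $\pm q_0$ in $\Q_2$.

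Next we take an eigenvector $w\in V_2J$ for the eigenvalue $\lambda\in\{q_0,-q_0\}$ and form the line $\Q_2 w$. Its intersection with $T_2J$ is a rank one $\Z_2$-summand $\Lambda$ stable under $\Frob_q$, acting by $\lambda$. Dually, we want a $\Frob_q$-equivariant surjection $T_2J\to\Z_2$. To get one, we choose an eigenvector for the transpose, which has the same eigenvalues; alternatively we apply the Weil pairing, which turns a $\lambda$-eigenline into a $q/\lambda$-eigenline quotient. Since $q/\lambda=\pm q_0$ again, both choices are allowed by the statement.

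The key identification is that $T_2J$ is canonically $\Gal(M/\mathbb{F}_q^{\textup{sep}}(T)(\chi))(2)^{\textup{ab}}$, up to Tate twist conventions, where $M$ is the maximal everywhere unramified abelian pro-$2$ extension of $\overline{\mathbb{F}}_q C_\chi$. This is geometric class field theory, i.e. $\pi_1^{\textup{ab}}$ of the geometric curve tensored with $\Z_2$ is $T_2J$. The identification is equivariant for the action of $G_{\mathbb{F}_q(T)}$ by conjugation, after using the fixed lift $\Frob_q$.

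A $\Frob_q$-equivariant surjection $T_2J\to\Z_2(\lambda)$, with $\Frob_q$ acting by $\lambda$, cuts out an unramified $\Z_2$-extension $L$ of $\mathbb{F}_q^{\textup{sep}}(T)(\chi)$. We then check that $L$ is Galois over $\mathbb{F}_q(T)$:
\begin{itemize}
\item its kernel is stable under $\Frob_q$ by equivariance;
\item it is stable under the involution generating $\Gal(\mathbb{F}_q^{\textup{sep}}(T)(\chi)/\mathbb{F}_q^{\textup{sep}}(T))$, because that involution acts by $-1$ on $J$, hence on $T_2J$, and so preserves every sublattice.
\end{itemize}
Together these two elements and $G_{\mathbb{F}_q^{\textup{sep}}(T)(\chi)}$ generate the relevant group, so $L$ is Galois over $\mathbb{F}_q(T)$, and $\Frob_q$ acts on $\Gal(L/\mathbb{F}_q^{\textup{sep}}(T)(\chi))$ by $\lambda=\pm q_0$.

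The main obstacle will be the bookkeeping in this Galois-equivariant identification of $\pi_1^{\textup{ab}}\otimes\Z_2$ with $T_2J$. Whether we land on $T_2J$ or its dual, and whether $\Frob_q$ acts by $\lambda$ or $q/\lambda$, has to be tracked. Since $\pm q_0$ is closed under $\lambda\mapsto q/\lambda$ (as $q/q_0=q_0$), every such ambiguity is absorbed by the "$q_0$ or $-q_0$" in the statement.
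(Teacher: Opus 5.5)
Your proposal is correct and follows essentially the paper's route. Both arguments turn $L(\tfrac12,\chi)=0$, via Grothendieck--Lefschetz, into a Frobenius eigenvalue $\pm q_0$ on the $2$-adic abelianized geometric fundamental group of $C_\chi$, and both get Galois-ness over $\mathbb F_q(T)$ from the involution acting by $-1$. The only difference is that the paper works with $H^1_{\textup{\'et}}(C\times\overline{\mathbb F}_q,\Z_2)=\Hom(\pi_1^{\textup{ab}}(2),\Z_2)$. There an eigenvector of $\Frob_q^{-1}$ is already a homomorphism out of $\pi_1^{\textup{ab}}$, so your transpose/Weil-pairing step is not needed.

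One caution about the bookkeeping you flag: not every ambiguity is absorbed by the statement.
\begin{itemize}
\item The ambiguities $\lambda\leftrightarrow q/\lambda$ and $\lambda\leftrightarrow-\lambda$ are harmless. The sign ambiguity occurs when $\chi(\Frob_q)=1$, so that the fixed lift acts on $C_\chi$ as Frobenius composed with the involution.
\item A naive $\Z_2$-linear dual or a Tate twist is not harmless: it would yield values such as $\pm q_0^{-1}$, which the statement excludes. For the same reason the Lefschetz formula on $H^1$ needs $\Frob_q^{-1}$, as in the paper, rather than $\Frob_q$ as in your first display.
\end{itemize}
So you genuinely need the untwisted identification of $\pi_1^{\textup{ab}}\otimes\Z_2$ of the geometric curve with $T_2J$, with arithmetic Frobenius acting as the Frobenius endomorphism. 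That is the correct identification, and it is what makes your final formula $L(s,\chi)=\det(1-\Frob_q q^{-s}\mid V_2J)$ valid.
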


\begin{proof}
We fix an embedding of the ring $\mathbb Z_2$ into $\mathbb C$ to be used (silently) throughout the proof.

The Artin $L$-function of $\chi$, viewed as a polynomial in the variable $u = q^{-s}$, is the numerator of the zeta function of the smooth projective geometrically connected (hyperelliptic) curve $C$ over $\mathbb F_q$ whose function field is $\mathbb F_q(T)(\chi)$. This means, according to the Grothendieck--Lefschetz trace formula for the prime $2 \nmid q$, that the $L$-function of $\chi$ equals the determinant of the action of $1 - u \operatorname{Frob}_q^{-1}$ on the free $\mathbb Z_2$-module $H^1_{\textup{\'et}}(C \times_{\mathbb F_q} \mathbb{F}_q^{\textup{sep}}, \mathbb Z_2)$, where $\operatorname{Frob}_q$ is the arithmetic Frobenius - the map induced on \'etale cohomology by the morphism of schemes 
\[
\mathrm{id}_C \times (x \mapsto x^q)\colon C \times_{\mathbb F_q} \mathbb{F}_q^{\textup{sep}} \to C \times_{\mathbb F_q} \mathbb{F}_q^{\textup{sep}}.
\]
Vanishing of the $L$-function of $\chi$ at $s = 1/2$ thus implies that (at least) one of $u = q_0^{-1}$ and $u= -q_0^{-1}$ is a zero of the aforementioned determinant of $1 - u \operatorname{Frob}_q^{-1}$ on $H^1_{\textup{\'et}}(C \times_{\mathbb F_q} \mathbb{F}_q^{\textup{sep}}, \mathbb Z_2)$.
We can therefore find $v \in H^1_{\textup{\'et}}(C \times_{\mathbb F_q} \mathbb{F}_q^{\textup{sep}}, \mathbb Z_2)$ with $\operatorname{Frob}_q v = \pm q_0^{-1} v$.

We have a natural $\operatorname{Frob_q}$-equivariant isomorphism of $\mathbb Z_2$-modules
\begin{equation} 
\label{FreeModIsomCanFrobZ2}
\begin{split}
H^1_{\textup{\'et}}(C \times_{\mathbb F_q} \mathbb{F}_q^{\textup{sep}}, \mathbb Z_2) &\cong \Hom_{\text{top.gr.}}(\pi_1^{\textup{\'et}}(C \times_{\mathbb F_q} \mathbb{F}_q^{\textup{sep}})^{\text{ab}}, \mathbb Z_2) \\
&= \Hom_{\text{top.gr.}}(\pi_1^{\textup{\'et}}(C \times_{\mathbb F_q} \mathbb{F}_q^{\textup{sep}})^{\text{ab}}(2), \mathbb Z_2).
\end{split}
\end{equation}
The short exact sequence of profinite groups
\[
1 \to \pi_1^{\textup{\'et}}(C \times_{\mathbb F_q} \mathbb{F}_q^{\textup{sep}}) \to \pi_1^{\textup{\'et}}(C) \to \pi_1^{\textup{\'et}}(\operatorname{Spec} \mathbb F_q) \to 1
\]
induces a $\mathbb Z_2$-linear action by conjugation of $\pi_1^{\textup{\'et}}(\operatorname{Spec} \mathbb F_q) = \operatorname{Gal}(\mathbb{F}_q^{\textup{sep}}/\mathbb F_q)$ on the (free) $\mathbb Z_2$-module 
$
\pi_1^{\textup{\'et}}(C \times_{\mathbb F_q} \mathbb{F}_q^{\textup{sep}})^{\textup{ab}}(2)
$
and thus also on its dual over $\mathbb Z_2$ appearing as the last term in equation \eqref{FreeModIsomCanFrobZ2}. In particular, the element $(x \mapsto x^q) \in \pi_1^{\textup{\'et}}(\operatorname{Spec} \mathbb F_q)$ acts $\mathbb Z_2$-linearly on the latter $\mathbb Z_2$-module, and its action on 
$
\pi_1^{\textup{\'et}}(C \times_{\mathbb F_q} \mathbb{F}_q^{\textup{sep}})^{\textup{ab}}(2)
$
is dual to the action of $\operatorname{Frob}_q$ on $H^1_{\textup{\'et}}(C \times_{\mathbb F_q} \mathbb{F}_q^{\textup{sep}}, \mathbb Z_2)$.

This duality tells us that the existence of $v \in H^1_{\textup{\'et}}(C \times_{\mathbb F_q} \mathbb{F}_q^{\textup{sep}}, \mathbb Z_2)$ with $\operatorname{Frob}_q v = \pm q_0^{-1} v$ as above is equivalent to the existence of a quotient of 
$
\pi_1^{\textup{\'et}}(C \times_{\mathbb F_q} \mathbb{F}_q^{\textup{sep}})^{\textup{ab}}(2)$
isomorphic to $\mathbb Z_2$ on which the element $(x \mapsto x^q) \in \pi_1^{\textup{\'et}}(\operatorname{Spec} \mathbb F_q)$ acts via multiplication by $\pm q_0$. Such a quotient corresponds to an everywhere unramified $\mathbb Z_2$-extension $L$ of $\mathbb{F}_q^{\textup{sep}}(T)(\chi)$ that is Galois over $\mathbb F_q(T)(\chi)$ with $\Frob_q$ acting on $\Gal(L/\mathbb{F}_q^{\textup{sep}}(T)(\chi))$ via multiplication by $\pm q_0$. The field $L$ is Galois even over $\mathbb F_q(T)$ because the Galois group of $\mathbb F_q(T)(\chi)$ over $\mathbb F_q(T)$ acts by negation on $\pi_1^{\textup{\'et}}(C \times_{\mathbb F_q} \mathbb{F}_q^{\textup{sep}})^{\textup{ab}}$. 
\end{proof}

\subsection{Artin pairings} 
\label{Section: Artin pairings}
We begin with the following fact, which will be crucial for the definition of Artin pairings. 

\begin{proposition} 
\label{Prop: Inertia and frob commute} 
Let $\chi\in\Gamma_{\mathbb F_2}(\mathbb F_q(T))$, let $v \in \textup{Ram}(\chi)$ and let $i \in \{1, -1\}$. Let $\iota: \mathbb{F}_q(T)^{\textup{sep}} \xhookrightarrow{} \mathbb{F}_q(T)_v^{\textup{sep}}$ be an embedding and let $\iota^\ast: G_{\mathbb{F}_q(T)_v} \rightarrow G_{\mathbb{F}_q(T)}$ be the resulting injection on Galois groups. Let $F$ be any lift of Frobenius to $G_{\mathbb{F}_q(T)_v}$ and let $\sigma$ be any lift of a topological generator of tame inertia to $G_{\mathbb{F}_q(T)_v}$. Then we have for each cocycle $\psi \in \mathrm{Cocy}(G_{\mathbb{F}_q(T)_v}, N_i(\chi))$
$$
\psi(F \sigma) = \psi(\sigma F).
$$
\end{proposition}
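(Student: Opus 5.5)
The plan is to compute both sides with the cocycle identity $\psi(gh) = \psi(g) + g \cdot \psi(h)$ and compare them using the tame relation $F\sigma F^{-1} = \sigma^{q_v}$ from Proposition \ref{Structure of tame inertia}.

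\textbf{Step 1: reduce to $G_{\mathbb{F}_q(T)_v}(2)$.} The relation $F\sigma F^{-1} = \sigma^{q_v}$ holds only in $G_{\mathbb{F}_q(T)_v}(2)$, not in $G_{\mathbb{F}_q(T)_v}$ itself. So first I show that $\psi$, and the action on $N_i(\chi)$, factor through this pro-$2$ quotient.

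\begin{itemize}
\item The action factors because it is inflated from $G_{\mathbb{F}_q(T)}(2)$.
\item Let $K$ be the kernel of $G_{\mathbb{F}_q(T)_v} \to G_{\mathbb{F}_q(T)_v}(2)$. Since $K$ acts trivially on $N_i(\chi)$, the restriction $\psi|_K$ is a continuous homomorphism into a $2$-primary discrete group.
\item The cocycle relation shows that $\ker(\psi|_K)$ is normal in $G_{\mathbb{F}_q(T)_v}$. The quotient $G_{\mathbb{F}_q(T)_v}/\ker(\psi|_K)$ is an extension of a pro-$2$ group by a $2$-group, hence pro-$2$. By maximality of the pro-$2$ quotient this forces $\psi|_K = 0$.
\item Then $\psi(kg) = \psi(k) + k\psi(g) = \psi(g)$ for $k \in K$. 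Hence $\psi$ is constant on cosets of $K$ and descends to $G_{\mathbb{F}_q(T)_v}(2)$.
\end{itemize}

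I expect this reduction to be the only step requiring care. The rest is a short computation.

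\textbf{Step 2: how $\sigma$ acts.} Since $\sigma$ lies in inertia, it acts trivially on $\mathbb{F}_q^{\mathrm{sep}}$, so it acts trivially on $N_i$. Since $v \in \mathrm{Ram}(\chi)$, we have $\chi(\sigma) = 1$. Hence $\sigma$ acts on $N_i(\chi)$ by $-1$.

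From this, $\psi(\sigma^2) = \psi(\sigma) + \sigma\psi(\sigma) = 0$. By induction on $n \geq 1$:
\begin{itemize}
\item $\psi(\sigma^n) = \psi(\sigma)$ for $n$ odd;
\item $\psi(\sigma^n) = 0$ for $n$ even.
\end{itemize}
Also $\sigma^n$ acts by $(-1)^n$.

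\textbf{Step 3: compare the two sides.} In $G_{\mathbb{F}_q(T)_v}(2)$ we have $F\sigma = \sigma^{q_v}F$, and $q_v$ is odd. Therefore
$$
\psi(F\sigma) = \psi(\sigma^{q_v}F) = \psi(\sigma^{q_v}) + \sigma^{q_v}\cdot\psi(F) = \psi(\sigma) - \psi(F).
$$
On the other side,
$$
\psi(\sigma F) = \psi(\sigma) + \sigma\cdot\psi(F) = \psi(\sigma) - \psi(F).
$$
The two expressions agree, which proves the claim.
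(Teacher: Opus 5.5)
Your proof is correct and takes essentially the paper's route. The paper packages $(\psi,\chi,\chi_{\Frob})$ into a homomorphism $f$ to $N_i(\chi)\rtimes(\mathbb{F}_2\times\mathbb{Z}_2)$ and uses that $f(\sigma)$ has order $2$ together with the relation $F\sigma F^{-1}=\sigma^{q_v}$ from Proposition~\ref{Structure of tame inertia}; your direct cocycle computation is that same argument unpacked. Your Step~1, showing that $\psi$ descends to $G_{\mathbb{F}_q(T)_v}(2)$, is a step the paper leaves implicit, so spelling it out is a genuine improvement.
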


\begin{proof}
The condition in the proposition is equivalent to the homomorphism
$$
f := (\psi, \chi, \chi_\Frob)\colon G_{\mathbb{F}_q(T)_v} \rightarrow N_i(\chi) \rtimes (\FF_2 \times \Z_2)
$$
satisfying the condition $f(F \sigma) = f(\sigma F)$. Since $v \in \textup{Ram}(\chi)$, a direct computation shows that $f(\sigma)$ is always an element of order $2$. By Proposition \ref{Structure of tame inertia}, we have 
$$
f(F \sigma F^{-1} \sigma^{-q_v}) = 0. 
$$
Upon combining this with $f(\sigma^2) = 0$ and using that $f$ is a homomorphism, the proposition follows.
\end{proof}

Expanding the identity of Proposition \ref{Prop: Inertia and frob commute} with the definition of $1$-cocycle, we obtain
$$
((i \cdot q_0)^{\deg(v)} - 1) \cdot \psi(\sigma_v) = -2 \cdot \psi(\Frob_v(\chi)).
$$
Hence we have
$$
\Pi_v(\psi) := \frac{(i \cdot q_0)^{\deg(v)} - 1}{2} \cdot \psi(\sigma_v) + \psi(\Frob_v(\chi)) \in N_i(\chi)[2] = \mathbb{F}_2.
$$
This defines a map
$$
\Pi := (\Pi_v)_{v \in \text{Ram}(\chi)}\colon B_i(\chi) \to \mathbb{F}_2^{\text{Ram}(\chi)}.
$$
To keep the notation light, we have omitted the dependence on $i \in \{1, -1\}$ and $\chi$ in the definitions of $\Pi$ and $\Pi_v$.

Observe that given a cocycle $\psi$ valued in $N_i(\chi)[2^n]$, the obstruction to the existence of a cocycle $\tilde{\psi}$ valued in $N_i(\chi)[2^{n + 1}]$ with $2 \cdot \tilde{\psi} = \psi$ is a class $\theta(\psi) \in H^2(G_{\mathbb{F}_q(T)}, \mathbb{F}_2)$, arising from the embedding problem given by the central extension below
$$
0 \to \mathbb{F}_2 \to N_i(\chi)[2^{n + 1}] \rtimes (\mathbb{F}_2 \times \mathbb{Z}_2) \to N_i(\chi)[2^n] \rtimes (\mathbb{F}_2 \times \mathbb{Z}_2) \to 0. 
$$

\begin{proposition} 
\label{prop: detecting locally}
Let $\chi$ be in $\Gamma_{\mathbb{F}_2}(\mathbb{F}_q(T))$. Let $n$ be a nonnegative integer, let $i \in \{1,-1\}$ and let $v \in \textup{Ram}(\chi)$. Let $\psi$ be a cocycle valued in $N_i(\chi)[2^n]$. Let $\iota: \mathbb{F}_q(T)^{\textup{sep}} \xhookrightarrow{} \mathbb{F}_q(T)_v^{\textup{sep}}$ be an embedding and let $\iota^\ast: G_{\mathbb{F}_q(T)_v} \rightarrow G_{\mathbb{F}_q(T)}$ be the resulting injection on Galois groups. Let $F$ be any lift of Frobenius to $G_{\mathbb{F}_q(T)_v}$ satisfying $\chi(F) = 0$ and let $\sigma$ be any lift of a topological generator of tame inertia to $G_{\mathbb{F}_q(T)_v}$. Then we have
$$
\frac{(i \cdot q_0)^{\deg(v)} - 1}{2} \cdot \psi(\iota^\ast(\sigma)) + \psi(\iota^\ast(F)) = \inv_v(\theta(\psi)).
$$
In particular, we have $\Pi_v(\psi) = \inv_v(\theta(\psi))$.
\end{proposition}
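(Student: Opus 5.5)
The plan is to reduce everything to an explicit computation in the local pro-$2$ group. By Proposition \ref{pGv} and Proposition \ref{Structure of tame inertia}, after restricting along $\iota^\ast$ the cocycle $\psi$, together with $\chi$ and $\chi_{\Frob}$, factors through $G_{\mathbb{F}_q(T)_v}(2)$. This group is the free pro-$2$ group on $\sigma, F$ modulo the single relation $F\sigma F^{-1}=\sigma^{q_v}$, so it has cohomological dimension $2$ with a one-relator presentation. The local invariant $\inv_v$ of a class in $H^2(G_{\mathbb{F}_q(T)_v},\mathbb{F}_2)\cong \mathbb{F}_2$ can be computed on this pro-$2$ quotient: inflation from $G_{\mathbb{F}_q(T)_v}(2)$ to $G_{\mathbb{F}_q(T)_v}$ is an isomorphism on $H^2(-,\mathbb{F}_2)$. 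I would take this as standard, since the $2$-part of the Brauer group is already seen by the maximal pro-$2$ extension.

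Next I would use the standard dictionary for one-relator pro-$p$ groups. For a central embedding problem $0\to\mathbb{F}_2\to E\to Q\to 0$ and a homomorphism $f\colon \langle\sigma,F\mid r\rangle\to Q$, the obstruction class evaluated on the relation is computed as follows. Choose arbitrary lifts $\tilde s,\tilde F\in E$ of $f(\sigma),f(F)$ and evaluate $r(\tilde s,\tilde F)\in\mathbb{F}_2$. Under the isomorphism $H^2\cong\mathbb{F}_2$ dual to the relation, with normalization matching $\inv_v$ (Koch's description of local Demushkin/tame groups), this element is exactly $\inv_v(\theta(\psi))$.

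The computation then runs in $E=N_i(\chi)[2^{n+1}]\rtimes(\mathbb{F}_2\times\mathbb{Z}_2)$. Let $x:=\psi(\sigma)$ and $y:=\psi(F)$, and lift them to $\tilde x,\tilde y\in N_i(\chi)[2^{n+1}]$. Because $\chi(\sigma)=1$ and $\chi(F)=0$, the element $\sigma$ acts by $-1$ and $F$ acts by $(iq_0)^{\deg(v)}$. Write $a:=(iq_0)^{\deg(v)}$, noting that $a\equiv 1\bmod 4$ up to sign considerations; since $q\equiv1\bmod8$, $q_0\equiv 1\bmod 4$. Computing $\tilde F\tilde s\tilde F^{-1}\tilde s^{-q_v}$ in the semidirect product, its $N$-component is
\[
\tilde y + a\tilde x - a\cdot(\text{action of }\tilde s)\tilde y-\big(\text{the }N\text{-part of }\tilde s^{q_v}\big).
\]
Since $\tilde s^2=(\tilde x-\tilde x,\ldots)=0$ in the $N$-component and $q_v$ is odd, we get $\tilde s^{q_v}=\tilde s$. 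The $N$-component therefore reduces to $(1+a)\tilde y+(a-1)\tilde x$, up to sign conventions. As in Proposition \ref{Prop: Inertia and frob commute}, this lies in $N[2]$, and equals $2\big(\tfrac{a+1}{2}\tilde y+\tfrac{a-1}{2}\tilde x\big)$. Multiplication by $2$ identifies this with $\tfrac{a-1}{2}x+y$ modulo the $2$-torsion bookkeeping, using $\tfrac{a+1}{2}\equiv 1\bmod 2$. This gives the left-hand side of the claimed formula.

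Finally, $\Pi_v(\psi)=\inv_v(\theta(\psi))$ follows by specializing to $\sigma=\sigma_v$ and $F=\Frob_v(\chi)$, taken through the fixed embedding $i_v$. Independence of the embedding and of the lifts is automatic, because the left-hand side was shown to equal an invariant that depends only on the class.

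The main obstacle is the precise identification of the relation evaluation with $\inv_v$, including the normalization sign: the isomorphism $H^2(G_v(2),\mathbb{F}_2)\to\mathbb{F}_2$ given by "evaluate on $r$" must agree with the Brauer invariant rather than differ by an automorphism. Over $\mathbb{F}_2$ any nonzero isomorphism is the identity, so it suffices to check that $H^2$ is one-dimensional and that both maps are nonzero. I would verify this by exhibiting a single unsolvable embedding problem, for example a quaternion-type lift, whose obstruction is nonzero under both descriptions.
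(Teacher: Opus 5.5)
Your route is the paper's. You pass to the tame pro-$2$ quotient and lift $x=\psi(\sigma)$, $y=\psi(F)$ to $E=N_i(\chi)[2^{n+1}]\rtimes(\mathbb{F}_2\times\mathbb{Z}_2)$. You note that $\tilde s$ is an involution, so the relator is just the commutator defect of $\tilde s,\tilde F$. That defect is central, independent of the lifts (changing them by a central $z$ multiplies the relator by $z^{1-q_v}=1$), and vanishes if and only if the local embedding problem is solvable.

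The normalization issue you single out as the main obstacle is moot. Both sides lie in $\mathbb{F}_2$, and $\theta(\psi)|_v=0$ if and only if $\inv_v(\theta(\psi))=0$, so ``zero iff zero'' suffices. This is exactly how the paper concludes, and it needs no quaternion test case.

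The genuine problem is in the one computation that carries the content of the proposition. You took the $N$-component of $\tilde F^{-1}$ to be $-\tilde y$; in the semidirect product it is $-a^{-1}\tilde y$, where $a=(iq_0)^{\deg(v)}$. With the correct inverse, $\tilde F\tilde s\tilde F^{-1}=(2\tilde y+a\tilde x,1,0)$, and the relator is $(2\tilde y+(a-1)\tilde x,0,0)$. Using $2\tilde x=x$ and $2\tilde y=y$, this is exactly $y+\tfrac{a-1}{2}x$, and it lies in $N_i(\chi)[2]$ by Proposition~\ref{Prop: Inertia and frob commute}.

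Your expression $(1+a)\tilde y+(a-1)\tilde x$ is wrong. It doubles to $(a-1)y$, which need not vanish. For a concrete failure, take $q=25$, $q_0=5$, $i=1$, $\deg(v)=1$ (so $a=5$) and $\psi=\rho_3$, where $\theta(\psi)=0$ because $\rho_3=2\rho_4$. With the lifts $\tilde x=\rho_4(\sigma)=-\tfrac1{16}$ and $\tilde y=\rho_4(F)=\tfrac18$, your expression equals $\tfrac12\neq 0$, while the correct relator gives $0$.

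The ``$2$-torsion bookkeeping'' cannot repair this, for two reasons. First, $\tfrac{a+1}{2}y\neq y$ in general. Second, the parity fact $\tfrac{a+1}{2}\equiv 1\bmod 2$ you invoke is false when $i=-1$ and $\deg(v)$ is odd, since then $a\equiv -1\bmod 4$. The paper genuinely uses that case, e.g.\ $v=\infty$ with $i=-1$ in Proposition~\ref{pRedei}.

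Redo the semidirect-product computation with the correct inverse. The formula then falls out directly, with no parity input at all.
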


\begin{proof}
We fix lifts $g_1$ and $g_2$ of respectively $\psi(\iota^\ast(\sigma))$ and $\psi(\iota^\ast(F))$ to $N_i(\chi)[2^{n + 1}] \rtimes (\mathbb{F}_2 \times \Z_2/2^N)$, where $N$ is a sufficiently large integer such that the action of $G_{\mathbb{F}_q(T)_v}$ factors through $\mathbb{F}_2 \times \Z_2/2^N$. In view of Proposition \ref{Prop: Inertia and frob commute}, the images of $g_1$ and $g_2$ in $N_i(\chi)[2^n] \rtimes (\mathbb{F}_2 \times \Z_2/2^N)$ must commute. Thus $g_1g_2$ and $g_2g_1$ differ by an element of the central subgroup $2^n \cdot N_i(\chi)[2^{n + 1}] \rtimes \{0\}$ of order $2$. Let $z$ be the unique non-trivial element of this group.

\begin{claim}
We have that $g_1g_2 = g_2g_1 \cdot z^{\frac{(i \cdot q_0)^{\deg(v)} - 1}{2} \cdot \psi(\iota^\ast(\sigma)) + \psi(\iota^\ast(F))}$.    
\end{claim}

\begin{proof}[Proof of Claim] 
Write $g_1 := (\gamma_1, 1, 0), g_2 := (\gamma_2, 0, \deg(v)) \in N_i(\chi)[2^{n + 1}] \rtimes (\mathbb{F}_2 \times \Z_2/2^N)$ in the semi-direct product coordinates. We have that
$$
g_1g_2 = (\gamma_1 - \gamma_2, 1, \deg(v)), \quad g_2g_1 = ((i \cdot q_0)^{\deg(v)} \cdot \gamma_1 + \gamma_2, 1, \deg(v)).
$$
Therefore $z$ appears with trivial power if and only if
$$
((i \cdot q_0)^{\deg(v)} - 1) \cdot \gamma_1 = -2 \cdot \gamma_2.
$$
Note that $2 \cdot \gamma_1$ viewed as element of $N_i(\chi)[2^n]$, through the doubling map $N_i(\chi)[2^{n + 1}] \to N_i(\chi)[2^n]$, equals precisely $\psi(\iota^\ast(\sigma))$, while $2 \cdot \gamma_2 = \psi(\iota^\ast(F))$. Hence the above equality holds if and only if
$$
0 = ((i \cdot q_0)^{\deg(v)} - 1) \cdot \gamma_1 + \psi(\iota^\ast(F)).
$$
All in all, we deduce that $\frac{(i \cdot q_0)^{\deg(v)} - 1}{2} \cdot \psi(\iota^\ast(\sigma)) + \psi(\iota^\ast(F)) = 0$ if and only if $g_1g_2 = g_2g_1$ and $\frac{(i \cdot q_0)^{\deg(v)} - 1}{2} \cdot \psi(\iota^\ast(\sigma)) + \psi(\iota^\ast(F)) = 1$ if and only if $g_1g_2 = g_2g_1z$, which is the desired conclusion. 
\end{proof}

We now complete our argument. If $\frac{(i \cdot q_0)^{\deg(v)} - 1}{2} \cdot \psi(\iota^\ast(\sigma)) + \psi(\iota^\ast(F)) = 0$, then $\sigma \mapsto g_1, F \mapsto g_2$ yields the desired group-theoretic local lift. Conversely, suppose that a group-theoretic lift $\sigma \mapsto \tilde{g}_1, F \mapsto \tilde{g}_2$ exists. Then we can write $\tilde{g}_1 = g_1z_1, \tilde{g}_2 = g_2z_2$, for elements $z_1$ and $z_2$ in the center. It follows that 
$$
\tilde{g}_1\tilde{g}_2 = \tilde{g}_2\tilde{g}_1 \cdot z^{\frac{(i \cdot q_0)^{\deg(v)} - 1}{2} \cdot \psi(\iota^\ast(\sigma)) + \psi(\iota^\ast(F))}. 
$$
This shows that $\frac{(i \cdot q_0)^{\deg(v)} - 1}{2} \cdot \psi(\iota^\ast(\sigma)) + \psi(\iota^\ast(F)) = 0$. This proves the first part of the proposition, while the second follows trivially from the first. 
\end{proof}

\begin{proposition} 
\label{prop: Pi detects 2}
For each $\chi \in \Gamma_{\mathbb{F}_2}^{\textup{im}}(\mathbb{F}_q(T))$ and each $i \in \{1, -1\}$, we have that
$$
\ker(\Pi) = 2 \cdot B_i(\chi). 
$$
\end{proposition}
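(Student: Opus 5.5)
The proof proves the two inclusions separately. Both directions use Proposition \ref{prop: detecting locally} to translate $\Pi_v$ into the local invariant of the lifting obstruction $\theta(\psi)$. Throughout, every $\psi \in B_i(\chi)$ is continuous into the discrete module $N_i(\chi)$, so it has finite image and takes values in $N_i(\chi)[2^n]$ for some $n$. Hence $\theta(\psi)$ is defined.

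\emph{Inclusion $2 \cdot B_i(\chi) \subseteq \ker(\Pi)$.} Let $\psi = 2\tilde\psi$ with $\tilde\psi \in B_i(\chi)$ valued in $N_i(\chi)[2^{n+1}]$. Then $\psi$ is valued in $N_i(\chi)[2^n]$ and $\tilde\psi$ is a global solution of the embedding problem, so $\theta(\psi) = 0$. In particular $\inv_v(\theta(\psi)) = 0$ for every $v \in \textup{Ram}(\chi)$. By the last assertion of Proposition \ref{prop: detecting locally}, $\Pi_v(\psi) = 0$ for all $v \in \textup{Ram}(\chi)$.

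\emph{Inclusion $\ker(\Pi) \subseteq 2 \cdot B_i(\chi)$.} Let $\psi \in \ker(\Pi)$ be valued in $N_i(\chi)[2^n]$.
\begin{enumerate}
\item By Proposition \ref{prop: detecting locally}, $\inv_v(\theta(\psi)) = \Pi_v(\psi) = 0$ for $v \in \textup{Ram}(\chi)$.
\item Let $v \notin \textup{Ram}(\chi)$. Then $\chi$ is unramified at $v$, so $N_i(\chi)$ is an unramified $G_{\mathbb{F}_q(T)_v}$-module, and $\psi(\sigma_v) = 0$ by \eqref{eBidef}. Hence the local map $(\psi, \chi, \chi_{\Frob})$ factors through the unramified quotient. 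By Propositions \ref{Structure of tame inertia} and \ref{pGv}, the pro-$2$ unramified quotient is free pro-$2$ on the image of $\Frob_v$. Lifting the image of $\Frob_v$ arbitrarily therefore solves the local embedding problem, so $\theta(\psi)$ is locally trivial at $v$.
\item Now $\theta(\psi) \in H^2(G_{\mathbb{F}_q(T)}, \mathbb{F}_2) = \mathrm{Br}(\mathbb{F}_q(T))[2]$, using $\mu_2 \cong \mathbb{F}_2$ since $q$ is odd. This class is locally trivial everywhere, so the Hasse principle for the Brauer group gives $\theta(\psi) = 0$. Hence there is a cocycle $\tilde\psi$ valued in $N_i(\chi)[2^{n+1}]$ with $2\tilde\psi = \psi$.
\end{enumerate}

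\emph{Correcting the lift.} The lift $\tilde\psi$ need not lie in $B_i(\chi)$, since it may ramify outside $\textup{Ram}(\chi)$. For $v \notin \textup{Ram}(\chi)$ we have $2\tilde\psi(\sigma_v) = \psi(\sigma_v) = 0$, so $\tilde\psi(\sigma_v) \in N_i(\chi)[2] = \mathbb{F}_2$.
\begin{itemize}
\item By continuity, $\tilde\psi(\sigma_v) \neq 0$ for only finitely many $v$.
\item Since $\chi$ is imaginary, $\infty \in \textup{Ram}(\chi)$, so only finite places impose conditions. This is where the imaginary hypothesis enters.
\item Set $\phi := \sum_{v} \tilde\psi(\sigma_v)\chi_v$, the sum over finite $v \notin \textup{Ram}(\chi)$. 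The twisting action on $N_i(\chi)[2]$ is trivial, so $\phi$ is a cocycle valued in $N_i(\chi)[2]$.
\item By the duality in Proposition \ref{Topological generators}, $\phi(\sigma_v) = \tilde\psi(\sigma_v)$ for every such $v$.
\end{itemize}
Therefore $\tilde\psi - \phi \in B_i(\chi)$, and $2(\tilde\psi - \phi) = \psi$.

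The step needing the most care is the vanishing of $\theta(\psi)$: identifying the obstruction class with a Brauer class so that local-global injectivity applies, and verifying the local solvability at unramified places with the right normalizations. The final correction step is routine once the dual-basis proposition is available.
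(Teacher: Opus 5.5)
Your proposal is correct and follows the paper's proof essentially step for step. The one cosmetic difference is that you get local triviality of $\theta(\psi)$ at $v \notin \mathrm{Ram}(\chi)$ by solving the local embedding problem through the free unramified pro-$2$ quotient, where the paper notes that the class is inflated from $H^2(G_{\mathbb{F}_{q_v}}, \mathbb{F}_2) = 0$; these are the same fact. The rest matches: Proposition \ref{prop: detecting locally} at the ramified places, the Hasse principle for $H^2$, and correcting the lift by $\sum_v \tilde\psi(\sigma_v)\chi_v$ over finitely many finite places, using $\infty \in \mathrm{Ram}(\chi)$.
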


\begin{proof}
If $\psi\in2\cdot B_i(\chi)$, then the obstruction class $\theta(\psi)$ vanishes. Hence Proposition \ref{prop: detecting locally} gives $\Pi_v(\psi) = 0$ for every $v \in \operatorname{Ram}(\chi)$, and therefore $\Pi(\psi) = 0$.

Conversely, let $\psi$ be an element of $B_i(\chi)$ with $\Pi(\psi) = 0$. By Proposition \ref{prop: detecting locally}, this means that $\text{inv}_v(\theta(\psi)) = 0$ for each $v \in \text{Ram}(\chi)$. Furthermore, observe that, in virtue of the fact that $\psi(\sigma_v) = 0$ for all $v \not \in \text{Ram}(\chi)$, the class $\theta(\psi)$ comes by inflation from $H^2(G_{\mathbb{F}_{q_v}}, \mathbb{F}_2) = 0$ for each $v \not \in \text{Ram}(\chi)$. Therefore we deduce that $\inv_v(\theta(\psi)) = 0$ for all $v \in \Omega_{\mathbb{F}_q(T)}$ and thus, by local-to-global for $H^2(G_{\mathbb{F}_q(T)},\mathbb{F}_2)$, we deduce that $\theta(\psi) = 0$ in $H^2(G_{\mathbb{F}_q(T)}, \mathbb{F}_2)$. Tracing back to the embedding problem, this means that we can find a $1$-cocycle $\tilde{\psi}\colon G_{\mathbb{F}_q(T)} \to N_i(\chi)$ such that $2 \cdot \tilde{\psi} = \psi$. The field of definition $L(\tilde{\psi})/\mathbb{F}_q(T)$ ramifies at only finitely many primes. Hence the cocycle
$$
\tilde{\psi} + \sum_{v \in \Omega_{\mathbb{F}_q(T)}^{\text{fin}}} \mathbf{1}_{\tilde{\psi}(\sigma_v) \neq 0} \cdot \chi_v
$$
still doubles to $\psi$ and vanishes at all $\sigma_v$ with $v \in \Omega_{\mathbb{F}_q(T)} - \text{Ram}(\chi)$, thus lives inside $B_i(\chi)$.
\end{proof}

We define for each $i \in \{1, -1\}$, each nonnegative integer $n$ and each $\chi$ in $\Gamma_{\mathbb{F}_2}(\mathbb{F}_q(T))$, the $\mathbb{F}_2$-vector space 
$$
C_n^{(i)}(\chi) : =2^n \cdot B_i(\chi)[2^{n + 1}].
$$
We equip $\mathbb{F}_2^{\text{Ram}(\chi)}$ with the standard bilinear form given by the dot product, which we denote by $\langle v_1, v_2 \rangle$. 

We then define for each $i \in \{1, -1\}$ and for each nonnegative integer $n$
$$
D_n^{(i)}(\chi) := (\Pi(B_i(\chi)[2^n]))^{\perp}.
$$
Observe that, by construction, both $(C_n^{(i)}(\chi))_{n \geq 0}$ and $(D_n^{(i)}(\chi))_{n \geq 0}$ are non-ascending sequences of $\mathbb{F}_2$-vector spaces. We now define a fundamental piece of structure, namely a pairing that is able to detect the passage from each of these spaces to the subsequent one. 

\begin{proposition} 
\label{prop: elementary def of pairing}
Let $\chi$ be in $\Gamma_{\mathbb{F}_2}^{\textup{im}}(\mathbb{F}_q(T))$ and let $n$ be a nonnegative integer. Let $\chi_1 \in C_n^{(i)}(\chi)$ and $b \in D_n^{(i)}(\chi)$. The value of 
$
\langle \Pi(\psi), b \rangle
$
remains constant as $\psi$ varies among the elements of $B_i(\chi)$ such that $2^n \cdot \psi = \chi_1$. This defines a bilinear pairing
$$
\textup{Art}_{\chi, n + 1}^{(i)}\colon C_n^{(i)}(\chi) \times D_n^{(i)}(\chi) \to \mathbb{F}_2
$$
such that 
$$
\textup{Left-Ker}(\textup{Art}_{\chi, n + 1}^{(i)}) = C_{n + 1}^{(i)}(\chi)
$$
and
$$
\textup{Right-Ker}(\textup{Art}_{\chi, n + 1}^{(i)}) = D_{n + 1}^{(i)}(\chi).
$$
\end{proposition}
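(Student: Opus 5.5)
The plan is to deduce everything from three inputs: $\Pi$ is additive, $\ker(\Pi) = 2\cdot B_i(\chi)$ (Proposition \ref{prop: Pi detects 2}), and the definition of $D_n^{(i)}(\chi)$ as the orthogonal complement of $\Pi(B_i(\chi)[2^n])$.

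First I check that $\Pi$ is a group homomorphism $B_i(\chi)\to\mathbb{F}_2^{\mathrm{Ram}(\chi)}$. Each $\Pi_v$ is a combination of evaluations of $\psi$ at fixed group elements, so it is additive. Next comes well-definedness of the pairing. Suppose $\psi,\psi'\in B_i(\chi)$ both satisfy $2^n\psi = 2^n\psi' = \chi_1$. Then $\psi-\psi'\in B_i(\chi)[2^n]$, so $\Pi(\psi)-\Pi(\psi')\in \Pi(B_i(\chi)[2^n])$. This is orthogonal to every $b\in D_n^{(i)}(\chi)$, so $\langle\Pi(\psi),b\rangle$ does not depend on the choice of $\psi$.

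Such a $\psi$ exists because $\chi_1\in C_n^{(i)}(\chi)=2^n\cdot B_i(\chi)[2^{n+1}]$. Bilinearity follows by adding lifts: if $\psi_1,\psi_2$ lift $\chi_1,\chi_1'$, then $\psi_1+\psi_2$ lifts $\chi_1+\chi_1'$.

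For the left kernel, I first show $C_{n+1}^{(i)}(\chi)$ lies in it. Take $\chi_1 = 2^{n+1}\phi$ with $\phi\in B_i(\chi)[2^{n+2}]$. Then $\psi := 2\phi$ is a valid lift, and $\Pi(2\phi)=0$ by Proposition \ref{prop: Pi detects 2}.

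Conversely, suppose $\langle \Pi(\psi),b\rangle = 0$ for all $b\in D_n^{(i)}(\chi)$. Then $\Pi(\psi)\in (\Pi(B_i(\chi)[2^n]))^{\perp\perp}=\Pi(B_i(\chi)[2^n])$, since the dot product on the finite-dimensional space $\mathbb{F}_2^{\mathrm{Ram}(\chi)}$ is nondegenerate. Here $\mathrm{Ram}(\chi)$ is finite, as $\chi$ is a quadratic character. So $\Pi(\psi)=\Pi(\psi_0)$ for some $\psi_0\in B_i(\chi)[2^n]$. Proposition \ref{prop: Pi detects 2} then gives $\psi-\psi_0 = 2\phi$ with $\phi\in B_i(\chi)$. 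Now $2^{n+1}\phi = 2^n\psi - 2^n\psi_0 = \chi_1$. Since $\chi_1$ is $2$-torsion, $\phi\in B_i(\chi)[2^{n+2}]$, and therefore $\chi_1\in C_{n+1}^{(i)}(\chi)$.

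For the right kernel, note that the set of $\Pi(\psi)$, as $\psi$ ranges over lifts of elements of $C_n^{(i)}(\chi)$, is exactly $\Pi(B_i(\chi)[2^{n+1}])$. Indeed, $\psi\in B_i(\chi)[2^{n+1}]$ precisely when $2^n\psi\in C_n^{(i)}(\chi)$. Hence the right kernel is
$$D_n^{(i)}(\chi)\cap \Pi(B_i(\chi)[2^{n+1}])^{\perp} = \Pi(B_i(\chi)[2^{n+1}])^{\perp} = D_{n+1}^{(i)}(\chi),$$
where the first equality uses $B_i(\chi)[2^n]\subseteq B_i(\chi)[2^{n+1}]$.

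I expect the main obstacle to be the double-perp step. It requires the relevant spaces to be finite-dimensional, which holds because $\mathrm{Ram}(\chi)$ is finite. It also requires the hypothesis that $\chi$ is imaginary, which is needed to invoke Proposition \ref{prop: Pi detects 2}.
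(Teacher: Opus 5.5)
Your proof is correct and follows essentially the same route as the paper. Well-definedness comes from two lifts differing by an element of $B_i(\chi)[2^n]$, the right kernel is computed as $\Pi(B_i(\chi)[2^{n+1}])^{\perp}$, and the left kernel follows from $\ker(\Pi)=2\cdot B_i(\chi)$ together with nondegeneracy of the dot product on $\mathbb{F}_2^{\mathrm{Ram}(\chi)}$; the only cosmetic difference is that you split the left-kernel statement into two inclusions, where the paper runs a single chain of equivalences.
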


\begin{proof}
Observe that by definition $b$ is orthogonal to the image of $B_i(\chi)[2^n]$ under $\Pi$. Moreover, two lifts of $\chi_1$ must necessarily differ by an element of $B_i(\chi)[2^n]$. Therefore, invoking the bilinearity of $\langle -, - \rangle$, we deduce that $\langle \Pi(\psi), b \rangle$ is independent of the choice of $\psi \in B_i(\chi)[2^{n + 1}]$ satisfying $2^n \cdot \psi = \chi_1$. If we fix $\chi_1$ and an element $\psi \in B_i(\chi)[2^{n + 1}]$ satisfying $2^n \cdot \psi =\chi_1$, linearity in the second entry follows from bilinearity of the pairing $\langle -, - \rangle$, i.e.
$$
\langle \Pi(\psi), b_1 + b_2 \rangle = \langle \Pi(\psi), b_1 \rangle + \langle \Pi(\psi), b_2 \rangle.
$$
To prove that $\text{Art}_{\chi, n + 1}^{(i)}$ is also linear in the first entry, let us fix $\chi_1, \chi_2 \in C_n^{(i)}(\chi)$, $b \in D_n^{(i)}(\chi)$. We also fix lifts $\psi_1, \psi_2 \in B_i(\chi)[2^{n + 1}]$ satisfying $2^n \cdot \psi_1 = \chi_1$ and $2^n \cdot \psi_2 = \chi_2$. Observe that this implies that $2^n \cdot (\psi_1 + \psi_2) = \chi_1 + \chi_2$. Hence, invoking linearity of $\Pi$ and bilinearity of $\langle -, - \rangle$, we can compute 
\begin{align*}
\text{Art}_{\chi, n + 1}^{(i)}(\chi_1 + \chi_2,b) &= \langle \Pi(\psi_1 + \psi_2), b \rangle = \langle \Pi(\psi_1), b \rangle + \langle \Pi(\psi_2), b \rangle \\
&= \text{Art}_{\chi, n + 1}^{(i)}(\chi_1, b) + \text{Art}_{\chi, n + 1}^{(i)}(\chi_2, b).
\end{align*}
So far we have established that $\text{Art}_{\chi, n + 1}^{(i)}$ is independent of the choice of $\psi$ and that it defines a bilinear pairing
$$
\text{Art}_{\chi, n + 1}^{(i)}\colon C_n^{(i)}(\chi) \times D_n^{(i)}(\chi) \to \mathbb{F}_2,
$$
as claimed. We now want to show that the left and the right kernels are as claimed in the proposition. 

Let us begin with the right kernel. Observe that $b$ is in the right kernel if and only if $\langle \Pi(\psi), b \rangle = 0$ for each element $\psi \in B_i(\chi)[2^{n + 1}]$. This means that $b$ is in the right kernel if and only if it is in $(\Pi(B_i(\chi)[2^{n + 1}]))^{\perp} = D_{n + 1}^{(i)}(\chi)$, i.e.
$$
\text{Right-Ker}(\textup{Art}_{\chi, n + 1}^{(i)}) = D_{n + 1}^{(i)}(\chi)
$$
as claimed. 

Let us now establish the claim on the left kernel. Let $\chi_1$ be an element of $C_n^{(i)}(\chi)$. Fix $\psi$ in $B_i(\chi)[2^{n + 1}]$ such that $2^n \cdot \psi = \chi_1$. We have that $\chi_1$ is in $C_{n + 1}^{(i)}(\chi)$ if and only if there exists $\psi_0$ in $B_i(\chi)[2^{n}]$ such that
$$
\psi + \psi_0 \in 2 \cdot B_i(\chi) = \ker(\Pi),
$$
where the last equality is justified by Proposition \ref{prop: Pi detects 2}. So $\chi_1 \in C_{n + 1}^{(i)}(\chi)$ if and only if $\Pi(\psi) \in \Pi(B_i(\chi)[2^n])$. However, since $\langle -, - \rangle$ is a perfect pairing $\mathbb{F}_2^{\text{Ram}(\chi)} \times \mathbb{F}_2^{\text{Ram}(\chi)} \to \mathbb{F}_2$, $\Pi(\psi)$ is in the vector space $\Pi(B_i(\chi)[2^{n}])$ if and only if $\langle \Pi(\psi), - \rangle$ annihilates $\Pi(B_i(\chi)[2^{n}])^{\perp}=D_n^{(i)}(\chi)$. This, by definition, means precisely that $\chi_1$ is orthogonal to $D_n^{(i)}(\chi)$ under $\text{Art}_{\chi, n + 1}^{(i)}$. In other words, we have shown that $\chi_1$ is in $C_{n + 1}^{(i)}(\chi)$ if and only if it is in the left kernel of $\text{Art}_{\chi, n + 1}^{(i)}$.
\end{proof}

We have the following relation between the spaces as $i$ switches between $1$ and $-1$. 

\begin{remark}
\label{rSwitching}
Using Remark \ref{remark: -1 module is twisting by epsilon}, one sees that 
$$
B_{-1}(\chi) = B_1(\chi + \chi_{\epsilon_{\mathbb{F}_q}})
$$
and that for each nonnegative integer $n$ we have
$$
C_n^{(-1)}(\chi) = C_n^{(1)}(\chi + \chi_{\epsilon_{\mathbb{F}_q}}), \ \ D_n^{(-1)}(\chi) = D_n^{(1)}(\chi + \chi_{\epsilon_{\mathbb{F}_q}}), \ \ \textup{Art}_{\chi, n + 1}^{(-1)} = \textup{Art}_{\chi + \chi_{\epsilon_{\mathbb{F}_q}}, n + 1}^{(1)}.
$$
\end{remark}

For a character $\chi$ in $\Gamma_{\mathbb{F}_2}^{\text{im}}(\mathbb{F}_q(T))$ and for a place $v$ in $\text{Ram}(\chi)$, we denote by $e_v$ the standard basis vector supported only at $v$ in $\mathbb{F}_2^{\text{Ram}(\chi)}$. We put
$$
r_{\infty}(\chi) := \sum_{v \in \text{Ram}(\chi)} e_v.
$$
We have systematic elements in, respectively, all of the $C_n,D_n$, as the following fact illustrates. 

Let us define for each nonnegative integer $n$ the element of $B_i(\chi)$ given by
$$
\rho_n(g) := g \cdot \frac{1}{2^{n + 1}} - \frac{1}{2^{n + 1}},
$$
for each $g$ in $G_{\mathbb{F}_q(T)}$. A small computation shows that 
\begin{equation}
\label{eRho1}
\rho_1 = \chi + \frac{1-i}{2} \cdot \chi_{\epsilon_{\mathbb{F}_q}}.
\end{equation}

\begin{proposition} 
\label{prop: trivial elemts}
$(a)$ Let $\chi$ be an element of $\Gamma_{\mathbb{F}_2}^{\textup{im}}(\mathbb{F}_q(T))$. Then we have that
$$
r_{\infty}(\chi) \in D_n^{(i)}(\chi),
$$
for each $i \in \{1, -1\}$ and all nonnegative integers $n$. \\
$(b)$ Let $\chi$ be an element of $\Gamma_{\mathbb{F}_2}^{\textup{im}}(\mathbb{F}_q(T))$. Then we have that
$$
\chi \in C_n^{(1)}(\chi), \quad \chi + \chi_{\epsilon_{\mathbb{F}_q}} \in C_n^{(-1)}(\chi),
$$
for all nonnegative integers $n$.
\end{proposition}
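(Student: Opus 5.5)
For part $(b)$ I will use the systematic cocycles $\rho_n$ directly, and for part $(a)$ the global reciprocity law for $H^2(G_{\mathbb{F}_q(T)},\mathbb{F}_2)$, read through Proposition \ref{prop: detecting locally}.

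\emph{Part $(b)$.}
\begin{enumerate}
\item Each $\rho_m$ is a coboundary, $g\mapsto g\cdot y-y$ with $y=2^{-(m+1)}\in N_i(\chi)$, hence a $1$-cocycle.
\item It lies in $B_i(\chi)$. For $v\notin\mathrm{Ram}(\chi)$, $\sigma_v$ lies in inertia, so it acts trivially on $\mathbb{F}_q^{\mathrm{sep}}$ and $\chi_{\Frob}(\sigma_v)=0$. Also $\chi(\sigma_v)=0$ because $v$ is unramified in $\chi$. So $\sigma_v$ acts trivially on $N_i(\chi)$, and $\rho_m(\sigma_v)=0$, which is condition \eqref{eBidef}.
\item Torsion. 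Every $g$ acts on $N_i(\chi)$ through the unit $u_g=(-1)^{\chi(g)}(i q_0)^{\chi_{\Frob}(g)}\in\Z_2^\ast$, which is odd. Hence $u_g-1\in 2\Z_2$, so $\rho_m(g)=(u_g-1)y$ is killed by $2^{m}$. In particular $\rho_{n+1}\in B_i(\chi)[2^{n+1}]$.
\item Descent. Since $y\mapsto 2y$ is $G$-equivariant, $2^{n}\rho_{n+1}=\rho_1$.
\item Evaluate $\rho_1$ by \eqref{eRho1}, using $q_0\equiv 1\bmod 4$: $\rho_1(g)=(u_g-1)/4$ equals $\chi(g)+\frac{1-i}{2}\chi_{\epsilon_{\mathbb{F}_q}}(g)$ in $N_i(\chi)[2]=\mathbb{F}_2$. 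Note that $\chi_{\Frob}$ mod $2$ equals $\chi_{\epsilon_{\mathbb{F}_q}}$.
\end{enumerate}
Taking $i=1$ this gives $\chi\in C_n^{(1)}(\chi)$, and taking $i=-1$ it gives $\chi+\chi_{\epsilon_{\mathbb{F}_q}}\in C_n^{(-1)}(\chi)$, for every $n$. Care is needed with the indexing: $\rho_0$ is identically zero because $-1/2=1/2$, which is why one must start from $\rho_{n+1}$ rather than $\rho_n$.

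\emph{Part $(a)$.}
\begin{enumerate}
\item Fix $\psi\in B_i(\chi)[2^n]$. Then
\[
\langle\Pi(\psi),r_\infty(\chi)\rangle=\sum_{v\in\mathrm{Ram}(\chi)}\Pi_v(\psi),
\]
and by Proposition \ref{prop: detecting locally} this equals $\sum_{v\in\mathrm{Ram}(\chi)}\inv_v(\theta(\psi))$.
\item Exactly as in the proof of Proposition \ref{prop: Pi detects 2}: for $v\notin\mathrm{Ram}(\chi)$ we have $\psi(\sigma_v)=0$. So the local restriction of $\theta(\psi)$ is inflated from $H^2(G_{\mathbb{F}_{q_v}},\mathbb{F}_2)=0$, and $\inv_v(\theta(\psi))=0$.
\item Hence the sum in step 1 equals $\sum_{v\in\Omega_{\mathbb{F}_q(T)}}\inv_v(\theta(\psi))$. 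This vanishes by the reciprocity law, i.e.\ the exact sequence $0\to \mathrm{Br}(\mathbb{F}_q(T))\to\bigoplus_v \mathrm{Br}(\mathbb{F}_q(T)_v)\xrightarrow{\sum\inv_v}\Q/\Z$ restricted to $2$-torsion.
\item Therefore $r_\infty(\chi)\perp\Pi(B_i(\chi)[2^n])$, i.e.\ $r_\infty(\chi)\in D_n^{(i)}(\chi)$.
\end{enumerate}
The imaginarity hypothesis ensures $\infty\in\mathrm{Ram}(\chi)$, so the place at infinity is part of $r_\infty(\chi)$ and the decomposition of invariants is consistent. Proposition \ref{prop: detecting locally} requires its cocycle to be valued in $N_i(\chi)[2^n]$ with the level-$(n+1)$ lifting problem; $\psi$ satisfies this.

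The main obstacle is bookkeeping rather than anything deep. In $(b)$ it is the indexing of $\rho$ and the mod-$4$ identification of $u_g$. In $(a)$ it is checking that the invariant computed in Proposition \ref{prop: detecting locally} with the normalized $\Frob_v(\chi)$ is the same global class $\theta(\psi)$ at every place, so that reciprocity applies.
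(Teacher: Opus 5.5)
Your proposal is correct and essentially matches the paper. Part (b) uses $2^n\rho_{n+1}=\rho_1$ together with \eqref{eRho1}, and part (a) uses Proposition \ref{prop: detecting locally}, local vanishing of $\theta(\psi)$ outside $\mathrm{Ram}(\chi)$, and Hilbert reciprocity. The only difference is that the paper proves (a) by induction on $n$ through the pairing $\mathrm{Art}^{(i)}_{\chi,n+1}$ and its right kernel, whereas you check $r_\infty(\chi)\perp\Pi(B_i(\chi)[2^n])$ directly, which shows the induction is not needed.
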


\begin{proof}
Part $(b)$ follows immediately from the construction of $\rho_n$ combined with the fact that
$$
2^n \cdot \rho_{n + 1} = \rho_1 = \chi + \frac{(1 - i)}{2} \cdot \chi_{\epsilon_{\mathbb{F}_q}}
$$
for all nonnegative integers $n$, which also follows by definition of $\rho_n$ and equation \eqref{eRho1}. So we now focus on part $(a)$. 

We prove this by induction on $n$, where the base case $n = 0$ follows by definition. Assume the conclusion is true for $n$, and we now prove it for $n + 1$. Let $\psi$ be any element of $B_i(\chi)[2^{n + 1}]$ and $v$ be an element of $\text{Ram}(\chi)$. By Proposition \ref{prop: detecting locally}, we have that $\inv_v(\theta(\psi)) = 0$ if and only if $\Pi_v(\psi) = 0$. Observe that at all other places $w$ outside of $\text{Ram}(\chi)$, we have that $\inv_w(\theta(\psi)) = 0$. Applying Hilbert reciprocity to $\theta(\psi)$ yields 
$$
\text{Art}_{\chi, n + 1}^{(i)}(\chi_0, r_{\infty}(\chi)) = \sum_{v \in \text{Ram}(\chi)} \Pi_v(\psi) = 0,
$$
where $\chi_0 = 2^{n} \cdot \psi$. 

This shows that $r_{\infty}(\chi)$ pairs trivially with all of $C_n^{(i)}(\chi)$, hence $r_{\infty}(\chi) \in D_{n + 1}^{(i)}(\chi)$.
\end{proof}

We make the following simple observation.

\begin{proposition}
\label{prop: genus spaces}
Let $\chi$ be an element of $\Gamma_{\mathbb{F}_2}^{\textup{im}}(\mathbb{F}_q(T))$. Then the set
$$
\{\chi_v : v \in \textup{Ram}(\chi) \cap \Omega_{\mathbb{F}_q(T)}^{\textup{fin}}\} \cup \{\chi_{\epsilon_{\mathbb{F}_q}}\}
$$
is a basis of $C_0^{(i)}(\chi)$ for each $i \in \{1, -1\}$, and the tuple $(e_v)_{v \in \textup{Ram}(\chi)}$ is a basis of $D_0^{(i)}(\chi)$ for each $i \in \{1, -1\}$.
\end{proposition}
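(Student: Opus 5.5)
The plan is to treat the two claims separately; both follow from unwinding the definitions $C_0^{(i)}(\chi) = B_i(\chi)[2]$ and $D_0^{(i)}(\chi) = (\Pi(B_i(\chi)[1]))^{\perp}$.

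\textbf{The claim about $D_0^{(i)}(\chi)$.} Since $B_i(\chi)[1] = B_i(\chi)[2^0]$ is the zero group, its image under $\Pi$ is $\{0\}$. Its orthogonal complement is therefore all of $\mathbb{F}_2^{\mathrm{Ram}(\chi)}$, which has the standard basis $(e_v)_{v \in \mathrm{Ram}(\chi)}$.

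\textbf{Identifying $C_0^{(i)}(\chi)$.} By definition, $C_0^{(i)}(\chi) = B_i(\chi)[2]$, and these are the cocycles valued in $N_i(\chi)[2] = \tfrac{1}{2}\Z_2/\Z_2 \cong \mathbb{F}_2$. The Galois action on this $2$-torsion is trivial, because $\pm q_0 \equiv 1 \bmod 2$ and the twist by $\chi$ acts by $-1 \equiv 1 \bmod 2$. Hence $N_i(\chi)[2] \cong \mathbb{F}_2$ as a trivial module, and cocycles into it are exactly homomorphisms $G_{\mathbb{F}_q(T)} \to \mathbb{F}_2$, that is, elements of $\Gamma_{\mathbb{F}_2}(\mathbb{F}_q(T))$. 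The condition \eqref{eBidef} then says that $\psi(\sigma_v) = 0$ for every $v \notin \mathrm{Ram}(\chi)$.

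\textbf{Computing the basis.} By Proposition \ref{Topological generators}, the characters $\chi_v$ for finite $v$, together with $\chi_{\epsilon_{\mathbb{F}_q}}$, form a basis dual to the generating set $\mathfrak{G} = \{\sigma_v\} \cup \{\Frob_q\}$. Write a character as $\psi = \sum_{v \text{ fin}} a_v \chi_v + b\,\chi_{\epsilon_{\mathbb{F}_q}}$, a finite sum. Duality gives $\psi(\sigma_v) = a_v$ for finite $v$. So the vanishing conditions at finite $v \notin \mathrm{Ram}(\chi)$ force $a_v = 0$ there, while $b$ remains free.

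\textbf{The main obstacle: the place $\infty$.} When $\infty \notin \mathrm{Ram}(\chi)$ one would also need $\psi(\sigma_\infty) = 0$. Here, however, $\chi$ is imaginary, so $\infty \in \mathrm{Ram}(\chi)$ and \eqref{eBidef} imposes no condition at $\infty$. This is the one point that needs care, and it is exactly where the hypothesis $\chi \in \Gamma^{\mathrm{im}}_{\mathbb{F}_2}(\mathbb{F}_q(T))$ is used. Consequently the space is spanned by the $\chi_v$ with $v \in \mathrm{Ram}(\chi)$ finite, together with $\chi_{\epsilon_{\mathbb{F}_q}}$. These vectors are linearly independent because they are part of a basis. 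Each of them does lie in $B_i(\chi)[2]$: $\chi_v(\sigma_w) = 0$ for finite $w \neq v$, and $\chi_{\epsilon_{\mathbb{F}_q}}$ is unramified at every finite place.
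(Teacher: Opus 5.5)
Your proposal is correct and follows essentially the same route as the paper: it identifies $C_0^{(i)}(\chi)=B_i(\chi)[2]$ with quadratic characters (using that $N_i(\chi)[2]\cong\mathbb{F}_2$ carries trivial action), uses $\infty\in\mathrm{Ram}(\chi)$ so that no condition is imposed at $\infty$, reads off the basis from the duality in Proposition \ref{Topological generators}, and notes $B_i(\chi)[1]=0$, so that $D_0^{(i)}(\chi)=\mathbb{F}_2^{\mathrm{Ram}(\chi)}$. The only cosmetic difference is that you compute coordinates directly via $\psi(\sigma_v)=a_v$, whereas the paper phrases the space as $\{\eta : \mathrm{Ram}(\eta)\subseteq\mathrm{Ram}(\chi)\}$ before invoking the same proposition.
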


\begin{proof}
Since $N_i(\chi)[2] \cong \mathbb F_2$ with trivial Galois action, we have
\[
C_0^{(i)}(\chi) = B_i(\chi)[2] = \{\eta \in \Gamma_{\mathbb F_2}(\mathbb{F}_q(T)): \text{Ram}(\eta) \subseteq \text{Ram}(\chi)\}.
\]
Because $\infty\in\text{Ram}(\chi)$, Proposition \ref{Topological generators} shows that this
space has basis
\[
\{\chi_v : v \in \textup{Ram}(\chi) \cap \Omega_{\mathbb{F}_q(T)}^{\text{fin}}\} \cup \{\chi_{\epsilon_{\mathbb F_q}}\}.
\]
Moreover, $B_i(\chi)[1] = 0$, and hence
\[
D_0^{(i)}(\chi) = \Pi(B_i(\chi)[1])^\perp = \mathbb F_2^{\text{Ram}(\chi)},
\]
whose standard basis is $(e_v)_{v \in \text{Ram}(\chi)}$.
\end{proof}

Combining Proposition \ref{prop: genus spaces} with Proposition \ref{prop: elementary def of pairing}, we deduce that 
$$
\dim_{\mathbb{F}_2} \, C_n^{(i)}(\chi) = \dim_{\mathbb{F}_2} \, D_n^{(i)}(\chi)
$$
for all integers $n \geq 1$ and $i \in \{1, -1\}$. Therefore we see that $C_n^{(1)}(\chi) = \langle \chi \rangle$ (resp.~$C_{n}^{(-1)}(\chi) = \langle \chi + \chi_{\epsilon_{\mathbb{F}_q}} \rangle$) happens if and only if $D_n^{(1)}(\chi) = \langle r_{\infty}(\chi) \rangle$ (resp.~$D_n^{(-1)}(\chi) = \langle r_{\infty}(\chi) \rangle$).

We will need the following basic fact, which one may, in particular, use to describe the spaces $B_i(\chi)[4]$. 

\begin{proposition} 
\label{prop: description of N[4] cocycles}
Let $\chi, \chi' \in \Gamma_{\mathbb{F}_2}(\mathbb{F}_q(T))$ and let $i \in \{1, -1\}$. Let $\psi\colon G_{\mathbb{F}_q(T)} \to N_i(\chi)[4]$ be a continuous function such that $2 \cdot \psi =\chi'$. Then $\psi \in \textup{Cocy}(G_{\mathbb{F}_q(T)}, N_i(\chi)[4])$ if and only if there exists a continuous $1$-cochain $\phi\colon G_{\mathbb{F}_q(T)} \to \mathbb{F}_2$ satisfying 
$$
\mathrm{d}\phi(\sigma, \tau) = \left(\chi + \chi' + \frac{1 - i}{2} \cdot \chi_{\epsilon_{\mathbb{F}_q}}\right)(\sigma) \cdot \chi'(\tau)
$$
for all $\sigma, \tau \in G_{\mathbb{F}_q(T)}$ and $\psi = \phi + \frac{\chi'}{4}$.
\end{proposition}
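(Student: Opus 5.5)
Since $\psi$ is valued in $N_i(\chi)[4] \cong \Z/4\Z$ and $2\psi = \chi'$, we can write $\psi = \phi + \frac{\chi'}{4}$ for a unique continuous $1$-cochain $\phi\colon G_{\mathbb{F}_q(T)} \to \mathbb{F}_2$. Here $\mathbb{F}_2$ is identified with $2 \cdot N_i(\chi)[4] = \{0, \tfrac12\}$, and $\frac{\chi'}{4}$ takes the values $0, \tfrac14$. Indeed, $\psi(g) - \frac{\chi'}{4}(g)$ is killed by $2$, because both terms double to $\chi'(g)$. So the only content of the statement is to decide when $\psi$ satisfies the cocycle identity, and the plan is to compute the coboundary of $\psi$ directly in $N_i(\chi)[4]$.

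First I will determine the action on $N_i(\chi)[4]$. An element $g$ acts by $(-1)^{\chi(g)}$ times the action on $N_i[4]$, which is inflated from $G_{\mathbb{F}_q}$ via $\Frob_q \mapsto \pm q_0$. Since $q_0 \equiv 1 \bmod 4$, we have $q_0^n \equiv 1 \bmod 4$, so in $N_i[4]$ the element $g$ acts by $(i)^{\chi_{\Frob}(g)} \bmod 4$. For $i = -1$ this sign is $(-1)^{\chi_{\epsilon_{\mathbb{F}_q}}(g)}$, because $\chi_{\epsilon_{\mathbb{F}_q}}(g) \equiv \chi_{\Frob}(g) \bmod 2$ (by the convention on $\epsilon$ and $\Frob_q$). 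Hence $g$ acts on $N_i(\chi)[4]$ by $(-1)^{\eta(g)}$ with $\eta := \chi + \frac{1-i}{2}\chi_{\epsilon_{\mathbb{F}_q}}$. This is consistent with \eqref{eRho1} and Remark \ref{remark: -1 module is twisting by epsilon}.

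Next I will compute $\psi(\sigma\tau) - \psi(\sigma) - \sigma\cdot\psi(\tau)$. On the $\phi$ part the action is trivial, since $-1$ acts trivially on $2$-torsion, so it contributes $-\mathrm{d}\phi(\sigma,\tau)$ (signs are irrelevant mod $2$). For the $\frac{\chi'}{4}$ part, I will use the elementary identity in $\Z/4$: for $a, b \in \{0,1\}$,
$$\frac{a+b \bmod 2}{4} = \frac{a}{4} + \frac{b}{4} - \frac{ab}{2}.$$
I will also use that $\sigma$ acts on $\frac{b}{4}$ as $\frac{b}{4} - \eta(\sigma)\frac{b}{2}$. Together these give the coboundary of $\frac{\chi'}{4}$ as $\frac{1}{2}\bigl(\chi'(\sigma)\chi'(\tau) + \eta(\sigma)\chi'(\tau)\bigr)$, that is, the element $(\chi' + \eta)(\sigma)\chi'(\tau)$ of $\mathbb{F}_2$. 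Therefore $\psi$ is a cocycle if and only if $\mathrm{d}\phi(\sigma,\tau) = (\chi + \chi' + \frac{1-i}{2}\chi_{\epsilon_{\mathbb{F}_q}})(\sigma)\chi'(\tau)$, which is the claimed criterion. Continuity is automatic: $\phi$ is continuous because $\psi$ and $\chi'$ are.

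The main point to check carefully is the first step, namely that the Tate-like twist $\pm q_0$ reduces mod $4$ exactly to the character $\frac{1-i}{2}\chi_{\epsilon_{\mathbb{F}_q}}$. This uses $q \equiv 1 \bmod 8$ to get $q_0 \equiv 1 \bmod 4$, and also that $\chi_{\epsilon_{\mathbb{F}_q}}$ is the unramified quadratic character, which equals $\chi_{\Frob} \bmod 2$. Everything else is bookkeeping with signs in $\Z/4$.
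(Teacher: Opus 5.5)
Your proposal is correct and follows the paper's proof. In both, you write $\psi = \phi + \frac{\chi'}{4}$, observe that $G_{\mathbb{F}_q(T)}$ acts on $N_i(\chi)[4]$ through $(-1)^{\chi + \frac{1-i}{2}\chi_{\epsilon_{\mathbb{F}_q}}}$, and read off the extra term $\chi'(\sigma)\chi'(\tau)$ from carry arithmetic mod $4$. One small correction: $q_0 \equiv 1 \bmod 4$ is the normalization built into the definition of $q_0$, since $q \equiv 1 \bmod 8$ only guarantees that the square roots $\pm q_0$ exist in $\Z_2$; this does not affect your argument.
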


\begin{proof}
The assumption that $2 \cdot \psi = \chi'$ translates into the existence of a continuous $1$-cochain $\phi \colon G_{\mathbb{F}_q(T)} \to \mathbb{F}_2$ such that $\psi = \phi + \frac{\chi'}{4}$. We now verify that the cocycle condition on $\psi$ precisely translates into the desired constraint on $\phi$. Working out the action of $G_{\mathbb{F}_q(T)}$ on $N_i(\chi)[4]$ gives that the cocycle equation can be rephrased as
$$
\psi(\sigma \tau) = (1 - 2)^{\chi(\sigma) + \frac{1 - i}{2} \cdot \chi_{\epsilon_{\mathbb{F}_q}}(\sigma)} \cdot \psi(\tau) + \psi(\sigma)
$$
for all $\sigma, \tau \in G_{\mathbb{F}_q(T)}$. Substituting $\psi = \phi + \frac{\chi'}{4}$ on both sides of the equation and doing carry arithmetic gives the relation
$$
\phi(\sigma\tau) = \phi(\sigma) + \phi(\tau) + \left(\chi(\sigma) + \frac{1 - i}{2} \cdot \chi_{\epsilon_{\mathbb{F}_q}}(\sigma)\right) \cdot \chi'(\tau) + \chi'(\sigma) \cdot \chi'(\tau),
$$
where the last term is the one coming from carry arithmetic modulo $4$. This is precisely the desired conclusion. 
\end{proof}

\subsection{Reinterpretation of non-vanishing}
We now reinterpret Proposition \ref{prop: field-theory of non-vanishing}.

\begin{proposition}
\label{pNonCrit}
Let $\chi$ be an element of $\Gamma_{\mathbb{F}_2}^{\textup{im}}(\mathbb{F}_q(T))$. Then $L\left(\frac{1}{2},\chi\right) \neq 0$ if there exists a nonnegative integer $n$ such that for all $i \in \{1, -1\}$ we have
$$
C_n^{(i)}(\chi) = \left \langle \chi + \frac{(1 - i)}{2} \cdot \chi_{\epsilon_{\mathbb{F}_q}} \right \rangle.
$$
\end{proposition}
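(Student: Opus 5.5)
We prove the contrapositive. Suppose $L(\tfrac12,\chi)=0$. For imaginary $\chi$ the ramification at $\infty$ forces the restriction of $\chi$ to $G_{\mathbb{F}_q^{\textup{sep}}(T)}$ to be non-trivial, so Proposition \ref{prop: field-theory of non-vanishing} applies. It produces an everywhere unramified $\Z_2$-extension $L/\mathbb{F}_q^{\textup{sep}}(T)(\chi)$, Galois over $\mathbb{F}_q(T)$, on which $\Frob_q$ acts by $i\cdot q_0$ for some $i\in\{1,-1\}$. We fix this $i$ and show that for every $n$ the space $C_n^{(i)}(\chi)$ strictly contains $\langle \chi + \tfrac{1-i}{2}\chi_{\epsilon_{\mathbb{F}_q}}\rangle$. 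This contradicts the hypothesis for that $i$.

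\textbf{Step 1: build a cocycle.} Let $\tilde L$ be the Galois closure of $L$ over $\mathbb{F}_q(T)$; by the cited proposition, $L$ itself is Galois. Put $\Gamma:=\Gal(L/\mathbb{F}_q^{\textup{sep}}(T)(\chi))\cong\Z_2$. The group $G_{\mathbb{F}_q(T)}$ acts on $\Gamma$ by conjugation as follows:
\begin{itemize}
\item elements nontrivial on $\chi$ act by $-1$, since the hyperelliptic involution acts by negation on the abelianized geometric fundamental group;
\item $\Frob_q$ acts by $i q_0$.
\end{itemize}
Hence $\Gamma\cong T_i(\chi)$ as $G_{\mathbb{F}_q(T)}$-modules, possibly up to the dual twist, which is harmless because $T_i$ is self-dual up to the sign conventions one checks.

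The extension $1\to\Gamma\to\Gal(L/\mathbb{F}_q(T))\to \Gal(\mathbb{F}_q^{\textup{sep}}(T)(\chi)/\mathbb{F}_q(T))\to 1$ corresponds to an element of $H^1$ of a $2$-adic module. Concretely, dualizing, or working with $\Hom(\Gamma,\Q_2/\Z_2)\cong N_i(\chi)$ (again with the sign check), we obtain a cocycle $\psi\colon G_{\mathbb{F}_q(T)}\to N_i(\chi)$ whose restriction to $G_{\mathbb{F}_q^{\textup{sep}}(T)(\chi)}$ is a surjective homomorphism onto $N_i(\chi)$ factoring through $\Gamma$. One can take $\psi$ to be a splitting-type cocycle. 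Alternatively, construct $\psi$ directly: choose a character of $\Gamma$ of order $2^{n+1}$ and extend it via the semidirect structure.

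\textbf{Step 2: unramifiedness.} Since $L$ is unramified over $\mathbb{F}_q^{\textup{sep}}(T)(\chi)$, for every $v\notin\mathrm{Ram}(\chi)$ the element $\sigma_v$ lies in $G_{\mathbb{F}_q^{\textup{sep}}(T)(\chi)}$ and acts trivially on $L$, so $\psi(\sigma_v)$ can be arranged to vanish. Modifying $\psi$ by a coboundary and by the characters $\chi_v$ as in the proof of Proposition \ref{prop: Pi detects 2} puts $\psi$ in $B_i(\chi)$. The restriction of $2^n\psi$ to $G_{\mathbb{F}_q^{\textup{sep}}(T)(\chi)}$ is then the nontrivial quadratic character cutting out the unramified quadratic subextension of $L$. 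So $\chi_1:=2^n\cdot(\text{a }2^{n+1}\text{-torsion truncation of }\psi)$ lies in $C_n^{(i)}(\chi)$.

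\textbf{Step 3: $\chi_1$ is a new element.} The elements of $\langle \chi+\frac{1-i}{2}\chi_{\epsilon}\rangle$ are $0$ and $\rho_1$. Both restrict to $G_{\mathbb{F}_q^{\textup{sep}}(T)(\chi)}$ trivially: $\chi$ dies there, and $\chi_\epsilon$ dies over $\mathbb{F}_q^{\textup{sep}}$. By contrast, $\chi_1$ restricts nontrivially, since $L$ is a genuine $\Z_2$-extension of the geometric function field. So $\chi_1\notin\langle\rho_1\rangle$, and $C_n^{(i)}(\chi)\neq\langle\rho_1\rangle$ for every $n$.

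\textbf{Main obstacle.} The delicate point is Step 1: producing an honest cocycle on all of $G_{\mathbb{F}_q(T)}$ with values in $N_i(\chi)$, and not just a homomorphism on the geometric subgroup. This requires the module identification to be exactly $N_i(\chi)$ rather than its Tate-dual variant. I would handle it by realizing $N_i(\chi)$ explicitly as $\Hom(\Gamma,\Q_2/\Z_2)$ with $\Frob_q$ acting by $(iq_0)^{-1}$. This differs from $iq_0$, so one must instead use that $\Gamma\otimes\Q_2/\Z_2$ (not its dual) carries the correct action. One then checks that the extension class of $L$ gives a class in $H^1(G_{\mathbb{F}_q(T)},\Gamma\otimes\Q_2/\Z_2)$ restricting to a surjection. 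This uses the vanishing of $H^2$ of the pro-cyclic quotient $\Gal(\mathbb{F}_q^{\textup{sep}}(T)(\chi)/\mathbb{F}_q(T))$ on $2$-primary modules, via inflation–restriction and the fact that $\hat{\Z}\times\Z/2$ has $2$-cohomological dimension small enough, after checking invariants.
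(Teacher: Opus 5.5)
Your Step 1 has a genuine gap, and it is the crux of the proof. Write $E:=\mathbb{F}_q^{\textup{sep}}(T)(\chi)$ and $Q:=\Gal(E/\mathbb{F}_q(T))\cong\mathbb{F}_2\times\hat{\Z}$. A cocycle $\psi\colon G_{\mathbb{F}_q(T)}\to T_i(\chi)$ whose restriction to $G_E$ is the surjection $G_E\to\Gamma$ exists if and only if the extension $1\to\Gamma\to\Gal(L/\mathbb{F}_q(T))\to Q\to 1$ splits. The obstruction is the transgression of that homomorphism, which is the extension class in $H^2(Q,\Gamma)$, not an $H^1$ class.

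You try to kill this class by a vanishing of $H^2(Q,-)$, but that vanishing fails. $Q$ is not procyclic; it contains $\Z/2$, so it has infinite $2$-cohomological dimension, and in fact $H^2(Q,T_i(\chi))\cong\Z/2$. Concretely, let $\tilde F,\tilde\tau\in\Gal(L/\mathbb{F}_q(T))$ lift $\Frob_q$ and the involution. Then $\tilde\tau^2=1$ automatically, since $\Gamma^{\tau}=0$. Replacing the lifts by $\gamma\tilde\tau$ and $\delta\tilde F$ changes the commutator $c:=\tilde F\tilde\tau\tilde F^{-1}\tilde\tau^{-1}\in\Gamma$ by $(iq_0-1)\gamma+2\delta\in2\Gamma$. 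So $c \bmod 2\Gamma$ is an invariant, the extension splits if and only if it vanishes, and abstract extensions with $c\notin 2\Gamma$ do exist. No group-cohomological argument can therefore produce $\psi$. Your alternative, ``extend via the semidirect structure'', presupposes exactly this splitting.

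The missing input is arithmetic. This is where $\infty\in\textup{Ram}(\chi)$ is really used, not merely to get geometric non-triviality. The paper's argument goes through the decomposition group at $\infty$:
\begin{itemize}
\item $\infty$ has degree $1$ and is ramified in $\chi$, so its decomposition group in $Q$ is all of $Q$.
\item $L/E$ is unramified above $\infty$, and the residue fields there admit no nontrivial (pro-$2$) extensions. The paper first descends to $\mathbb{F}_{q^{2^\infty}}(T)(\chi)$ by a pro-nilpotency argument, but $E$ itself would also do. Hence the places above $\infty$ split completely in $L$.
\item Therefore the decomposition group of $L/\mathbb{F}_q(T)$ at $\infty$ maps isomorphically onto $Q$ and is the required complement. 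Geometrically, the rational Weierstrass point at $\infty$ splits $\pi_1$.
\end{itemize}
Once $\Gal(L/\mathbb{F}_q(T))\cong T_i(\chi)\rtimes Q$, the $T_i(\chi)$-coordinate is a cocycle $\psi$. Then $\psi_n:=(2^{-n}\psi)\bmod T_i(\chi)$ lies in $B_i(\chi)[2^n]$ and satisfies $2^n\psi_{n+1}=\psi_1$, and your Steps 2 and 3 go through as in the paper.

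Two smaller points in Step 1 also need fixing:
\begin{itemize}
\item $\Gamma$ with its conjugation action is $T_i(\chi)$ on the nose, with no dual twist.
\item No continuous map from $G_E$ or from $\Gamma\cong\Z_2$ can surject onto the discrete infinite group $N_i(\chi)\cong\Q_2/\Z_2$. The cocycle must be $T_i(\chi)$-valued, or taken at a finite level $N_i(\chi)[2^{n+1}]$, and then truncated.
\end{itemize}
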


\begin{proof}
To prove the contrapositive suppose that $L(\frac{1}{2}, \chi) = 0$. The character $\chi$ is non-trivial when restricted to $\mathbb{F}_q^{\text{sep}}(T)$ since it ramifies at the infinite place, so Proposition \ref{prop: field-theory of non-vanishing} provides us with an everywhere unramified $\Z_2$-extension $L/\mathbb{F}_q^{\text{sep}}(T)(\chi)$ that is Galois over $\mathbb{F}_q(T)$ with $\Frob_q$ acting on $\Gal(L/\mathbb{F}_q^{\text{sep}}(T)(\chi))$ by multiplication by $i \cdot q_0$ for some $i$ in $\{1, -1\}$. 

We next claim that we can descend $L$ to an everywhere unramified $\mathbb{Z}_2$-extension of
\[
E_2 := \mathbb{F}_{q^{2^\infty}}(T)(\chi).
\]
More precisely, we claim that there exists an everywhere unramified $\mathbb{Z}_2$-extension $L'/E_2$, which is Galois over $\mathbb{F}_q(T)$, such that conjugation by the fixed element $\Frob_q$ on $\Gal(L'/E_2)$ is multiplication by $i q_0$, and such that
$$
L = L' \cdot \mathbb{F}_q^{\text{sep}}(T).
$$
We now prove the claim. Put
$$
E := \mathbb{F}_q^{\text{sep}}(T)(\chi), \qquad \mathcal{G} := \Gal(L/\mathbb F_q(T)), \qquad H := \Gal(L/E) \cong \mathbb Z_2.
$$
Since $\chi$ is geometrically non-trivial, we have
$$
\mathcal{G}/H \cong \Gal(E/\mathbb F_q(T)) \cong \mathbb F_2 \times \widehat{\mathbb Z}.
$$
A lift of $\Frob_q$ acts on $H$ by multiplication by $i q_0$, while a lift of the non-trivial element of the first factor acts by multiplication by $-1$.

We use the following finite-group lemma.

\begin{lemma*}[Finite group lemma] Let $G$ be a finite group and let $H$ be a normal abelian subgroup of $G$. Suppose that $G/H$
is abelian and generated by $g_1H$ and $g_2H$. For $j \in \{1, 2\}$, let $\varphi_j$ be the automorphism of $H$ induced by conjugation by $g_j$. If both $\varphi_1 - \textup{id}$ and $\varphi_2 - \textup{id}$ are nilpotent, then $G$ is nilpotent.
\end{lemma*}

\begin{proof}[Proof of finite group lemma] Put
$$
N_j:=\varphi_j-\mathrm{id}.
$$
Since $[g_1, g_2] \in H$ and $H$ is abelian, the automorphisms $\varphi_1$ and $\varphi_2$ commute. Let $I$ be the ideal generated by $N_1, N_2$ in the commutative subring of $\mathrm{End}(H)$ generated by $\varphi_1, \varphi_2$. Since $N_1$ and $N_2$ are nilpotent, we have
$$
I^m \cdot H = 0
$$
for all sufficiently large $m$.

Every $\sigma\in G$ can be written as $\sigma = g_1^ag_2^bh$, with $h \in H$, and therefore
$$
\left.\operatorname{Ad}(\sigma)\right|_H - \mathrm{id} \in I.
$$
Since $[G, G]\subseteq H$, it follows inductively that
$$
\gamma_{m + 2}(G) \subseteq I^m \cdot H.
$$
Thus the lower central series of $G$ terminates, proving the lemma.
\end{proof}

We apply the lemma to every finite continuous quotient of $\mathcal{G}$. The two relevant endomorphisms of the image of $H$ are induced by multiplication by
$$
i q_0 - 1 \qquad \text{and} \qquad -2.
$$
Both belong to $2\mathbb Z_2$, and hence are nilpotent on every finite quotient of $H$. Therefore every finite quotient of $\mathcal{G}$ is nilpotent, so $\mathcal{G}$ is pro-nilpotent. Write
$$
\mathcal{G} = \prod_\ell \mathcal G_\ell
$$
for its decomposition into pro-$\ell$ Sylow subgroups, and put
$$
\mathcal G_{2'} := \prod_{\ell \neq 2} \mathcal G_\ell, \qquad L' := L^{\mathcal G_{2'}}.
$$
Since $H$ is pro-$2$, it is contained in $\mathcal G_2$, and the natural projection induces an exact sequence
$$
1 \longrightarrow H \longrightarrow \mathcal G_2 \longrightarrow \Gal(E_2/\mathbb F_q(T)) \longrightarrow 1.
$$
Consequently $L'/E_2$ is a $\mathbb Z_2$-extension, $L'$ is Galois over $\mathbb F_q(T)$, and
$$
L = L' \cdot E = L' \cdot \mathbb{F}_q^{\mathrm{sep}}(T).
$$
Moreover, $L'/E_2$ is everywhere unramified, since this may be checked after the constant field base change from $E_2$ to $E$. The odd Sylow factors commute with $H$, so the conjugation action of $\Frob_q$ on $H$ remains multiplication by $i q_0$. Replacing $L$ by $L'$, we obtain the claimed descent.

We proceed by making another claim.

\begin{claim} 
For any such extension $L$, the exact sequence of Galois groups
\begin{equation}
\label{eGaloisSplit}
0 \to \Gal(L/\mathbb{F}_{q^{2^{\infty}}}(T)(\chi)) \to \Gal(L/\mathbb{F}_q(T)) \xrightarrow{\pi} \Gal(\mathbb{F}_{q^{2^{\infty}}}(T)(\chi)/\mathbb{F}_q(T)) \to 0
\end{equation}
splits as a semi-direct product. 
\end{claim}

\begin{proof}[Proof of claim] Let $v := \infty$. Since the extension $L/\mathbb{F}_{q^{2^\infty}}(T)(\chi)$ is unramified, we have that the image of $\sigma_v$ in $\Gal(L/\mathbb{F}_q(T))$ must have order exactly $2$. Consider the natural quotient map $Q\colon G_{\mathbb{F}_q(T)} \rightarrow \Gal(L/\mathbb{F}_q(T))$. In view of Proposition \ref{Structure of tame inertia}, we see that the map $Q \circ i_v^\ast$ factors through
$$
\Gal(\mathbb{F}_q(T)_v \cdot i_v(E_2)/\mathbb{F}_q(T)_v) \cong \mathbb{F}_2 \times \Z_2,
$$
where the isomorphism is in the category of profinite groups. Furthermore, since $v$ is of odd degree, we see that the projection $\pi$ from \eqref{eGaloisSplit} induces an isomorphism when restricted to the image of $Q \circ i_v^\ast$, giving the desired splitting.
\end{proof}

The claim produces a continuous surjective group homomorphism
$$
(\psi, \chi, \chi_\Frob) \colon G_{\mathbb{F}_q(T)} \to T_{i}(\chi) \rtimes (\mathbb{F}_2 \times \Z_2)
$$
for some $i \in \{1, -1\}$ such that the resulting extension of $\mathbb{F}_{q^{2^{\infty}}}(T)(\chi)$ is everywhere unramified. By construction, for each $v \in \Omega_{\mathbb{F}_q(T)} \setminus \text{Ram}(\chi)$, we must have that $\psi(\sigma_v) = 0$. Furthermore, since $(\psi, \chi, \chi_\Frob)$ is a surjection, it induces a surjection on the abelianization. Hence $\{(\psi)_{\text{mod} \ 2}, \chi_{\epsilon_{\mathbb{F}_q}},\chi\}$ forms an independent set. By defining
$$
\psi_n := \left(\frac{1}{2^n}\psi\right)_{\text{mod} \ T_{i}(\chi)} \in \text{Cocy}(G_{\mathbb{F}_q(T)}, N_i(\chi)[2^n]),
$$
we see that $\psi_n(\sigma_v) = 0$ for each $v \in \Omega_{\mathbb{F}_q(T)} \setminus \text{Ram}(\chi)$, so we conclude that $\psi_n \in B_i(\chi)[2^n]$ by the defining equation \eqref{eBidef} of $B_i(\chi)$ and that $\psi_1 \neq \chi + \frac{(1 - i)}{2} \cdot \chi_{\epsilon_{\mathbb{F}_q}}$. Furthermore, by construction, we have that $2^n \cdot \psi_{n + 1} = \psi_1$. We thus deduce in particular that $\psi_1$ is an element of $C_n^{(i)}(\chi)$ for each nonnegative integer $n$, and $\psi_1 \neq \chi+\frac{(1-i)}{2} \cdot \chi_{\epsilon_{\mathbb{F}_q}}$. We have thus shown that $C_n^{(i)}(\chi) \neq \left \langle \chi + \frac{1-i}{2} \cdot \chi_{\epsilon_{\mathbb{F}_q}} \right \rangle$ for every nonnegative integer $n$ as required.
\end{proof}
\section{The first Artin pairing} \label{Section: first Artin pairing}
Let $q$ be a prime power congruent to $1$ modulo $8$. Let $D := D(T) \in \mathbb{F}_q[T]$ be a squarefree, monic polynomial of odd degree. Let $\pi_1, \ldots, \pi_r \in \mathbb F_q[T]$ be the distinct irreducible factors of $D$. By Proposition \ref{prop: genus spaces}, the set
$$
\{\chi_{\pi_i} : 1 \leq i \leq r\} \cup \{\chi_{\epsilon_{\mathbb{F}_q}}\}
$$
is a basis of $C_0^{(i)}(\chi_D)$ for each $i \in \{1, -1\}$, while $\{e_{\pi_i} : 1 \leq i \leq r\} \cup \{e_{\infty}\}$ is a basis of $D_0^{(i)}(\chi_D)$ for each $i \in \{1, -1\}$. Hence representing $\textup{Art}_{D, 1}^{(i)} = \text{Art}_{\chi_D, 1}^{(i)}$ in these bases yields two matrices, which we shall next describe. 

We begin by defining an $r \times r$ matrix with coefficients in $\mathbb{F}_2$, which we will denote by
$$
\text{R\'edei}(\chi_D) := (R_{i, j}(\chi_D))_{1 \leq i, j \leq r}
$$ 
and is given by the following assignment. For $i \neq j$ elements of $[r]$, we place
$$
R_{i,j}(\chi_D):=\iota \left(\frac{\pi_i}{\pi_j} \right),
$$
where $\iota$ is the unique identification between $\{1,-1\}$ and $\mathbb{F}_2$ as abelian groups. 

Observe that since $q$ is, in particular, $1$ modulo $4$, the matrix $\text{R\'edei}(\chi_D)$ is symmetric. Indeed, applying Hilbert reciprocity to the class $\chi_{\pi_i} \cup \chi_{\pi_j}$ in $H^2(G_{\mathbb{F}_q(T)},\mathbb{F}_2)$, we see that the contribution at $\infty$ vanishes as it can be rewritten, locally, as a multiple of $\chi_T \cup \chi_T$, which vanishes whenever $-1$ is a perfect square in the base field. The only other two places contributing are $\pi_i$ and $\pi_j$, yielding precisely the desired symmetry. 

We complete the description of $\text{R\'edei}(\chi_D)$ by prescribing the diagonal in the unique way that forces the sum of all columns (equivalently, by symmetry, rows) to be equal to $0$. 

Let $R^{(1)}(\chi_D) \in \text{Mat}_{r + 1}(\mathbb{F}_2)$ be the matrix that agrees with $\text{R\'edei}(\chi_D)$ in the first $r \times r$-minor, and is extended as follows. The entries $(i, r + 1)$ with $i \leq r$ are all $0$. The entry $(r + 1, i)$ equals (the reduction modulo $2$ of) $\deg(\pi_i)$ for each $i \leq r$. We take the entry $(r + 1, r + 1)$ to be $1$. 

Let $R^{(-1)}(\chi_D) \in \text{Mat}_{r + 1}(\mathbb{F}_2)$ be the matrix that equals $\text{R\'edei}(\chi_D) + \text{Diag}(\deg(\pi_i))_{i = 1}^{r}$ in the first $r \times r$-minor, and is continued as we explain next. The entries $(i, r + 1)$ with $i \leq r$ are (the reductions modulo $2$ of) $\deg(\pi_i)$. Likewise, the entry $(r + 1, i)$ equals (the reduction modulo $2$ of) $\deg(\pi_i)$ for each $i \leq r$. We declare the entry $(r + 1, r + 1)$ to be $1$. 

\begin{proposition}
\label{pRedei}
For $i \in \{1,-1\}$ the pairing $\textup{Art}_{D, 1}^{(i)}$ is represented in the two bases $\{\chi_{\pi_i} : 1 \leq i \leq r\} \cup \{\chi_{\epsilon_{\mathbb{F}_q}}\}$ and $\{e_{\pi_i} : 1 \leq i \leq r\} \cup \{e_{\infty}\}$ by the matrix
$
R^{(i)}(\chi_D).
$

\end{proposition}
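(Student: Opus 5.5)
The plan is to compute $\textup{Art}_{D,1}^{(i)}$ on basis elements directly from Proposition \ref{prop: elementary def of pairing} with $n=0$. For $\chi_1 \in C_0^{(i)}(\chi_D)$, one takes any $\psi \in B_i(\chi_D)[2]$ with $\psi = \chi_1$; the pairing with $e_w$ is then $\Pi_w(\chi_1)$. Since $\psi$ is $2$-torsion, $\psi$ is an honest quadratic character $\chi_1$ viewed inside $N_i(\chi_D)[2]=\mathbb F_2$. The quantity $\Pi_w(\chi_1)$ equals $\frac{(iq_0)^{\deg w}-1}{2}\chi_1(\sigma_w)+\chi_1(\Frob_w(\chi_D))$, read in $N_i[2]=\mathbb F_2$. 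Hence the entry $(a,w)$ of the matrix is
\[
\tfrac{(iq_0)^{\deg w}-1}{2}\,\chi_a(\sigma_w)+\chi_a(\Frob_w(\chi_D)) \pmod 2 .
\]
Equivalently, by Proposition \ref{prop: detecting locally}, it is $\inv_w$ of the obstruction class $\theta(\chi_a)$. By Proposition \ref{prop: description of N[4] cocycles}, this class is the cup product $(\chi_D+\chi_a+\frac{1-i}{2}\chi_{\epsilon})\cup\chi_a$. This second description gives a convenient cross-check via Hilbert symbols.

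First I compute the coefficient $\frac{(iq_0)^{d}-1}{2} \bmod 2$. Since $q_0\equiv 1 \bmod 4$, we have $q_0^d\equiv 1 \bmod 4$, so for $i=1$ the coefficient is $0$. For $i=-1$ we have $(-q_0)^d-1\equiv (-1)^d-1 \bmod 4$, so the coefficient is $d \bmod 2$. This accounts for the extra diagonal $\mathrm{Diag}(\deg \pi_j)$ in $R^{(-1)}$, together with the entry $(r+1,r+1)$ contribution from $\deg\infty=1$.

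Next I evaluate $\chi_a(\Frob_w(\chi_D))$ on the basis elements, with $a\in\{\pi_1,\dots,\pi_r,\epsilon\}$ and $w\in\{\pi_1,\dots,\pi_r,\infty\}$.
\begin{enumerate}
\item For $a=\pi_j$ and $w=\pi_k$ with $j\ne k$, $\chi_{\pi_j}$ is unramified at $\pi_k$, so this is the Legendre symbol $(\pi_j/\pi_k)$, which is $R_{j,k}$.
\item For $a=\epsilon$ and $w=\pi_k$, it is $\chi_\epsilon(\Frob_{\pi_k})=\deg \pi_k \bmod 2$, because $\epsilon$ is a nonsquare in $\mathbb F_q$ and becomes a square in $\mathbb F_{q^{d}}$ iff $d$ is even. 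This gives the $(r+1,k)$ entries.
\item For $w=\infty$ and $a=\epsilon$, we get $\chi_\epsilon(\Frob_\infty(\chi_D))=1$, since $\deg\infty=1$. For $i=-1$ the coefficient term adds $\chi_\epsilon(\sigma_\infty)=0$, so the entry is $1$ in both matrices.
\item For $w=\infty$ and $a=\pi_j$: here $\chi_{\pi_j}$ may ramify at $\infty$. I would compute this entry via the Hilbert-reciprocity trick rather than directly.
\end{enumerate}
For the diagonal entries ($a=\pi_j$, $w=\pi_j$) and the $\infty$-column, I would not compute locally. I plan to use global constraints: by Proposition \ref{prop: trivial elemts}(b), $\chi_D\in C_1^{(1)}$, so the row indexed by $\chi_D=\sum_j\chi_{\pi_j}$ vanishes. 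By (a), $r_\infty\in D_1$, so every row sums to zero (Hilbert reciprocity for $\theta$).

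For $i=1$, I would first show the $\infty$-column vanishes for $a=\pi_j$. The local class at $\infty$ is $(\chi_D+\chi_{\pi_j})\cup\chi_{\pi_j}$, and locally at $\infty$ each $\chi_{\pi}$ is $\deg\pi\cdot\chi_T$ plus an unramified character. The resulting cup products are multiples of $\chi_T\cup\chi_T=0$ (as $-1$ is a square), plus $\chi_T\cup\chi_{\epsilon}$ terms. These latter terms cancel because $\deg D$ is odd; this step needs care. Given that column, row-sum-zero then forces the diagonal to make the row sums of $\mathrm{R\acute{e}dei}$ vanish, which is exactly how the diagonal was defined. The $\epsilon$ row is then consistent with row-sum zero, since $\sum_k\deg\pi_k+1=\deg D+1\equiv0$. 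For $i=-1$, I would use Remark \ref{rSwitching} or redo the computation with the extra coefficient terms. In particular, $\chi_{\pi_j}(\sigma_\infty)=\deg\pi_j$ contributes the $(j,r+1)$ entry $\deg\pi_j$, while the diagonal correction comes from $\chi_{\pi_j}(\sigma_{\pi_j})=1$.

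The main obstacle is the bookkeeping at the ramified places: the diagonal entries and the place $\infty$, where $\Frob_w(\chi_D)$ depends on normalizations. I would handle these through the global relations rather than explicit local computation, so that only the off-diagonal Legendre symbols and the $\epsilon$-row need direct evaluation.
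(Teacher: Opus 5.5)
Your proposal is correct and follows essentially the paper's route. You evaluate the off-diagonal Legendre symbols, the $\chi_{\epsilon_{\mathbb{F}_q}}$-row and the $\infty$-column directly, and the diagonal is then forced by $r_\infty(\chi_D)\in D_1^{(i)}(\chi_D)$, i.e.\ vanishing row sums; your cup-product computation at $\infty$ is equivalent to the paper's observation that $\chi_{\pi_j}$ lies locally at $\infty$ in the span of $\chi_D$. The step you flag as delicate is in fact trivial: each $\pi_j$ is monic, so $\chi_{\pi_j}$ restricts at $\infty$ to exactly $\deg(\pi_j)\cdot\chi_T$, no $\chi_T\cup\chi_{\epsilon_{\mathbb{F}_q}}$ terms arise, and only $\chi_T\cup\chi_T=0$ is needed.
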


\begin{proof}
We recall that our notation for $\Pi$ does not keep track of $i$ in $\{1, -1\}$. Hence, below, whenever we compute $\Pi_v(-)$, there is an associated choice of $i$ that is not reflected in the notation.

We begin by showing that the off-diagonal entries of the matrix are as claimed. Let us first fix two distinct finite places $v, w \in \text{Ram}(\chi_D)$. Then since $\chi_v(\sigma_w) = 0$, for both values of $i$ in $\{1, -1\}$ we have
$$
\Pi_w(\chi_v) = \chi_v(\Frob_w(\chi_D)) = \chi_v(\Frob_w) = \iota \left( \frac{\pi_v}{\pi_w} \right).
$$
Let us next assume that $v$ is a finite place and $w = \infty$. Let us consider first the case $i = 1$. Since $q_0 \equiv 1 \bmod 4$ in this case, we have
$$
\Pi_\infty(\chi_v) = \chi_v(\Frob_\infty(\chi_D)). 
$$
Observe that $\chi_v$ is locally at $\infty$ in the span of $\chi_D$. It follows that $\chi_v(\Frob_{\infty}(\chi_D)) = 0$, as claimed. 

Let us now consider the case $i = -1$, still with $v$ a finite place and $w=\infty$. In this case we have that
$$
\Pi_{\infty}(\chi_v) = \chi_v(\sigma_{\infty}) + \chi_v(\Frob_{\infty}(\chi_D)). 
$$
Arguing as above, we have that $\chi_v(\Frob_{\infty}(\chi_D)) = 0$. However, $\chi_v(\sigma_{\infty}) \equiv \deg(v) \bmod 2$. Hence we have that $\Pi_{\infty}(\chi_v) \equiv \deg(v) \bmod 2$, as claimed. 

Let us next consider the case, where we pair $\chi_{\epsilon_{\mathbb{F}_q}}$ with any place $v$ in $\text{Ram}(\chi_D)$. Observe that $\chi_{\epsilon_{\mathbb{F}_q}}(\sigma_v)=0$ for all places of $\Omega_{\mathbb{F}_q(T)}$. Hence we have that
$$
\Pi_v(\chi_{\epsilon_{\mathbb{F}_q}}) = \chi_{\epsilon_{\mathbb{F}_q}}(\Frob_{v}(\chi_D)) \equiv \deg(v) \bmod 2. 
$$
It only remains to compute the diagonal terms $\Pi_v(\chi_v)$ for $v$ finite. Thanks to Proposition \ref{prop: trivial elemts}, $r_{\infty}(\chi_D)$ is in $D_1^{(i)}(\chi_D)$. Thus we derive precisely the claimed relation that the sum of all columns is the zero vector, which forces the diagonal to be precisely as described in the two cases $i \in \{1, -1\}$. 
\end{proof}

\begin{remark} 
\label{rmk: identifying spaces}
For each $D$, there is a natural $\mathbb F_2$-linear map $f_D$ from $\FF_2^{r + 1}$ to $\Gamma_{\mathbb{F}_2}(\mathbb{F}_q(T))$. Concretely, the map $f_D$ sends a vector $(a_1, \dots, a_{r + 1}) \in \FF_2^{r + 1}$ to
$$
a_1 \chi_{\pi_1} + \dots + a_r \chi_{\pi_r} + a_{r + 1} \chi_{\epsilon_{\FF_q}}.
$$
By restriction, $f_D$ is also a natural map from the left kernel of $R^{(1)}(\chi_D)$ to $\Gamma_{\mathbb{F}_2}(\mathbb{F}_q(T))$. Observe that an element of the left kernel of $R^{(1)}(\chi_D)$ must have trivial last coordinate (i.e.~satisfies $a_{r + 1} = 0$) by the shape of the matrix. This means that all elements of $C_1^{(1)}(\chi_{D})$ have a monic representative. 

Hence, keeping in mind Proposition \ref{prop: trivial elemts}, we have for each $n \geq 0$ a natural splitting
$$
C_n^{(1)}(\chi_D) = \mathbb{F}_2 \cdot \chi_D \oplus C_n^{(1)}(\chi_D)^\circ,
$$
where $C_n^{(1)}(\chi_D)^\circ$ consists of the elements of $C_n^{(1)}(\chi_D)$ of even degree.

Arguing in a similar way for the right kernel gives 
for each $n \geq 0$ a decomposition 
$$
D_n^{(1)}(\chi_D) = \mathbb{F}_2 \cdot r_{\infty}(\chi_D) \oplus D_n^{(1)}(\chi_D)^\circ,
$$
where the second summand consists of elements supported only in finite places (their degree is necessarily even if $n$ is positive). 

Likewise looking at the shape of $R^{(-1)}(\chi_D)$, and keeping in mind Proposition \ref{prop: trivial elemts}, we see that elements of the left kernel must be either non-monic and of odd degree or monic and of even degree. So there, too, we have a decomposition 
$$
C_n^{(-1)}(\chi_D) = \mathbb{F}_2 \cdot (\chi_D + \chi_{\epsilon_{\mathbb{F}_q}}) \oplus C_n^{(-1)}(\chi_D)^\circ,
$$
where the second summand consists of the elements of even degree (which are automatically monic if $n$ is positive). 

The same applies to the right kernel, yielding a decomposition
$$
D_n^{(-1)}(\chi_D) = \mathbb{F}_2 \cdot r_{\infty}(\chi_D) \oplus D_n^{(-1)}(\chi_D)^\circ,
$$
where the second summand consists of the elements supported only in finite places (and necessarily of even degree if $n$ is positive). 
\end{remark}

We see that, thanks to Remark \ref{rmk: identifying spaces}, we have a natural identification of $\FF_2$-vector spaces
$$
\Sym(\chi_D) \colon C_0^{(i)}(\chi_D)^\circ \to D_0^{(i)}(\chi_D)^\circ,
$$
which is uniquely determined by the following properties:
\begin{itemize}
\item if $\chi_P$ is trivial at infinity (i.e.~$P$ is monic and of even degree), then $\chi_P$ is sent to $\sum_{v \in \text{Ram}(\chi_P)} e_v$, and
\item $\chi_{\epsilon_{\mathbb{F}_q}}$ is sent to $r_\infty(\chi_D) - e_\infty$.
\end{itemize}

A fundamental piece of extra structure on our pairings is the symmetry provided by the following theorem.

\begin{theorem} 
\label{thm: symmetry}
The map $\textup{Sym}(\chi_D)$ maps $C_n^{(i)}(\chi_D)^\circ$ isomorphically onto $D_n^{(i)}(\chi_D)^\circ$ and the induced pairing
$$
\textup{Art}_{D, n + 1}^{(i)} \circ (\textup{id} \times \textup{Sym}(\chi_D)) \colon C_n^{(i)}(\chi_D)^\circ \times C_n^{(i)}(\chi_D)^\circ \to \mathbb{F}_2
$$
is symmetric, for each $i \in \{1, -1\}$ and every nonnegative integer $n$. 
\end{theorem}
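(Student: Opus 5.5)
We argue by induction on $n$, with the inductive step driven by a ``reciprocity'' identity for the higher pairings.

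\textbf{Base case $n = 0$.} We use Proposition \ref{pRedei}. Take the basis of $C_0^{(i)}(\chi_D)^\circ$ given by even-degree monic $\chi_P$, $P \mid D$, together with the appropriate $\chi_{\epsilon_{\FF_q}}$-combinations. Then $\textup{Art}_{D,1}^{(i)}(\chi_P, \textup{Sym}(\chi_Q))$ equals the sum of the entries of $R^{(i)}(\chi_D)$ over the row indices dividing $P$ and the column indices dividing $Q$, after accounting for the $e_\infty$ correction built into $\textup{Sym}$. The symmetry of $\textup{R\'edei}(\chi_D)$, which comes from $-1$ being a square in $\FF_q$, together with the explicit border of $R^{(\pm1)}$ and the parity of $\deg P$, $\deg Q$, then gives symmetry directly. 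This is a finite matrix check.

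\textbf{Sym respects the filtration.} Given symmetry at level $n$, Proposition \ref{prop: elementary def of pairing} gives
\[
C_{n+1}^\circ = \text{Left-Ker}, \qquad D_{n+1}^\circ = \text{Right-Ker}.
\]
These must be restricted to the $\circ$-parts using Remark \ref{rmk: identifying spaces} and Proposition \ref{prop: trivial elemts}: the systematic elements $\chi_D$ and $r_\infty$ pair trivially. Symmetry of $\textup{Art}_{n+1}\circ(\textup{id}\times\textup{Sym})$ on $C_n^\circ$ forces its left kernel and its right kernel (pulled back via $\textup{Sym}$) to coincide. Hence $\textup{Sym}(C_{n+1}^\circ) = D_{n+1}^\circ$. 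So the isomorphism statement at level $n+1$ follows from symmetry at level $n$, and the real content is symmetry at each level.

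\textbf{Main step: symmetry for $n \ge 1$.} Take $a, b \in C_n^\circ$ with lifts $\psi_a, \psi_b \in B_i(\chi_D)[2^{n+1}]$, so that $2^n\psi_a = a$ and $2^n\psi_b = b$. We need
\[
\langle \Pi(\psi_a), \textup{Sym}(b)\rangle = \langle \Pi(\psi_b), \textup{Sym}(a)\rangle .
\]
Write $b = \chi_Q$ (up to the $\epsilon$ adjustment). Then $\langle \Pi(\psi_a), \textup{Sym}(b)\rangle = \sum_{v \mid Q}\Pi_v(\psi_a)$, and by Proposition \ref{prop: detecting locally} this is $\sum_{v\mid Q}\inv_v\theta(\psi_a)$.

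The plan is to show that both sides equal a single global invariant, namely a sum of local invariants of a cup product
\[
\psi_a \cup \psi_b \in H^2\bigl(G_{\FF_q(T)}, N_i(\chi)[2^{n+1}]^{\otimes 2}\bigr) \to H^2\bigl(G_{\FF_q(T)}, \tfrac{1}{2^{n+1}}\Z/\Z\,(1)\bigr).
\]
This uses the perfect pairing $T_i(\chi)\otimes T_i(\chi)\to\Z_2(1)$: the $\chi$-twists cancel, and $q_0^2 = q$ gives the Tate twist. Alternatively, in Flach/Poonen--Stoll style, the pairing is identified with a Cassels--Tate pairing for a self-dual module.

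Concretely, we would verify a local identity at each $v\in\textup{Ram}(\chi_D)$ relating
\[
\inv_v(\theta(\psi_a))\cdot b(\sigma_v) \quad\text{and}\quad \inv_v(\psi_a\cup\psi_b) .
\]
This is done by computing on the generators $\sigma_v$ and $\Frob_v(\chi)$ of the local tame group using Proposition \ref{Structure of tame inertia} and the formula for $\Pi_v$. At unramified places the local invariant vanishes because both cocycles are unramified there. Summing via Hilbert reciprocity, and then using that the cup product pairing is symmetric or antisymmetric with a sign absorbed by $q\equiv 1 \bmod 8$, yields the required equality. The antisymmetry correction contributes $\psi\cup\psi$ terms; these vanish because the induced form is alternating on $2^{n+1}$-torsion.

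\textbf{Expected obstacle.} The hard part is this local computation at $\infty$ and at even-degree primes, where the $(iq_0)^{\deg v}$ factors and the choice $\Frob_v(\chi)$ interact with the $e_\infty$ correction in $\textup{Sym}$. The first thing to try is direct computation in the explicit local group
\[
N_i(\chi)[2^{n+2}]\rtimes(\FF_2\times\Z_2),
\]
mirroring the proof of Proposition \ref{prop: detecting locally}.
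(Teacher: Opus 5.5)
Your outline is the paper's proof. The base case is read off from Proposition \ref{pRedei}. The kernel argument, using that $\chi_D$ (resp.\ $\chi_D+\chi_{\epsilon_{\FF_q}}$) and $r_\infty(\chi_D)$ pair trivially, passes from symmetry at level $n$ to $\Sym(\chi_D)(C_{n+1}^{(i)}(\chi_D)^\circ)=D_{n+1}^{(i)}(\chi_D)^\circ$. For $n\ge1$, you apply Hilbert reciprocity to $\psi_a\cup\psi_b$, viewed in $H^2(G_{\FF_q(T)},\Q_2/\Z_2(1))$ via Proposition \ref{prop: main symmetry source}, with vanishing outside $\mathrm{Ram}(\chi_D)$.

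What you have not supplied is the local identity on which the main step rests:
\[
\inv_v(\psi_a\cup\psi_b)=\Pi_v(\psi_a)\,b(\sigma_v)+\Pi_v(\psi_b)\,a(\sigma_v)\qquad (v\in\mathrm{Ram}(\chi_D)).
\]
The reason you give for it is not valid. Graded commutativity only yields $\langle x,y\rangle_v=-\langle y,x\rangle_v$, which is vacuous on these $2$-torsion values, and $q\equiv1\bmod 8$ absorbs no sign here. Alternation is also not a formal consequence of a symmetric coefficient pairing at the prime $2$: already for $\FF_2$-coefficients one has $x\cup x=x\cup\chi_{-1}$. Yet the diagonal terms are decisive. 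A nonzero $\langle\rho_{n+1},\rho_{n+1}\rangle_v$ would add $a(\sigma_v)b(\sigma_v)$ to the local formula. A nonzero $\langle\chi_{\epsilon_v},\chi_{\epsilon_v}\rangle_v$ would add $\Pi_v(\psi_a)\Pi_v(\psi_b)$. Neither cancels after summing over $v$, so the argument would no longer prove symmetry.

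The paper proves the identity uniformly for every $v\in\mathrm{Ram}(\chi_D)$. So $\infty$ and even-degree places need no special treatment, and no computation in $N_i(\chi)[2^{n+2}]\rtimes(\FF_2\times\Z_2)$ is required.
\begin{itemize}
\item By Proposition \ref{prop: computing coordinates locally}, $H^1(G_{\FF_q(T)_v},N_i(\chi_D)[2^{n+1}])=\FF_2\chi_{\epsilon_v}\oplus\FF_2\rho_{n+1}$, with coordinates $\psi\mapsto(\Pi_v(\psi),\mathrm{Ev}(\psi,\sigma_v))$. Moreover $\mathrm{Ev}(\psi_a,\sigma_v)=a(\sigma_v)$.
\item Proposition \ref{prop: computing local cups} computes the Gram matrix. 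First, $\langle\chi_{\epsilon_v},\chi_{\epsilon_v}\rangle_v=0$, since this class is inflated from the residue field.
\item Next, $\langle\chi_{\epsilon_v},\rho_{n+1}\rangle_v=1$. The adjunction with the inclusion $N_i(\chi_D)[2]\hookrightarrow N_i(\chi_D)[2^{n+1}]$ reduces this to $\langle\chi_{\epsilon_v},\rho_1\rangle_{v,1}=\langle\chi_{\epsilon_v},\chi_D\rangle_{v,1}=1$.
\item Finally, $\langle\rho_{n+1},\rho_{n+1}\rangle_v=0$. Indeed $\rho_{n+1}=\delta(\tfrac12)$ for the sequence $0\to N_i(\chi_D)[2^{n+1}]\to N_i(\chi_D)[2^{2n+2}]\to N_i(\chi_D)[2^{n+1}]\to0$. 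Hence $\langle\rho_{n+1},\rho_{n+1}\rangle_v=\delta(\rho_{n+1})\cup\tfrac12$, and $\delta(\rho_{n+1})=0$ because $\rho_{n+1}=2^{n+1}\rho_{2n+2}$.
\end{itemize}
This divisibility of the systematic cocycle is the concrete input your ``alternating'' claim was standing in for. With it, summing the displayed identity over $v$ gives $\textup{Art}^{(i)}_{D,n+1}(a,\Sym(\chi_D)(b))=\textup{Art}^{(i)}_{D,n+1}(b,\Sym(\chi_D)(a))$ directly.
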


The proof of Theorem \ref{thm: symmetry} will require several preliminary results. Recall that the multiplication pairing $\Q_2 \times \Q_2 \to \Q_2$ induces, for every nonnegative integer $n$, a perfect symmetric pairing on 
$$
\frac{\frac{1}{2^n} \cdot \Z_2}{\Z_2} \times \frac{\frac{1}{2^n} \cdot \Z_2}{\Z_2} \to \frac{\Q_2}{\Z_2},
$$
given by
$$
(a, b) \mapsto \frac{1}{2^n} \cdot \left((2^n \cdot a) \cdot (2^n \cdot b) \right)
$$
for representatives $a, b \in \Q_2$. One can verify that this induced pairing does not depend on the choice of the representatives and is perfect and symmetric. 

\begin{proposition} 
\label{prop: main symmetry source}
For $\chi \in \Gamma_{\mathbb{F}_2}(\mathbb{F}_q(T))$ and $i \in \{1,-1\}$ the bilinear pairing $\mathbb{Z}_2 \times \mathbb{Z}_2 \to \mathbb{Z}_2$, given by multiplication, induces a $G_{\mathbb{F}_q(T)}$-equivariant perfect pairing
$$
T_i(\chi) \times T_i(\chi) \to \mathbb{Z}_2(1).
$$
Identifying $N_i(\chi)[2^n]$ and $T_i(\chi)/2^n$ through the multiplication by $2^n$ on $V_i(\chi)$, we have that this naturally induces a $G_{\mathbb{F}_q(T)}$-equivariant perfect pairing
$$
N_{i}(\chi)[2^n] \times N_i(\chi)[2^n] \to \frac{2^{-n} \mathbb Z_2}{\mathbb{Z}_2}(1).
$$
\end{proposition}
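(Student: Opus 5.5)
The plan is to verify the Galois equivariance of the multiplication pairing directly on the generators of the action, then deduce the finite-level statement by reduction. Throughout, $T_i(\chi)$ is $\Z_2$ with $G_{\mathbb{F}_q(T)}$ acting through a character
$$
g \mapsto (-1)^{\chi(g)} \cdot (i q_0)^{\chi_{\Frob}(g)},
$$
obtained by inflation from $G_{\mathbb{F}_q}$ and twisted by $\chi$. The target $\Z_2(1)$ carries the cyclotomic character. On $\mathbb{F}_q^{\text{sep}}(T)$, the cyclotomic character factors through $G_{\mathbb{F}_q}$, and $\Frob_q$ acts on $2$-power roots of unity by $q$-powering. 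So $\Z_2(1)$ is $\Z_2$ with $g$ acting by $q^{\chi_{\Frob}(g)}$; this is consistent with $\chi_{\Frob}$ taking values in $\Z_2$, because $q \equiv 1 \bmod 8$ makes $q^{x}$ continuous in $x \in \Z_2$.

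Let $\lambda(g)$ denote the scalar by which $g$ acts on $T_i(\chi)$. For $a, b \in T_i(\chi)$ we have $(ga)(gb) = \lambda(g)^2 ab$. Now compute
$$
\lambda(g)^2 = (-1)^{2\chi(g)} \cdot (i q_0)^{2\chi_{\Frob}(g)} = (i^2 q_0^2)^{\chi_{\Frob}(g)} = q^{\chi_{\Frob}(g)},
$$
using $i^2 = 1$ and $q_0^2 = q$. This is exactly the action of $g$ on $\Z_2(1)$, so the multiplication pairing is $G_{\mathbb{F}_q(T)}$-equivariant. The pairing is perfect because multiplication $\Z_2 \times \Z_2 \to \Z_2$ is perfect: $a \mapsto (b \mapsto ab)$ identifies $\Z_2$ with $\Hom_{\Z_2}(\Z_2, \Z_2)$.

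For the finite level, reduce the pairing modulo $2^n$ to get a pairing $T_i(\chi)/2^n \times T_i(\chi)/2^n \to \Z_2/2^n(1)$. This is well defined because $2^n \Z_2 \cdot \Z_2 \subseteq 2^n \Z_2$, and it is equivariant because reduction commutes with the Galois action. It is perfect because multiplication on $\Z/2^n$ is a perfect pairing.

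Next, transport the pairing through multiplication by $2^n$ on $V_i(\chi)$. This map identifies $N_i(\chi)[2^n] = 2^{-n}T_i(\chi)/T_i(\chi)$ with $T_i(\chi)/2^n$, and it is equivariant because it is a scalar. On the target side, identify $\Z_2/2^n(1)$ with $(2^{-n}\Z_2/\Z_2)(1)$ via division by $2^n$. Composing these identifications gives exactly the formula
$$
(a, b) \mapsto 2^{-n}\bigl((2^n a)(2^n b)\bigr)
$$
recalled before the statement. That formula was already noted to be independent of representatives, perfect and symmetric, and the argument above adds equivariance.

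The only point needing care is the description of $\Z_2(1)$, namely that $G_{\mathbb{F}_q(T)}$ acts on it through $q^{\chi_{\Frob}}$. This holds because all $2$-power roots of unity lie in $\mathbb{F}_{q^{2^\infty}}$, on which $g$ acts as $\Frob_q^{\chi_{\Frob}(g)}$ by the defining property of $\chi_{\Frob}$. The quadratic twist by $\chi$ drops out because $(-1)^2 = 1$, and the sign $i$ drops out for the same reason.
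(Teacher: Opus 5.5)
Your proof is correct and follows essentially the same route as the paper: you compute that $g$ acts on $T_i(\chi)$ by $(-1)^{\chi(g)}(iq_0)^{\chi_{\Frob}(g)}$, and squaring gives $q^{\chi_{\Frob}(g)}$, which is the cyclotomic action; you then pass to $N_i(\chi)[2^n]$ via scaling by $2^n$, which commutes with the Galois action. Your extra justifications fill in details the paper leaves implicit: the cyclotomic action on $\Z_2(1)$ via $\mu_{2^\infty}\subseteq\mathbb{F}_{q^{2^\infty}}$, and perfectness via $\Z_2\cong\Hom_{\Z_2}(\Z_2,\Z_2)$ rather than the integral-domain remark.
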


\begin{proof}
Clearly, the multiplication pairing on $\Z_2$ is a perfect bilinear pairing since $\Z_2$ is an integral domain. We now check Galois equivariance of the induced pairing
$$
T_i(\chi) \times T_i(\chi) \to \Z_2(1).
$$
Let $g \in G_{\mathbb{F}_q(T)}$ and let us recall that $g$ acts on $\Z_2(1)$ by multiplication by $q^{\chi_{\Frob}(g)}$. On the other hand $g$ acts on $T_i(\chi)$ by multiplication by $(i \cdot q_0)^{\chi_{\Frob}(g)} \cdot (-1)^{\chi(g)}$. Hence we have that
\begin{align*}
((i \cdot q_0)^{\chi_{\Frob}(g)} \cdot (-1)^{\chi(g)} \cdot a, (i \cdot q_0)^{\chi_{\Frob}(g)} \cdot (-1)^{\chi(g)} \cdot b) &\mapsto (i \cdot q_0)^{2 \cdot \chi_{\Frob}(g)} \cdot (-1)^{2 \cdot \chi(g)} \cdot a \cdot b \\
&= q^{\chi_{\Frob}(g)} \cdot (a \cdot b),
\end{align*}
which establishes the desired Galois equivariance. 

The induced pairing on $V_i(\chi) \times V_i(\chi) \to \Q_2(1)$ is also a perfect symmetric pairing, and one can check that the procedure to get a pairing on 
$$
N_i(\chi)[2^n] \times N_i(\chi)[2^n] \to \frac{\Q_2}{\Z_2}(1),
$$
as explained right above the proposition, is also Galois equivariant, since it is given by scaling both coordinates by $2^n$, which commutes with the action of Galois. 
\end{proof}

Thanks to Proposition \ref{prop: main symmetry source}, we have an identification $\lambda \colon N_i(\chi)[2^n] \to (N_i(\chi)[2^n])^\ast$ of $G_{\mathbb{F}_q(T)}$-modules. 

Let $n \geq 1$. We will start by giving, for each $v \in \text{Ram}(\chi)$, an explicit description of 
$$
H^1(G_{\mathbb{F}_q(T)_v}, N_i(\chi)[2^n]).
$$
We apply inflation-restriction with respect to the inertia subgroup $I_v$ at $v$. Observe that $N_i(\chi)^{I_v} = N_i(\chi)[2]$ and $H^2(G_{\mathbb{F}_{q_v}}, N_i(\chi)[2]) = 0$. Hence the inflation-restriction sequence yields
\begin{equation}
\label{eInfRes}
0 \to H^1(G_{\mathbb{F}_{q_v}}, N_i(\chi)[2]) \to H^1(G_{\mathbb{F}_q(T)_v}, N_i(\chi)[2^n]) \to H^1(I_v, N_i(\chi)[2^n])^{G_{\mathbb{F}_{q_v}}} \to 0.
\end{equation}
We introduce the shorthand $\epsilon_v := \epsilon_{\mathbb{F}_{q_v}}$. With this notation, the image of the first map equals precisely $\langle \chi_{\epsilon_v} \rangle$. But $H^1(I_v, N_i(\chi)[2^n])^{G_{\mathbb{F}_{q_v}}}$ is a group of order $2$. Notice now that the restriction of $\rho_n$ to $G_{\mathbb{F}_q(T)_v}$ gives a splitting of the exact sequence, so that we can decompose
$$
H^1(G_{\mathbb{F}_q(T)_v}, N_i(\chi)[2^n]) = \mathbb{F}_2 \cdot \chi_{\epsilon_v} \oplus \mathbb{F}_2 \cdot \rho_n.
$$
We now have two key computations. Note that evaluation at $\sigma_v$ on $H^1(I_v, N_i(\chi)[2^n])$ gives an element in $N_i(\chi)[2^n]/N_i(\chi)[2^{n - 1}] \cong \mathbb{F}_2$. We will write this element as $\mathrm{Ev}(\psi, \sigma_v) \in \mathbb{F}_2$.

\begin{proposition} 
\label{prop: computing coordinates locally}
Let $n \in \Z_{\geq 1}$. Let $v \in \textup{Ram}(\chi)$. We have for all $\psi \in H^1(G_{\mathbb{F}_q(T)_v}, N_i(\chi)[2^n])$
$$
\psi = \Pi_v(\psi) \cdot \chi_{\epsilon_v} + \mathrm{Ev}(\psi, \sigma_v) \cdot \rho_n.
$$
\end{proposition}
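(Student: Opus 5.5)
Since both sides are additive in $\psi$ and the local group $H^1(G_{\mathbb{F}_q(T)_v}, N_i(\chi)[2^n])$ is the two-dimensional space $\mathbb{F}_2 \cdot \chi_{\epsilon_v} \oplus \mathbb{F}_2 \cdot \rho_n$ from \eqref{eInfRes}, we argue in two steps. First we check that the two maps
$$
\psi \mapsto \Pi_v(\psi), \qquad \psi \mapsto \mathrm{Ev}(\psi, \sigma_v)
$$
are well-defined $\mathbb{F}_2$-linear functionals on local cohomology classes. Then we evaluate them on the basis $\{\chi_{\epsilon_v}, \rho_n\}$ and show they are its dual basis. The formula then follows by linearity.

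\textbf{Well-definedness and linearity.} For $\mathrm{Ev}$: a coboundary $g \mapsto g m - m$ evaluated at $\sigma_v$ gives $-2m$, since $\chi(\sigma_v)=1$ and $\sigma_v$ acts trivially on the Frobenius part. This lies in $N_i(\chi)[2^{n-1}]$, so $\mathrm{Ev}$ descends to cohomology. For $\Pi_v$, here read on local cocycles with $\Frob_v(\chi)$ and $\sigma_v$ interpreted through $i_v^\ast$: Proposition \ref{prop: detecting locally} identifies $\Pi_v(\psi)$ with the local invariant of the obstruction class $\theta(\psi)$. One must be careful, because $\theta$ is not additive in general. A cleaner route is to check directly that a coboundary contributes zero.

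Set $\lambda := (iq_0)^{\deg(v)}$. For the coboundary of $m$, its values are $-2m$ at $\sigma_v$ and $(\lambda-1)m$ at $\Frob_v(\chi)$, since $\chi(\Frob_v(\chi))=0$. Plugging these in gives
$$
\frac{\lambda-1}{2}\cdot(-2m)+(\lambda-1)m = 0.
$$
Linearity of $\Pi_v$ on cocycles is clear from its defining formula. One point needs attention: $\Pi_v$ was defined with values in $N_i(\chi)[2]$, using Proposition \ref{Prop: Inertia and frob commute}, which applies to any local cocycle. So it is defined on all of $H^1$ and not only on unramified-outside-$\mathrm{Ram}(\chi)$ classes.

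\textbf{Evaluation on the basis.} For $\chi_{\epsilon_v}$: it is unramified, so $\chi_{\epsilon_v}(\sigma_v)=0$ and $\mathrm{Ev}(\chi_{\epsilon_v},\sigma_v)=0$. Also $\Pi_v(\chi_{\epsilon_v}) = \chi_{\epsilon_v}(\Frob_v(\chi)) = 1$, because $\epsilon_v$ is a nonsquare in the residue field and $\Frob_v(\chi)$ restricts to the Frobenius on the residue field.

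For $\rho_n$: we have $\rho_n(\sigma_v) = \sigma_v\cdot 2^{-(n+1)} - 2^{-(n+1)} = -2^{-n}$. This generates $N_i(\chi)[2^n]$ modulo $N_i(\chi)[2^{n-1}]$, so $\mathrm{Ev}(\rho_n,\sigma_v)=1$. For $\Pi_v(\rho_n)$, we use that $\rho_n = 2\rho_{n+1}$ is globally divisible by $2$, so $\theta(\rho_n)=0$. Then Proposition \ref{prop: detecting locally} gives $\Pi_v(\rho_n)=\inv_v(\theta(\rho_n))=0$. As a cross-check, direct computation gives $(\lambda-1)2^{-(n+1)}$ from the Frobenius term and $-\frac{\lambda-1}{2}2^{-n}$ from the inertia term, which cancel.

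The only delicate step is confirming that $\Pi_v$ and $\mathrm{Ev}$ are honest linear functionals on local cohomology. In particular, $\Pi_v$ must be independent of the chosen local lifts $F,\sigma$. Proposition \ref{prop: detecting locally} gives this independence via the intrinsic interpretation as $\inv_v\theta$, and the coboundary computation above handles the passage from cocycles to classes.
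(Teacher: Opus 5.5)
Your proposal is correct and follows the paper's proof. Both arguments evaluate $\Pi_v$ and $\mathrm{Ev}(-,\sigma_v)$ on the basis $\{\chi_{\epsilon_v}, \rho_n\}$ coming from \eqref{eInfRes}, obtain $\Pi_v(\rho_n)=0$ from $\rho_n = 2\rho_{n+1}$ together with Proposition \ref{prop: detecting locally}, and conclude that the two functionals form the dual basis. Your explicit check that both functionals vanish on coboundaries, which the paper leaves implicit, is a useful addition. Your caveat about additivity is unnecessary, though: here $\theta$ is the connecting map of $0 \to N_i(\chi)[2] \to N_i(\chi)[2^{n+1}] \to N_i(\chi)[2^n] \to 0$, so it is additive.
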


\begin{proof}
Since $\rho_n = 2 \cdot \rho_{n + 1}$, it follows that $\Pi_v(\rho_n) = 0$ in view of Proposition \ref{prop: detecting locally}. At the same time, it follows from the definition of $\Pi_v$ that
$$
\Pi_v(\chi_{\epsilon_v}) = \chi_{\epsilon_v}(\Frob_v(\chi)) = 1.
$$
This shows that if $\psi = \lambda_1 \cdot \chi_{\epsilon_v} + \lambda_2 \cdot \rho_n$, then $\Pi_v(\psi) = \lambda_1$.

To compute $\lambda_2$, we return to the exact sequence \eqref{eInfRes}. Note that $\chi_{\epsilon_v}$ restricts trivially to $I_v$, while the image of $\rho_n$ in $H^1(I_v,N_i(\chi)[2^n])$ is exactly $\mathrm{Ev}(\rho_n, \sigma_v) = 1$. Thus, we have
$$
\lambda_2 = \mathrm{Ev}(\psi, \sigma_v). 
$$
This is precisely the desired conclusion. 
\end{proof}

Finally, we have a cup product pairing
$$
\langle -, - \rangle_{v, n} \colon H^1(G_{\mathbb{F}_q(T)_v}, N_i(\chi)[2^n]) \times H^1(G_{\mathbb{F}_q(T)_v}, N_i(\chi)[2^n]) \to \frac{\mathbb{Q}_2}{\mathbb{Z}_2}
$$
through the identification $\lambda$. Since $H^1(G_{\mathbb{F}_q(T)_v}, N_i(\chi)[2^n])$ is a vector space over $\mathbb{F}_2$ by Proposition \ref{prop: computing coordinates locally}, we have that $\langle -, - \rangle_{v, n}$ lands in the $2$-torsion subgroup $\frac{\frac{1}{2} \cdot \mathbb{Z}_2}{\mathbb{Z}_2} \cong \mathbb{F}_2$. We now compute this pairing. 

\begin{proposition} 
\label{prop: computing local cups}
Let $n \in \Z_{\geq 1}$. Let $v$ be a place in $\textup{Ram}(\chi)$. We have that $\langle \chi_{\epsilon_v}, \chi_{\epsilon_v} \rangle_{v, n} = \langle \rho_n, \rho_n \rangle_{v, n} = 0$ and $\langle \chi_{\epsilon_v}, \rho_n \rangle_{v, n} = 1$.
\end{proposition}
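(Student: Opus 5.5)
The plan is to compute the local cup products directly, working in the local pro-$2$ quotient $G_{\mathbb{F}_q(T)_v}(2)$ from Proposition \ref{Structure of tame inertia}. There are three values to find, and the strategy is to reduce each one to a pairing that is already understood.

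\textbf{First value.} For $\langle \chi_{\epsilon_v}, \chi_{\epsilon_v} \rangle_{v,n}$ I will use that $\chi_{\epsilon_v}$ takes values in $N_i(\chi)[2] = \mathbb{F}_2$. Through $\lambda$, the pairing of two $2$-torsion classes is the cup product of their images in $H^1(G_{\mathbb{F}_q(T)_v},\mathbb{F}_2)$, composed with $\mathbb{F}_2 \otimes \mathbb{F}_2 \to \frac{1}{2}\mathbb{Z}_2/\mathbb{Z}_2(1) = \mu_2$. So I must check that the inclusion $N_i(\chi)[2] \hookrightarrow N_i(\chi)[2^n]$ is compatible with $\lambda$ in this way; that comes down to a one-line check with the formula $(a,b) \mapsto 2^{-n}(2^n a)(2^n b)$. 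The value is then $\chi_{\epsilon_v} \cup \chi_{\epsilon_v}$, which is the Hilbert symbol $(\epsilon_v, \epsilon_v)_v$. This vanishes because both entries are units and the residue characteristic is odd; equivalently, both classes are unramified and $H^2(G_{\mathbb{F}_{q_v}}, -) = 0$.

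\textbf{Second value.} For $\langle \chi_{\epsilon_v}, \rho_n \rangle_{v,n}$ I will write $\rho_n$ as the coboundary-type cocycle $g \mapsto (g-1)\cdot 2^{-n-1}$. The cup product with the $2$-torsion class $\chi_{\epsilon_v}$ factors through the reduction of $\rho_n$ modulo $N_i(\chi)[2^{n-1}]$, which is where $\mathrm{Ev}(\cdot,\sigma_v)$ lives. By Proposition \ref{prop: computing coordinates locally}, that reduction is a class in $H^1(G_{\mathbb{F}_q(T)_v},\mathbb{F}_2)$ that is ramified, since $\mathrm{Ev}(\rho_n,\sigma_v) = 1$. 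So the pairing becomes the Hilbert symbol of $\epsilon_v$ against a uniformizer-type element, i.e.\ $(\epsilon_v, \pi)_v$, which equals $1$ because $\epsilon_v$ is a nonsquare unit. This is the standard unramified-against-ramified computation.

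A cleaner alternative is to use Proposition \ref{prop: detecting locally}. Local Tate duality identifies $\langle x, \rho_n\rangle$ with the obstruction to lifting. The maps $\Pi_v$ and $\mathrm{Ev}$ give dual coordinates, and since $\rho_n$ lifts to $\rho_{n+1}$ it is isotropic. Perfectness of the pairing (local Tate duality on the two-dimensional space) then forces $\langle \chi_{\epsilon_v},\rho_n\rangle_{v,n} = 1$ once the first and third values are known to vanish.

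\textbf{Third value (main obstacle).} The hardest step is $\langle \rho_n,\rho_n\rangle_{v,n} = 0$, because the coefficients genuinely have order $2^n$ and one cannot simply reduce mod $2$. The plan is to show that $\rho_n$ lies in the image of $H^1(G_{\mathbb{F}_q(T)_v}, N_i(\chi)[2^{n+1}])$ under multiplication by $2$, namely $\rho_n = 2\rho_{n+1}$. Then I use the compatibility of the cup product with the maps $N[2^{n+1}] \xrightarrow{2} N[2^n]$ and $N[2^n] \hookrightarrow N[2^{n+1}]$. This gives
\[
\langle 2\rho_{n+1}, \rho_n\rangle_{v,n} = \langle \rho_{n+1}, \iota\rho_n\rangle_{v,n+1}.
\]
Here $\iota\rho_n = 2\rho_{n+1}$ inside $N[2^{n+1}]$, so the right side equals $2\langle\rho_{n+1},\rho_{n+1}\rangle_{v,n+1}$.

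Iterating this identity would go up the tower indefinitely, so instead I will argue directly. The class $\rho_{n}$ restricted to $G_{\mathbb{F}_q(T)_v}$ is the Kummer-type class attached to the module itself. Its cup square is represented by the $2$-cocycle $(g,h) \mapsto \lambda(\rho(g), g\rho(h))$. Using the explicit local presentation with generators $\sigma$ and $F$ and the relation $F\sigma F^{-1} = \sigma^{q_v}$, I would evaluate the invariant via the same central-extension trick as in the proof of Proposition \ref{prop: detecting locally}. Concretely, I compute the commutator defect of lifts of $(\rho_n(\sigma_v),\rho_n(\Frob_v(\chi)))$ in the Heisenberg-type extension defined by the symmetric pairing, and show that it is $0$. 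This uses $\rho_n(\Frob_v(\chi)) = \frac{(iq_0)^{\deg v}-1}{2^{n+1}}$ together with the symmetry of the pairing. This explicit verification is where I expect the main difficulty to lie.
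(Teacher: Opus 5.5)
\textbf{The gap is in the third value.} As written, you never establish $\langle\rho_n,\rho_n\rangle_{v,n}=0$. You set up the right reduction and then drop it in favour of an explicit ``Heisenberg-type'' invariant computation. That computation is only announced, and it is unclear how it would run: the representing cocycle $(g,h)\mapsto\lambda(\rho_n(g),g\rho_n(h))$ is $2^{-n}\Z_2/\Z_2(1)$-valued, whereas Lemma \ref{lInvariantMap} and the commutator trick of Proposition \ref{prop: detecting locally} are built for $\mu_2$-valued data.

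The missing observation is that your identity already ends the proof, with no tower to climb. Write $\iota\colon N_i(\chi)[2^n]\hookrightarrow N_i(\chi)[2^{n+1}]$ for your inclusion. The cocycle $\iota\rho_n$, given by $g\mapsto g\cdot 2^{-n-1}-2^{-n-1}$, is the coboundary of the constant $0$-cochain $2^{-n-1}\in N_i(\chi)[2^{n+1}]$. So it vanishes in $H^1(G_{\mathbb{F}_q(T)_v},N_i(\chi)[2^{n+1}])$, and hence $\langle\rho_n,\rho_n\rangle_{v,n}=\langle\rho_{n+1},\iota\rho_n\rangle_{v,n+1}=0$. Said differently, $2\langle\rho_{n+1},\rho_{n+1}\rangle_{v,n+1}=0$ because $\langle-,-\rangle_{v,n+1}$ lands in $\frac{1}{2}\Z_2/\Z_2$. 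This holds because $H^1(G_{\mathbb{F}_q(T)_v},N_i(\chi)[2^{n+1}])$ is an $\mathbb{F}_2$-vector space, by Proposition \ref{prop: computing coordinates locally} applied with $n+1$.

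\textbf{Comparison and smaller points.} Completed this way, your argument for the third value differs from the paper's. The paper writes $\rho_n=\delta(1/2)$ for the connecting map of $0\to N_i(\chi)[2^n]\to N_i(\chi)[2^{2n}]\to N_i(\chi)[2^n]\to 0$. It then moves $\delta$ across the cup product via \cite[Corollary 1.4.6]{NSW} and concludes from $\delta(\rho_n)=0$, which holds because $\rho_n=2^n\rho_{2n}$. Your route needs only that the coefficient pairing of Proposition \ref{prop: main symmetry source}, $(a,b)\mapsto 2^n ab$ on $N_i(\chi)[2^n]$, satisfies $(2a,b)_n=(a,\iota b)_{n+1}$, since both sides equal $2^{n+1}ab$. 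This is the same cochain-level adjunction the paper uses for the mixed value, and it avoids connecting homomorphisms; the two arguments are equally short.

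Your second value is the paper's argument: pairing against a class valued in $N_i(\chi)[2]$ only sees $2^{n-1}\rho_n=\rho_1$, which is ramified at $v$. Your ``cleaner alternative'' presupposes the isotropy of $\rho_n$, so it is not independent of the third value.

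In the first value, the compatibility you claim holds only for $n=1$. For $n\geq 2$ the pairing $(a,b)\mapsto 2^nab$ vanishes identically on $N_i(\chi)[2]\times N_i(\chi)[2]$, which makes $\langle\chi_{\epsilon_v},\chi_{\epsilon_v}\rangle_{v,n}=0$ trivial there. Your inflation argument covers all $n$ in any case.
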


\begin{proof}
The class $\chi_{\epsilon_v} \cup \chi_{\epsilon_v}$ comes by inflation from $H^2(G_{\mathbb{F}_{q_v}}, \frac{\Q_2}{\Z_2}(1)) = 0$, giving the first vanishing. To see that $\langle \chi_{\epsilon_v}, \rho_n \rangle_{v, n} = 1$, write $\iota \colon N_i(\chi)[2] \rightarrow N_i(\chi)[2^n]$ for the inclusion and consider the diagram
$$
\begin{tikzcd}[column sep=0.1in]
H^1(G_{\mathbb{F}_q(T)_v}, N_i(\chi)[2]) \ar[d, "\iota"] & \times & H^1(G_{\mathbb{F}_q(T)_v}, N_i(\chi)[2]) & \to & \Q_2/\Z_2 \ar[d, "="] \\
H^1(G_{\mathbb{F}_q(T)_v}, N_i(\chi)[2^n]) & \times & H^1(G_{\mathbb{F}_q(T)_v}, N_i(\chi)[2^n]) \ar[u, "\iota^\ast"] & \to & \Q_2/\Z_2
\end{tikzcd}
$$
which gives $\langle \iota(x), y \rangle_{v, n} = \langle x, \iota^\ast(y) \rangle_{v, 1}$ by a direct cochain computation. We apply this with $x = \chi_{\epsilon_v}$ and $y = \rho_n$ to deduce that
$$
\langle \chi_{\epsilon_v}, \rho_n \rangle_{v, n} = \left \langle \chi_{\epsilon_v}, \chi + \frac{1 - i}{2} \cdot \deg(v) \cdot \chi_{\epsilon_v} \right \rangle_{v, 1} = \left \langle \chi_{\epsilon_v}, \chi \right \rangle_{v, 1} = 1.
$$
We now proceed to show that $\langle \rho_n, \rho_n \rangle_{v, n} = 0$. We are going to apply \cite[Corollary 1.4.6]{NSW} with $C := \frac{\Q_2}{\Z_2}(1)$ and the exact sequence
$$
0 \to N_i(\chi)[2^n] \to N_i(\chi)[2^{2n}] \to N_i(\chi)[2^n] \to 0.
$$
Through our identification $\lambda$, this yields the diagram
$$
\begin{tikzcd}[column sep=0.1in]
H^1(G_{\mathbb{F}_q(T)_v}, N_i(\chi)[2^n]) \ar[d, "\delta"] & \times & H^1(G_{\mathbb{F}_q(T)_v}, N_i(\chi)[2^n]) & \to & \mathrm{Br}(\mathbb{F}_q(T)_v) \ar[d, "="] \\
H^2(G_{\mathbb{F}_q(T)_v}, N_i(\chi)[2^n]) & \times & H^0(G_{\mathbb{F}_q(T)_v}, N_i(\chi)[2^n]) \ar[u, "\delta"] & \to & \mathrm{Br}(\mathbb{F}_q(T)_v).
\end{tikzcd}
$$
Observe that $\rho_n = \delta(1/2)$. From the diagram, we thus obtain 
$$
\langle \rho_n, \rho_n \rangle_{v, n} = \langle \rho_n, \delta(1/2) \rangle_{v, n} = \delta(\rho_n) \cup (1/2).
$$
We now show that $\delta(\rho_n) = 0$ as an element of $H^2(G_{\mathbb{F}_{q}(T)_v}, N_i(\chi)[2^n])$. But indeed, a direct computation shows that $2^n \cdot \rho_{2n} = \rho_n$, and thus the desired vanishing follows from the long exact sequence.
\end{proof}

\begin{proof}[Proof of Theorem \ref{thm: symmetry}] 
Theorem \ref{thm: symmetry} follows from Proposition \ref{pRedei} for $n = 0$. We henceforth assume that $n \geq 1$. We begin with the following claim. 

\begin{claim} 
Let $\chi_1, \chi_2$ be two elements of $C_n^{(i)}(\chi_D)^\circ$ such that $\Sym(\chi_D)(\chi_1)$ and $\Sym(\chi_D)(\chi_2)$ are both elements of $D_n^{(i)}(\chi_D)$. Then 
$$
\textup{Art}_{D, n + 1}^{(i)}(\chi_1, \Sym(\chi_D)(\chi_2)) = \textup{Art}_{D, n + 1}^{(i)}(\chi_2, \Sym(\chi_D)(\chi_1)).
$$
\end{claim}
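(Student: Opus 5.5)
The plan is to obtain the symmetry from global reciprocity for the cup product of two lifts, computed through the self-duality $\lambda$ of Proposition \ref{prop: main symmetry source}. Choose $\psi_1, \psi_2 \in B_i(\chi_D)[2^{n+1}]$ with $2^n \cdot \psi_j = \chi_j$. Since $\Sym(\chi_D)(\chi_j) \in D_n^{(i)}(\chi_D)$ by hypothesis, Proposition \ref{prop: elementary def of pairing} gives
$$
\textup{Art}_{D, n + 1}^{(i)}(\chi_1, \Sym(\chi_D)(\chi_2)) = \sum_{v \in \textup{Ram}(\chi_D)} \Pi_v(\psi_1) \cdot \Sym(\chi_D)(\chi_2)_v,
$$
and symmetrically with the indices swapped. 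The goal is to recognize the sum of these two expressions as $\sum_v \textup{inv}_v(\psi_1 \cup \psi_2)$, where the cup product is formed via $\lambda$ with values in $\frac{\Q_2}{\Z_2}(1)$. Hilbert/Poitou--Tate reciprocity then forces this sum to vanish.

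\textbf{Step 1: identify the coordinates of $\Sym(\chi_D)(\chi_2)$.} I will show that $\Sym(\chi_D)(\chi_2)_v = \mathrm{Ev}(\psi_2, \sigma_v)$ for every $v \in \textup{Ram}(\chi_D)$, where $\mathrm{Ev}$ is taken at level $n+1$.

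1. Since $n \geq 1$, Remark \ref{rmk: identifying spaces} shows that $\chi_2 = \chi_P$ with $P$ monic of even degree. Hence $\Sym(\chi_D)(\chi_2) = \sum_{v \in \textup{Ram}(\chi_P)} e_v$, and its $v$-coordinate is $\chi_2(\sigma_v)$; this includes $v = \infty$, where both sides vanish.
2. The value $\psi_2(\sigma_v) \in N_i(\chi_D)[2^{n+1}]$ maps under multiplication by $2^n$ to $\chi_2(\sigma_v) \in N_i(\chi_D)[2]$.
3. Because $v$ ramifies in $\chi_D$, the element $\sigma_v$ acts by $-1$. So $\psi_2(\sigma_v)$ is determined up to $(\sigma_v - 1)$-coboundaries, i.e. up to $2 N_i(\chi_D)[2^{n+1}]$. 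Its class in $N_i(\chi_D)[2^{n+1}]/N_i(\chi_D)[2^n] \cong \mathbb{F}_2$ is exactly $\chi_2(\sigma_v)$.

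\textbf{Step 2: compute the local cup products at ramified places.} For $v \in \textup{Ram}(\chi_D)$, Proposition \ref{prop: computing coordinates locally} (applied with $n+1$) gives locally
$$
\psi_j = \Pi_v(\psi_j)\, \chi_{\epsilon_v} + \mathrm{Ev}(\psi_j, \sigma_v)\, \rho_{n+1}.
$$
Expanding bilinearly and using Proposition \ref{prop: computing local cups}, the local pairing is
$$
\langle \psi_1, \psi_2 \rangle_{v, n+1} = \Pi_v(\psi_1)\, \mathrm{Ev}(\psi_2, \sigma_v) + \Pi_v(\psi_2)\, \mathrm{Ev}(\psi_1, \sigma_v).
$$
No sign issue arises here, since the local classes are $2$-torsion and the pairing lands in $\tfrac{1}{2}\Z_2/\Z_2$.

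\textbf{Step 3: the unramified places contribute nothing.} For $v \notin \textup{Ram}(\chi_D)$, both $\psi_j$ vanish on $\sigma_v$ and the module is unramified at $v$. Hence $\psi_1 \cup \psi_2$ is inflated from $H^2(G_{\mathbb{F}_{q_v}}, \frac{\Q_2}{\Z_2}(1)) = 0$.

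\textbf{Step 4: conclude.} By the reciprocity law for the global class $\psi_1 \cup \psi_2 \in H^2(G_{\mathbb{F}_q(T)}, \frac{\Q_2}{\Z_2}(1))$, the sum of local invariants is $0$. Combining with Steps 1–3 yields
$$
\textup{Art}_{D, n + 1}^{(i)}(\chi_1, \Sym(\chi_D)(\chi_2)) + \textup{Art}_{D, n + 1}^{(i)}(\chi_2, \Sym(\chi_D)(\chi_1)) = 0,
$$
which is the claim.

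\textbf{Main obstacle.} I expect Step 1 to be the delicate point. One must correctly match the coordinates of $\Sym(\chi_D)$ with $\mathrm{Ev}(\psi_j, \sigma_v)$, including the place $\infty$ and the role of $\chi_{\epsilon_{\mathbb{F}_q}}$; the even-degree and monicity from Remark \ref{rmk: identifying spaces} is what should make the $\infty$-coordinate harmless. A secondary check is that the local pairing $\langle -, - \rangle_{v, n+1}$ really computes $\textup{inv}_v$ of the global cup product formed through $\lambda$.
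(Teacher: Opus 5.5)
Your proposal is correct and follows essentially the same route as the paper: lift to $\psi_1,\psi_2 \in B_i(\chi_D)[2^{n+1}]$, apply Hilbert reciprocity to $\psi_1 \cup \psi_2$ formed via $\lambda$, discard the places outside $\textup{Ram}(\chi_D)$ by inflation from the residue field, and expand locally using Propositions \ref{prop: computing coordinates locally} and \ref{prop: computing local cups} at level $n+1$. Your Step 1 identifies $\mathrm{Ev}(\psi_j,\sigma_v)$ with $\chi_j(\sigma_v)$, and hence with the $v$-coordinate of $\Sym(\chi_D)(\chi_j)$, using that $\chi_j$ is monic of even degree when $n \geq 1$; this only makes explicit what the paper uses silently when it writes the resulting sums over $\textup{Ram}(\chi_1)$ and $\textup{Ram}(\chi_2)$.
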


\begin{proof}[Proof of Claim] By definition of $C_n^{(i)}(\chi_D)$, we can find cocycles $\psi_1, \psi_2 \in B_i(\chi_D)[2^{n + 1}]$ such that $2^n \cdot \psi_j = \chi_j$. Via $\lambda$ we can view $\psi_1 \cup \psi_2$ as an element of $H^2(G_{\mathbb{F}_q(T)}, \frac{\Q_2}{\Z_2}(1))$. Therefore Hilbert reciprocity gives that
$$
\sum_{v \in \Omega_{\mathbb{F}_q(T)}} \inv_v(\psi_1 \cup \psi_2) = 0.
$$
The invariant map vanishes outside of $\text{Ram}(\chi_D)$, as in that case the cocycles $\psi_1$ and $\psi_2$ are unramified thanks to the definition of $B_i(\chi_D)$. Thus $\psi_1 \cup \psi_2$ comes from $H^2(G_{\mathbb{F}_{q_v}}, \frac{\Q_2}{\Z_2}(1)) = 0$ by inflation. Hence we conclude that 
$$
\sum_{v \in \text{Ram}(\chi_D)} \inv_v(\psi_1 \cup \psi_2) = 0.
$$
Applying Proposition \ref{prop: computing coordinates locally} and then Proposition \ref{prop: computing local cups} (both with $n + 1$ in place of $n$), we deduce that
$$
\sum_{v \in \text{Ram}(\chi_1)} \Pi_v(\psi_2) + \sum_{w \in \text{Ram}(\chi_2)} \Pi_w(\psi_1) = 0.
$$
For $n \geq 1$, the first summand equals precisely $\text{Art}_{D, n + 1}^{(i)}(\chi_2, \Sym(\chi_D)(\chi_1))$, while the second equals precisely $\text{Art}_{D, n + 1}^{(i)}(\chi_1, \Sym(\chi_D)(\chi_2))$. Therefore we conclude that
$$
\text{Art}_{D, n + 1}^{(i)}(\chi_1, \Sym(\chi_D)(\chi_2)) = \text{Art}_{D, n + 1}^{(i)}(\chi_2, \Sym(\chi_D)(\chi_1)),
$$
as desired. 
\end{proof}

Having proved the claim above, we now prove Theorem \ref{thm: symmetry} by induction. Observe that $\Sym(\chi_D)$ induces an isomorphism between $C_0^{(i)}(\chi_D)^\circ$ and $D_0^{(i)}(\chi_D)^\circ$. However, if we know that $\Sym(\chi_D)$ maps $C_n^{(i)}(\chi_D)^\circ$ isomorphically to $D_n^{(i)}(\chi_D)^\circ$ for a given value of $n$, then we deduce from the claim that the induced pairing 
$$
\text{Art}_{D, n + 1}^{(i)} \circ (\text{id} \times \Sym(\chi_D))
$$
is symmetric and therefore the left kernel and right kernel coincide. The left kernel is by Proposition \ref{prop: elementary def of pairing} precisely $C_{n + 1}^{(i)}(\chi_D)^\circ$, while, still by Proposition \ref{prop: elementary def of pairing}, the right kernel equals $\Sym(\chi_D)^{-1}(D_{n + 1}^{(i)}(\chi_D)^\circ)$. We conclude that $\Sym(\chi_D)$ maps $C_{n + 1}^{(i)}(\chi_D)^\circ$ isomorphically to $D_{n + 1}^{(i)}(\chi_D)^\circ$, completing the induction.
\end{proof}

We conclude with the following rephrasing of Proposition \ref{pNonCrit}.

\begin{proposition} 
\label{pNonCrit2}
Let $\chi$ be a monic element of $\Gamma_{\mathbb{F}_2}^{\textup{im}}(\mathbb{F}_q(T))$. Then $L\left(\frac{1}{2},\chi\right) \neq 0$ if there exists a nonnegative integer $n$ such that for all $i \in \{1, -1\}$ we have 
$$
C_n^{(i)}(\chi)^\circ = \{0\}.
$$
\end{proposition}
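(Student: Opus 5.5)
The plan is to reduce Proposition \ref{pNonCrit2} directly to Proposition \ref{pNonCrit}, using the splittings from Remark \ref{rmk: identifying spaces}. Write $\chi = \chi_D$ with $D$ monic, squarefree and of odd degree. Odd degree is automatic here: $\chi$ is imaginary, so $\infty \in \textup{Ram}(\chi)$, and for monic $D$ this means $\deg D$ is odd. This is exactly the setting of Section \ref{Section: first Artin pairing}.

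Suppose that for some $n \geq 0$ we have $C_n^{(i)}(\chi)^\circ = \{0\}$ for both $i \in \{1,-1\}$. By Remark \ref{rmk: identifying spaces} there are, for every $n \geq 0$, decompositions
$$
C_n^{(1)}(\chi_D) = \mathbb{F}_2 \cdot \chi_D \oplus C_n^{(1)}(\chi_D)^\circ, \qquad C_n^{(-1)}(\chi_D) = \mathbb{F}_2 \cdot (\chi_D + \chi_{\epsilon_{\mathbb{F}_q}}) \oplus C_n^{(-1)}(\chi_D)^\circ.
$$
Both systematic elements genuinely lie in $C_n^{(i)}(\chi_D)$ by Proposition \ref{prop: trivial elemts}(b). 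Setting the $\circ$-parts to zero gives
$$
C_n^{(i)}(\chi) = \left\langle \chi + \tfrac{1-i}{2}\cdot \chi_{\epsilon_{\mathbb{F}_q}} \right\rangle \quad \text{for both } i,
$$
and Proposition \ref{pNonCrit} then yields $L(\tfrac12,\chi) \neq 0$.

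The only point needing care is that the splittings in Remark \ref{rmk: identifying spaces} really are direct sums for every $n$, including $n = 0$.
\begin{itemize}
\item For $i = 1$: $\chi_D$ has odd degree, so it does not lie in the even-degree subspace, and the even-degree elements form a codimension-one subspace. Degree parity is a linear functional on $C_n^{(1)}$, and $\chi_D$ does not lie in its kernel.
\item For $i = -1$: $\chi_D + \chi_{\epsilon_{\mathbb{F}_q}}$ has odd degree, so the same argument applies.
\end{itemize}
Here the $\circ$-subspace should be read as the even-degree part, as the Remark defines it. I expect no real obstacle: the proposition is essentially a bookkeeping corollary of the two cited results.
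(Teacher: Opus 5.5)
Your proposal is correct and matches the paper's proof: the paper also invokes the splitting $C_n^{(i)}(\chi) = \mathbb{F}_2 \cdot \bigl(\chi + \frac{1-i}{2}\cdot\chi_{\epsilon_{\mathbb{F}_q}}\bigr) \oplus C_n^{(i)}(\chi)^\circ$ from Remark \ref{rmk: identifying spaces}, notes that the $\circ$-part vanishing is equivalent to $C_n^{(i)}(\chi)$ being spanned by the systematic element, and then applies Proposition \ref{pNonCrit}. Your extra check that these sums are direct (degree parity, i.e.\ evaluation at $\sigma_\infty$, is a linear functional not vanishing on the odd-degree systematic element, which lies in $C_n^{(i)}(\chi)$ by Proposition \ref{prop: trivial elemts}(b)) makes explicit what the paper takes directly from the Remark.
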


\begin{proof}
Indeed, we have that 
$$
C_n^{(i)}(\chi) = \mathbb{F}_2 \cdot \left(\chi + \frac{(1 - i)}{2} \cdot \chi_{\epsilon_{\mathbb{F}_q}}\right) \oplus C_n^{(i)}(\chi)^\circ.
$$
Therefore $C_n^{(i)}(\chi) = \mathbb{F}_2 \cdot \left(\chi + \frac{(1 - i)}{2} \cdot \chi_{\epsilon_{\mathbb{F}_q}}\right)$ if and only if $C_n^{(i)}(\chi)^\circ = \{0\}$. Hence the desired conclusion follows from Proposition \ref{pNonCrit}.
\end{proof}
\section{Expansion maps} 
\label{section: expansion maps}
In this section we explain the relevant changes to \cite[Section 2.2]{KP} that allow us to speak about expansion maps and normalized expansion maps also over $\mathbb{F}_q(T)$. 

We have that \cite[Definition 2.12]{KP} can be adapted verbatim, by replacing the symbol $\Q$ by $\mathbb{F}_q(T)$ in all occurrences. In this way, for a linearly independent finite set $X \subseteq \Gamma_{\mathbb{F}_2}(\mathbb{F}_q(T))$, and an element $\chi_0$ of $X$ we now have the definition of an \emph{expansion map} $\psi$ with support $X$ and \emph{pointer} $\chi_0$. As explained right below \cite[Definition 2.12]{KP}, such data are the same thing as a collection of continuous $1$-cochains $(\phi_Y)_{Y \subseteq X-\{\chi_0\}}$ satisfying the recursive formulas \cite[Equation 2.3]{KP}. The data of the corresponding expansion map is then recovered through \cite[Equation 2.2]{KP}. 

Next, as summarized right before \cite[Lemma 2.13]{KP}, given $X, \chi_0$ as above, there exists \emph{at most one} expansion map $\psi(\mathfrak{G}) = (\phi_Y(\mathfrak{G}))_{Y \subseteq X - \{\chi_0\}}$ satisfying
$$
\phi_Y(\sigma) = 0 \quad \text{ for all } \sigma \in \mathfrak{G} \text{ and for all subsets } \emptyset \subset Y \subseteq X - \{\chi_0\}
$$
The reason is precisely the same as the one explained right before \cite[Lemma 2.13]{KP}: thanks to Proposition \ref{Topological generators}, the set $\mathfrak{G}$ is a set of topological generators for $G_{\mathbb{F}_q(T)}(2)$. Hence this requirement determines the value of $\psi(\mathfrak{G})$ on $\mathfrak{G}$ and therefore by continuity on the entire $G_{\mathbb{F}_q(T)}(2)$. 

We will follow the same notation used in \cite{KP}. So, if $X$ corresponds to a set of squarefree polynomials in $\frac{\mathbb{F}_q(T)^\ast}{\mathbb{F}_q(T)^{\ast 2}}$, with pointer $\chi_0$ corresponding to a squarefree polynomial $d$, then we will denote by
$$
\phi_{S; d}(\mathfrak{G})
$$
the $1$-cochains corresponding to the expansion map $\psi(\mathfrak{G})$. Here $S$ is a subset of squarefree polynomials corresponding to a subset of characters of $X - \{\chi_0\}$. So in the same fashion, for a nonnegative integer $s$, for a set $S := \{a_1, \ldots, a_s\}$ of cardinality $s$, consisting of squarefree polynomials, for a squarefree polynomial $a_{s + 1}$ such that the set $\{a_1, \ldots, a_{s + 1}\}$ is linearly independent in $\frac{\mathbb{F}_q(T)^\ast}{\mathbb{F}_q(T)^{\ast 2}}$, we have at most one cochain
$$
\phi_{\{a_1, \ldots, a_s\}; a_{s + 1}}(\mathfrak{G}),
$$
as above. We will call these $1$-cochains \emph{normalized expansion maps} as well. By convention, we will define $\phi_{\{a_1, \ldots, a_s\}; a_{s + 1}}(\mathfrak{G})$ to be the zero map if one of the $a_i$ is equal to $1$.

With these conventions, \cite[Lemma 2.13]{KP} holds verbatim in our setting, with the obvious substitution that $\Q$ gets replaced by $\mathbb{F}_q(T)$ and squarefree integers by squarefree polynomials. This also gives us the \emph{additivity equations} of \cite[Lemma 2.13]{KP} for expansion maps. The basic group-theoretic result \cite[Proposition 2.14]{KP} holds in our case as well.

So far the translation has been almost entirely literal. In fact, this part of the material is entirely formal, and could be developed over any field (replacing the parlance of squarefree numbers or polynomials simply with classes in $\frac{K^\ast}{K^{\ast 2}}$, and replacing $\mathfrak{G}$ with any minimal set of topological generators for $G_{K}(2)$). However the material spanning from \cite[Proposition 2.15]{KP} until the end of that section has arithmetic content, and in particular it partly relies on the presence of a wildly ramified prime (namely $2$) and on the notion of finite primes of the rationals. Since the analogy with function fields is not entirely perfect, we opted to rework this remaining material from scratch. We have two remaining goals: 
\begin{enumerate}
\item[(1)] provide control on the ramification behavior of normalized expansion maps,
\item[(2)] provide a criterion to construct inductively such normalized maps.
\end{enumerate}

\noindent Let us start by addressing point $(1)$. Given linearly independent $a_1, \ldots, a_n$ in $\frac{\mathbb{F}_q(T)^\ast}{\mathbb{F}_q(T)^{\ast 2}}$, we define for every place $v \in \Omega_{\mathbb{F}_q(T)}^{\textup{fin}}$ the place $\mathrm{Up}(v) \in \Omega_{\mathbb{F}_q(T)(\{\chi_{a_i} : i \in [n]\})}$ given by restricting $i_v$ to $\mathbb{F}_q(T)(\{\chi_{a_i} : i \in [n]\})$. We then define $\mathrm{AUp}(v)$ to be $g \cdot \mathrm{Up}(v)$, where we take $g \in \Gal(\mathbb{F}_q(T)(\{\chi_{a_i} : i \in [n]\})/\mathbb{F}_q(T))$ to be the unique element sending $\sqrt{a_i}$ to $-\sqrt{a_i}$ for all $i$.

\begin{proposition} 
\label{normalized expansion maps are unramified}
Let $n$ be a positive integer. Let $a_1, \ldots, a_{n + 1}$ be squarefree polynomials that are linearly independent in $\frac{\mathbb{F}_q(T)^{\ast}}{\mathbb{F}_q(T)^{\ast 2}}$ with $\textup{Ram}(\chi_{a_i}) \cap \textup{Ram}(\chi_{a_j}) \cap \Omega_{\mathbb{F}_q(T)}^{\textup{fin}} = \emptyset$ for each distinct $i, j \in [n + 1]$. Suppose that the normalized expansion map $\phi_{\{a_1, \ldots, a_n\}; a_{n + 1}}(\mathfrak{G})$ exists. 
\begin{enumerate}
\item[$(a)$] The extension $L(\phi_{\{a_1, \ldots, a_n\}; a_{n + 1}}(\mathfrak{G}))/\mathbb{F}_q(T)(\{\chi_{a_i} : i \in [n]\})$ is a multiquadratic extension containing $\mathbb{F}_q(T)(\chi_{a_{n + 1}})$. Furthermore, the extension 
$$
L(\phi_{\{a_1, \ldots, a_n\}; a_{n + 1}}(\mathfrak{G}))/\mathbb{F}_q(T)(\{\chi_{a_i} : i \in [n + 1]\})
$$
is unramified at all places of $\mathbb{F}_q(T)(\{\chi_{a_i} : i \in [n + 1]\})$ lying above $\Omega_{\mathbb{F}_q(T)}^{\textup{fin}}$. 

Moreover, let $g \in \Gal(\mathbb{F}_q(T)(\{\chi_{a_i} : i \in [n]\})/\mathbb{F}_q(T))$ and define $\mathrm{Supp}(g)$ to be the subset of $i \in [n]$ such that $g(\sqrt{a_i}) = -\sqrt{a_i}$. Then
\begin{equation}
\label{eConjugateExpansion}
\phi_{\{a_1, \ldots, a_n\}; a_{n + 1}}(\mathfrak{G})^g = \sum_{[n] - \textup{Supp}(g) \subseteq T \subseteq [n]} \phi_{\{a_i : i \in T\}; a_{n + 1}}(\mathfrak{G}).
\end{equation}
Finally, for each finite place $v$ in $\textup{Ram}(\chi_{a_{n + 1}})$, the quadratic character obtained by restricting $\phi_{\{a_1, \ldots, a_n\}; a_{n + 1}}(\mathfrak{G})$ to $G_{\mathbb{F}_q(T)(\{\chi_{a_i} : i \in [n]\})}$ ramifies at $\mathrm{AUp}(v)$ and at no other places above $v$.
\item[$(b)$] Assume now also that $a_i$ is monic of even degree for each $i \in [n + 1]$. Then 
$$
\{\res_{\infty}(\phi_{\{a_i : i \in U\}; a_{n + 1}}(\mathfrak{G})) : U \subseteq [n]\} \subseteq \Gamma_{\mathbb{F}_2}(\mathbb{F}_q(T)_{\infty}),
$$
and moreover the characters $\res_{\infty}(\phi_{\{a_i : i \in U\}; a_{n + 1}}(\mathfrak{G}))$ are contained in the $1$-dimensional space $\mathbb{F}_2 \cdot \res_{\infty}(\chi_T)$.
\end{enumerate}
\end{proposition}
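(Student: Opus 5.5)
We argue by induction on $n$, using the structure of expansion maps recalled from \cite[Definition 2.12, Lemma 2.13, Proposition 2.14]{KP}. Write $K_n := \mathbb{F}_q(T)(\{\chi_{a_i} : i \in [n]\})$ and $\phi := \phi_{\{a_1, \ldots, a_n\}; a_{n + 1}}(\mathfrak{G})$. By definition of an expansion map, $\mathrm{d}\phi$ is a sum of cup products of $\chi_{a_i}$ with cochains $\phi_{Y; a_{n + 1}}(\mathfrak{G})$ for proper subsets $Y \subsetneq [n]$. All of these vanish on $G_{K_n}$, so $\phi$ restricts to a homomorphism $G_{K_n} \to \mathbb{F}_2$. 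The map $(\chi_{a_1}, \ldots, \chi_{a_n}, \phi_{Y; a_{n+1}})_Y$ is the associated homomorphism to a unipotent-type $2$-group from \cite[Proposition 2.14]{KP}. Its kernel restricted to $G_{K_n}$ is abelian of exponent $2$, because the cochains $\phi_{Y;a_{n+1}}$ restricted to $G_{K_n}$ are characters. It follows that $L(\phi)/K_n$ is multiquadratic. It contains $\sqrt{a_{n+1}}$ because $\phi_{\emptyset; a_{n+1}} = \chi_{a_{n+1}}$ is one of the constituent cochains.

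Formula \eqref{eConjugateExpansion} is a purely formal cochain computation. Conjugating $\phi$ by a lift $\tilde g \in G_{\mathbb{F}_q(T)}$ of $g$ and using the recursive formulas \cite[Equation 2.3]{KP}, one gets $\phi(\tilde g x \tilde g^{-1})$ expanded via the cocycle-type relation. The terms pick up $\phi_{T; a_{n+1}}(\tilde g)$-type constants times lower expansion maps. On $G_{K_n}$ these constants drop out modulo coboundaries that restrict trivially, and the surviving terms are indexed by $T \supseteq [n] - \mathrm{Supp}(g)$. I would prove this by induction on $|\mathrm{Supp}(g)|$, reducing to $g$ flipping a single $\sqrt{a_j}$. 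In that case the identity reads $\phi^g = \phi + \phi_{[n]-\{j\}; a_{n+1}}$, which follows from the defining relation $\mathrm{d}\phi_{[n]} \ni \chi_{a_j}\cup\phi_{[n]-\{j\}}$ evaluated against conjugation.

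Next comes the ramification analysis, which is the main step. Take a finite place $v$ and the inertia generator $\sigma_v \in \mathfrak{G}$. The normalization gives $\phi_Y(\sigma_v) = 0$ for all nonempty $Y$. There are three cases.
\begin{itemize}
\item If $v$ divides none of the $a_i$, then $\sigma_v$ lies in $G_{K_n}$ and every constituent cochain vanishes on $\sigma_v$ and on its conjugates. Here we use the disjointness of the ramification sets, so that all $\chi_{a_i}$ vanish on the relevant inertia. Hence $v$ is unramified.
\item If $v \mid a_j$ with $j \leq n$, then inertia in $K_{n+1}$ above $v$ is generated by $\sigma_v^2$ (or its conjugates). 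We compute $\phi(\sigma_v^2) = \phi(\sigma_v) + \phi(\sigma_v) + \sum \chi_{a_i}(\sigma_v)\phi_Y(\sigma_v)$, which is $0$ by normalization and by $\chi_{a_{n+1}}(\sigma_v) = 0$. The conjugate inertia groups are handled through \eqref{eConjugateExpansion}.
\item If $v \mid a_{n+1}$, then $\sigma_v$ lies in $G_{K_n}$, and the quadratic character $\phi|_{G_{K_n}}$ evaluated on $\tilde g \sigma_v \tilde g^{-1}$ equals $\phi^{g}(\sigma_v) = \sum_{T \supseteq [n]-\mathrm{Supp}(g)} \phi_T(\sigma_v)$. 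Normalization kills every term except $T = \emptyset$, and this term occurs only when $\mathrm{Supp}(g) = [n]$, where it contributes $\chi_{a_{n+1}}(\sigma_v) = 1$. This singles out $\mathrm{AUp}(v)$ as the unique ramified place above $v$.
\end{itemize}
Unramifiedness of $L(\phi)/K_{n+1}$ above finite places then follows from the first two cases, together with the third case combined with the fact that $\chi_{a_{n+1}}$ absorbs the ramification. The obstacle I expect is the sign and ordering bookkeeping, namely tracking the index set in \eqref{eConjugateExpansion} and checking that the embedding $i_v$ matches $\mathrm{Up}(v)$ rather than another conjugate.

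For $(b)$, each $a_i$ is monic of even degree, so each $\chi_{a_i}$ is trivial locally at $\infty$. The local cochains $\res_\infty\phi_{U; a_{n+1}}$ are therefore, by induction on $|U|$, closed and hence characters of $G_{\mathbb{F}_q(T)_\infty}$. To locate them we evaluate on the local generators $\sigma_\infty$ and $\Frob_\infty$. Here $\Frob_q = \Frob_\infty(\chi_T)$ lies in $\mathfrak{G}$, so $\phi_U(\Frob_\infty(\chi_T)) = 0$ for nonempty $U$. The character $\res_\infty\chi_{\epsilon_{\mathbb{F}_q}}$ is nonzero on $\Frob_\infty(\chi_T)$, whereas $\res_\infty \chi_T$ vanishes there. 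Since $H^1(G_{\mathbb{F}_q(T)_\infty},\mathbb{F}_2)$ is spanned by these two characters, each restriction lies in $\mathbb{F}_2\cdot\res_\infty(\chi_T)$.
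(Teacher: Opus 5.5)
Your proposal is correct and follows essentially the paper's route. Multiquadraticity comes from restricting to $G_{K_n}$ via the recursive formula, which, like the paper, uses that $L(\phi)$ equals the field of definition of the whole expansion map. Formula \eqref{eConjugateExpansion} comes from the recursion, $\mathrm{AUp}(v)$ is isolated by plugging $\sigma_v$ into $\phi^g$, and $(b)$ uses the $\Frob_\infty(\chi_T)$ normalization. The one difference is in showing unramifiedness over $K_{n+1}$: the paper notes that the expansion map sends $\sigma_v$ to an involution for every finite $v \in \bigcup_i \mathrm{Ram}(\chi_{a_i})$, so $v$ has ramification index $2$ in $L(\phi)/\mathbb{F}_q(T)$, as in $\mathbb{F}_q(T)(\{\chi_{a_i} : i \in [n+1]\})$; this handles all conjugate inertia groups at once, instead of your case-by-case evaluation on $\sigma_v^2$ and its conjugates.
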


\begin{proof}
For $(a)$, it can be shown easily using the recursive equation for expansion maps that the field of definition $L(\phi_{\{a_1, \ldots, a_n\}; a_{n + 1}}(\mathfrak{G}))$ coincides with the field of definition of the homomorphism $L(\psi_{\{a_1, \ldots, a_n\}; a_{n + 1}}(\mathfrak{G}))$ corresponding to the normalized expansion map.

The Galois group $\Gal(L(\phi_{\{a_1, \ldots, a_n\}; a_{n + 1}}(\mathfrak{G}))/\mathbb{F}_q(T)(\{\chi_{a_i} : i \in [n]\}))$ therefore gets mapped through the expansion map precisely on $\mathbb{F}_2[\mathbb{F}_2^n] \rtimes \{0\}$, showing, thanks to Galois theory, that the extension is a multiquadratic extension of degree $2^{2^n}$. The character $\chi_{a_{n + 1}}$ is precisely given by $\phi_{\emptyset; a_{n + 1}}(\mathfrak{G})$. Thanks to definition of normalized expansion maps, we see that for each finite place $v$ in $\Omega_{\mathbb{F}_q(T)}$ outside of $\cup_{i \in [n + 1]} \text{Ram}(\chi_{a_i})$, we must have that $\sigma_v$ is sent to the element of $\mathbb{F}_2[\mathbb{F}_2^n] \rtimes \mathbb{F}_2^n$ having all coordinates equal to $0$, which is nothing else than the identity element of this group, showing that the extension is unramified at these places. 

For all finite places in $\Omega_{\mathbb{F}_q(T)}$ contained in $\cup_{i \in [n + 1]} \, \text{Ram}(\chi_{a_i})$, we see that $\sigma_v$ is sent to an involution. It follows that the ramification index of $v$ inside $L(\phi_{\{a_1, \ldots, a_n\}; a_{n + 1}}(\mathfrak{G}))/\mathbb{F}_q(T)$ equals $2$. However, this is also true for $\mathbb{F}_q(T)(\{\chi_{a_i} : i \in [n + 1]\})/\mathbb{F}_q(T)$. It follows that 
$$
L(\phi_{\{a_1, \ldots, a_n\}; a_{n + 1}}(\mathfrak{G}))/\mathbb{F}_q(T)(\{\chi_{a_i} : i \in [n + 1]\})
$$
is unramified above all places lying above a finite place in $\Omega_{\mathbb{F}_q(T)}$. 

Next, we note that equation \eqref{eConjugateExpansion} follows from the recursive formula for expansion maps \cite[Equation 2.3]{KP}. Finally, let $v \in \text{Ram}(\chi_{a_{n+1}})$. Now recall that $i_v$ induces a prime, denoted $\mathrm{Up}(v)$, in $\mathbb{F}_q(T)(\{\chi_{a_i} : i \in [n]\})$ that lies above $v$. Then $\res_{G_{\mathbb{F}_q(T)(\{\chi_{a_i} : i \in [n]\})}} \phi_{\{a_1, \ldots, a_n\}; a_{n + 1}}(\mathfrak{G})$ ramifies at $g \cdot \mathrm{Up}(v)$ if and only if the character 
$$
\phi_{\{a_1, \ldots, a_n\}; a_{n + 1}}(\mathfrak{G})^g = \sum_{[n] - \text{Supp}(g) \subseteq T \subseteq [n]} \phi_{\{a_i; i \in T\}; a_{n + 1}}(\mathfrak{G})
$$
ramifies at $\mathrm{Up}(v)$. This latter condition can now directly be checked by plugging in $\sigma_v$ and using that the expansion map is normalized; we see that the total sum yields $1$ if and only if $\text{Supp}(g) = [n]$. This proves part $(a)$. 

We now prove part $(b)$. The fact that all expansion maps are characters locally at $\infty$ follows at once from the expression for their differential combined with the fact that all of the $a_i$, for $i$ in $[n]$, are monic polynomials of even degree, so that $\chi_{a_i}$ is a trivial character locally at $\infty$. Since $a_{n + 1}$ is also monic and hence vanishes on $\Frob_{\infty}(\chi_T)$ and since normalized expansion maps vanish on $\Frob_{\infty}(\chi_T)$, it follows that any conjugate of a normalized expansion map (which can be re-expressed as a linear combination of normalized expansion maps and $\chi_{a_{n + 1}}$) vanishes on $\Frob_{\infty}(\chi_T)$. The only such non-trivial local character in $\Gamma_{\mathbb{F}_2}(\mathbb{F}_q(T)_{\infty})$ is precisely $\chi_T$, yielding the desired conclusion.  
\end{proof}

We now deal with point $(2)$. We have the following. 

\begin{proposition} 
\label{inductively construct expansion maps}
Let $n$ be a nonnegative integer. Let $a_1, \ldots, a_{n + 1}$ be squarefree polynomials that are linearly independent in $\frac{\mathbb{F}_q(T)^{\ast}}{\mathbb{F}_q(T)^{\ast 2}}$ with $\textup{Ram}(\chi_{a_i}) \cap \textup{Ram}(\chi_{a_j}) \cap \Omega_{\mathbb{F}_q(T)}^{\textup{fin}} = \emptyset$ for each distinct $i, j \in [n + 1]$. Suppose that for each distinct $i, j \in [n + 1]$ and for each $v \in \textup{Ram}(\chi_{a_i}) \cap \Omega_{\mathbb{F}_q(T)}^{\textup{fin}}$, we have that
$$
\chi_{a_j} \circ i_v^\ast = 0.
$$
Suppose also that $a_i$ is monic of even degree for each $i \in [n]$.

Then $\phi_{\{a_1,\ldots, a_n\}; a_{n + 1}}(\mathfrak{G})$ exists if and only if $\phi_{\{a_i : i \neq j\}; a_{n + 1}}(\mathfrak{G})$ exists for all $j \in [n]$ and every place $v \in \textup{Ram}(\chi_{a_j}) \cap \Omega_{\mathbb{F}_q(T)}^{\textup{fin}}$ splits completely in $L(\phi_{\{a_i : i \neq j\}; a_{n + 1}}(\mathfrak{G}))$.
\end{proposition}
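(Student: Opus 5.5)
We argue by induction on $n$, following the structure of the analogous statement in \cite{KP}. The case $n=0$ is trivial, since $\phi_{\emptyset; a_1}(\mathfrak{G}) = \chi_{a_1}$ always exists. The "only if" direction is the easy one. If $\phi_{\{a_1,\ldots,a_n\}; a_{n+1}}(\mathfrak{G})$ exists, then the recursive formulas \cite[Equation 2.3]{KP} show that every sub-family $\phi_{\{a_i : i \in U\}; a_{n+1}}(\mathfrak{G})$ with $U \subseteq [n]$ also exists; this uses uniqueness of normalized expansion maps. Now fix $j$ and $v \in \textup{Ram}(\chi_{a_j}) \cap \Omega^{\textup{fin}}$. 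We plan to show that $v$ splits completely in $L := L(\phi_{\{a_i : i \neq j\}; a_{n+1}}(\mathfrak{G}))$. The differential of $\phi_{[n];a_{n+1}}$ contains the term $\chi_{a_j} \cup \phi_{[n]-\{j\}; a_{n+1}}$. Restricting to the decomposition group at $v$ and evaluating on the pair $(\sigma_v, \Frob_v)$, we should get the value of the local character $\phi_{[n]-\{j\};a_{n+1}}$ at $\Frob_v$. All other terms vanish: by hypothesis $\chi_{a_i}\circ i_v^\ast = 0$ for $i \neq j$, and $v$ is unramified in $L$ by Proposition \ref{normalized expansion maps are unramified}$(a)$. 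Since the local pro-$2$ group at $v$ is generated by $\sigma_v, \Frob_v$ subject to $F\sigma F^{-1}=\sigma^{q_v}$ (Proposition \ref{Structure of tame inertia}), the cocycle condition forces this Frobenius value to vanish. Doing this for every $\phi_{U;a_{n+1}}$ with $U \subseteq [n]-\{j\}$ (again by induction), and using that $L$ is generated by these characters, gives complete splitting of $v$.

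For the "if" direction we solve the embedding problem. Consider the central extension of $\mathbb{F}_2[\mathbb{F}_2^{n}]\rtimes\mathbb{F}_2^n$ obtained by adjoining the top coordinate. The lower expansion maps $\phi_{U;a_{n+1}}$ with $U \subsetneq [n]$ exist by hypothesis together with the inductive structure. Hence the obstruction is the class $\theta \in H^2(G_{\mathbb{F}_q(T)},\mathbb{F}_2)$ given by $\sum_{j} \chi_{a_j}\cup\phi_{[n]-\{j\};a_{n+1}}$, where the sum runs over the terms of the recursive differential. By local-to-global for $H^2(G_{\mathbb{F}_q(T)},\mathbb{F}_2)$ (Brauer group), it suffices to check that $\inv_v(\theta)=0$ for all $v$.

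The local check splits into three cases. At places outside $\bigcup_i\textup{Ram}(\chi_{a_i})$ and $\infty$, everything is unramified, so $\theta$ is inflated from $H^2(G_{\mathbb{F}_{q_v}},\mathbb{F}_2)=0$. At $v\in\textup{Ram}(\chi_{a_j})$, $j\le n$, the only surviving term is $\chi_{a_j}\cup\phi_{[n]-\{j\};a_{n+1}}$. The second factor is locally trivial because $v$ splits completely in $L$, so the invariant vanishes. At $v\in\textup{Ram}(\chi_{a_{n+1}})$, each $\chi_{a_j}$ with $j\le n$ is locally trivial by hypothesis, so all cup products vanish. At $\infty$, every $a_j$ with $j\le n$ is monic of even degree, so $\chi_{a_j}$ is locally trivial and $\theta$ vanishes again. (Alternatively, Hilbert reciprocity disposes of one remaining place.)

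Once $\theta=0$, we pick any solution $\phi$ and normalize it. We add $\sum_v \mathbf{1}_{\phi(\sigma_v)\ne0}\chi_v$ (a finite sum) together with a multiple of $\chi_{\epsilon_{\mathbb{F}_q}}$ so that $\phi(\sigma)=0$ for all $\sigma\in\mathfrak{G}$. This is possible because of the duality in Proposition \ref{Topological generators}, and adding characters does not change $\mathrm{d}\phi$. The main obstacle I expect is bookkeeping the precise differential $\mathrm{d}\phi_{[n];a_{n+1}}$, which in \cite{KP} involves cups $\chi_{a_j}\cup\phi_{U;a_{n+1}}$ for several $U$ and not only $[n]-\{j\}$. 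One has to verify, using the conjugation formula \eqref{eConjugateExpansion} and the splitting hypothesis for all smaller $U$ (supplied by induction), that all these terms are locally trivial at $v\in\textup{Ram}(\chi_{a_j})$.
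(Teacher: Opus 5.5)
Your proposal is correct and follows the paper's proof. In the \emph{if} direction you show that the obstruction cocycle $\sum_{\emptyset \neq U \subseteq [n]} \chi_{\{a_i : i \in U\}} \cup \phi_{\{a_h : h \in [n] \setminus U\}; a_{n+1}}(\mathfrak{G})$ vanishes locally everywhere, apply the Hasse principle and renormalize on $\mathfrak{G}$, while the \emph{only if} direction reads off the splitting from local vanishing at the primes dividing $a_j$, exactly as the paper does (your explicit computation with $F\sigma F^{-1} = \sigma^{q_v}$, applied to every sub-map, just spells this out); the induction on $n$ is unnecessary, and the bookkeeping obstacle you anticipate does not arise, since for $|U| \geq 2$ the pointwise product $\chi_{\{a_i : i \in U\}}$ contains a factor $\chi_{a_i}$ with $i \neq j$ that is identically zero on the decomposition group at $v$, so only the $U = \{j\}$ term survives there.
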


\begin{proof}
Suppose first that $\phi_{\{a_i : i \neq j\}; a_{n + 1}}(\mathfrak{G})$ exists for all $j \in [n]$ and that every place $v \in \textup{Ram}(\chi_{a_j}) \cap \Omega_{\mathbb{F}_q(T)}^{\textup{fin}}$ splits completely in $L(\phi_{\{a_i : i \neq j\}; a_{n + 1}}(\mathfrak{G}))$. Let us consider the defining cocycle of the expansion map 
$$
\theta := \sum_{\emptyset \neq T \subseteq [n]} \chi_{\{a_i : i \in T\}}(\sigma) \cdot \phi_{\{a_h : h \in [n] \setminus T\}; a_{n + 1}}(\mathfrak{G})(\tau).
$$
We now show that $\theta$ is locally trivial everywhere, which implies vanishing inside $H^2(G_{\mathbb{F}_q(T)},\mathbb{F}_2)$ by local-to-global. Firstly, $\theta$ vanishes at $\infty$, since all of the $a_i$ are monic and of even degree. Similarly, $\theta$ vanishes for places $v \in \text{Ram}(\chi_{a_{n + 1}})$ thanks to our assumption that $\chi_{a_i}$, with $i \in [n]$, is locally trivial at all such places $v$. It remains to check the places $v \in \text{Ram}(\chi_{a_j})$ for some $j \in [n]$. Using one more time our assumption that all of the other $\chi_{a_i}$, for $i \in [n] \setminus \{j\}$ are locally trivial at such a $v$, we see that the only term surviving in the expression for the cocycle above, when restricted to $G_{\mathbb{F}_q(T)_v}$, is
$$
\chi_{a_j}(\sigma) \cdot \phi_{\{a_h: h \in [n] \setminus \{j\}\}; a_{n + 1}}(\mathfrak{G})(\tau).
$$
On the other hand, $\phi_{\{a_h : h \in [n] \setminus \{j\}\}; a_{n + 1}}(\mathfrak{G})$ vanishes at $\sigma_v$ by definition. Moreover, since $L(\phi_{\{a_h : h \in [n] \setminus \{j\}\}; a_{n + 1}}(\mathfrak{G}))/\mathbb{F}_q(T)$ splits completely at $v$ by assumption, we conclude that this expression is literally the zero function at any such $v$. It remains to examine $\theta$ at places $v$ which are finite and out of $\cup_{i \in [n + 1]} \text{Ram}(\chi_{a_i})$. Since the field of definition of all these expansion maps and characters will be unramified at such a place, we see that in that case $\theta$ comes from the inflation from the absolute Galois group of the residue field, and therefore vanishes in cohomology. 

All in all, we conclude that $\theta$ is zero inside $H^2(G_{\mathbb{F}_q(T)},\mathbb{F}_2)$. Let $\phi \in C^1(G_{\mathbb{F}_q(T)}, \mathbb{F}_2)$ with $\mathrm{d}\phi = \theta$. We see that $\phi$ vanishes at all but finitely many elements of $\mathfrak{G}$. Moreover, the expression
$$
\phi':=\phi + \phi(\Frob_q) \cdot \chi_{\epsilon_{\mathbb{F}_q}} + \sum_{v \in \Omega_{\mathbb{F}_q(T)}^{\text{fin}}} \phi(\sigma_v) \cdot \chi_v
$$
is well-defined, satisfies $\mathrm{d}\phi' = \theta$ and vanishes at $\mathfrak{G}$. Hence $\phi_{\{a_1, \ldots, a_n\}; a_{n + 1}}(\mathfrak{G})$ exists. 

For the other direction, observe that if $\phi_{\{a_1, \ldots, a_n\}; a_{n + 1}}(\mathfrak{G})$ exists, then by definition all of its lower level expansion maps must exist. Furthermore, by construction $\theta$ above must vanish in cohomology. Examining its behavior at the irreducible polynomials dividing $a_1, \ldots, a_n$, we see precisely the required splitting demand. This ends the proof. 
\end{proof}

We also need the following ``additivity'' result.

\begin{proposition}
\label{pExpansionAdditive}
Let $n \in \Z_{\geq 0}$. For $h \in \{1, 2, 3\}$ let 
$$
(a_{1, h}, \ldots, a_{n + 1, h}) \in \left(\frac{\mathbb{F}_q(T)^\ast}{\mathbb{F}_q(T)^{\ast 2}}\right)^{[n + 1]}
$$ 
be a vector whose underlying set has cardinality $n + 1$ and is linearly independent for each $h$. Suppose furthermore that there exists one entry $n_0 \in [n + 1]$ such that the vectors are identical outside of that entry, while at $n_0$ we have $a_{n_0, 1} a_{n_0, 2} a_{n_0, 3} = 1$.

Suppose furthermore that $\phi_{\{a_{i, h} : i \in [n]\}; a_{n + 1, h}}(\mathfrak{G})$ exists for $h \in \{1, 2\}$.

Then $\phi_{\{a_{i, 3} : i \in [n]\}; a_{n + 1, 3}}(\mathfrak{G})$ also exists and furthermore
$$
\sum_{h = 1}^3 \phi_{\{a_{i, h} : i \in [n]\}; a_{n + 1, h}}(\mathfrak{G}) = 0. 
$$
\end{proposition}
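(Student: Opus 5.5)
Since all the data agree outside the entry $n_0$, I will induct on $n$ (equivalently, on the size of the subsets $S$), proving the additivity and existence statements simultaneously for all sub-expansion maps indexed by subsets of the support. The natural candidate for the third map is the sum of the first two. So for every subset $Y$ I set
$$
\phi^{(3)}_Y := \phi^{(1)}_Y + \phi^{(2)}_Y,
$$
where $\phi^{(h)}_Y$ denotes the cochain attached to $Y$ in the expansion map of the $h$-th vector. I then check two things: that this family satisfies the recursive differential equations \cite[Equation 2.3]{KP} for the support $\{a_{i,3}\}$, and that it vanishes on $\mathfrak{G}$ whenever $Y\neq\emptyset$. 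The second property is immediate, since each $\phi^{(h)}_Y$ is normalized. By the uniqueness of normalized expansion maps, these two facts give both the existence of $\phi_{\{a_{i,3}\}; a_{n+1,3}}(\mathfrak{G})$ and the identity $\sum_h \phi^{(h)} = 0$ in $\mathbb{F}_2$.

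The main step is the verification of the differential equation, which splits into two cases.

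\textbf{Case $n_0 = n+1$ (pointer entry).} The sets $X-\{\chi_0\}$ coincide for all three vectors, and the recursion $\mathrm{d}\phi_Y = \sum_{\emptyset\neq T\subseteq Y}\chi_T\cup\phi_{Y-T}$ is linear in the family $(\phi_Y)_Y$ once the characters $\chi_{a_i}$ ($i\le n$) are fixed. The base term is $\phi_\emptyset = \chi_{a_{n+1,h}}$, and these are additive: $\chi_{a_{n+1,3}} = \chi_{a_{n+1,1}}+\chi_{a_{n+1,2}}$. Hence the sum of the two solutions solves the recursion, and this case is formal.

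\textbf{Case $n_0\le n$.} Here $\chi_{a_{n_0}}$ itself enters the recursion bilinearly, so additivity has to be arranged. I would split
$$
\mathrm{d}\phi^{(3)}_Y = \sum_{\emptyset \neq T \subseteq Y} \chi^{(3)}_T \cup \phi^{(3)}_{Y-T}
$$
according to whether $n_0\in T$, $n_0\in Y-T$, or $n_0\notin Y$. If $n_0\notin Y$, then $\phi^{(h)}_Y$ does not depend on $h$, so $\phi^{(1)}_Y=\phi^{(2)}_Y$ and $\phi^{(3)}_Y$ would vanish. That is the wrong candidate. The correct convention, which I would follow from \cite[Lemma 2.13]{KP}, is to set $\phi^{(3)}_Y := \phi_Y$ (the common value) when $n_0\notin Y$, and $\phi^{(3)}_Y := \phi^{(1)}_Y+\phi^{(2)}_Y$ when $n_0\in Y$. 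In the latter case the terms with $n_0\in T$ use $\chi^{(3)}_T=\chi^{(1)}_T+\chi^{(2)}_T$ paired against a common $\phi_{Y-T}$, while the terms with $n_0\notin T$ pair a common $\chi_T$ against $\phi^{(1)}_{Y-T}+\phi^{(2)}_{Y-T}$. Both are linear, so the equation holds.

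With this convention, the claimed identity $\sum_h\phi^{(h)}=0$ for the full $Y=[n]$ holds because $n_0\in[n]$, which is exactly the case under consideration. When $n_0 = n+1$ the identity also holds for every $Y$.

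I expect the only real obstacle to be this bookkeeping: fixing the correct conventions for the lower-order maps, making sure the linear-independence hypothesis keeps all supports well defined, and handling the convention that $\phi=0$ when some $a_i=1$. Everything else reduces to the formal argument of \cite[Lemma 2.13]{KP} transported to $\mathbb{F}_q(T)$ using Proposition \ref{Topological generators}.
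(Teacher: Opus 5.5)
Your argument is correct and is essentially the one the paper invokes via \cite[Lemma 2.13]{KP}. You build the third family by adding the two normalized families on the coordinates $Y \ni n_0$ (on all coordinates when $n_0 = n+1$) and keeping the common coordinates otherwise; you check the recursion by multilinearity of $\chi_T$ in the varying entry and the normalization on $\mathfrak{G}$; and you conclude by uniqueness of normalized expansion maps, where the equality $\phi^{(1)}_Y = \phi^{(2)}_Y$ for $n_0 \notin Y$ is that same uniqueness applied to the sub-support $\{a_i : i \in Y\} \cup \{a_{n+1}\}$.
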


\begin{proof}
This follows by the argument proving \cite[Lemma 2.13]{KP}, which applies verbatim over $\mathbb{F}_q(T)$. 
\end{proof}
\section{\texorpdfstring{The theory of $\hat{\phi}$ cochains}{The theory of hat phi cochains}} \label{Section: phi hat}
Let $q$ be a prime power congruent to $1$ modulo $8$. The purpose of this section is to describe a family of cochains over $\mathbb{F}_q(T)$ that, when restricted to suitable subfields, provide trivializations of cup products of expansion maps. 

We begin with some general group-theoretic constructions. Let $A$ and $B$ be two disjoint, finite sets. We introduce the following groups
$$
G_A := \mathbb{F}_2[\mathbb{F}_2^A] \rtimes \mathbb{F}_2^A, \quad \quad G_B := \mathbb{F}_2[\mathbb{F}_2^B] \rtimes \mathbb{F}_2^B.
$$
Here $(\mathbb{F}_2^A)$ acts on $\mathbb{F}_2[\mathbb{F}_2^A]$ through the left regular action, and similarly for $B$.

We have a natural isomorphism
$$
G_A \times G_B \cong (\mathbb{F}_2[\mathbb{F}_2^A] \times \mathbb{F}_2[\mathbb{F}_2^B]) \rtimes (\mathbb{F}_2^A \times \mathbb{F}_2^B),
$$
where $\mathbb{F}_2^A$ is acting trivially on $\mathbb{F}_2[\mathbb{F}_2^B]$ and $\mathbb{F}_2^B$ is acting trivially on $\mathbb{F}_2[\mathbb{F}_2^A]$. The coordinate projections on respectively $\mathbb{F}_2[\mathbb{F}_2^A]$ and $\mathbb{F}_2[\mathbb{F}_2^B]$ give two elements
$$
\psi_A \in Z^1(G_A \times G_B, \mathbb{F}_2[\mathbb{F}_2^A]), \quad \quad \psi_B \in Z^1(G_A \times G_B, \mathbb{F}_2[\mathbb{F}_2^B]).
$$
Write $e_i$ and $e_j$ for respectively the standard basis vectors of $\mathbb{F}_2^A$ and $\mathbb{F}_2^B$. We fix the monomial basis $\{t_V : V \subseteq A\}$ and $\{t_W : W \subseteq B\}$ of respectively $\mathbb{F}_2[\mathbb{F}_2^A]$ and $\mathbb{F}_2[\mathbb{F}_2^B]$, where 
$$
t_V := \prod_{i \in V} (1 + e_i), \quad \quad t_W := \prod_{j \in W} (1 + e_j).
$$
Using the shorthand $t_i$ for $t_{\{i\}}$ we can then naturally view
$$
\mathbb{F}_2[\mathbb{F}_2^A] \cong \frac{\mathbb{F}_2[\{t_i : i \in A\}]}{\langle \{t_i^2 : i \in A\} \rangle}, \quad \quad \mathbb{F}_2[\mathbb{F}_2^B] \cong \frac{\mathbb{F}_2[\{t_j : j \in B\}]}{\langle \{t_j^2 : j \in B\} \rangle}
$$
as rings of truncated polynomials, where $\langle \cdot \rangle$ denotes the smallest \emph{ideal} generated by the given elements. We have a natural multiplication pairing, yielding an identification
$$
\mathbb{F}_2[\mathbb{F}_2^A] \otimes_{\mathbb{F}_2} \mathbb{F}_2[\mathbb{F}_2^B] \cong_{\text{ring}} \mathbb{F}_2[\mathbb{F}_2^{A \cup B}],
$$
given on monomials by the assignment $t_V \otimes t_W \mapsto t_{V \cup W}$. This identification is $\mathbb{F}_2^A\times \mathbb{F}_2^B$-equivariant, where the action on the right-hand side is the left regular action under the natural identification of groups
$$
\mathbb{F}_2^A \times \mathbb{F}_2^B \simeq \mathbb{F}_2^{A\cup B}.
$$
Through this bilinear equivariant pairing we can cup $\psi_A$ and $\psi_B$ in order to get a natural $2$-cocycle
$$
\theta_{(A, B)} := \psi_A \cup \psi_B \in Z^2(G_A \times G_B, \mathbb{F}_2[\mathbb{F}_2^{A \cup B}]). 
$$
Recall from Section \ref{section: expansion maps} that $\psi_A$ and $\psi_B$ are given in the monomial coordinates as
$$
\psi_A(g) = \sum_{V \subseteq A}\phi_V(g) \cdot t_V, \quad \quad \psi_B(g) = \sum_{W \subseteq B}\phi_{W}(g) \cdot t_W,
$$
where the functions $\phi_V$ and $\phi_W$ satisfy the usual recurrence of an expansion map, namely
$$
\mathrm{d}\phi_V(g_1,g_2) = \sum_{\emptyset \neq V' \subseteq V} \chi_{V'}(g_1) \phi_{V - V'}(g_2), \quad \mathrm{d}\phi_W(g_1,g_2) = \sum_{\emptyset \neq W' \subseteq W} \chi_{W'}(g_1) \phi_{W - W'}(g_2).
$$
Unraveling the definition of the cup product we get that
\begin{align*}
\theta_{(A, B)}(g_1, g_2) &= (\psi_A \cup \psi_B)(g_1, g_2) \\
&= \left(\sum_{V \subseteq A} \phi_V(g_1) \cdot t_V\right) \cdot \left(\sum_{W_1 \subseteq B} \chi_{W_1}(g_1) \cdot t_{W_1} \cdot \sum_{W_2 \subseteq B} \phi_{W_2}(g_2) \cdot t_{W_2}\right) \\
&= \sum_{V \subseteq A, W \subseteq B} \sum_{W_1 \sqcup W_2 = W} \phi_V(g_1) \cdot \chi_{W_1}(g_1) \cdot \phi_{W_2}(g_2) \cdot t_{V \cup W} \\
&= \sum_{V \subseteq A \cup B} \phi_{V \cap A}(g_1) \cdot \sum_{U \subseteq V \cap B} \chi_U(g_1) \cdot \phi_{V \cap B - U}(g_2) \cdot t_V,
\end{align*}
where the last equation is obtained by a change of variables. This cocycle corresponds to a group extension
$$
1 \to \mathbb{F}_2[\mathbb{F}_2^{A \cup B}] \to \hat{G}_{(A, B)} \to G_A \times G_B \to 1,
$$
where $\hat{G}_{(A, B)}$ is the group given by $(\mathbb{F}_2[\mathbb{F}_2^{A \cup B}] \times G_A \times G_B, \ast_{\theta_{(A, B)}})$, with the usual general rule of a group constructed by a cocycle on the product set, through the formula
$$
(h_1, g_1) \ast_{\theta_{(A, B)}}(h_2, g_2) = (h_1 + g_1 \cdot h_2 + \theta_{(A, B)}(g_1, g_2), g_1g_2). 
$$
Having defined $\hat{G}_{(A, B)}$, we next unravel the data of a homomorphism into $\hat{G}_{(A, B)}$, still using monomial coordinates. 

Observe that subsets $V$ of $A \cup B$ are in bijection with pairs of subsets $(V_1, V_2)$ of, respectively, $A$ and $B$. We will denote the projection functions on $t_{(V_1, V_2)}$ as $\hat{\phi}_{(V_1, V_2)}$. One has for these $\mathbb{F}_2$-valued cochains, the following recursive equation
\begin{multline*}
\mathrm{d} \hat{\phi}_{(V_1, V_2)}(g_1, g_2) = \left(\sum_{\substack{(\emptyset, \emptyset) \neq (U_1, U_2) \\ U_1 \subseteq V_1, U_2 \subseteq V_2}} \chi_{U_1 \cup U_2}(g_1) \cdot \hat{\phi}_{(V_1 - U_1, V_2 - U_2)}(g_2) \right) + \\
\phi_{V_1}(g_1) \cdot \left(\sum_{U \subseteq V_2} \chi_{U}(g_1) \phi_{V_2 - U}(g_2) \right).
\end{multline*}

\subsection{\texorpdfstring{Basic group theory of $\hat{G}_{(A, B)}$}{Basic group theory of G(A, B)}}
Let $G$ be a finite $2$-group. We denote by $\Phi(G)$ the Frattini subgroup.

\begin{proposition} 
\label{prop: iso mod frattini}
Let $A$ and $B$ be two disjoint finite sets. Then the projection map $\hat{G}_{(A, B)} \to G_A \times G_B$ induces an isomorphism
$$
\frac{\hat{G}_{(A, B)}}{\Phi(\hat{G}_{(A, B)})} \to \frac{G_A \times G_B}{\Phi(G_A \times G_B)} \cong_{\textup{ab.gr.}} \mathbb{F}_2^{|A| + |B| + 2}. 
$$
Furthermore, we have that $[\hat{G}_{(A, B)}, \hat{G}_{(A, B)}] = \Phi(\hat{G}_{(A, B)})$. 
\end{proposition}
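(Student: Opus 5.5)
The plan is to show that the kernel $K := \mathbb{F}_2[\mathbb{F}_2^{A \cup B}]$ of $\hat{G}_{(A, B)} \to G_A \times G_B$ lies in $[\hat{G}_{(A, B)}, \hat{G}_{(A, B)}]$. Both claims then follow from standard facts about $2$-groups.

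\textbf{Step 1: the quotient $G_A \times G_B$.} First I treat $G_A = \mathbb{F}_2[\mathbb{F}_2^A] \rtimes \mathbb{F}_2^A$ alone. Conjugating $t_V$ by $e_i$ with $i \notin V$ gives $e_i t_V = t_V + t_{V \cup \{i\}}$. Hence $[G_A, G_A]$ contains every $t_V$ with $V \neq \emptyset$. The quotient of $G_A$ by the span of these is $\mathbb{F}_2 t_\emptyset \times \mathbb{F}_2^A$, which is elementary abelian. Therefore $[G_A, G_A] = \Phi(G_A)$ and $G_A/\Phi(G_A) \cong \mathbb{F}_2^{|A| + 1}$. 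The same holds for $G_B$, and taking products gives $G_A \times G_B / \Phi \cong \mathbb{F}_2^{|A| + |B| + 2}$, with $\Phi = [\,,\,]$.

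\textbf{Step 2: $K$ consists of commutators.} For this I use the extension structure of $\hat{G}_{(A, B)}$. The submodule of $K$ generated by commutators with lifts of $e_i$ or $e_j$ contains $(e - 1)\cdot t_V = t_{V \cup \{i\}}$ for $i \notin V$. So $[\hat{G}, \hat{G}] \supseteq [\hat{G}, K]$ contains every $t_V$ with $V \neq \emptyset$.

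It remains to get $t_\emptyset \in [\hat{G}, \hat{G}]$. Here I use the cocycle directly. Take lifts $x = (0, (t_\emptyset, 0) \in G_A)$ and $y = (0, (t_\emptyset, 0) \in G_B)$, where both $t_\emptyset$ are the constant monomials $\phi_\emptyset$. Their images in $G_A \times G_B$ commute. Their commutator in $\hat{G}$ is therefore $\theta(x, y) - \theta(y, x)$, up to the module action, which is trivial on $t_\emptyset$ modulo the already-obtained $t_V$.

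From the formula for $\theta_{(A, B)}$, the $t_\emptyset$ coefficient is $\phi_\emptyset(g_1) \phi_\emptyset(g_2)$. So $\theta(x, y) = t_\emptyset$ and $\theta(y, x) = 0$, since $\phi_\emptyset$ of the $B$-element is the $A$-coordinate, which is $0$. Hence $[x, y] = t_\emptyset$ modulo $\langle t_V : V \neq \emptyset \rangle$, and therefore $K \subseteq [\hat{G}, \hat{G}]$. I expect this to be the main point to get right: one has to check carefully the coordinate conventions for $\phi_\emptyset$ on the $A$- and $B$-factors and the sign and order conventions of the commutator in $\ast_\theta$.

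\textbf{Step 3: conclusion.} Since $K \subseteq [\hat{G}, \hat{G}] \subseteq \Phi(\hat{G})$, the Frattini quotient of $\hat{G}$ maps isomorphically onto that of $G_A \times G_B$; here I use that $\Phi$ of a quotient is the image of $\Phi$. Moreover, $[\hat{G}, \hat{G}]$ is the preimage of $[G_A \times G_B, G_A \times G_B] = \Phi(G_A \times G_B)$, which is exactly $\Phi(\hat{G})$. This gives both statements of the proposition.
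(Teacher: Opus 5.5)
Your proof is correct and follows essentially the paper's route. Both arguments use the asymmetry of $\theta_{(A,B)}$ to show that the kernel $\mathbb{F}_2[\mathbb{F}_2^{A\cup B}]$ lies in $[\hat{G}_{(A,B)},\hat{G}_{(A,B)}]$, and then conclude from the fact that $G_A\times G_B$ has elementary abelian abelianization. The only difference is cosmetic: the paper gets every basis vector at once as $[g_U,g_V]=t_{(U,V)}$, whereas you do this only for $U=V=\emptyset$ and generate the remaining $t_W$ through the conjugation action of lifts of the $e_i$ on the kernel.
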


\begin{proof}
For subsets $U \subseteq A$ and $V \subseteq B$, call $g_U$ and $g_V$ the elements of $\hat{G}_{(A, B)}$ given by extending respectively $t_U$ and $t_V$ to $\hat{G}_{(A, B)}$ by taking all other coordinates to be $0$. Then it is an immediate computation that 
$$
[g_U, g_V] = t_{(U, V)} \in \mathbb{F}_2[\mathbb{F}_2^{A \cup B}],
$$
viewed as an element of $\hat{G}_{(A, B)}$ by also setting all other coordinates to be zero. In this way we see that $[\hat{G}_{(A, B)}, \hat{G}_{(A, B)}]$ contains 
$$
\ker(\hat{G}_{(A, B)} \to G_A \times G_B).
$$
This shows that the two groups have the same abelianization, which immediately gives the desired conclusion since the abelianizations of $G_A$ and $G_B$ are given by $\frac{\mathbb{F}_2[\mathbb{F}_2^A]}{I_{\mathbb{F}_2^A} \cdot \mathbb{F}_2[\mathbb{F}_2^A]} \times \mathbb{F}_2^A$ and $\frac{\mathbb{F}_2[\mathbb{F}_2^B]}{I_{\mathbb{F}_2^B} \cdot \mathbb{F}_2[\mathbb{F}_2^B]} \times \mathbb{F}_2^B$. We thus infer the claimed equality between Frattini and commutator subgroups.
\end{proof}

Our main goal will be to gain sufficient control on how performing nested commutators works in the group $\hat{G}_{(A, B)}$. We begin with the following basic observation, providing a partial splitting for the cocycle $\theta_{(A, B)}$. To state it, we remark that $\hat{G}_{(A, B)}$ may also be viewed set-theoretically as the product
$$
\mathbb{F}_2[\mathbb{F}_2^{A \cup B}] \times \left(\mathbb{F}_2[\mathbb{F}_2^A] \times \mathbb{F}_2[\mathbb{F}_2^B] \right) \times \left(\mathbb{F}_2^A \times \mathbb{F}_2^B \right).
$$

\begin{proposition} 
\label{prop: first two lifts}
The cocycle $\theta_{(A, B)}$ becomes the zero function when restricted to $\{0\} \times \mathbb{F}_2[\mathbb{F}_2^A] \times \{0\} \times \mathbb{F}_2^A \times \{0\}$, and similarly when restricted to $\{0\} \times \{0\} \times \mathbb{F}_2[\mathbb{F}_2^B] \times \{0\} \times \mathbb{F}_2^B$. More precisely, the subsets 
$$
H_A := \{0\} \times \mathbb{F}_2[\mathbb{F}_2^A] \times \{0\} \times \mathbb{F}_2^A \times \{0\}, \quad \quad H_B := \{0\} \times \{0\} \times \mathbb{F}_2[\mathbb{F}_2^B] \times \{0\} \times \mathbb{F}_2^B
$$
are actually subgroups of $\hat{G}_{(A, B)}$ projecting isomorphically to $G_A$ and $G_B$ respectively. 
\end{proposition}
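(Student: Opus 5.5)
The plan is a direct computation with the explicit formula for $\theta_{(A, B)}$ obtained just before the statement:
$$
\theta_{(A,B)}(g_1,g_2)=\sum_{V\subseteq A\cup B}\phi_{V\cap A}(g_1)\sum_{U\subseteq V\cap B}\chi_U(g_1)\,\phi_{V\cap B-U}(g_2)\,t_V .
$$
Equivalently, one can use its unexpanded form $\psi_A(g_1)\cdot\bigl(g_1\cdot\psi_B(g_2)\bigr)$. Write elements as $(h, x_A, x_B, a, b)$, so that the quotient coordinates are $g=(x_A,x_B,a,b)\in G_A\times G_B$.

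On $H_A$ we have $x_B=0$ and $b=0$, so $\psi_B(g)=0$ for every $g$ in the image of $H_A$. Hence $\theta_{(A,B)}(g_1,g_2)=\psi_A(g_1)\cdot(g_1\cdot 0)=0$. In the expanded formula this is visible because every $\phi_{W}$ with $W\subseteq B$ (including $\phi_\emptyset$, which is the constant-term coordinate of $\psi_B$) vanishes on $g_2$.

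On $H_B$ we have $x_A=0$, so $\psi_A(g_1)=0$ and again $\theta_{(A,B)}(g_1,g_2)=0\cdot(\ldots)=0$. Here the vanishing comes from the left factor $\phi_{V\cap A}(g_1)$ instead.

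With $\theta_{(A,B)}$ zero on these pairs, the multiplication law becomes
$$
(0,g_1)\ast(0,g_2)=(0+g_1\cdot 0+0,\;g_1g_2)=(0,g_1g_2).
$$
This has two consequences. First, $H_A$ is closed under multiplication, since $G_A\times\{1\}$ is a subgroup of $G_A\times G_B$, and likewise for $H_B$. Second, the map $H_A\to G_A\times\{1\}$, $(0,g)\mapsto g$, is a multiplicative bijection. The identity $(0,1)$ lies in $H_A$, and inverses stay in $H_A$ because a finite nonempty subset closed under multiplication is a subgroup. Composing with the projection to $G_A$ gives the isomorphism $H_A\cong G_A$, and symmetrically $H_B\cong G_B$.

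The only point needing care is the asymmetry of the cup product: the right factor is twisted by $g_1$. So one should check the vanishing in both orders, and confirm that the twisted factor $g_1\cdot\psi_B(g_2)$ is still zero. It is, since the twist acts linearly and $\psi_B(g_2)=0$. I expect no real obstacle; the main risk is a bookkeeping slip about which coordinates vanish.
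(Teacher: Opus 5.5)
Your proposal is correct and follows the paper's approach. Like the paper, you verify directly that $\theta_{(A, B)}$ vanishes on pairs from $H_A$ and from $H_B$, and then read off closure and the isomorphisms from the group law. The only difference is that you use the factored form $\psi_A(g_1)\cdot(g_1\cdot\psi_B(g_2))$, where the paper checks the expanded formula term by term.
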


\begin{proof}
A direct verification with the cocycle defining the group shows that when evaluated in these subsets it vanishes (even termwise), yielding immediately the desired conclusion from the way the group law is defined. 
\end{proof}

Our next proposition provides a third interesting subgroup of $\hat{G}_{(A, B)}$. 

\begin{proposition} 
\label{prop: third lift}
The subset $H_{(A, B)}$ of elements of $\hat{G}_{(A, B)}$ having $0$-coordinate in $\mathbb{F}_2[\mathbb{F}_2^A]$ and $\mathbb{F}_2[\mathbb{F}_2^B]$ is a subgroup and the natural bijection with $G_{A \cup B}$ is a group isomorphism. 
\end{proposition}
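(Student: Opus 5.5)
Our plan is to verify directly from the cocycle formula that $\theta_{(A, B)}$ is a group-theoretic $2$-cocycle on the subset in question with the right shape, and then compare the induced group law with the semidirect product law of $G_{A \cup B} = \mathbb{F}_2[\mathbb{F}_2^{A \cup B}] \rtimes \mathbb{F}_2^{A \cup B}$. Concretely, an element of $H_{(A,B)}$ is a triple $(h, (0, 0), (x, y))$ with $h \in \mathbb{F}_2[\mathbb{F}_2^{A \cup B}]$, $x \in \mathbb{F}_2^A$, $y \in \mathbb{F}_2^B$. The natural bijection sends it to $(h, (x, y)) \in G_{A \cup B}$ under $\mathbb{F}_2^A \times \mathbb{F}_2^B \simeq \mathbb{F}_2^{A \cup B}$.

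The key step is to evaluate $\theta_{(A, B)}$ on pairs of elements of $G_A \times G_B$ whose group-ring coordinates both vanish. By the unraveled formula, $\theta_{(A, B)}(g_1, g_2) = \psi_A(g_1) \cdot (g_1 \cdot \psi_B(g_2))$. This is a product with first factor $\psi_A(g_1)$, so it vanishes as soon as the $\mathbb{F}_2[\mathbb{F}_2^A]$-coordinate of $g_1$ is $0$; equivalently, in monomial coordinates, every term contains $\phi_{V \cap A}(g_1)$, which is zero. Hence on images of elements of $H_{(A,B)}$ the cocycle is identically zero, termwise. We then compute, for two elements of $H_{(A,B)}$,
$$
(h_1, g_1) \ast_{\theta_{(A, B)}} (h_2, g_2) = (h_1 + g_1 \cdot h_2, g_1 g_2).
$$
Since $g_1, g_2$ lie in the subgroup $\{0\} \times \{0\} \times \mathbb{F}_2^A \times \mathbb{F}_2^B$ of $G_A \times G_B$, which is closed under multiplication (in the semidirect products the group-ring part of a product of two elements with zero group-ring part is zero), $g_1 g_2$ again has zero group-ring coordinates and equals $(x_1 + x_2, y_1 + y_2)$. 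This gives closure, and the identity and inverses are then immediate ($(h, g)^{-1} = (g^{-1} \cdot h, g^{-1})$ with $g^{-1} = g$), so $H_{(A,B)}$ is a subgroup.

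Finally we check that the bijection is a homomorphism. The product formula above is exactly the semidirect product law of $G_{A \cup B}$, provided the action of $(x, y) \in \mathbb{F}_2^A \times \mathbb{F}_2^B$ on $\mathbb{F}_2[\mathbb{F}_2^{A \cup B}]$ used in $\hat{G}_{(A, B)}$ is the left regular action of $\mathbb{F}_2^{A \cup B}$. This is the $\mathbb{F}_2^A \times \mathbb{F}_2^B$-equivariance of the identification $\mathbb{F}_2[\mathbb{F}_2^A] \otimes \mathbb{F}_2[\mathbb{F}_2^B] \cong \mathbb{F}_2[\mathbb{F}_2^{A \cup B}]$ recorded above. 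The only point needing care (the expected mild obstacle) is that the action of a general element of $G_A \times G_B$ on the kernel factors through $\mathbb{F}_2^A \times \mathbb{F}_2^B$. That holds because the group-ring coordinates act trivially on the coefficient module, as the coefficient module is inflated from the quotient $\mathbb{F}_2^A \times \mathbb{F}_2^B$. With this, the bijection is a bijective homomorphism, hence an isomorphism.
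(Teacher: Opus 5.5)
Your proof is correct and is essentially the paper's argument: both rest on the fact that the cross term $\theta_{(A,B)}(g_1,g_2)=\psi_A(g_1)\cdot(g_1\cdot\psi_B(g_2))$ vanishes termwise on $H_{(A,B)}$, so the twisted group law reduces to the semidirect product law of $G_{A\cup B}$. The paper states this in coordinates, by noting that the restricted $\hat{\phi}_{(V_1,V_2)}$ satisfy the ordinary expansion-map recursion, whereas you compare the group laws directly, which is slightly more self-contained.
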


\begin{proof}
From the group law of $\hat{G}_{(A, B)}$ defined by the cocycle $\theta_{(A, B)}$, we see that this is indeed a subgroup. Furthermore, we see that when we restrict the cochains $\hat{\phi}_{(V_1, V_2)}$ to $H_{(A, B)}$ they satisfy precisely the recursive equation of an expansion map given that the second piece of the recursion
$$
\phi_{V_1}(g_1) \cdot \left(\sum_{U \subseteq V_2} \chi_U(g_1) \phi_{V_2 - U}(g_2) \right)
$$
consistently vanishes on $H_{(A,B)}$ by definition. This gives the desired isomorphism. 
\end{proof}

The last two propositions above yield already a vast amount of control for computing nested commutators. Our next proposition will be the final ingredient. 

\begin{proposition} 
\label{prop: mutually centralizing}
The subgroup $E_A := \{0\} \times \{0\} \times \{0\} \times \mathbb{F}_2^A \times \{0\}$ centralizes $H_B$ and the subgroup
$E_B := \{0\} \times \{0\} \times \{0\} \times \{0\} \times \mathbb{F}_2^B$ centralizes $H_A$.
\end{proposition}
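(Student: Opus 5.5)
The claim is a direct computation with the group law $\ast_{\theta_{(A,B)}}$, using the explicit formula
$$
\theta_{(A,B)}(g_1,g_2) = \sum_{V \subseteq A \cup B} \phi_{V \cap A}(g_1) \sum_{U \subseteq V \cap B} \chi_U(g_1)\, \phi_{V\cap B - U}(g_2)\, t_V .
$$
Take $x = (0,0,0,a,0) \in E_A$ and $y = (0,0,h_B,0,b) \in H_B$. We compute $x \ast y$ and $y \ast x$ and show they coincide. By symmetry of the construction in $A$ and $B$, the same computation will then handle $E_B$ and $H_A$; the only asymmetry in $\theta$ is which factor carries $\phi_{V\cap A}(g_1)$, so that case has to be checked separately.

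First, we record the coordinates that enter the cocycle. For $x$, all $\phi_V$ vanish ($\psi_A(x) = 0$, $\psi_B(x)=0$) and $\chi_U(x)=0$ for $U \subseteq B$. For $y$, $\phi_{V_1}(y) = 0$ for every $V_1 \subseteq A$, $\chi$ restricted to $A$ vanishes, and $\phi_W(y)$ is the $t_W$-coefficient of $h_B$.

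Second, we compute the two cocycle values. In $\theta(x,y)$ every term carries the factor $\phi_{V\cap A}(x) = 0$, so $\theta(x,y)=0$. In $\theta(y,x)$ every term carries $\phi_{V\cap B-U}(x) = 0$; the $V\cap A=\emptyset$ term, where $\phi_\emptyset$ is the trivial entry, is killed by the same factor. Hence $\theta(y,x) = 0$ as well.

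Third, we compare the remaining coordinates. The $G_A\times G_B$ components commute, since $G_A$ and $G_B$ commute in the product and $a$ acts trivially on $\mathbb{F}_2[\mathbb{F}_2^B]$. The $\mathbb{F}_2[\mathbb{F}_2^{A\cup B}]$ components are $0 + x\cdot 0 + 0$ and $0 + y\cdot 0 + 0$, so they agree.

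For the second statement, take $x = (0,0,0,0,b) \in E_B$ and $y = (0,h_A,0,a,0) \in H_A$. In $\theta(y,x)$ the factor $\phi_{V\cap B - U}(x)$ is zero. In $\theta(x,y)$ the factor $\phi_{V\cap A}(x)$ is zero whenever $V\cap A \neq \emptyset$. When $V\cap A = \emptyset$, the term is $\sum_U \chi_U(x)\phi_{V-U}(y)$ with $V \subseteq B$, and $\phi_{W}(y)=0$ for $W\subseteq B$, including $W = \emptyset$ since $y$ has trivial $\mathbb{F}_2^B$ component. So this term also vanishes, provided $\phi_\emptyset$ is read as the relevant projection; this reading is exactly the point to verify.

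The main obstacle is the bookkeeping of the degenerate index $V\cap A = \emptyset$ (respectively $V\cap B - U = \emptyset$). There one must pin down precisely what $\phi_\emptyset$ means in the cup-product formula: it is the constant coordinate of $\psi_A$, not a character. With that convention the factors vanish as claimed, and everything else is routine.
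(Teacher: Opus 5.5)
Your proof is correct and is the paper's argument: in each case both $\theta_{(A,B)}(x,y)$ and $\theta_{(A,B)}(y,x)$ vanish because every term carries a $\phi$-coordinate of an element whose relevant group-ring component is zero, and the $G_A\times G_B$ parts commute. The degenerate index you flag is not an obstacle: $\phi_\emptyset$ is simply the $t_\emptyset$-coordinate of $\psi_A$ (respectively $\psi_B$). It is in fact a homomorphism, namely the pointer character, but it vanishes on any element whose $\mathbb{F}_2[\mathbb{F}_2^A]$ (respectively $\mathbb{F}_2[\mathbb{F}_2^B]$) component is $0$, so the factor $\phi_{V\cap A}(x)$ is zero for every $V$ and no special convention is needed.
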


\begin{proof}
Let $g_1$ be in $E_A$ and $g_2$ be in $H_B$. Observe that for all $V \subseteq A$, we have that $\phi_V(g_1) = 0 = \phi_V(g_2)$. Therefore it follows from the formula for $\theta_{(A, B)}$ that $\theta_{(A, B)}(g_1, g_2) = 0 = \theta_{(A, B)}(g_2, g_1)$. This gives that $E_A$ centralizes $H_B$. Swapping the roles of $A$ and $B$, the same argument shows that $E_B$ centralizes $H_A$. 
\end{proof}

We next define a convenient minimal set of generators for $\hat{G}_{(A, B)}$. For $i \in A$ (and likewise for $j \in B$), we let $\sigma_i$ be the unique element of $\FF_2^A$ that projects to $1$ precisely on the $i$th coordinate. We next define $\sigma_{\text{pt}_A} := 1 \in \mathbb{F}_2[\mathbb{F}_2^A]$ (in the monomial coordinates, this is exactly $t_\emptyset$) and similarly for $\sigma_{\text{pt}_B}$. We denote by 
$$
X := \{\sigma_i : i \in A\} \cup \{\sigma_j : j \in B\} \cup \{\sigma_{\text{pt}_A}, \sigma_{\text{pt}_B}\}.
$$
Thanks to Proposition \ref{prop: iso mod frattini} we see that $X$ is a minimal set of generators for $\hat{G}_{(A, B)}$. We now compute a general nested commutator evaluated on this minimal set of generators. 

\begin{proposition} 
\label{prop: computing nested commutators}
$(a)$ Let $s \geq 2$ be an integer. Let $(x_1, \ldots, x_s) \in X^s$. Then 
$$
[x_1, [x_2, [\cdots, [x_{s - 1}, x_s] \cdots]]]
$$
does not vanish if and only if the following conditions simultaneously hold:
\begin{itemize}
    \item The sequence $(x_1, \ldots, x_s)$ has no repetitions.
    \item We have $\sigma_{\textup{pt}_A} \in \{x_{s - 1}, x_s\}$ and the largest index $h$ with $x_h \in \{\sigma_j : j \in B\} \cup \{\sigma_{\textup{pt}_B}\}$ satisfies $x_h = \sigma_{\textup{pt}_B}$ (this is to be interpreted as vacuously true in case no such index exists), or we have $\sigma_{\textup{pt}_B} \in \{x_{s - 1}, x_s\}$ and the largest index $h$ with $x_h \in \{\sigma_i : i \in A\} \cup \{\sigma_{\textup{pt}_A}\}$ satisfies $x_h = \sigma_{\textup{pt}_A}$.
\end{itemize}
$(b)$ In case the above two conditions hold, we have that 
$$
[x_1, [x_2, [\cdots, [x_{s - 1}, x_s] \cdots]]] =: c
$$
can be computed by the following rules: 
\begin{itemize}
    \item If $\sigma_{\textup{pt}_A} \in \{x_{s - 1}, x_s\}$ and there is no index $h$ with $x_h = \sigma_{\textup{pt}_B}$, then
    $$
    c = (0, t_R, 0, 0, 0),
    $$
    where $R$ is the subset of $i$ in $A$ such that $x_j = \sigma_i$ for a necessarily unique $j$ in $[s]$.
    \item If $\sigma_{\textup{pt}_B} \in \{x_{s - 1}, x_s\}$ and there is no index $h$ with $x_h = \sigma_{\textup{pt}_A}$, then
    $$
    c = (0, 0, t_R, 0, 0),
    $$
    where $R$ is the subset of $i$ in $B$ such that $x_j = \sigma_i$ for a necessarily unique $j$ in $[s]$.
    \item If $\sigma_{\textup{pt}_A}$ and $\sigma_{\textup{pt}_B}$ both occur, then
    $$
    c = (t_{(R_1, R_2)}, 0, 0, 0, 0),
    $$
    where $R_1 \subseteq A$ and $R_2 \subseteq B$ are such that $R_1 \cup R_2$ contains an index $i$ if and only if $\sigma_i$ occurs in the sequence $(x_1, \ldots, x_s)$.
\end{itemize}
\end{proposition}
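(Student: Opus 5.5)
The plan is to argue by induction on $s$, computing the innermost commutator $[x_{s-1}, x_s]$ directly. At each later step we commute a generator $x_h$ with an element already known to have a specific shape. The whole computation is organized by three kinds of elements: those lying in the central-ish kernel $K := \mathbb{F}_2[\mathbb{F}_2^{A \cup B}]$ (first coordinate), those in $\mathbb{F}_2[\mathbb{F}_2^A] \times \{0\}$ inside $H_A$, and those in $\mathbb{F}_2[\mathbb{F}_2^B]$ inside $H_B$.

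\textbf{Base case} $s = 2$. There are only a few cases to check.
\begin{itemize}
\item Two elements of $E_A \cup E_B$ commute, since they lie in the abelian group $\mathbb{F}_2^A \times \mathbb{F}_2^B$ and $\theta$ vanishes on it, because all $\phi_V$ vanish there.
\item $\sigma_{\mathrm{pt}_A}$ and $\sigma_j$ commute by Proposition \ref{prop: mutually centralizing}, and symmetrically for $\sigma_{\mathrm{pt}_B}$ and $\sigma_i$.
\item $[\sigma_i, \sigma_{\mathrm{pt}_A}] = t_{\{i\}}$ inside $H_A \cong G_A$, using Proposition \ref{prop: first two lifts} and the identity $e_i \cdot 1 - 1 = t_i$ in the group ring.
\item $[\sigma_{\mathrm{pt}_A}, \sigma_{\mathrm{pt}_B}] = t_{(\emptyset, \emptyset)} \in K$, which is the computation already done in the proof of Proposition \ref{prop: iso mod frattini}.
\end{itemize}
This matches the claimed conditions, noting that all elements have order $2$, so signs and orientation are irrelevant.

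\textbf{Inductive step.} Suppose $c' = [x_2, [\cdots]]$ is nonzero and of the shape given in (b). We distinguish three cases.
\begin{itemize}
\item If $c' = (0, t_R, 0, 0, 0) \in H_A$, then commuting with $\sigma_i$ ($i \in A$) inside $H_A \cong G_A$ gives $(1 + e_i) t_R = t_{R \cup \{i\}}$ if $i \notin R$ and $0$ if $i \in R$ (since $t_i^2 = 0$). Commuting with $\sigma_{\mathrm{pt}_A}$ gives $0$, because $\mathbb{F}_2[\mathbb{F}_2^A]$ is abelian and the cocycle vanishes on $H_A$. Commuting with $\sigma_j$ gives $0$ by Proposition \ref{prop: mutually centralizing}. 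Commuting with $\sigma_{\mathrm{pt}_B}$ is the genuinely new case: it yields $t_{(R, \emptyset)}$, generalizing $[g_U, g_V] = t_{(U,V)}$ from the proof of Proposition \ref{prop: iso mod frattini} with $V = \emptyset$.
\item If $c' \in H_B$, the argument is symmetric.
\item If $c' = (t_{(R_1, R_2)}, 0, 0, 0, 0) \in K$, we use that conjugation by an element acts on $K$ through its image in $\mathbb{F}_2^A \times \mathbb{F}_2^B$ by the regular action (this is read off the group law $h_1 + g_1 \cdot h_2$). So $\sigma_{\mathrm{pt}_A}$ and $\sigma_{\mathrm{pt}_B}$ act trivially and give commutator $0$, while $\sigma_i$ multiplies by $1 + e_i$. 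Hence $[\sigma_i, c'] = t_i t_{(R_1, R_2)}$, which is $t_{(R_1 \cup \{i\}, R_2)}$ or $0$ according to whether $i$ is repeated, and likewise for $j \in B$.
\end{itemize}

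\textbf{Matching with the statement.} Tracking these rules, nonvanishing forces exactly the following.
\begin{itemize}
\item There are no repeated indices.
\item The innermost pair must contain a pointer, since otherwise $s = 2$ already gives $0$ (or the pair consists of two non-pointers, which commute).
\item Once we are in $H_A$, the only allowed subsequent letters are new $\sigma_i$ until $\sigma_{\mathrm{pt}_B}$ appears, at which point we move into $K$. After that, only new $\sigma_i$ or $\sigma_j$ are allowed. This is precisely the condition that the last $B$-type letter, read from the right, before the move into $K$ is $\sigma_{\mathrm{pt}_B}$. The condition on the largest index $h$ with a $B$-letter must be read as the letter closest to the core. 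I would double-check this indexing convention against the statement, since "largest index" refers to positions $x_h$ nearer the innermost bracket.
\end{itemize}
The values of $c$ follow from the recorded rules.

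\textbf{Main obstacle.} The step I expect to be delicate is the commutator $[\sigma_{\mathrm{pt}_B}, (0, t_R, 0, 0, 0)]$. It requires evaluating $\theta_{(A,B)}$ on pairs where one factor has nonzero $\phi_V$-coordinates and the other has $\chi$- or $\phi_W$-coordinates, which means carefully using the formula
$$
\theta(g_1, g_2) = \sum_{V} \phi_{V \cap A}(g_1) \sum_{U} \chi_U(g_1) \phi_{V \cap B - U}(g_2)\, t_V.
$$
Here $g_1 = (0, t_R, 0, 0, 0)$ has $\phi_R(g_1) = 1$ and $\chi(g_1) = 0$, and $g_2 = \sigma_{\mathrm{pt}_B}$ has $\phi_\emptyset = 1$. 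So $\theta(g_1, g_2) = t_{(R, \emptyset)}$, while $\theta(g_2, g_1) = 0$. One must also account for the inverses in the commutator, which is harmless since all elements involved have order $2$ modulo $K$ and the correction terms vanish.
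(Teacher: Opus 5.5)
Your proposal is correct and follows essentially the same route as the paper. Both arguments work outward from the innermost bracket $[x_{s-1},x_s]$, tracking whether the partial commutator lies in $\mathbb{F}_2[\mathbb{F}_2^A]\subseteq H_A$, in $\mathbb{F}_2[\mathbb{F}_2^B]\subseteq H_B$, or in the kernel $\mathbb{F}_2[\mathbb{F}_2^{A\cup B}]$, and both use Propositions \ref{prop: first two lifts} and \ref{prop: mutually centralizing} to kill the forbidden letters. The differences are only cosmetic: you compute the $\sigma_{\mathrm{pt}_B}$ step explicitly from $\theta_{(A,B)}$, and you read the action on the kernel off the group law instead of citing Proposition \ref{prop: third lift}. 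Your reading of ``largest index'' as the letter nearest the innermost bracket is the correct one.
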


\begin{proof}
By Proposition \ref{prop: first two lifts}, the commutator vanishes if $x_{s - 1}, x_s \in E_A$. Similarly, the commutator vanishes if $x_{s - 1}, x_s \in E_B$. Also, by Proposition \ref{prop: mutually centralizing}, the commutator vanishes if $|E_A \cap \{x_{s - 1}, x_s\}| = |E_B \cap \{x_{s - 1}, x_s\}| = 1$ or if $\{x_{s - 1}, x_s\}$ consists of $\sigma_{\text{pt}_A}$ and an element of $E_B$, or $\sigma_{\text{pt}_B}$ and an element of $E_A$. Finally, the commutator clearly vanishes if $x_{s - 1} = x_s$.

Hence, if the commutator does not vanish, the only possibilities left for $\{x_{s - 1}, x_s\}$ are that:
\begin{itemize}
    \item we take $\sigma_{\text{pt}_A}$ and an element of $E_A$,
    \item we take $\sigma_{\text{pt}_B}$ and an element of $E_B$,
    \item we take $\{\sigma_{\text{pt}_A}, \sigma_{\text{pt}_B}\}$.
\end{itemize}
We begin by addressing the first case; so suppose $\sigma_{\text{pt}_A} \in \{x_{s - 1}, x_s\}$ and the other element is in $E_A$. Therefore, by Proposition \ref{prop: first two lifts}, we must be in the lift $H_A$. Now, running through the variables in the commutator from right to left, we see by Proposition \ref{prop: first two lifts} that we will remain in $H_A$ for as long as we use variables in $H_A$. This means that if the first index, from right to left, that is not in $H_A$ happens not to be $\sigma_{\text{pt}_B}$, then the commutator vanishes in view of Proposition \ref{prop: mutually centralizing}. We conclude that the first index that we use, from right to left, that is not in $H_A$, must be $\sigma_{\text{pt}_B}$. 

Let $R \subseteq A$ be the set of indices $i$ such that $\sigma_i$ occurs to the right of $\sigma_{\text{pt}_B}$. Then we have that up until $\sigma_{\text{pt}_B}$ the element of $G_A$ that we get is simply the monomial $\prod_{i \in R} t_i$, which forces in particular no repetitions up to this point. When we take the commutator with $\sigma_{\text{pt}_B}$, we get the element
$$
(t_R, 0, 0, 0, 0).
$$
From now on, we can simply use any remaining set of variables other than $\sigma_{\text{pt}_A},\sigma_{\text{pt}_B}$: indeed elements in $\mathbb{F}_2[\mathbb{F}_2^{A \cup B}] \times \{0\} \times \{0\} \times \{0\} \times \{0\}$ commute with $\sigma_{\text{pt}_A},\sigma_{\text{pt}_B}$. From Proposition \ref{prop: third lift} we get precisely the element $(t_{R \cup R'}, 0, 0, 0, 0)$, where $R'$ is the set of remaining variables that we use. 

The second case is entirely symmetrical and we now deal with the third case. Observe that $[\sigma_{\text{pt}_A}, \sigma_{\text{pt}_B}] \in \mathbb{F}_2[\mathbb{F}_2^{A \cup B}] \times \{0\} \times \{0\} \times \{0\} \times \{0\}$. More precisely, we have
$$
(t_{\emptyset}, 0, 0, 0, 0) = [\sigma_{\text{pt}_A}, \sigma_{\text{pt}_B}].
$$
From this point onward, we can no longer use either $\sigma_{\text{pt}_A}$ or $\sigma_{\text{pt}_B}$, as observed already above, and then we can directly appeal to Proposition \ref{prop: third lift} and see that our only constraint is to use pairwise distinct variables. The resulting commutator is then
$$
(t_R, 0, 0, 0, 0),
$$
where $R \subseteq A \cup B$ is the total set of non-pointer variables used, precisely as claimed. 
\end{proof}

An immediate corollary is as follows. 

\begin{proposition} 
The group $\hat{G}_{(A, B)}$ has nilpotency class equal to $|A| + |B| + 2$.
\end{proposition}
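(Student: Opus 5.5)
The statement should follow from Proposition \ref{prop: computing nested commutators} together with the standard fact that, for a finite $p$-group generated by a set $X$, the $c$-th term $\gamma_c$ of the lower central series is generated modulo $\gamma_{c+1}$ by the left-normed (equivalently, right-normed) commutators of weight $c$ in elements of $X$. The plan is to prove two inequalities: a nonvanishing commutator of weight $|A|+|B|+2$ exists, and every commutator of weight $|A|+|B|+3$ in $X$ vanishes.

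\emph{Lower bound.} Order $X$ as $x_1,\ldots,x_s$ with $s=|A|+|B|+2$. Place all $\sigma_i$ ($i\in A$) and $\sigma_j$ ($j\in B$) in positions $1,\dots,s-2$, and take $x_{s-1}=\sigma_{\mathrm{pt}_A}$, $x_s=\sigma_{\mathrm{pt}_B}$. This sequence has no repetitions. Since $\sigma_{\mathrm{pt}_B}\in\{x_{s-1},x_s\}$, the conditions of Proposition \ref{prop: computing nested commutators}$(a)$ hold, and the largest index $h$ carrying an $A$-variable is $s-1$, where $x_h=\sigma_{\mathrm{pt}_A}$. By part $(b)$, third case, the commutator is $(t_{(A,B)},0,0,0,0)\neq 0$. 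Hence $\gamma_{s}(\hat G_{(A,B)})\neq 1$, and the class is at least $s$.

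\emph{Upper bound.} A right-normed commutator of length $s+1$ in elements of $X$ must repeat some generator, since $|X|=s$. By part $(a)$ it therefore vanishes. It remains to check that all of $\gamma_{s+1}$ vanishes, not just these basic commutators. I would argue by induction on the lower central series: $\gamma_{c}$ is generated by $\gamma_{c+1}$ together with right-normed commutators $[x_1,[\cdots,[x_{c-1},x_c]]]$ with $x_i\in X$. This holds because $\gamma_c=[G,\gamma_{c-1}]$ and commutator identities make $[g,h]$ bilinear modulo $\gamma_{c+1}$ for $g\in G$, $h\in\gamma_{c-1}$. Since the group is finite, $\gamma_{s+2}\subseteq\gamma_{s+1}$, and $\gamma_{s+1}$ is generated modulo $\gamma_{s+2}$ by trivial elements, we get $\gamma_{s+1}=\gamma_{s+2}$. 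Nilpotency then forces $\gamma_{s+1}=1$; indeed $\hat G_{(A,B)}$ is a finite $2$-group, hence nilpotent.

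The main obstacle is this reduction from arbitrary commutators to commutators in generators. It is standard (for instance in Hall's treatment, or Magnus--Karrass--Solitar), so I would cite it rather than reprove it. The only care needed is that the generating set is $X$ and not $X$ together with inverses. This is harmless, because modulo $\gamma_{c+1}$ the weight-$c$ commutator is multiplicative in each entry, so inverting an entry only inverts the commutator.
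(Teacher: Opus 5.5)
Your proposal is correct and follows the paper's proof. Like the paper, you use the sequence with the non-pointer generators first and then $\sigma_{\mathrm{pt}_A}, \sigma_{\mathrm{pt}_B}$, which by Proposition \ref{prop: computing nested commutators} gives a nonvanishing commutator of weight $|A|+|B|+2$, and you use pigeonhole to kill all longer commutators. Your explicit reduction, that $\gamma_c/\gamma_{c+1}$ is generated by right-normed weight-$c$ commutators in the generating set $X$, is the standard fact the paper leaves implicit, and spelling it out is a welcome addition.
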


\begin{proof}
Proposition \ref{prop: computing nested commutators} implies that in the group $\hat{G}_{(A, B)}$ there is a commutator of length $|A| + |B| + 2$ that does not vanish (e.g.~picking $x_{|A| + |B| + 1} = \sigma_{\text{pt}_A}$, $x_{|A| + |B| + 2} = \sigma_{\text{pt}_B}$ and picking any other choice without repetitions for the remaining variables), while all the commutators of length exceeding $|A| + |B| + 2$ vanish since there is a repetition by the pigeonhole principle. This means that the nilpotency class is precisely $|A| + |B| + 2$.
\end{proof}

\subsection{\texorpdfstring{$\hat{\phi}$-maps for a profinite group}{Hat phi maps for a profinite group}} \label{section: hat phi for profinite groups}
Let now $\mathcal{G}$ be any profinite group. Suppose that $A$ and $B$ are two disjoint finite sets of continuous $\mathbb{F}_2$-valued characters of $\mathcal{G}$ such that $A \cup B$ is a linearly independent set. Let $\chi_1$ and $\chi_2$ be two further characters that together with $A \cup B$ form a linearly independent set. Let $\psi_A$ and $\psi_B$ be two expansion maps for $\mathcal{G}$ with pointers respectively $\chi_1$ and $\chi_2$. 

We say that $\psi_{(A, B)}$ is a $\hat{\phi}$-expansion map pointed at $(\psi_A, \psi_B)$ if $\psi_{(A, B)}$ is a continuous group homomorphism 
$$
\psi_{(A, B)}\colon \mathcal{G} \to \hat{G}_{(A, B)}
$$
that composed with the natural projection $\hat{G}_{(A, B)} \twoheadrightarrow G_A \times G_B$ coincides with the pair $(\psi_A, \psi_B)$. Observe that to give such a function from $\mathcal{G}$ to $\hat{G}_{(A, B)}$ is tantamount to giving a system of continuous $\mathbb{F}_2$-valued cochains 
$$
\{\hat{\phi}_{(V_1, V_2)} : V_1 \subseteq A, V_2 \subseteq B\}
$$
giving the monomial coordinates of $\psi_{(A, B)}$ in $\mathbb{F}_2[\mathbb{F}_2^{A \cup B}]$. 

\begin{proposition} 
\label{prop: recursive equation for hat phi}
A function $\psi := \left( (\hat{\phi}_{(V_1, V_2)})_{(V_1 \subseteq A, V_2 \subseteq B)}, \psi_A, \psi_B \right)$ induces a surjective continuous group homomorphism
$$
\psi\colon \mathcal{G} \to \hat{G}_{(A, B)}
$$
if and only if each of the $\mathbb{F}_2$-valued cochains $\hat{\phi}_{(V_1, V_2)}$ is continuous and they satisfy the recursive equation
\begin{multline}
\label{eHatRecursion}
\mathrm{d} \hat{\phi}_{(V_1, V_2)}(g_1, g_2) = \left(\sum_{\substack{(\emptyset, \emptyset) \neq (U_1, U_2) \\ U_1 \subseteq V_1, U_2 \subseteq V_2}} \chi_{U_1 \cup U_2}(g_1) \cdot \hat{\phi}_{(V_1-U_1,V_2-U_2)}(g_2) \right) + \\
\phi_{V_1}(g_1) \cdot \left(\sum_{U \subseteq V_2}\chi_{U}(g_1)\phi_{V_2-U}(g_2) \right)
\end{multline}
for all $g_1, g_2 \in \mathcal{G}$. 
\end{proposition}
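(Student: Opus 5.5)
The plan is to unwind the definition of the group law on $\hat{G}_{(A, B)}$, which was built from the cocycle $\theta_{(A, B)}$. A function $\psi = (h, \psi_A, \psi_B)\colon \mathcal{G} \to \hat{G}_{(A, B)}$, with $h := \sum_{(V_1, V_2)} \hat{\phi}_{(V_1, V_2)} \cdot t_{(V_1, V_2)}$, is a homomorphism if and only if two things hold. First, $(\psi_A, \psi_B)$ is a homomorphism to $G_A \times G_B$; this is automatic, since $\psi_A$ and $\psi_B$ are expansion maps. Second, the $\mathbb{F}_2[\mathbb{F}_2^{A \cup B}]$-coordinate satisfies
$$
h(g_1 g_2) = h(g_1) + (\chi_A, \chi_B)(g_1) \cdot h(g_2) + \theta_{(A, B)}((\psi_A, \psi_B)(g_1), (\psi_A, \psi_B)(g_2)).
$$
Equivalently, $\mathrm{d}h(g_1, g_2) = \big((\chi_A, \chi_B)(g_1) - 1\big) \cdot h(g_2) + \theta_{(A, B)}(\ldots)$, where $\mathrm{d}$ is the differential for the trivial action coordinatewise. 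Continuity of $\psi$ is equivalent to continuity of all the coordinate cochains, since $\hat{G}_{(A, B)}$ is finite and discrete.

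Next, I would expand both terms in the monomial basis $t_{(V_1, V_2)}$. For the action term, the element $\sigma \in \mathbb{F}_2^{A \cup B}$ acts on $t_W$ by multiplication by $\prod_{i \in \mathrm{Supp}(\sigma)} (1 + t_i)$. Since $t_i t_W = t_{W \cup \{i\}}$ when $i \notin W$ and $t_i^2 = 0$, this gives $\sigma \cdot t_W = \sum_{U \subseteq \mathrm{Supp}(\sigma),\, U \cap W = \emptyset} t_{W \cup U}$. So the coefficient of $t_{(V_1, V_2)}$ in $(\sigma - 1) \cdot h(g_2)$ is $\sum_{\emptyset \neq (U_1, U_2) \subseteq (V_1, V_2)} \chi_{U_1 \cup U_2}(g_1)\, \hat{\phi}_{(V_1 - U_1, V_2 - U_2)}(g_2)$. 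This uses that $\chi_U(g_1) = \prod_{i \in U} \chi_i(g_1)$ detects $U \subseteq \mathrm{Supp}$. For the cocycle term, I would read off the coefficient of $t_{(V_1, V_2)}$ from the formula for $\theta_{(A, B)}$ computed earlier (the last line of that displayed computation). That coefficient is exactly $\phi_{V_1}(g_1) \sum_{U \subseteq V_2} \chi_U(g_1) \phi_{V_2 - U}(g_2)$. Comparing coefficients gives \eqref{eHatRecursion}, and the argument runs in both directions.

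The remaining point is surjectivity, which the statement bundles in; I expect this to be the main obstacle. For it I would invoke Proposition \ref{prop: iso mod frattini}. A homomorphism into a finite $2$-group is surjective iff its composite to the Frattini quotient is surjective, and that quotient is identified with the Frattini quotient of $G_A \times G_B$. So surjectivity of $\psi$ reduces to surjectivity of $(\psi_A, \psi_B)$. This in turn holds because $A \cup B \cup \{\chi_1, \chi_2\}$ is linearly independent, so the induced map to $\mathbb{F}_2^{|A| + |B| + 2}$ is onto. If the intended reading of ``surjective'' is merely a qualifier on homomorphisms pointed at surjective expansion maps, this step is exactly what justifies it. The only delicate bookkeeping is the sign and ordering convention in $\theta_{(A, B)} = \psi_A \cup \psi_B$: one must check that the action on the right factor is the one producing the $\chi_U(g_1)$ factor, matching the earlier expansion.
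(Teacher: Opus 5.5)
Your proposal is correct and follows the same route as the paper. Continuity is checked coordinatewise. The homomorphism condition is unwound from the cocycle group law as $\mathrm{d}\psi' = \psi_A \cup \psi_B$ and compared coefficientwise in the monomial basis $t_{(V_1, V_2)}$. Surjectivity comes from Proposition \ref{prop: iso mod frattini} together with the linear independence of $A \cup B \cup \{\chi_1, \chi_2\}$. Your explicit expansion of the action term $(\sigma - 1) \cdot t_W$ supplies a detail the paper leaves implicit.
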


\begin{proof}
Clearly, $\psi$ is continuous if and only if all of its coordinates $\hat{\phi}_{(V_1,V_2)}$ are continuous functions, hence settling the continuity part of the if and only if.

Denote by $\psi'$ the projection of $\psi$ on $\mathbb{F}_2[\mathbb{F}_2^{A \cup B}] \times \{0\} \times \{0\} \times \{0\} \times \{0\}$. Then, by definition of the group law for $\hat{G}_{(A, B)}$, we have that $\psi = (\psi' ,\psi_A, \psi_B)$ is a group homomorphism if and only if
$$
\mathrm{d}\psi' = \psi_A \cup \psi_B.
$$
Computing explicitly the cup product and the differential in the monomial coordinates, as executed at the beginning of this section, yields precisely the conclusion that $\psi$ is a homomorphism if and only if the $\hat{\phi}_{(V_1,V_2)}$ satisfy that recursive system of equations. 

Finally, as soon as $\psi$ is a homomorphism, it will be automatically surjective. Indeed, Proposition \ref{prop: iso mod frattini} tells us that $\text{Im}(\psi)$ will automatically generate modulo the Frattini (in view of our running assumption that $A \cup B \cup \{\chi_1,\chi_2\}$ is a linearly independent set over $\mathbb{F}_2$), and therefore it will generate, since $\hat{G}_{(A,B)}$ is a $2$-group. 
\end{proof}

\begin{remark} 
\label{remark: group of def of an exp map}
The group of definition of an expansion map $\psi_A$ equals the group of definition of its $\mathbb{F}_2$-valued top cochain $\phi_A(\psi_A)$. This justifies a slight abuse of notation, one that we will systematically adopt in later sections, by specifying only the top cochains of the two underlying expansion maps.
\end{remark}

Our second proposition highlights the role of $\hat{\phi}$-expansion maps in trivializing cup products of usual expansion maps. 

\begin{proposition} 
\label{prop: d hat phi equals cup of phi's}
Let $\left( (\hat{\phi}_{(V_1, V_2)})_{(V_1 \subseteq A, V_2 \subseteq B)}, \psi_A, \psi_B \right)$ be a $\hat{\phi}$-expansion map. Let 
$$
\mathcal{H} := \ker\left( \mathcal{G} \xrightarrow{\bigoplus_{\chi \in A \cup B} \chi} \FF_2^{A \cup B} \right).
$$
Then for all $h_1, h_2 \in \mathcal{H}$ we have
$$
\mathrm{d}\hat{\phi}_{(A, B)}(h_1, h_2) = \phi_A(h_1) \cdot \phi_B(h_2).
$$
\end{proposition}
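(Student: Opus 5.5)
Apply the recursion \eqref{eHatRecursion} from Proposition \ref{prop: recursive equation for hat phi} with $(V_1, V_2) = (A, B)$ and $g_1 = h_1$, $g_2 = h_2$ both in $\mathcal{H}$, and show that every term except $\phi_A(h_1)\phi_B(h_2)$ vanishes.

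The first sum runs over pairs $(U_1, U_2) \neq (\emptyset, \emptyset)$ with $U_1 \subseteq A$ and $U_2 \subseteq B$. Each summand contains the factor $\chi_{U_1 \cup U_2}(h_1)$. Here $\chi_{U}$ denotes the product (in $\mathbb{F}_2$) of the characters in $U$, so it vanishes as soon as one of its factors vanishes. Since $U_1 \cup U_2$ is a nonempty subset of $A \cup B$ and every $\chi \in A \cup B$ is trivial on $\mathcal{H}$, we get $\chi_{U_1 \cup U_2}(h_1) = 0$. Hence the whole first sum vanishes.

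The second term is $\phi_A(h_1)\cdot\sum_{U \subseteq B}\chi_U(h_1)\,\phi_{B - U}(h_2)$. For $U \neq \emptyset$ the same reasoning gives $\chi_U(h_1) = 0$ because $U \subseteq B$. Only $U = \emptyset$ survives, where $\chi_\emptyset = 1$ by the empty-product convention, leaving $\phi_A(h_1)\phi_B(h_2)$. Here $\phi_A$ and $\phi_B$ are the top cochains of $\psi_A$ and $\psi_B$.

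The main point to check is the convention for $\chi_U$. It must be the product of characters, as in the recursion of expansion maps coming from the monomial basis $t_V = \prod(1+e_i)$: the action of $g$ on $t_W$ produces $\sum_{W_1} \chi_{W_1}(g)\, t_{W_1 \cup W}$-type terms with multiplicative coefficients. It must also satisfy $\chi_\emptyset = 1$. Both follow from the cup product computation at the start of the section, since $g \cdot t_W = \prod_{j\in W}(1+e_j)\cdot e^{g}$ expands with coefficients $\prod_{j \in U}\chi_j(g)$. With this, the identity holds pointwise on $\mathcal{H} \times \mathcal{H}$.
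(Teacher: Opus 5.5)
Your proof is correct and is exactly the paper's argument: specialize the recursion \eqref{eHatRecursion} to $(V_1,V_2)=(A,B)$, discard every term carrying a factor $\chi_U(h_1)$ with $U \neq \emptyset$ (all characters in $A \cup B$ vanish on $\mathcal{H}$), and keep only the $U=\emptyset$ term $\phi_A(h_1)\phi_B(h_2)$. Your check that $\chi_U$ is the product convention with $\chi_\emptyset = 1$ is the right point to verify, and it agrees both with the paper's notation section and with the cup-product expansion at the start of the section.
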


\begin{proof}
We consider the recursive equation \eqref{eHatRecursion} of Proposition \ref{prop: recursive equation for hat phi} applied to $V_1 = A$ and $V_2 = B$. Each summand in the first sum has a term of the shape $\chi_{U_1 \cup U_2}(h_1)$; this term is zero when restricted to $\mathcal{H}$ since $U_1$ and $U_2$ are not both empty. Among the terms in the second summand, we see that they all contain $\chi_U(h_1)$, which vanishes whenever $U$ is not empty. We conclude that the only remaining term corresponds to $U = \emptyset$, which contributes exactly $\phi_A(h_1) \cdot \phi_B(h_2)$.
\end{proof}

We have the following basic fact. 

\begin{proposition} 
\label{prop: at most one normalized map}
Let $\mathcal{G}$ be a pro-$2$ group. Let $\mathfrak{G}_0$ be a minimal set of topological generators for $\mathcal{G}$. Let $A,B, \{\chi_1,\chi_2\}$ be sets of continuous $\mathbb{F}_2$-valued characters of $\mathcal{G}$ forming altogether a linearly independent set. Then there exists at most one pair of expansion maps $\psi_A$ and $\psi_B$ with support set respectively $A \cup \{\chi_1\}, B \cup \{\chi_2\}$ and such that 
$$
\phi_U(\psi_A)(\mathfrak{G}_0) = \phi_V(\psi_B)(\mathfrak{G}_0) = \{0\},
$$
for all non-empty subsets $U \subseteq A$ and all non-empty subsets $V \subseteq B$. 

Furthermore, there exists at most one $\hat{\phi}$-expansion map $\psi_{(A, B)}$ pointed at $(\psi_A, \psi_B)$ such that
$$
\hat{\phi}_{(U, V)}(\psi_{(A, B)})(\mathfrak{G}_0) = \{0\}
$$
for all $U \subseteq A$ and all $V \subseteq B$.
\end{proposition}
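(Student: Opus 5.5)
The plan is the standard uniqueness argument: two homomorphisms out of a pro-$2$ group that agree on a topological generating set agree everywhere. The only care needed is to let the normalization fix the lower-order coordinates first, working upward through the recursion.

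For the expansion maps, suppose $\psi_A$ and $\psi_A'$ are two expansion maps with support $A \cup \{\chi_1\}$, pointer $\chi_1$, and both normalized on $\mathfrak{G}_0$.
\begin{itemize}
\item Their bottom cochains agree: $\phi_\emptyset(\psi_A) = \chi_1 = \phi_\emptyset(\psi_A')$.
\item Their $\mathbb{F}_2^A$-components agree, since both are given by the characters in $A$.
\item The top coordinates $\phi_U$ for $\emptyset \neq U \subseteq A$ vanish on $\mathfrak{G}_0$ for both maps.
\end{itemize}
So the two continuous homomorphisms $\mathcal{G} \to G_A$ agree on every element of $\mathfrak{G}_0$. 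Since $\mathfrak{G}_0$ topologically generates $\mathcal{G}$ and homomorphisms are continuous, they agree on the closed subgroup generated by $\mathfrak{G}_0$, which is all of $\mathcal{G}$. Hence $\psi_A = \psi_A'$. The same argument gives uniqueness of $\psi_B$, and therefore of the pair. This is exactly the argument recalled before \cite[Lemma 2.13]{KP}.

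For the $\hat{\phi}$-map, let $\psi_{(A,B)}$ and $\psi_{(A,B)}'$ be two normalized $\hat{\phi}$-expansion maps pointed at the same pair $(\psi_A,\psi_B)$. Both are continuous homomorphisms $\mathcal{G} \to \hat{G}_{(A,B)}$. Their projections to $G_A \times G_B$ agree by definition, since both equal $(\psi_A, \psi_B)$.

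Their remaining coordinates are the cochains $\hat{\phi}_{(U,V)}$, and by hypothesis these vanish on $\mathfrak{G}_0$ for both maps. So the two homomorphisms take identical values in the set-theoretic product description of $\hat{G}_{(A,B)}$ at every element of $\mathfrak{G}_0$. Again topological generation plus continuity forces $\psi_{(A,B)} = \psi_{(A,B)}'$.

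A more cohomological phrasing is also available as a check. The difference $\delta = \hat{\phi}_{(\cdot)} - \hat{\phi}'_{(\cdot)}$ satisfies the homogeneous version of the recursion \eqref{eHatRecursion}, because the inhomogeneous term $\phi_{V_1}\cdot(\ldots)$ depends only on $(\psi_A,\psi_B)$ and cancels. Hence $\delta$ is a $1$-cocycle valued in $\mathbb{F}_2[\mathbb{F}_2^{A\cup B}]$ with the action induced by $A\cup B$. A cocycle is determined by its values on a topological generating set, and $\delta$ vanishes on $\mathfrak{G}_0$, so $\delta = 0$.

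The only point needing care is that agreement on generators does force equality, for both the homomorphism and the twisted-cocycle formulation. This holds because cocycle identities and group laws propagate values multiplicatively to the abstract subgroup generated by $\mathfrak{G}_0$, which is dense, and continuity then gives equality on all of $\mathcal{G}$. Minimality of $\mathfrak{G}_0$ and linear independence are not actually needed for uniqueness; they only matter for existence and surjectivity.
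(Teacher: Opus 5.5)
Your proposal is correct and follows the paper's argument: a continuous homomorphism from $\mathcal{G}$ to a finite group is determined by its values on a topological generating set, and the normalization conditions pin down those values in $G_A$, $G_B$ and $\hat{G}_{(A,B)}$. Your additional cocycle-difference check, and your remark that minimality and linear independence are not needed for uniqueness, are both valid but go beyond what the paper uses.
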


\begin{proof}
A continuous homomorphism from $\mathcal{G}$ to a finite discrete group is entirely determined by its values on a set of topological generators. Since we have prescribed $\phi_U(\psi_A)(\mathfrak{G}_0) = \phi_V(\psi_B)(\mathfrak{G}_0) = \{0\}$ for all non-empty subsets $U \subseteq A$ and all non-empty subsets $V \subseteq B$, this determines the image of $\mathfrak{G}_0$ inside $G_A$ and $G_B$. Since we have also prescribed the values of $\hat{\phi}_{(U, V)}(\psi_{(A, B)})$ on $\mathfrak{G}_0$ for all $U \subseteq A$ and all $V \subseteq B$, we obtain the proposition.
\end{proof}

We will say that a $\hat{\phi}$-expansion map satisfying the properties in Proposition \ref{prop: at most one normalized map} is $\mathfrak{G}_0$-normalized. 

\subsection{\texorpdfstring{$\hat{\phi}$-maps for $\mathbb{F}_q(T)$}{Hat phi maps for function fields}}
Let $q \equiv 1 \bmod 8$ be a prime power. Let $A$ and $B$ be two finite disjoint subsets of $\Gamma_{\mathbb{F}_2}(\mathbb{F}_q(T))$ such that $A \cup B$ is a linearly independent set. Let $\chi_1$ and $\chi_2$ be two characters in $\Gamma_{\mathbb{F}_2}(\mathbb{F}_q(T))$ such that $A \cup B \cup \{\chi_1, \chi_2\}$ is a linearly independent set of size $|A| + |B| + 2$.

Let $\psi_A$ and $\psi_B$ be two expansion maps for $G_{\mathbb{F}_q(T)}$ with pointers respectively $\chi_1$ and $\chi_2$. By taking $\mathcal{G} := G_{\mathbb{F}_q(T)}(2)$ in the context of Subsection \ref{section: hat phi for profinite groups}, we can talk about $\hat{\phi}$-expansion maps pointed at $(\psi_A, \psi_B)$, which we will denote by $\psi_{(A, B)}$.

Our next goal is to define a \emph{normalized} $\hat{\phi}$-expansion map. To that end, let $s, r \in \Z_{\geq 0}$ and let $S := \{a_1, \ldots, a_s\}$ and $R := \{b_1, \ldots, b_r\}$ be two subsets of $\frac{\mathbb{F}_q(T)^\ast}{\mathbb{F}_q(T)^{\ast 2}}$, let $a_{s + 1}, b_{r + 1}$ be elements of $\frac{\mathbb{F}_q(T)^\ast}{\mathbb{F}_q(T)^{\ast 2}}$ such that $S \cup R \cup \{a_{s + 1}\} \cup \{b_{r+1}\}$ is linearly independent of size $s + r + 2$. Suppose that both expansion maps $\phi_{S; a_{s + 1}}(\mathfrak{G})$ and $\phi_{R; b_{r + 1}}(\mathfrak{G})$ exist. Then we say that 
$$
\hat{\phi}_{S, R; a_{s + 1}, b_{r + 1}}(\mathfrak{G})
$$
\emph{exists} if there is a $\hat{\phi}$-expansion map pointed at $(\phi_{S; a_{s + 1}}(\mathfrak{G}), \phi_{R; b_{r + 1}}(\mathfrak{G}))$, which is $\mathfrak{G}$-normalized. In view of Proposition \ref{prop: at most one normalized map}, there exists at most one such normalized set of cochains
$$
(\hat{\phi}_{\{a_i : i \in U\}, \{b_j : j \in V\}; a_{s + 1}, b_{r + 1}}(\mathfrak{G}))_{U \subseteq [s], V \subseteq [r]}.
$$
We will denote by
$$
\hat{\psi}_{\{a_i : i \in [s]\}, \{b_j : j \in [r]\}; a_{s + 1}, b_{r + 1}}(\mathfrak{G}) \colon \mathcal{G} \to \hat{G}_{(\{a_i : i \in [s]\},\{b_j : j \in [r]\})}
$$
the underlying group homomorphism.

In what follows, the statement that a $\hat{\phi}$-normalized map exists will also include the existence of the two underlying normalized $\phi$-maps. In the next proposition we will show that normalized $\hat{\phi}$-expansion maps behave additively.

\begin{proposition} 
\label{prop: hat phi are additive}
Let $s, r \in \Z_{\geq 0}$. For $h \in \{1, 2, 3\}$ let 
$$
((a_{1, h}, \ldots, a_{s + 1, h}), (b_{1, h}, \ldots, b_{r + 1, h})) \in \left(\frac{\mathbb{F}_q(T)^\ast}{\mathbb{F}_q(T)^{\ast 2}}\right)^{[s + 1]}\times \left(\frac{\mathbb{F}_q(T)^\ast}{\mathbb{F}_q(T)^{\ast 2}}\right)^{[r + 1]}
$$ 
be a vector whose underlying set has size $r + s +2$ and is linearly independent for each $h$. Suppose furthermore that there exists $n_0$ in $[r + 1]$ or in $[s + 1]$, such that the vectors are identical outside of that entry, while at $n_0$ the three corresponding characters sum to $0$. 

Suppose furthermore that $\hat{\phi}_{\{a_{i, h} : i \in [s]\}, \{b_{j, h} : j \in [r]\}; a_{s + 1, h}, b_{r + 1, h}}(\mathfrak{G})$ exists for $h \in \{1, 2\}$.

Then $\hat{\phi}_{\{a_{i, 3} : i \in [s]\}, \{b_{j, 3} : j \in [r]\}; a_{s + 1, 3}, b_{r + 1, 3}}(\mathfrak{G})$ also exists and furthermore
$$
\sum_{h = 1}^3 \hat{\phi}_{\{a_{i, h} : i \in [s]\}, \{b_{j, h} : j \in [r]\}; a_{s + 1, h}, b_{r + 1, h}}(\mathfrak{G}) = 0. 
$$
\end{proposition}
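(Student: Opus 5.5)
We will mimic the proof of additivity for ordinary normalized expansion maps (Proposition \ref{pExpansionAdditive}, i.e.\ \cite[Lemma 2.13]{KP}): build a candidate system of cochains for $h = 3$ as the sum of the two systems for $h = 1, 2$, check that it satisfies the recursion \eqref{eHatRecursion}, and check that it is $\mathfrak{G}$-normalized. Uniqueness (Proposition \ref{prop: at most one normalized map}) then identifies this candidate with the normalized $\hat{\phi}$-map for $h = 3$, which gives both existence and the vanishing of the sum.

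By symmetry between $A$ and $B$, assume $n_0$ lies in the $a$-side, so $b_{j,1} = b_{j,2} = b_{j,3}$ for all $j$. The underlying expansion maps $\phi_{\cdot; b_{r+1}}$ are then literally the same for all three $h$. For the $a$-side, Proposition \ref{pExpansionAdditive} gives that $\phi_{\{a_{i,3}\}; a_{s+1,3}}(\mathfrak{G})$ exists and that the three $\phi$-systems sum to zero. We must check the subsystem compatibility: for each $U \subseteq [s]$ the truncated families also satisfy the hypothesis, and Proposition \ref{pExpansionAdditive} applied to each sub-support gives $\sum_h \phi_{\{a_{i,h} : i \in U\}; a_{s+1,h}} = 0$. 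When $n_0 \notin U \cup \{s+1\}$, the three terms coincide and the sum is trivially $\phi + \phi + \phi = \phi$. This means the three-term identity must be read as the paper's convention: for indices not involving $n_0$, the relevant maps are the same, exactly as in \cite[Lemma 2.13]{KP}.

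Define, for all $U \subseteq [s]$ and $V \subseteq [r]$, the cochain $\hat{\phi}^{(3)}_{(U,V)} := \hat{\phi}^{(1)}_{(U,V)} + \hat{\phi}^{(2)}_{(U,V)}$. This is interpreted in the same bookkeeping as in \cite[Lemma 2.13]{KP}: when $n_0 \in U$, the monomial for the character $\chi_{a_{n_0,3}} = \chi_{a_{n_0,1}} + \chi_{a_{n_0,2}}$ is used.

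Next, take $\mathrm{d}$ of this sum and use \eqref{eHatRecursion} for $h = 1, 2$. The first sum in \eqref{eHatRecursion} is linear in the pair $(\chi_{U_1 \cup U_2}, \hat{\phi})$. Since $\chi_{a_{n_0}}$ is additive, and the coordinates not involving $n_0$ agree, the cross terms should recombine exactly as in the ordinary expansion-map case. The second term is
$$
\phi_{V_1}(g_1) \cdot \sum_{U} \chi_U(g_1)\, \phi_{V_2 - U}(g_2).
$$
Its $B$-factor is independent of $h$, and its $A$-factor $\phi_{V_1}$ is additive in $h$ by Proposition \ref{pExpansionAdditive}. This term is therefore additive as well.

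Finally, normalization is immediate: each $\hat{\phi}^{(h)}_{(U,V)}$ vanishes on $\mathfrak{G}$, and hence so does their sum. Uniqueness then yields the result.

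The main obstacle is the careful bookkeeping in the first sum of the recursion when $n_0 \in U_1$. There, $\chi_{U_1}$ itself changes with $h$, so linearity is not termwise. The identity has to be organized as in \cite[Lemma 2.13]{KP}, by viewing $n_0$ as a point of a larger group $\hat{G}$ with two generators and pushing forward along the map that adds those two generators. Concretely, I would try to construct the homomorphism $\mathcal{G} \to \hat{G}_{(S \cup \{a_{n_0,1}, a_{n_0,2}\}, R)}$ and compose it with the natural homomorphism collapsing the two $n_0$-directions. Checking that this collapsing map exists on $\hat{G}$, which is plausible from its definition via $\theta_{(A,B)}$ and truncated polynomial rings with $t \mapsto t_1 + t_2 + t_1 t_2$, is where the real work lies.
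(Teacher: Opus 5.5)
Your overall strategy (sum the two normalized systems, verify \eqref{eHatRecursion}, check vanishing on $\mathfrak{G}$, conclude by Proposition \ref{prop: at most one normalized map}) is what the paper means by invoking the proof of Proposition \ref{pExpansionAdditive}. As written, however, the proposal has a wrong candidate and leaves the decisive verification open along a route that does not work.

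\emph{The candidate is wrong for $n_0 \in [s]$.} You define $\hat{\phi}^{(3)}_{(U,V)} := \hat{\phi}^{(1)}_{(U,V)} + \hat{\phi}^{(2)}_{(U,V)}$ for \emph{all} $(U,V)$. For $n_0 \notin U$, the subfamily indexed by $U \subseteq [s] - \{n_0\}$, $V \subseteq [r]$ is a $\mathfrak{G}$-normalized $\hat{\phi}$-map, pointed at normalized expansion maps, for data independent of $h$. By uniqueness, $\hat{\phi}^{(1)}_{(U,V)} = \hat{\phi}^{(2)}_{(U,V)}$, so your sum is $0$. This already violates the recursion at $(\emptyset, \emptyset)$, where $\mathrm{d}\hat{\phi}_{(\emptyset,\emptyset)}(g_1, g_2) = \chi_{a_{s+1}}(g_1)\chi_{b_{r+1}}(g_2)$ is a non-zero cochain. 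The correct candidate sums only the coordinates whose index contains $n_0$ and keeps the common cochain for the others. Summing every coordinate is right only when $n_0$ is a pointer index.

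\emph{The step you call the main obstacle is not one, and your device for it is unavailable.} Your own remark that the cross terms recombine as in the ordinary case is already the whole argument. Take $n_0 \in V_1$ and a term of the first sum in \eqref{eHatRecursion} with $n_0 \in U_1$. The factor $\hat{\phi}_{(V_1 - U_1, V_2 - U_2)}$ has index avoiding $n_0$, so it is the same for $h = 1, 2$. Meanwhile $\chi_{U_1 \cup U_2}(g_1) = \chi_{a_{n_0,h}}(g_1) \cdot \chi_{(U_1 - \{n_0\}) \cup U_2}(g_1)$ is additive in $h$. Hence these terms sum to the $h = 3$ term. Terms with $n_0 \notin U_1$ are additive through the $\hat{\phi}$ factor, and the last term is additive through $\phi_{V_1}$ by Proposition \ref{pExpansionAdditive}. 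The $B$-side is not literally symmetric, since the last term of \eqref{eHatRecursion} is not. Still, the same dichotomy $n_0 \in U$ versus $n_0 \notin U$ handles $\sum_{U \subseteq V_2} \chi_U(g_1)\phi_{V_2 - U}(g_2)$.

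By contrast, a homomorphism $G_{\mathbb{F}_q(T)}(2) \to \hat{G}_{(S \cup \{a_{n_0,1}, a_{n_0,2}\}, R)}$ need not exist under the hypotheses. It would contain an ordinary expansion map with both $\chi_{a_{n_0,1}}$ and $\chi_{a_{n_0,2}}$ in its support. Such a map is subject to a further obstruction (compare the local criterion of Proposition \ref{inductively construct expansion maps}) that nothing in the statement controls.

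If you want a group-theoretic proof, map instead into a quotient of that group. Let $A' := S \cup \{a_{n_0,1}, a_{n_0,2}\}$ and let $t_1, t_2$ be the two $n_0$-variables. Quotient by the normal subgroup $t_1 t_2\,\mathbb{F}_2[\mathbb{F}_2^{A' \cup R}] \times t_1 t_2\,\mathbb{F}_2[\mathbb{F}_2^{A'}] \times \{0\} \times \{0\} \times \{0\}$. A homomorphism into this quotient is exactly the given pair of maps, which agree on the common part by uniqueness. The collapse $t_1, t_2 \mapsto t$ kills $t_1 t_2$ and so factors through the quotient. Note that $t \mapsto t_1 + t_2 + t_1 t_2$ goes in the opposite direction.
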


\begin{proof}
This follows from Proposition \ref{pExpansionAdditive} and its proof strategy.
\end{proof}

We now give some control on the ramification of fields of definition of normalized $\hat{\phi}$-expansion maps. 

\begin{proposition} 
\label{prop: field of definition of hat phi}
Let $s, r \in \Z_{\geq 0}$. Let 
$$
((a_1, \ldots, a_{s + 1}),(b_1, \ldots, b_{r + 1})) \in \left(\frac{\mathbb{F}_q(T)^\ast}{\mathbb{F}_q(T)^{\ast 2}}\right)^{[s + 1]} \times \left(\frac{\mathbb{F}_q(T)^\ast}{\mathbb{F}_q(T)^{\ast 2}}\right)^{[r + 1]}
$$ 
be a vector consisting of classes of squarefree monic polynomials of positive degree and pairwise coprime. Set $S := \{a_1, \ldots, a_s\}$ and $R := \{b_1, \dots, b_r\}$. Suppose that $\hat{\phi}_{S, R; a_{s + 1}, b_{r + 1}}(\mathfrak{G})$ exists. Then the following holds.
\begin{enumerate}
\item[$(a)$] The field extension $L(\hat{\psi}_{S, R; a_{s + 1}, b_{r + 1}}(\mathfrak{G}))/\mathbb{F}_q(T)(\{\chi_{a_i} : i \in [s + 1]\} \cup \{\chi_{b_j} : j \in [r + 1]\})$ is a Galois extension of nilpotency class at most $2$, realizing precisely the $2^{r + s}$ classes given by
$$
\phi_{\{a_i : i \in U\}; a_{s + 1}}(\mathfrak{G}) \cup \phi_{\{b_j : j \in V\}; b_{r + 1}}(\mathfrak{G})
$$
with trivializing cochain provided exactly by
$$
\hat{\phi}_{\{a_i : i \in U\}, \{b_j : j \in V\}; a_{s + 1}, b_{r + 1}}(\mathfrak{G}).
$$
\item[$(b)$] The field extension $L(\hat{\psi}_{S, R; a_{s + 1}, b_{r + 1}}(\mathfrak{G}))/\mathbb{F}_q(T)(\{\chi_{a_i} : i \in [s + 1]\} \cup \{\chi_{b_j} : j \in [r + 1]\})$ is unramified above all places of $\mathbb{F}_q(T)(\{\chi_{a_i} : i \in [s + 1]\} \cup \{\chi_{b_j} : j \in [r + 1]\})$ lying above $\Omega_{\mathbb{F}_q(T)}^{\textup{fin}}$. 
\item[$(c)$] Suppose further that $a_i$ and $b_j$ are of even degree for each $i \in [s + 1]$ and each $j \in [r + 1]$. Also assume that $L(\phi_{\{a_i : i \in [s]\}; a_{s + 1}}(\mathfrak{G}))/\mathbb{F}_q(T)$ splits completely at $\infty$. Then, for each $U \subseteq [s], V \subseteq [r]$, we have that $\hat{\phi}_{\{a_i : i \in U\}, \{b_j : j \in V\}; a_{s + 1}, b_{r + 1}}(\mathfrak{G})$ restricts to an element of $\Gamma_{\mathbb{F}_2}(\mathbb{F}_q(T)_{\infty})$ and lands in the subspace $\mathbb{F}_2 \cdot \chi_T$. 
\end{enumerate}
\end{proposition}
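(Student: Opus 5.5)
Write $K := \mathbb{F}_q(T)(\{\chi_{a_i} : i \in [s + 1]\} \cup \{\chi_{b_j} : j \in [r + 1]\})$ and $\mathcal{H} := G_K$. We mirror the proof of Proposition \ref{normalized expansion maps are unramified}, with the group $\hat{G}_{(A, B)}$ in place of $G_A$.

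\emph{Part $(a)$.} The homomorphism $\hat{\psi}_{S, R; a_{s + 1}, b_{r + 1}}(\mathfrak{G})$ is surjective onto $\hat{G}_{(S, R)}$ by Proposition \ref{prop: recursive equation for hat phi}. Hence $\Gal(L(\hat{\psi})/K)$ is identified with the preimage in $\hat{G}_{(S, R)}$ of $\{0\} \times \{0\} \times \{0\} \times \{0\}$ in the $\mathbb{F}_2^S \times \mathbb{F}_2^R$ coordinates and of $\ker(\chi_{a_{s + 1}}) \times \ker(\chi_{b_{r + 1}})$. This is the central-ish extension of the abelian group $\mathbb{F}_2[\mathbb{F}_2^S] \times \mathbb{F}_2[\mathbb{F}_2^R]$ by $\mathbb{F}_2[\mathbb{F}_2^{S \cup R}]$, on which $\mathbb{F}_2^{S \cup R}$ acts trivially. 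It is therefore of class at most $2$.

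On $\mathcal{H}$, the cochains $\phi_U$ and $\phi_V$ restrict to characters. Applying the recursion \eqref{eHatRecursion}, exactly as in Proposition \ref{prop: d hat phi equals cup of phi's} but for arbitrary $(U, V)$, gives
$$
\mathrm{d}\hat{\phi}_{(U, V)}|_{\mathcal{H}} = \phi_U \cup \phi_V.
$$
This shows that $\hat{\phi}_{(U, V)}$ trivializes these $2^{r + s}$ classes, and comparing orders shows they exhaust the extension.

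\emph{Part $(b)$.} The argument is local.
\begin{itemize}
\item At a finite place $v$ outside all $\mathrm{Ram}(\chi_{a_i}), \mathrm{Ram}(\chi_{b_j})$, normalization forces $\hat{\psi}(\sigma_v) = 1$ (all coordinates vanish on $\mathfrak{G}$), so $v$ is unramified.
\item At a finite $v$ dividing some $a_i$ or $b_j$, $\hat{\psi}(\sigma_v)$ has $\hat{\phi}$-coordinates $0$, $\phi$-coordinates $0$, and exactly one nonzero character coordinate.
\end{itemize}
In the second case one checks from the group law (using $\theta_{(S, R)}(g, g) = 0$ when all $\phi$-coordinates of $g$ vanish, by the displayed formula for $\theta$) that this element is an involution. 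Since coprimality gives ramification index $2$ already in $K/\mathbb{F}_q(T)$, the inertia group above $v$ in $L(\hat{\psi})/K$ is trivial.

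This squaring check is the step that needs care. The image of $\sigma_v$ lies in $E_S$ or $E_R$, or is a pointer generator $\sigma_{\mathrm{pt}}$. Propositions \ref{prop: first two lifts} and \ref{prop: third lift} show these lie in the subgroups $H_S$, $H_R$ or $H_{(S,R)}$, which are isomorphic to groups $G_A$ in which such elements have order $2$.

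\emph{Part $(c)$.} All $a_i$ and $b_j$ are monic of even degree, so the $\chi_{a_i}$ and $\chi_{b_j}$ are locally trivial at $\infty$. By Proposition \ref{normalized expansion maps are unramified}$(b)$, each $\phi_U$ and $\phi_V$ restricts to a multiple of $\chi_T$ at $\infty$. By the splitting hypothesis, $\phi_U$ vanishes identically on $G_{\mathbb{F}_q(T)_\infty}$. Hence in \eqref{eHatRecursion} restricted to $G_{\mathbb{F}_q(T)_\infty}$:
\begin{itemize}
\item the second summand vanishes because of the factor $\phi_{V_1}(g_1)$;
\item the first vanishes because every $\chi_{U_1 \cup U_2}$ is locally trivial.
\end{itemize}
So each $\hat{\phi}_{(U, V)}$ is a local character at $\infty$.

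It vanishes on $\Frob_\infty(\chi_T) = \Frob_q$ by normalization. The only nonzero character of $G_{\mathbb{F}_q(T)_\infty}$ killing that element is $\chi_T$, since $\mathbb{F}_q(T)_\infty^\ast / \mathbb{F}_q(T)_\infty^{\ast 2}$ is spanned by $T$ and $\epsilon_{\mathbb{F}_q}$. This gives $(c)$, exactly as in Proposition \ref{normalized expansion maps are unramified}$(b)$.
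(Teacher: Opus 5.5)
Your proposal is correct and takes essentially the same route as the paper. In $(a)$ you restrict the recursion \eqref{eHatRecursion} to $G_K$ to get $\mathrm{d}\hat{\phi}_{(U,V)} = \phi_U \cup \phi_V$. In $(b)$ you use normalization at unramified places, and at ramified places the fact that $\sigma_v$ maps to an involution, which you justify more carefully than the paper by placing it in $H_S$ or $H_R$. The first justification you give for the involution claim (``all $\phi$-coordinates vanish'') does not literally apply to pointer generators, whose $\phi_\emptyset$-coordinate is nonzero, but the $H_S$/$H_R$ argument covers that case. In $(c)$ you show the coboundary vanishes locally at $\infty$ and then use $\Frob_q = \Frob_\infty(\chi_T)$, exactly as the paper does.
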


\begin{proof}
We start with $(a)$. Set $K := \mathbb{F}_q(T)(\{\chi_{a_i} : i \in [s + 1]\} \cup \{\chi_{b_j} : j \in [r + 1]\})$. By the recursive equation for expansion maps, $\phi_{\{a_i : i \in U\}; a_{s + 1}}(\mathfrak{G})$ and $\phi_{\{b_j : j \in V\}; b_{r + 1}}(\mathfrak{G})$ restrict to elements of $\Gamma_{\mathbb{F}_2}(K)$. We have by Proposition \ref{prop: d hat phi equals cup of phi's}
$$
\mathrm{d}\hat{\phi}_{U, V; a_{s + 1}, b_{r + 1}}(\mathfrak{G})(\sigma, \tau) = \phi_{\{a_i : i \in U\}; a_{s + 1}}(\mathfrak{G})(\sigma) \cdot \phi_{\{b_j : j \in V\}; b_{r + 1}}(\mathfrak{G})(\tau)
$$
for all $\sigma, \tau \in G_K$, where the right hand side is the cup product restricted to $G_K$. It follows that $L(\hat{\psi}_{S, R; a_{s + 1}, b_{r + 1}}(\mathfrak{G}))/\mathbb{F}_q(T)(\{\chi_{a_i} : i \in [s + 1]\} \cup \{\chi_{b_j} : j \in [r + 1]\})$ is a $2$-step nilpotent extension obtained by trivializing the cup product of quadratic characters, giving part $(a)$ of the statement. 

We now establish part $(b)$. We begin with any prime $v$ of $\Omega_{\mathbb{F}_q(T)}^{\text{fin}}$ such that
$$
v \not \in \text{Ram}(\{\chi_{a_i} : i \in [s + 1]\} \cup \{\chi_{b_j} : j \in [r + 1]\}).
$$
By definition of normalized $\hat{\phi}$-maps, $\sigma_v$ is sent to the vector of $\hat{G}_{(\{a_i : i \in [s]\},\{b_j : j \in [r]\})}$ having all entries equal to $0$. This is the identity element for this group, which shows that the extension $L(\hat{\psi}_{S, R; a_{s + 1}, b_{r + 1}}(\mathfrak{G}))/\mathbb{F}_q(T)$ is unramified at all primes above $v$.

We now address the case of a finite prime $w$ such that
$$
w \in \text{Ram}(\{\chi_{a_i} : i \in [s + 1]\} \cup \{\chi_{b_j} : j \in [r + 1]\}).
$$
By construction, $\sigma_w$ is sent to an element of $E_{\{a_i : i \in [s]\}} \cup E_{\{b_j : j \in [r]\}}$ or to $\sigma_{\text{pt}_A},\sigma_{\text{pt}_B}$. These are all involutions, so that we have that the extension $L(\hat{\psi}_{S, R; a_{s + 1}, b_{r + 1}}(\mathfrak{G}))/\mathbb{F}_q(T)$ has ramification index $2$ at the place $w$. Since $\mathbb{F}_q(T)(\{\chi_{a_i} : i \in [s + 1]\} \cup \{\chi_{b_j} : j \in [r + 1]\})/\mathbb{F}_q(T)$ also ramifies at $w$ with order $2$, it follows that 
$$
L(\hat{\psi}_{S, R; a_{s + 1}, b_{r + 1}}(\mathfrak{G}))/\mathbb{F}_q(T)(\{\chi_{a_i} : i \in [s + 1]\} \cup \{\chi_{b_j} : j \in [r + 1]\})
$$
is unramified at all places above $w$. All in all, we have shown that the extension 
$$
L(\hat{\psi}_{S, R; a_{s + 1}, b_{r + 1}}(\mathfrak{G}))/\mathbb{F}_q(T)(\{\chi_{a_i} : i \in [s + 1]\} \cup \{\chi_{b_j} : j \in [r + 1]\})
$$
is unramified at all places of $\mathbb{F}_q(T)(\{\chi_{a_i} : i \in [s + 1]\} \cup \{\chi_{b_j} : j \in [r + 1]\})$ lying above a prime of $\Omega_{\mathbb{F}_q(T)}^{\text{fin}}$.

We now prove part $(c)$. We claim that the $2$-cochain $\mathrm{d}\hat{\phi}_{\{a_i : i \in U\}, \{b_j : j \in V\}; a_{s + 1}, b_{r + 1}}(\mathfrak{G})(\sigma, \tau)$ restricts to the zero function locally at $\infty$. In fact, we will inspect the various terms occurring in the definition, and we check they vanish one by one. 

This is clear for all the terms having $\chi_{a_i}$ or $\chi_{b_j}$, as $a_i$ and $b_j$ are monic of even degree by assumption. The term $\phi_{\{a_i : i \in U\}; a_{s + 1}}(\mathfrak{G})(\sigma) \cdot \phi_{\{b_j : j \in V\}; b_{r + 1}}(\mathfrak{G})(\tau)$ vanishes, because the assumptions imply that $\phi_{\{a_i : i \in U\}; a_{s + 1}}(\mathfrak{G})(\sigma)$ is equal to zero locally at $\infty$: indeed, we know that $\infty$ splits completely in $L(\phi_{\{a_i : i \in U\}; a_{s + 1}}(\mathfrak{G}))/\mathbb{F}_q(T)$ and we also know that $\phi_{\{a_i : i \in U\}; a_{s + 1}}(\mathfrak{G})$ vanishes at the identity. So this shows that $\hat{\phi}_{\{a_i : i \in U\}, \{b_j : j \in V\}; a_{s + 1}, b_{r + 1}}(\mathfrak{G})$ is a character locally at $\infty$. Furthermore, since it is a normalized $\hat{\phi}$-map, we know that it vanishes at $\Frob_{\infty}(\chi_T)$. The elements of $\Gamma_{\mathbb{F}_2}(\mathbb{F}_q(T)_{\infty})$ vanishing at $\Frob_{\infty}(\chi_T)$ are precisely the subspace $\mathbb{F}_2 \cdot \chi_T$. 
\end{proof}

We next give explicit control of the value of normalized $\hat{\phi}$-maps on nested commutators evaluated in $\mathfrak{G}$. 

\begin{proposition} 
\label{prop: nested commutators in inertia}
Let $s, r \in \Z_{\geq 0}$. Let 
$$
((a_1, \ldots, a_{s + 1}),(b_1, \ldots, b_{r + 1})) \in \left(\frac{\mathbb{F}_q(T)^\ast}{\mathbb{F}_q(T)^{\ast 2}}\right)^{[s + 1]}\times \left(\frac{\mathbb{F}_q(T)^\ast}{\mathbb{F}_q(T)^{\ast 2}}\right)^{[r + 1]}
$$ 
be a vector consisting of classes of squarefree monic polynomials of positive degree and pairwise coprime. Suppose that $\hat{\phi}_{\{a_i : i \in [s]\}, \{b_j : j \in [r]\}; a_{s + 1}, b_{r + 1}}(\mathfrak{G})$ exists.

Pick for each $i \in [s + 1]$ a finite prime $v_i$ of $\mathbb{F}_q(T)$ such that $v_i(a_i)$ is odd, and likewise for each $j \in [r + 1]$ pick a finite prime $w_j$ of $\mathbb{F}_q(T)$ such that $w_j(b_j)$ is odd. Set 
$$
X := \{\sigma_{v_i} : i \in [s]\} \cup \{\sigma_{w_j} : j \in [r]\} \cup \{\sigma_{v_{s + 1}}, \sigma_{w_{r + 1}}\}.
$$
Let now $(x_1, \ldots, x_{s + r + 2}) \in X^{s + r + 2}$. Let $U \subseteq [s]$ and $V \subseteq [r]$ be two subsets. Then
$$
\hat{\phi}_{\{a_i : i \in U\}, \{b_j : j \in V\}; a_{s + 1}, b_{r + 1}}(\mathfrak{G})([x_1, [x_2, [\cdots, [x_{s + r + 1}, x_{s + r + 2}] \cdots]]]) = 1
$$
if and only if the following conditions simultaneously hold:
\begin{itemize}
    \item $U = [s]$ and $V = [r]$.
    \item Each element of $X$ appears precisely once in the list $(x_1, \ldots, x_{s + r + 2})$.
    \item Either $\sigma_{v_{s + 1}}$ is in $\{x_{s + r + 1},x_{s + r + 2}\}$ and for the largest index $i$ satisfying the condition $x_i \in \{\sigma_{w_1}, \ldots, \sigma_{w_{r + 1}}\}$ one has that $x_i = \sigma_{w_{r + 1}}$ or $\sigma_{w_{r + 1}}$ is in $\{x_{s + r + 1}, x_{s + r + 2}\}$ and for the largest index $j$ such that $x_j \in \{\sigma_{v_1}, \ldots, \sigma_{v_{s + 1}}\}$ one has that $x_j = \sigma_{v_{s + 1}}$.
\end{itemize}
\end{proposition}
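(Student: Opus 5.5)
We reduce the statement to the purely group-theoretic computation of Proposition \ref{prop: computing nested commutators}. Write $A := \{\chi_{a_i} : i \in [s]\}$, $B := \{\chi_{b_j} : j \in [r]\}$ and $\hat{\psi} := \hat{\psi}_{S, R; a_{s + 1}, b_{r + 1}}(\mathfrak{G}) \colon G_{\mathbb{F}_q(T)}(2) \to \hat{G}_{(A, B)}$. By the uniqueness part of Proposition \ref{prop: at most one normalized map}, the normalized cochains attached to subsets $U \subseteq [s]$, $V \subseteq [r]$ are exactly the monomial coordinates $\hat{\phi}_{(U, V)}$ of this single homomorphism $\hat{\psi}$. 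The sub-maps are themselves normalized $\hat{\phi}$-maps, and uniqueness identifies them with the corresponding coordinates. So the quantity in the statement is the $t_{(U, V)}$-coordinate in $\mathbb{F}_2[\mathbb{F}_2^{A \cup B}]$ of $\hat{\psi}([x_1, [\cdots, [x_{s + r + 1}, x_{s + r + 2}] \cdots]])$. Since $\hat{\psi}$ is a homomorphism, this equals the nested commutator of the images $\hat{\psi}(x_k)$.

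The first real step is to compute $\hat{\psi}(\sigma_{v_i})$ and $\hat{\psi}(\sigma_{w_j})$ and show that they are exactly the minimal generators $X$ of $\hat{G}_{(A, B)}$ from the previous subsection. The value on $\sigma_{v_i}$ for $i \in [s]$ should be $\sigma_i \in \mathbb{F}_2^A$. The value on $\sigma_{v_{s + 1}}$ should be $\sigma_{\mathrm{pt}_A}$, and similarly on the $B$ side. We determine these using:
\begin{itemize}
\item normalization: all $\phi_U$ with $U \neq \emptyset$ and all $\hat{\phi}_{(U, V)}$ vanish on elements of $\mathfrak{G}$, and each $\sigma_{v}$ lies in $\mathfrak{G}$;
\item the characters: $\chi_{a_k}(\sigma_{v_i}) = 1$ if and only if $k = i$, because the polynomials are pairwise coprime and $v_i(a_i)$ is odd; likewise for the $b$'s;
\item the pointer coordinate: $\phi_\emptyset(\psi_A) = \chi_{a_{s + 1}}$, so the $t_\emptyset$-coordinate in $\mathbb{F}_2[\mathbb{F}_2^A]$ is $1$ exactly at $\sigma_{v_{s + 1}}$.
\end{itemize}
This matches the set-theoretic coordinates $(0, t_\emptyset, 0, 0, 0)$, $(0, 0, 0, e_i, 0)$, and so on. One must check that the product-set description of $\hat{G}_{(A, B)}$ is compatible with reading off coordinates this way; this is immediate from the definition of the group law.

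Next we apply Proposition \ref{prop: computing nested commutators}. A nested commutator of length exactly $s + r + 2 = |A| + |B| + 2$ is nonzero only if there are no repetitions, so every element of $X$ occurs exactly once. In that case both pointers occur, so we are in the third case of part $(b)$. The commutator is then $(t_{([s], [r])}, 0, 0, 0, 0)$, whose only nonzero coordinate is $U = [s]$, $V = [r]$. The condition on the last two entries and on the position of the last $B$-type (resp.\ $A$-type) generator is exactly the second bullet of part $(a)$. Translating back through the identification of $X$ gives the three conditions in the statement.

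The main obstacle is the careful identification of $\hat{\psi}(\sigma_v)$ with the abstract generators. In particular we must confirm that the $\mathbb{F}_2[\mathbb{F}_2^{A \cup B}]$-coordinate vanishes, which follows from normalization. We must also confirm that the $\mathbb{F}_2[\mathbb{F}_2^A]$-coordinate of $\sigma_{v_i}$, $i \le s$, is zero, which holds because $\chi_{a_{s + 1}}(\sigma_{v_i}) = 0$ by coprimality. One should also note that the pro-$2$ images suffice, since $\hat{G}_{(A, B)}$ is a $2$-group.
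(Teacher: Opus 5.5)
Your proposal is correct and follows the paper's own argument: the paper proves this proposition as a direct translation of Proposition \ref{prop: computing nested commutators}. You have made that translation explicit by showing that normalization and coprimality send $\sigma_{v_i}, \sigma_{v_{s+1}}, \sigma_{w_j}, \sigma_{w_{r+1}}$ under $\hat{\psi}$ exactly to the abstract generators $\sigma_i, \sigma_{\mathrm{pt}_A}, \sigma_j, \sigma_{\mathrm{pt}_B}$. You also correctly note that a length-$(s+r+2)$ commutator without repetitions must fall into the third case of part $(b)$, so only the $t_{([s],[r])}$-coordinate can be nonzero.
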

\begin{proof}
This is a straightforward translation of Proposition \ref{prop: computing nested commutators}.    
\end{proof}

Finally we are able to prove a linear independence result between $\hat{\phi}$-expansion maps and usual $\phi$-maps, which will be crucial for us later. By repeatedly invoking Proposition \ref{prop: nested commutators in inertia}, it would be possible to state a much more precise description of the linear dependencies between $\hat{\phi}$ and $\phi$-maps. However, we shall restrict merely to the one that suffices for our purposes. 

\begin{proposition}
\label{prop: linear independence of hat phi and phi}
Let $s \geq 3$ be an integer. Let $(a_1, \ldots, a_{s + 1}) \in \left(\frac{\mathbb{F}_q(T)^\ast}{\mathbb{F}_q(T)^{\ast 2}}\right)^{[s + 1]}$ consist of classes of squarefree monic polynomials of positive degree and pairwise coprime. Suppose that for each $i \in [s]$ and each pair of disjoint strict subsets $S_1, S_2 \subseteq [s] - \{i\}$, the map 
$$
\hat{\phi}_{\{a_j : j \in S_1\}, \{a_h : h \in S_2\}; a_i, a_{s + 1}}(\mathfrak{G})
$$ 
exists. Assume furthermore that, for every $i \in [s]$, the expansion map
$$
\phi_{\{a_h : h \in [s] - \{i\}\}; a_i}(\mathfrak{G})
$$
exists. Moreover, assume that $\phi_{\{a_j : j \in [s]\}; a_{s + 1}}(\mathfrak{G})$ exists. Set for each $i \in [s]$
$$
\hat{\phi}_i := \phi_{\{a_j : j \in [s]\}; a_{s + 1}}(\mathfrak{G}) + \sum_{\substack{S_1 \sqcup S_2 = [s] - \{i\} \\ S_1, S_2 \subset [s] - \{i\}}} \hat{\phi}_{S_1, S_2; a_i, a_{s + 1}}(\mathfrak{G}), \quad \mathcal{H} := G_{\prod_{i \in [s]} L(\phi_{\{a_h : h \in [s] - \{i\}\}; a_i}(\mathfrak{G}))}.
$$
Set $\mathcal{Q} := \Gal(\FF_q(T)(\{\chi_{a_j} : j \in [s]\})/\FF_q(T))$. Then $\res_{\mathcal{H}} \, \phi_{\{a_j : j \in [s]\}; a_{s + 1}}(\mathfrak{G})$ and $\res_{\mathcal{H}} \, \hat{\phi}_i$ (with $i \in [s]$) are all quadratic characters and satisfy the following independence
$$
\res_{\mathcal{H}} \, \sum_{g \in \mathcal{Q} - \{1\}} \phi_{\{a_j : j \in [s]\}; a_{s + 1}}(\mathfrak{G})^g \not \in \langle \res_{\mathcal{H}} \, \hat{\phi}_1, \dots, \res_{\mathcal{H}} \, \hat{\phi}_s \rangle.
$$
\end{proposition}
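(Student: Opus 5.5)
We will work entirely inside a finite group: the quotient of $G_{\mathbb{F}_q(T)}(2)$ through which all the maps in the statement factor, namely the image of the product homomorphism built from $\psi := \psi_{\{a_j : j \in [s]\}; a_{s+1}}(\mathfrak{G})$, the expansion maps $\psi_{\{a_h : h \in [s]-\{i\}\}; a_i}(\mathfrak{G})$, and all the $\hat{\psi}_{S_1, S_2; a_i, a_{s+1}}(\mathfrak{G})$. The first step is to show that the restrictions to $\mathcal{H}$ are characters. For $\phi_{\{a_j\}; a_{s+1}}(\mathfrak{G})$ this follows because $\mathcal{H} \subseteq G_{\mathbb{F}_q(T)(\{\chi_{a_j}\})}$, where the recursion for expansion maps reduces to $\mathrm{d}\phi = 0$. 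For $\hat{\phi}_i$ we apply Proposition \ref{prop: d hat phi equals cup of phi's} with $A = \{a_j : j \in S_1\}$ and $B = \{a_h : h \in S_2\}$ and pointers $(a_i, a_{s+1})$. On $\mathcal{H}$, which also kills $\chi_{a_i}$, we get $\mathrm{d}\hat{\phi}_{S_1,S_2} = \phi_{S_1; a_i} \cup \phi_{S_2; a_{s+1}}$ restricted to $\mathcal{H}$. Here $\phi_{S_1; a_i}$ is a lower cochain of $\psi_{\{a_h : h \neq i\}; a_i}$, and since $S_1$ is a strict subset of $[s]-\{i\}$ it vanishes identically on $\mathcal{H}$: $\mathcal{H}$ lies in the kernel of that whole expansion map, so all its coordinates vanish there. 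Thus each $\hat{\phi}_{S_1,S_2}$ restricts to a character. Strictness of both $S_1$ and $S_2$ is exactly what makes this work.

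For the independence, we test against explicit elements of $\mathcal{H}$. The natural test elements are nested commutators $c = [x_1,[x_2,[\cdots,[x_{s},x_{s+1}]\cdots]]]$ with $x$'s from $X = \{\sigma_{v_1},\dots,\sigma_{v_{s+1}}\}$, where $v_j \mid a_j$. A commutator of length $s+1$ in these generators lies deep in the lower central series. Its images under the maps $\psi_{\{a_h : h \ne i\}; a_i}$ (nilpotency class $s$) are trivial, so $c \in \mathcal{H}$, provided we check the class bound $|A|+1 = s$. On the other side, $\phi_{[s]; a_{s+1}}$ and its conjugates by $g \in \mathcal{Q}$ evaluate on such commutators via the standard expansion-map formula, the analogue in \cite{KP} of Proposition \ref{prop: computing nested commutators} for $G_A$. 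That formula gives $1$ exactly when each generator appears once and $\sigma_{v_{s+1}} \in \{x_s, x_{s+1}\}$. Since conjugation by $g$ acts trivially on $\mathcal{H}^{\mathrm{ab}}$-evaluations of commutators up to the formula \eqref{eConjugateExpansion}, we obtain $\sum_{g \neq 1} \phi^g(c) = (2^s-1)\phi(c) = \phi(c)$, modulo the lower terms from \eqref{eConjugateExpansion}. Those lower terms vanish on length-$(s+1)$ commutators because $\phi_{T; a_{s+1}}$ with $|T| < s$ kills commutators of length exceeding $|T|+1$.

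It remains to evaluate $\hat{\phi}_i(c)$ using Proposition \ref{prop: nested commutators in inertia}. Here $|S_1|+|S_2|+2 = s+1$, so the commutator length matches. By that proposition, $\hat{\phi}_{S_1,S_2; a_i, a_{s+1}}(c) = 1$ iff every generator appears once and the pointer condition holds with respect to the partition $(S_1\cup\{i\}, S_2 \cup\{s+1\})$. The plan is to find a single commutator $c$ (or a combination of several) on which the left-hand side is $1$ but every $\hat{\phi}_i$ is $0$. Alternatively, we set up a small linear system: find commutators $c_1, \dots, c_m$ on which the vector $(\hat{\phi}_1, \dots, \hat{\phi}_s)$ takes values spanning a space that does not contain the value vector of $\phi$.

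A first attempt is $c = [x_1,[\cdots,[\sigma_{v_{s+1}}, \sigma_{v_k}]]]$ with $\sigma_{v_{s+1}}$ and $\sigma_{v_k}$ in the last two slots. Then $\phi(c) = 1$, and $\hat{\phi}_i(c)$ equals $\phi(c)$ plus the number of partitions $S_1 \sqcup S_2 = [s]-\{i\}$ with both parts strict for which the pointer condition holds. The pointer condition reads: the last $x_h$ in the $a_i$-block is $\sigma_{v_i}$, given that $\sigma_{v_{s+1}}$ is in the last two slots. Counting these partitions mod $2$ is the core computation. Whether $i = k$ or $i \neq k$, the count depends on how the indices to the right of $\sigma_{v_i}$ are forced into $S_2$, and yields a power of $2$ that is frequently even but not always. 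Here we use $s \ge 3$ to choose the ordering so that the contributions for all $i$ are even, making $\hat{\phi}_i(c) = \phi(c) = 1$. We then pair $c$ with a second commutator $c'$ to separate the two sides.

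\textbf{Main obstacle.} The main obstacle is this parity bookkeeping: making the combinatorics of Proposition \ref{prop: nested commutators in inertia} produce a test vector that separates $\sum_g \phi^g$ from the span of the $\hat{\phi}_i$. I would first try two or three carefully ordered commutators and compute their full value table by hand for $s = 3$, then generalize the pattern.
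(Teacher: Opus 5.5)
Your set-up agrees with the paper's, including the test elements. With $\phi := \phi_{\{a_j : j \in [s]\}; a_{s+1}}(\mathfrak{G})$, these are $c_T := [\sigma_{v_{T(1)}}, [\sigma_{v_{T(2)}}, [\cdots, [\sigma_{v_{T(s)}}, \sigma_{v_{s+1}}] \cdots]]]$ for $T \in \mathrm{Sym}([s])$. You correctly show that the restrictions to $\mathcal{H}$ are characters, that each $c_T$ lies in $\mathcal{H}$ by the nilpotency class of $G_{\{a_h : h \neq i\}}$, and that $\sum_{g \in \mathcal{Q} - \{1\}} \phi^g(c_T) = 1$. (Strictness of $S_1, S_2$ is not needed for the character property, since $\mathcal{H}$ kills every coordinate of $\psi_{\{a_h : h \neq i\}; a_i}(\mathfrak{G})$, top one included.)

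However, the independence step, which is the whole content of the proposition, is left undone, and your concrete claim about it is false. By Proposition \ref{prop: nested commutators in inertia}, $\hat{\phi}_{S_1, S_2; a_i, a_{s+1}}(\mathfrak{G})(c_T) = 1$ exactly when every index of $S_1$ precedes $i$ in $T$. Put $k := T^{-1}(i) - 1$. The number of admissible partitions is then $0$ for $k = 0$, $2^k - 1$ (odd) for $1 \leq k \leq s-2$, and $2^{s-1} - 2$ (even) for $k = s-1$; it is not a power of $2$. Hence $\hat{\phi}_i(c_T) = \mathbf{1}_{i \in \{T(1), T(s)\}}$.

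For $s \geq 3$ some index always sits strictly in the middle, so no ordering makes all contributions even; that happens only for $s = 2$. Nor does any single $c_T$ separate the left-hand side from the span, because exactly two of the $\hat{\phi}_i$ equal $1$ on each $c_T$.

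The missing idea is to use all orderings simultaneously and exploit an odd cycle. Suppose $\res_{\mathcal{H}} \sum_{g \neq 1} \phi^g = \sum_i \lambda_i \res_{\mathcal{H}} \hat{\phi}_i$. Evaluating at $c_T$ gives $\lambda_{T(1)} + \lambda_{T(s)} = 1$ for every $T$. Since $(T(1), T(s))$ ranges over all pairs of distinct indices, $\lambda_i + \lambda_j = 1$ for all $i \neq j$. Summing these relations over three distinct indices gives $1 = 0$.

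This triangle is precisely where $s \geq 3$ enters. For $s = 2$ the single relation $\lambda_1 + \lambda_2 = 1$ is consistent, so these test elements cannot separate. Without this computation and the resulting contradiction, your proposal does not yet prove the statement.
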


\begin{proof}
By the definition of $\mathcal{H}$ and Remark \ref{remark: group of def of an exp map}, every character $\chi_{a_j}$ with $j \in [s]$, and every coordinate
$$
\phi_{\{a_h : h \in S\}; a_i}(\mathfrak{G}), \qquad S \subseteq [s] - \{i\},
$$
vanishes on $\mathcal{H}$. The recursive equation for ordinary expansion maps therefore shows that
$$
\mathrm{d}(\res_{\mathcal{H}} \, \phi_{\{a_j : j \in [s]\}; a_{s + 1}}(\mathfrak{G})) = 0,
$$
and similarly for all of its conjugates. Likewise, in the recursive equation defining $\hat{\phi}_i$, every term vanishes on $\mathcal{H}$, hence $\res_{\mathcal{H}} \, \hat{\phi}_i$ is also a quadratic character.

Pick for each $i \in [s + 1]$ a finite prime $v_i$ of $\mathbb{F}_q(T)$ such that $v_i(a_i)$ is odd. Set 
$$
X := \{\sigma_{v_i} : i \in [s + 1]\}.
$$
For each $T \in \text{Sym}([s])$, we denote by
$$
\text{nested}(T) := [\sigma_{v_{T(1)}}, [\sigma_{v_{T(2)}}, [\cdots, [\sigma_{v_{T(s)}}, \sigma_{v_{s + 1}}] \cdots]]].
$$
We claim that for each $i \in [s]$
$$
\hat{\phi}_i(\text{nested}(T)) = \mathbf{1}_{i \in \{T(1), T(s)\}}.
$$
Indeed, observe that using Proposition \ref{prop: nested commutators in inertia}, we have that
$$
\hat{\phi}_i(\text{nested}(T)) = \phi_{\{a_j : j \in [s]\}; a_{s + 1}}(\mathfrak{G})(\text{nested}(T)) + \sum_{\substack{U_1 \sqcup U_2 = [s] - \{i\} \\ U_1, U_2 \neq \emptyset}} \prod_{u \in U_1} \mathbf{1}_{T^{-1}(u) < T^{-1}(i)}.
$$
This equality follows at once from Proposition \ref{prop: computing nested commutators} and from the standard fact that the expansion map does not vanish on any of these commutators (which can be actually also viewed as a special case of Proposition \ref{prop: computing nested commutators}).

We see that an even number of subsets $U_1$ are detected precisely when $T^{-1}(i) \in \{1, s\}$, which is precisely the claim. 

Now observe that for each $T \in \text{Sym}([s])$, we have that $\text{nested}(T)$ is in $\mathcal{H}$. Suppose now, for the sake of contradiction, that there is a linear dependency of the shape
$$
\res_{\mathcal{H}} \, \sum_{g \in \mathcal{Q} - \{1\}} \phi_{\{a_j : j \in [s]\}; a_{s + 1}}(\mathfrak{G})^g = \res_{\mathcal{H}} \, \sum_{i = 1}^s \lambda_i \cdot \hat{\phi}_i.
$$
Evaluating at $\text{nested}(T)$, we have that
$$
1 = \lambda_{T(1)} + \lambda_{T(s)}
$$
for all $T \in \text{Sym}([s])$. Recalling that, in particular, $s \geq 2$, and since, as $T$ varies, the ordered pair $(T(1),T(s))$ ranges over all pairs of distinct elements of $[s]$, this means that
$$
1 = \lambda_i + \lambda_j,
$$
for all $i \neq j$ in $[s]$. Now, recalling that $s \geq 3$, we have the following equalities in $\mathbb{F}_2$
$$
1 = 1 + 1 + 1 = (\lambda_1 + \lambda_2) + (\lambda_2 + \lambda_3) + (\lambda_1 + \lambda_3) = 0,
$$
which is a contradiction. 
\end{proof}

In our main application, Proposition \ref{prop: linear independence of hat phi and phi} will be responsible for the survival of bilinear terms in all of our character sums. The cutoff $s \geq 3$ stems from the fact that we only use nested commutators of length $s + 1$ in the proof of Proposition \ref{prop: linear independence of hat phi and phi}. It is for this group-theoretic reason that a careful study of the distribution of the spaces $C_4^{(i)}(\chi)$ seems more delicate.

\begin{proposition} 
\label{prop: criterion of existence for hat phi}
Let $s, r \in \Z_{\geq 0}$ be nonnegative integers. Let 
$$
((a_1, \ldots, a_{s + 1}),(b_1, \ldots, b_{r + 1})) \in \left(\frac{\mathbb{F}_q(T)^\ast}{\mathbb{F}_q(T)^{\ast 2}}\right)^{[s + 1]} \times \left(\frac{\mathbb{F}_q(T)^\ast}{\mathbb{F}_q(T)^{\ast 2}}\right)^{[r + 1]},
$$ 
be pairwise coprime positive degree squarefree monic polynomials in $\mathbb{F}_q[T]$. We make the following assumptions:
\begin{itemize}
\item All of the characters $\chi_{a_1}, \ldots, \chi_{a_s}$ and $\chi_{b_1}, \ldots, \chi_{b_r}$ are locally trivial at all places in $\textup{Ram}(\{\chi_{a_1}, \ldots, \chi_{a_{s + 1}}, \chi_{b_1}, \ldots, \chi_{b_{r + 1}}\})$ where they do not ramify.
\item For all subsets $U_1 \subseteq [s], U_2 \subseteq [r]$ such that $|U_1| + |U_2| < s + r$, we have that the map
$$
\hat{\phi}_{\{a_i : i \in U_1\}, \{b_j : j \in U_2\}; a_{s + 1}, b_{r + 1}}(\mathfrak{G})
$$
exists. 
\item We assume that both of the maps $\phi_{\{a_1, \ldots, a_s\}; a_{s + 1}}(\mathfrak{G})$ and $\phi_{\{b_1, \ldots, b_r\}; b_{r + 1}}(\mathfrak{G})$ exist.
\item $L(\phi_{\{a_1, \ldots, a_s\}; a_{s + 1}}(\mathfrak{G}))$ splits completely at all places in $\cup_{j = 1}^{r + 1} \, \textup{Ram}(\chi_{b_j})$. 
\item $L(\phi_{\{b_1, \ldots, b_r\}; b_{r + 1}}(\mathfrak{G}))$ splits completely at all places in $\cup_{i = 1}^{s + 1} \, \textup{Ram}(\chi_{a_i})$. 
\end{itemize}
Then the map $\hat{\phi}_{\{a_h : h \in [s]\}, \{b_j : j \in [r]\}; a_{s + 1}, b_{r + 1}}(\mathfrak{G})$ exists if and only if 
\begin{itemize}
\item If $j$ is in $[r]$ and $v$ is in $\textup{Ram}(\chi_{b_j}) \cap \Omega_{\mathbb{F}_q(T)}^{\textup{fin}}$, then 
$$
\hat{\phi}_{\{a_h : h \in [s]\}, \{b_i : i \in [r] - \{j\}\}; a_{s + 1}, b_{r + 1}}(\mathfrak{G})(\Frob_v) = 0.
$$
\item If $i$ is in $[s]$ and $v$ is in $\textup{Ram}(\chi_{a_i}) \cap \Omega_{\mathbb{F}_q(T)}^{\textup{fin}}$, then 
$$
\hat{\phi}_{\{a_h : h \in [s]-\{i\}\}, \{b_j : j \in [r]\}; a_{s + 1}, b_{r + 1}}(\mathfrak{G})(\Frob_v) = 0.
$$
\end{itemize}
\end{proposition}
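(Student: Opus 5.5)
We follow the template of Proposition \ref{inductively construct expansion maps}, now for the top coordinate $\hat{\phi}_{(A,B)}$ with $A = \{a_1, \ldots, a_s\}$ and $B = \{b_1, \ldots, b_r\}$. By hypothesis all lower coordinates $\hat{\phi}_{(U_1,U_2)}$ with $|U_1|+|U_2|<s+r$ exist. So do the two expansion maps $\phi_{A;a_{s+1}}(\mathfrak{G})$ and $\phi_{B;b_{r+1}}(\mathfrak{G})$. By Proposition \ref{prop: at most one normalized map}, these lower coordinates are the normalized ones. By Proposition \ref{prop: recursive equation for hat phi}, the top coordinate exists as a (not yet normalized) cochain if and only if the right-hand side $\theta$ of \eqref{eHatRecursion} with $(V_1,V_2)=(A,B)$ is a coboundary. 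Our first step is to check that $\theta$ is a $2$-cocycle. This is formal: it is the obstruction to lifting the already constructed homomorphism to the quotient of $\hat{G}_{(A,B)}$ by the central line spanned by $t_{(A,B)}$. By Tate/Hilbert local-to-global for $H^2(G_{\mathbb{F}_q(T)},\mathbb{F}_2)$, it then suffices to check $\inv_v(\theta)=0$ at every place $v$.

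We then go through the places, as in the earlier proof.
\begin{itemize}
\item At $\infty$ every $\chi_{a_i},\chi_{b_j}$ is trivial, since all the polynomials are monic. We additionally need the product term $\phi_A(g_1)\cdot(\ldots)$ to vanish locally at $\infty$. If the degrees are even, this follows from Proposition \ref{normalized expansion maps are unramified}$(b)$: the top term restricts to a multiple of $\chi_T\cup\chi_T$, and this class is $0$ because $-1$ is a square. Failing that, we still get a class $\chi_T\cup c\,\chi_T$ that vanishes for the same reason.
\item At finite places outside all ramification sets, every cochain involved is inflated from the unramified quotient, so $\theta$ comes from $H^2$ of a finite field, which is $0$.
\item At $v\in\mathrm{Ram}(\chi_{a_{s+1}})$ (and symmetrically $\mathrm{Ram}(\chi_{b_{r+1}})$), the local triviality assumption kills every $\chi_{a_i},\chi_{b_j}$ with $i\le s$, $j\le r$. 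The surviving term is $\phi_A(g_1)\phi_B(g_2)$. This vanishes because $L(\phi_B)$ splits completely at $v$, so $\phi_B$ is identically zero on the decomposition group.
\item At $v\in\mathrm{Ram}(\chi_{b_j})$ with $j\le r$, local triviality kills every character except $\chi_{b_j}$. The cross term dies because $\phi_A$ vanishes locally by the splitting hypothesis. What remains is $\chi_{b_j}(g_1)\,\hat{\phi}_{(A,B-\{b_j\})}(g_2)$. Locally, $\hat{\phi}_{(A,B-\{b_j\})}$ is a cochain whose differential is built from terms that now vanish, so it is a local character. It is unramified, since it kills $\sigma_v$ by normalization. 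Hence the local invariant is the Hilbert symbol of $\chi_{b_j}$, which is ramified at $v$, against this unramified character. This equals $\hat{\phi}_{(A,B-\{b_j\})}(\Frob_v)$, which is exactly the stated condition. The case of $\mathrm{Ram}(\chi_{a_i})$ is symmetric.
\end{itemize}
Conversely, if the top map exists then $\theta$ is a coboundary, so all these local invariants vanish. This gives the necessity of the conditions.

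Once $\theta=\mathrm{d}\phi$, we normalize exactly as in Proposition \ref{inductively construct expansion maps}. We replace $\phi$ by
\[
\phi+\phi(\Frob_q)\chi_{\epsilon_{\mathbb{F}_q}}+\sum_{v}\phi(\sigma_v)\chi_v .
\]
This does not change $\mathrm{d}\phi$, and it makes the cochain vanish on $\mathfrak{G}$. Surjectivity is automatic by Proposition \ref{prop: recursive equation for hat phi}.

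The main obstacle is the local computation at $v\in\mathrm{Ram}(\chi_{b_j})$ and at $\infty$. There one must verify that $\hat{\phi}_{(A,B-\{b_j\})}$ really restricts to an unramified local character. This requires checking, term by term in \eqref{eHatRecursion} and using the splitting hypotheses on the lower expansion maps, that every other summand vanishes on the local decomposition group. It also requires identifying the local cup product invariant with evaluation at $\Frob_v$, which is independent of the choice of lift because $\sigma_v$ is killed.
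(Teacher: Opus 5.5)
Your treatment of the finite places is the same as the paper's. You use the same obstruction cocycle, namely the right-hand side of \eqref{eHatRecursion} at $(A,B)$. At $v\mid a_{s+1}$ and $v\mid b_{r+1}$ you make the same reduction to $\phi_A\cup\phi_B$, which the splitting hypotheses kill. At $v\in\mathrm{Ram}(\chi_{b_j})$ (resp.\ $\mathrm{Ram}(\chi_{a_i})$) you identify the local invariant with $\hat\phi_{(A,B-\{b_j\})}(\Frob_v)$, as the paper does. Your normalization at the end is also the paper's.

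\textbf{The gap is at $\infty$.} The statement only assumes the $a_i,b_j$ are monic of positive degree. A monic $a$ of odd degree satisfies $a\equiv T$ modulo squares in $\FF_q((1/T))$, so $\chi_a$ is \emph{ramified} at $\infty$. Your sentence ``every $\chi_{a_i},\chi_{b_j}$ is trivial at $\infty$ since all the polynomials are monic'' is therefore false in general. The first hypothesis only forces triviality at $\infty$ for the characters that do not ramify there.

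If the pointers $a_{s+1},b_{r+1}$ are the only odd-degree entries, your argument survives. In that case $\phi_A$ and $\phi_B$ are local characters at $\infty$ vanishing on $\Frob_q=\Frob_\infty(\chi_T)$, hence lie in $\FF_2\chi_T$.

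If some $a_i$ or $b_j$ with $i\le s$, $j\le r$ has odd degree, the terms $\chi_{U_1\cup U_2}(g_1)\,\hat\phi_{(A-U_1,B-U_2)}(g_2)$ no longer vanish at $\infty$. You have no control over the lower $\hat\phi$-maps locally at $\infty$: Proposition~\ref{prop: field of definition of hat phi}$(c)$ needs even degrees and a splitting hypothesis at $\infty$. So your clause ``failing that, we still get a class $\chi_T\cup c\,\chi_T$'' is unsupported.

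\textbf{The fix, which is what the paper does, is to avoid any local analysis at $\infty$.} The cocycle $\theta$ is a global $2$-cocycle with values in $\FF_2=\mu_2$, so Hilbert reciprocity gives $\sum_v \inv_v(\theta)=0$. Once you have shown $\inv_v(\theta)=0$ at every finite $v$ (this is where the two bullet conditions enter), $\inv_\infty(\theta)=0$ follows automatically, and the Hasse principle then gives $[\theta]=0$. The necessity direction never needs $\infty$. With this replacement your proof is complete, and the ``main obstacle at $\infty$'' you flag disappears.
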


\begin{proof}
The proof proceeds exactly in the same way as the proof of Proposition \ref{inductively construct expansion maps} with the only difference that the cocycle $\theta$ appearing in that proof needs to be replaced with the cocycle
\begin{multline*}
\hat{\theta}(g_1, g_2) := \sum_{\substack{(\emptyset, \emptyset) \neq (U_1, U_2) \\ U_1 \subseteq \{a_1, \ldots, a_s\}, U_2 \subseteq \{b_1, \ldots, b_r\}}} \chi_{U_1 \cup U_2}(g_1) \cdot \hat{\phi}_{\{a_1, \ldots, a_s\} - U_1, \{b_1, \ldots, b_r\} - U_2; a_{s + 1}, b_{r + 1}}(\mathfrak{G})(g_2) \\
+ \phi_{\{a_1, \ldots, a_s\}; a_{s + 1}}(\mathfrak{G})(g_1) \cdot \left(\sum_{U \subseteq \{b_1, \ldots, b_r\}} \chi_{U}(g_1) \phi_{\{b_1, \ldots, b_r\} - U;b_{r + 1}}(\mathfrak{G})(g_2) \right).
\end{multline*}
We now inspect $\hat{\theta}$ locally at a place $v$ in $\Omega_{\mathbb{F}_q(T)}^{\text{fin}}$. First, consider the case where $v$ is not in $\text{Ram}(\{\chi_{a_1}, \ldots, \chi_{a_{s + 1}}, \chi_{b_1}, \ldots, \chi_{b_{r + 1}}\})$. Observe that since both the expansion maps and the $\hat{\phi}$-expansion maps appearing in $\hat{\theta}$ are normalized, we have by Proposition \ref{normalized expansion maps are unramified} and Proposition \ref{prop: field of definition of hat phi} that they factor through $G_{\mathbb{F}_q(T)_v}/I_v$. Hence $\hat{\theta}$ is the inflation of a class in $H^2(G_{\mathbb{F}_{q_v}}, \mathbb{F}_2) = 0$. 

Suppose now that $v \in \text{Ram}(\{\chi_{a_1}, \ldots, \chi_{a_s}, \chi_{b_1}, \ldots, \chi_{b_r}\})$. Let us first consider the case where $v \mid a_i$ for $i \in [s]$. Then, thanks to the first assumption, $\hat{\theta}$ locally at $v$ becomes
$$
\chi_{a_i}(g_1) \cdot \hat{\phi}_{\{a_1, \ldots, a_s\} - \{a_i\}, \{b_1, \ldots, b_r\}; a_{s + 1}, b_{r + 1}}(\mathfrak{G})(g_2)+\phi_{\{a_1, \ldots, a_s\}; a_{s + 1}}(\mathfrak{G})(g_1) \cdot \phi_{\{b_1, \ldots, b_r\}; b_{r + 1}}(\mathfrak{G})(g_2).
$$
By the last assumption, $L(\phi_{\{b_1,\ldots,b_r\};b_{r+1}}(\mathfrak{G}))$ splits completely at $v$, so
$\phi_{\{b_1,\ldots,b_r\};b_{r+1}}(\mathfrak{G})$ restricts trivially to $G_{\mathbb{F}_q(T)_v}$. Hence only the first term remains and it vanishes if and only if $\hat{\phi}_{\{a_1, \ldots, a_s\} - \{a_i\}, \{b_1, \ldots, b_r\}; a_{s + 1}, b_{r + 1}}(\mathfrak{G})$ restricts to the trivial character (since the $1$-cochain $\hat{\phi}_{\{a_1, \ldots, a_s\} - \{a_i\}, \{b_1, \ldots, b_r\}; a_{s + 1}, b_{r + 1}}(\mathfrak{G})$ restricts to an unramified character locally at $v$). The case $v \mid b_j$ is entirely symmetric. For the case where $v \mid a_{s+1}$ or $v \mid b_{r+1}$, we instead use the last two assumptions.

All in all, we see that $\hat{\theta}$ vanishes locally at all finite places if and only if the last two bullet points of the proposition hold. For the place at $\infty$ we use Hilbert reciprocity. Therefore, we find that $\hat{\theta}$ is locally trivial everywhere if and only if the last two bullet points hold. Once $\hat{\theta}$ vanishes in $H^2(G_{\mathbb{F}_q(T)},\mathbb{F}_2)$, we can find a normalized $\hat{\phi}$-expansion map by following the argument at the end of Proposition \ref{inductively construct expansion maps}. 
\end{proof}

We conclude with the following remark.

\begin{remark} 
\label{remark: degenerate phi-maps}
We declare a normalized expansion map as well as a normalized $\hat{\phi}$-expansion map to be trivial whenever one of the characters in the index set is trivial. We remark that this convention formally follows from the recursive equations defining these $1$-cochains if one drops the requirement that the underlying characters be linearly independent. In the same spirit, Proposition \ref{pExpansionAdditive} and Proposition \ref{prop: hat phi are additive} continue to hold in this slightly broader generality, where some of the entries are allowed to be trivial.
\end{remark}
\section{R\'edei symbols}\label{Section: redei}
In this section we define R\'edei symbols building on \cite{KS}, but using the generality that will be needed for our purposes: our symbols will be less general in terms of the underlying module. R\'edei symbols originate from the 1939 work \cite{Redei}: excellent modern treatments can be found in \cite{Ste, Stokvis}.

\subsection{Construction of the symbol}
Let $K/\mathbb{F}_q(T)$ be a finite separable extension. We begin by defining for which triples we shall be able to form the R\'edei symbol. For each $v$ in $\Omega_K$, we fix once and for all an element $\xi_v$ in 
$$
\Hom_{K\text{-alg}}(\mathbb{F}_q(T)^{\text{sep}}, K_v^{\text{sep}}).
$$
This gives us an embedding $\xi_v^\ast \colon G_{K_v} \to G_K$.

\begin{mydef} 
\label{def: Redei admissible}
Let $(\alpha, \beta, \gamma) \in (K^\ast/K^{\ast 2})^3$. Throughout this section, we set 
$$
S := \textup{Ram}(\chi_\alpha) \cup \textup{Ram}(\chi_\beta) \cup \textup{Ram}(\chi_\gamma) \cup \{v \in \Omega_K : v \mid \infty\}.
$$
We say that $(\alpha, \beta, \gamma)$ is \emph{R\'edei admissible} over $K$ if for all $v \in S$, at least one of $\chi_\alpha$, $\chi_\beta$ and $\chi_\gamma$ is trivial locally at $v$ and all three cup products $\chi_{\alpha} \cup \chi_{\beta}$, $\chi_{\alpha} \cup \chi_{\gamma}$ and $\chi_{\beta} \cup \chi_{\gamma}$ are trivial in $H^2(G_K, \FF_2)$. 
\end{mydef}

Henceforth we implicitly view $\FF_2$ in the unique way as a subgroup of $K^\ast$. Let $(\alpha, \beta, \gamma) \in (K^\ast/K^{\ast 2})^3$ be a R\'edei admissible triple. We let $G_{K, S}$ be the Galois group of the maximal extension of $K$ unramified outside $S$. By \cite[Proposition 8.3.17]{NSW}, we may pick $\epsilon \in C^2(G_{K, S}, \mathbb{F}_2)$ satisfying 
$$
\mathrm{d} \epsilon = \chi_\alpha \cup \chi_\beta \cup \chi_\gamma.
$$
In fact, for later purposes, we may even pick $\epsilon \in C^2(G_{K, S}(2), \mathbb{F}_2)$; this follows from \cite[Corollary 10.4.8]{NSW}, which continues to hold for function fields by the remark immediately after \cite[Theorem 10.4.2]{NSW}. For each $v \in S$ and $\xi \in \Hom_{K\text{-alg}}(\mathbb{F}_q(T)^{\text{sep}}, K_v^{\text{sep}})$, we have that $\epsilon \circ \xi^\ast \in Z^2(G_{K_v}, \mathbb{F}_2)$ by Definition \ref{def: Redei admissible}. As such, we can talk about $\inv_v(\epsilon \circ \xi^\ast)$. 

\begin{proposition} 
\label{prop: well-defined}
Let $(\alpha, \beta, \gamma)$ be in $(K^\ast/K^{\ast 2})^3$ be a triple that is R\'edei admissible over $K$. Then as $\epsilon$ varies over elements of $C^2(G_{K, S}, \mathbb{F}_2)$ satisfying $\mathrm{d} \epsilon = \chi_\alpha \cup \chi_\beta \cup \chi_\gamma$, the value of
$$
\sum_{v \in S} \inv_v(\epsilon \circ \xi_v^\ast)
$$
remains constant. 
\end{proposition}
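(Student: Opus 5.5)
Two admissible choices $\epsilon, \epsilon'$ with $\mathrm{d}\epsilon = \mathrm{d}\epsilon' = \chi_\alpha \cup \chi_\beta \cup \chi_\gamma$ differ by a $2$-cocycle $z := \epsilon - \epsilon' \in Z^2(G_{K,S}, \mathbb{F}_2)$. By additivity of $\inv_v$, it therefore suffices to prove
$$
\sum_{v \in S} \inv_v(z \circ \xi_v^\ast) = 0.
$$
The obvious tool is Hilbert reciprocity for the class $[z] \in H^2(G_{K,S}, \mathbb{F}_2)$. The plan is to inflate $z$ to $G_K$ and apply the exact sequence $0 \to \mathrm{Br}(K)[2] \to \bigoplus_{v} \mathrm{Br}(K_v)[2] \xrightarrow{\sum \inv_v} \frac{1}{2}\mathbb{Z}/\mathbb{Z}$. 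This requires knowing that the inflated class has zero local invariant at every $v \notin S$, and that the sum over $v \in S$ does not depend on the choice of $\xi_v$.

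We treat the places outside $S$ first. For $v \notin S$, the decomposition group at $v$ inside $G_{K,S}$ is a quotient of $G_{K_v}/I_v \cong \hat{\mathbb{Z}}$. Hence the restriction of the inflated $z$ to $G_{K_v}$ is inflated from $H^2(\hat{\mathbb{Z}}, \mathbb{F}_2) = 0$, and $\inv_v = 0$ there. This is the same argument used repeatedly earlier, e.g. in the proof of Proposition \ref{prop: Pi detects 2}. Reciprocity then gives $\sum_{v \in S} \inv_v(z \circ \xi_v^\ast) = 0$, because the invariant of a cohomology class is independent of the embedding $\xi$: changing $\xi$ conjugates the decomposition group, and conjugation acts trivially on cohomology.

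The main subtlety, which I expect to be the only real obstacle, is that $\epsilon \circ \xi_v^\ast$ itself is only a cochain, not a cocycle, on $G_{K_v}$ unless $\chi_\alpha \cup \chi_\beta \cup \chi_\gamma$ vanishes on $G_{K_v}$ at the cochain level. Admissibility guarantees this: at each $v \in S$ one of the three characters is identically zero locally, so the triple cup product cochain restricts to zero and $\epsilon \circ \xi_v^\ast$ is a genuine cocycle. The well-definedness of each individual term $\inv_v(\epsilon\circ\xi_v^\ast)$ is therefore fine. The statement fixes $\xi_v$, so we do not even need embedding-independence for $\epsilon$ itself, only for $z$, where it is automatic since $z$ is a global cocycle.

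Finally, the triviality of the pairwise cup products in Definition \ref{def: Redei admissible} enters only through the existence of $\epsilon$, which the paper has already secured via \cite[Proposition 8.3.17]{NSW}. It is not needed for the independence argument.
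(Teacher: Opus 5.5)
Your proposal is correct and follows the same route as the paper: two trivializations differ by a $2$-cocycle on $G_{K,S}$, and Hilbert reciprocity for $H^2(G_{K,S},\mathbb{F}_2)$, together with $\inv_v(\theta)=\inv_v(\theta\circ\xi_v^\ast)$ for global classes, kills the sum over $S$. Your extra steps (deriving the $G_{K,S}$-version of reciprocity from the Brauer sequence by showing that classes unramified at $v\notin S$ have zero invariant, and using admissibility to check that each $\epsilon\circ\xi_v^\ast$ is a cocycle) only spell out what the paper obtains by citing \cite[Proposition 8.3.11(iii)]{NSW}.
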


\begin{proof}
Recall that for every $\theta \in H^2(G_K, \mathbb{F}_2)$ the equality 
$$
\inv_v(\theta) = \inv_v(\theta \circ \xi_v^\ast) 
$$
holds by definition of the invariant map. Therefore the proposition follows from Hilbert reciprocity (see e.g.~\cite[Proposition 8.3.11(iii)]{NSW}), which states that $\sum_{v \in S} \inv_v(\theta) = 0$ for every $\theta \in H^2(G_{K, S}, \mathbb{F}_2)$.
\end{proof}

By Proposition \ref{prop: well-defined}, we can now define the R\'edei symbol.

\begin{mydef}
Let $(\alpha, \beta, \gamma) \in (K^\ast/K^{\ast 2})^3$ be R\'edei admissible over $K$. Fix any choice of $\epsilon \in C^2(G_{K, S}, \mathbb{F}_2)$ satisfying $\mathrm{d} \epsilon = \chi_\alpha \cup \chi_\beta \cup \chi_\gamma$. Then we define
$$
[\alpha, \beta, \gamma]:= \sum_{v \in S} \inv_v(\epsilon \circ \xi_v^\ast).
$$
\end{mydef}

A priori the local contribution to the R\'edei symbol $\inv_v(\epsilon \circ \xi_v^\ast)$ might depend on our initial choice of $(\xi_v)_{v \in \Omega_K}$, however one can see that this is not the case by invoking \cite[Equation (3.2)]{MS}. As such, we shall henceforth suppress $\xi_v^\ast$ in our notation.

We now state the main properties of R\'edei symbols. 

\begin{theorem} 
\label{theorem: Redei reciprocity}
The R\'edei symbol has the following properties:
\begin{enumerate}
\item[$(1)$] Let $(\alpha, \beta, \gamma) \in (K^\ast/K^{\ast 2})^3$ be a triple that is R\'edei admissible over $K$. Then any permutation is also R\'edei admissible over $K$ and we have that
$$
[\alpha, \beta, \gamma] = [\beta, \alpha, \gamma] = [\alpha, \gamma, \beta].
$$
\item[$(2)$] Let $(\alpha_1, \beta, \gamma), (\alpha_2, \beta, \gamma)$ be triples that are R\'edei admissible over $K$. Define
\begin{align*}
S_1 &:= \textup{Ram}(\chi_{\alpha_1}) \cup \textup{Ram}(\chi_\beta) \cup \textup{Ram}(\chi_\gamma) \cup \{v \in \Omega_K : v \mid \infty\} \\
S_2 &:= \textup{Ram}(\chi_{\alpha_2}) \cup \textup{Ram}(\chi_\beta) \cup \textup{Ram}(\chi_\gamma) \cup \{v \in \Omega_K : v \mid \infty\} \\
S_3 &:= \textup{Ram}(\chi_{\alpha_1 \alpha_2}) \cup \textup{Ram}(\chi_\beta) \cup \textup{Ram}(\chi_\gamma) \cup \{v \in \Omega_K : v \mid \infty\}.
\end{align*}
Then $(\alpha_1 \alpha_2, \beta, \gamma)$ is also R\'edei admissible over $K$ and moreover we have that
$$
[\alpha_1 \alpha_2, \beta, \gamma]= [\alpha_1, \beta, \gamma] + [\alpha_2, \beta, \gamma]. 
$$
\end{enumerate}
\end{theorem}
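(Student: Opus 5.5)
Both parts will be proved by working at the level of cochains and then applying Hilbert reciprocity, in the form of Proposition \ref{prop: well-defined}. Admissibility of permutations is immediate, since Definition \ref{def: Redei admissible} is symmetric in $\alpha,\beta,\gamma$ and $S$ is too. For the symmetries in $(1)$, the key input is graded commutativity of cup products at cochain level. On $C^1(G,\FF_2)$ there is a standard $1$-cochain $\cup_1$-product with
$$
\mathrm{d}(\chi_\alpha \cup_1 \chi_\beta) = \chi_\alpha\cup\chi_\beta + \chi_\beta\cup\chi_\alpha
$$
for cocycles; concretely one may take $(\chi_\alpha\cup_1\chi_\beta)(g)=\chi_\alpha(g)\chi_\beta(g)$ up to sign conventions. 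From this one gets an explicit $2$-cochain $\eta$ with
$$
\mathrm{d}\eta=\chi_\alpha\cup\chi_\beta\cup\chi_\gamma+\chi_\beta\cup\chi_\alpha\cup\chi_\gamma,
$$
namely $\eta=(\chi_\alpha\cup_1\chi_\beta)\cup\chi_\gamma$. Thus if $\epsilon$ trivializes one triple product, $\epsilon+\eta$ trivializes the other.

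It then remains to show that $\sum_{v\in S}\inv_v(\eta)=0$, and this is where admissibility enters. Since $\chi_\alpha\cup\chi_\beta$ is trivial globally, I would first write $\chi_\alpha\cup\chi_\beta+\chi_\beta\cup\chi_\alpha=\mathrm{d}(\chi_\alpha\chi_\beta)$. It is better, however, to choose $\epsilon$ in a standard shape. Pick $\phi_{\alpha\beta}$ with $\mathrm{d}\phi_{\alpha\beta}=\chi_\alpha\cup\chi_\beta$, and similarly $\phi_{\beta\gamma}$. Then
$$
\epsilon=\phi_{\alpha\beta}\cup\chi_\gamma+\chi_\alpha\cup\phi_{\beta\gamma}
$$
satisfies $\mathrm{d}\epsilon=\chi_\alpha\cup\chi_\beta\cup\chi_\gamma$, using the Massey-product-type identity over $\FF_2$ (signs are irrelevant). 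Locally at each $v\in S$, one of the three characters vanishes. If $\chi_\gamma$ vanishes locally, then $\epsilon_v=\chi_\alpha\cup\phi_{\beta\gamma}$ and $\phi_{\beta\gamma}$ is a local character, so $\inv_v$ is a local Hilbert symbol. The other cases are handled similarly, after adding global coboundary corrections so that $\epsilon$ lies in $C^2(G_{K,S})$. Comparing these explicit local expressions for the triple and its swapped version, the local invariants differ by cup products of local characters, $\inv_v(\chi\cup\chi')$. These sum to zero over $S$ by Hilbert reciprocity applied to a \emph{global} class such as $\phi_{\alpha\beta}$-corrected characters. The main obstacle is organizing this bookkeeping. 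Following \cite{KS}, I would first prove $[\alpha,\beta,\gamma]=[\gamma,\beta,\alpha]$ via the reversal anti-symmetry of triple cup products, $\chi_\alpha\cup\chi_\beta\cup\chi_\gamma$ versus $\chi_\gamma\cup\chi_\beta\cup\chi_\alpha$, which is cohomologous through an explicit homotopy built from $\cup_1$. I would then deduce the stated swaps by combining this with the one swap $[\alpha,\beta,\gamma]=[\alpha,\gamma,\beta]$, obtained from the local splitting above.

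For $(2)$, admissibility of $(\alpha_1\alpha_2,\beta,\gamma)$ comes from two observations. First, $\chi_{\alpha_1\alpha_2}=\chi_{\alpha_1}+\chi_{\alpha_2}$, so the cup products with $\chi_\beta,\chi_\gamma$ remain trivial. Second, at $v\in S_3$, if $\chi_\beta$ or $\chi_\gamma$ is locally trivial we are done. Otherwise both $\chi_{\alpha_1}$ and $\chi_{\alpha_2}$ are locally trivial at $v$, after checking $v\in S_1\cap S_2$, or else $v\notin S_i$ is handled by noting that $v\notin\mathrm{Ram}$ forces local triviality issues to be checked directly. Hence their sum is locally trivial too. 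For additivity, take $\epsilon_1,\epsilon_2$ for the two triples, both in $C^2(G_{K,S_1\cup S_2})$. Then $\epsilon_1+\epsilon_2$ trivializes $\chi_{\alpha_1\alpha_2}\cup\chi_\beta\cup\chi_\gamma$ by linearity of cup product. Since Proposition \ref{prop: well-defined} allows computing over any larger $S$, because places outside contribute zero, summing invariants gives additivity.
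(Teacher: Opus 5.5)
Your opening move for $(1)$ is exactly the paper's. With $\rho(\sigma):=\chi_\alpha(\sigma)\chi_\beta(\sigma)$ one has $\mathrm{d}\rho=\chi_\alpha\cup\chi_\beta+\chi_\beta\cup\chi_\alpha$, so $\epsilon+\eta$ with $\eta:=\rho\cup\chi_\gamma\in C^2(G_{K,S},\FF_2)$ trivializes $\chi_\beta\cup\chi_\alpha\cup\chi_\gamma$.

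The genuine gap is in what follows: you never prove that $\eta$ contributes nothing, and the route you sketch is broken. Your ``standard shape'' $\phi_{\alpha\beta}\cup\chi_\gamma+\chi_\alpha\cup\phi_{\beta\gamma}$ is not a trivialization. Each summand has differential $\chi_\alpha\cup\chi_\beta\cup\chi_\gamma$, so over $\FF_2$ their sum is a $2$-cocycle (the Massey product cocycle). The local comparison built on it, the appeal to Hilbert reciprocity for ``$\phi_{\alpha\beta}$-corrected characters'', and the reversal anti-symmetry via \cite{KS} are never made precise.

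The missing (and sufficient) observation is purely local. For each $v\in S$, admissibility says one of $\chi_\alpha,\chi_\beta,\chi_\gamma$ vanishes on $G_{K_v}$. Hence $\rho$ or $\chi_\gamma$ vanishes there, so $\eta(\sigma,\tau)=\rho(\sigma)\chi_\gamma(\tau)$ is the zero cochain on $G_{K_v}\times G_{K_v}$. Thus $(\epsilon+\eta)|_{G_{K_v}}=\epsilon|_{G_{K_v}}$ for every $v\in S$, and Proposition \ref{prop: well-defined} gives $[\beta,\alpha,\gamma]=[\alpha,\beta,\gamma]$ at once. The other swap uses $\chi_\alpha\cup\rho'$ with $\rho'(\sigma):=\chi_\beta(\sigma)\chi_\gamma(\sigma)$, which vanishes locally on $S$ for the same reason. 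This is the paper's proof, phrased there as Hilbert reciprocity for the cocycle $\epsilon+\epsilon'+\rho\cup\chi_\gamma$, whose last term has zero restriction at every $v\in S$.

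For $(2)$, your strategy of comparing $\epsilon_1+\epsilon_2$ with a trivialization over $S_3$ on the set $S_1\cup S_2\supseteq S_3$ is the paper's. However, the justification ``Proposition \ref{prop: well-defined} allows computing over any larger $S$, because places outside contribute zero'' is not what that proposition says, and it fails in general. At a place where all three characters are unramified and locally equal to the unramified quadratic character, the triple cup product restricts to a nonzero cochain on $G_{K_v}$. A trivialization need then not be a local cocycle there, and $\inv_v$ is not even defined.

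What makes it work is admissibility of the \emph{other} triple. A place $v\in S_2\setminus S_1$ lies in $\mathrm{Ram}(\chi_{\alpha_2})$, so $\chi_{\alpha_2}$ is locally nontrivial at $v\in S_2$, and therefore $\chi_\beta$ or $\chi_\gamma$ vanishes on $G_{K_v}$. Hence $\epsilon_1$ restricts to an unramified local $2$-cocycle, whose invariant is $0$ since it is inflated from $H^2(\widehat{\mathbb{Z}},\FF_2)=0$. The same applies to $\epsilon_2$ on $S_1\setminus S_2$ and to $\epsilon_3$ on $(S_1\cup S_2)\setminus S_3$. Hilbert reciprocity for $\epsilon_1+\epsilon_2+\epsilon_3\in Z^2(G_{K,S_1\cup S_2},\FF_2)$ then gives additivity.

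This reasoning also fixes your garbled admissibility check. Suppose $\chi_\beta$ and $\chi_\gamma$ are both locally nontrivial at $v\in S_3$. Admissibility of $(\alpha_i,\beta,\gamma)$ forbids $v$ from ramifying in $\chi_{\alpha_i}$ for either $i$. So $v\in \mathrm{Ram}(\chi_\beta)\cup\mathrm{Ram}(\chi_\gamma)\cup\{v\mid\infty\}\subseteq S_1\cap S_2$, and both $\chi_{\alpha_i}$ are locally trivial at $v$.
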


\begin{proof}
We will first prove $(1)$. Let $(\alpha, \beta, \gamma) \in (K^\ast/K^{\ast 2})^3$ be a triple that is R\'edei admissible over $K$. It is clear that any permutation of $(\alpha, \beta, \gamma)$ is still R\'edei admissible over $K$. We claim that we have that
$$
[\alpha, \beta, \gamma] = [\beta, \alpha, \gamma];
$$
it is easy to adapt the proof to also show that $[\alpha, \beta, \gamma] = [\alpha, \gamma, \beta]$.

Take $\epsilon, \epsilon' \in C^2(G_{K, S}, \mathbb{F}_2)$ such that
$$
\mathrm{d} \epsilon = \chi_\alpha \cup \chi_\beta \cup \chi_\gamma, \quad \quad \mathrm{d} \epsilon' = \chi_\beta \cup \chi_\alpha \cup \chi_\gamma.
$$
Define $\rho \in C^1(G_{K, S}, \mathbb{F}_2)$ by $\rho(\sigma) := \chi_\alpha(\sigma) \cdot \chi_\beta(\sigma)$. A direct computation reveals the identity $\mathrm{d} \rho = -\chi_\alpha \cup \chi_\beta - \chi_\beta \cup \chi_\alpha$. Hence we have 
$$
\mathrm{d} \left(\epsilon + \epsilon' + \rho \cup \chi_\gamma\right) = 0.
$$
Therefore we conclude from~\cite[Proposition 8.3.11(iii)]{NSW} that
\begin{equation}
\label{eRR}
\sum_{v \in S} \inv_v \left(\epsilon + \epsilon' + \rho \cup \chi_\gamma\right) = 0.
\end{equation}
But observe that
$$
\inv_v \left(\epsilon + \epsilon' + \rho \cup \chi_\gamma\right) = \inv_v\left(\epsilon \right) + \inv_v\left(\epsilon' \right) + \inv_v\left(\rho \cup \chi_\gamma \right).
$$
For every $v \in S$, R\'edei admissibility implies that $\rho$ or $\chi_\gamma$ is zero when restricted to $G_{K_v}$. Combining this with equation \eqref{eRR} gives precisely part $(1)$.

As for part $(2)$, it is not difficult to show that $(\alpha_1 \alpha_2, \beta, \gamma)$ is R\'edei admissible over $K$. Take $\epsilon_i \in C^2(G_{K, S_i}, \mathbb{F}_2)$ such that
\begin{align*}
\mathrm{d} \epsilon_i &= \chi_{\alpha_i} \cup \chi_\beta \cup \chi_\gamma \quad \quad \text{ for } i \in \{1, 2\} \\ 
\mathrm{d} \epsilon_3 &= \chi_{\alpha_1 \alpha_2} \cup \chi_\beta \cup \chi_\gamma.
\end{align*}
Then, by definition of the R\'edei symbol, we need to show that
$$
\sum_{v \in S_1} \inv_v(\epsilon_1) + \sum_{v \in S_2} \inv_v(\epsilon_2) + \sum_{v \in S_3} \inv_v(\epsilon_3) = 0.
$$
We claim that for each $v \in (S_2 \cup S_3) - S_1$, we have that $\epsilon_1$ restricts to a $2$-cocycle locally at $v$ and satisfies $\inv_v(\epsilon_1) = 0$. Indeed, observe that $\mathrm{d} \epsilon_1 = 0$ at such a place by R\'edei admissibility, but $\epsilon_1$ is also unramified by definition. Arguing similarly for $\epsilon_2$ and $\epsilon_3$, we conclude that
$$
\sum_{v \in S_1} \inv_v(\epsilon_1) + \sum_{v \in S_2} \inv_v(\epsilon_2) + \sum_{v \in S_3} \inv_v(\epsilon_3) = \sum_{v \in S_1 \cup S_2 \cup S_3} \inv_v(\epsilon_1 + \epsilon_2 + \epsilon_3).
$$
Now $\epsilon_1 + \epsilon_2 + \epsilon_3 \in C^2(G_{K, S_1 \cup S_2 \cup S_3}, \mathbb{F}_2)$ is a $2$-cocycle. Hence the result follows from~\cite[Proposition 8.3.11(iii)]{NSW}.
\end{proof}

We will need to know how R\'edei symbols change under conjugation. Before that, let us recall some basic notation. Suppose that the function field $K/\mathbb{F}_q(T)$ is Galois. Let $g$ be an element of $\Gal(K/\mathbb{F}_q(T))$. 

\begin{proposition} 
\label{prop: conjugating Redei}
Assume that the function field $K/\mathbb{F}_q(T)$ is Galois. Let $g$ be an element of $\Gal(K/\mathbb{F}_q(T))$. Let $(\alpha, \beta, \gamma) \in (K^\ast/K^{\ast 2})^3$ be a triple that is R\'edei admissible over $K$. Then $(g(\alpha), g(\beta), g(\gamma))$ is R\'edei admissible with associated set equal to $g \cdot S$. Furthermore, we have that
$$
[\alpha, \beta, \gamma] = [g(\alpha), g(\beta), g(\gamma)].
$$
\end{proposition}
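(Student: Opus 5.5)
The plan is to transport everything along a lift $\tilde g \in G_{\mathbb{F}_q(T)}$ of $g$, and to use conjugation by $\tilde g$ to build a defining cochain for the conjugated triple. Since $K/\mathbb{F}_q(T)$ is Galois, $G_K$ is normal in $G_{\mathbb{F}_q(T)}$, so conjugation $c(\sigma) := \tilde g^{-1}\sigma\tilde g$ is an automorphism of $G_K$.

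\textbf{Step 1: characters transform by conjugation.} Kummer theory gives $\chi_{g(\alpha)} = \chi_\alpha \circ c$. Indeed, $\sigma$ fixes $\sqrt{g(\alpha)} = \tilde g \sqrt{\alpha}$ exactly when $c(\sigma)$ fixes $\sqrt{\alpha}$.

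\textbf{Step 2: admissibility transfers.} Ramification moves accordingly: $\textup{Ram}(\chi_{g(\alpha)}) = g\cdot \textup{Ram}(\chi_\alpha)$. The set of places above $\infty$ is $g$-stable, so the associated set is $g\cdot S$, and $c$ maps $G_{K,g\cdot S}$ onto $G_{K,S}$. Admissibility has two parts, and both transfer.
\begin{itemize}
\item \emph{Local triviality.} $\chi_{g(\alpha)}$ is trivial at $gv$ if and only if $\chi_\alpha$ is trivial at $v$. This holds because $c$ carries a decomposition group at $gv$ to one at $v$.
\item \emph{Cup products.} The classes $\chi_{g(\alpha)}\cup\chi_{g(\beta)}$ are the pullbacks $c^\ast(\chi_\alpha\cup\chi_\beta)$. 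Since $c^\ast$ is an automorphism of cohomology, they vanish.
\end{itemize}

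\textbf{Step 3: compare the invariants.} Choose $\epsilon$ with $\mathrm{d}\epsilon = \chi_\alpha\cup\chi_\beta\cup\chi_\gamma$ and set $\epsilon' := \epsilon\circ(c\times c)$. Pullback commutes with $\mathrm{d}$ and with cup products, so $\epsilon'$ is a valid choice for $(g(\alpha),g(\beta),g(\gamma))$, with the same cup product taken in the same order. It then remains to show
$$
\inv_{gv}(\epsilon'\circ \xi_{gv}^\ast) = \inv_v(\epsilon\circ\xi_v^\ast).
$$
To see this, note that $\xi_v \circ \tilde g^{-1}$, suitably adjusted, is a $K$-embedding into $K_{gv}^{\textup{sep}}$: composing with the identification $K_v \cong K_{gv}$ induced by $g$, this identification matches $c\circ\xi^\ast$ with $\xi_v^\ast$, up to the choice of $\xi$. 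The paper already notes, via \cite[Equation (3.2)]{MS}, that local contributions are independent of the choice of $\xi$. The isomorphism $K_v\cong K_{gv}$ of local fields preserves invariant maps, since $\inv$ is canonical and functorial for isomorphisms of local fields. Summing over $v\in S$ then gives the equality of symbols.

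\textbf{Main obstacle.} The delicate step is this local bookkeeping. One must check that $c$ restricted to a decomposition group really corresponds to an isomorphism of local fields compatible with the fixed embeddings, and that the local cochain (not merely its class) has well-defined invariant. The latter holds because admissibility makes $\epsilon$ locally a cocycle. I would handle both points by invoking independence of $\xi$ together with functoriality of local class field theory under field isomorphisms.
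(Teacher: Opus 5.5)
Your proposal is correct and follows the paper's proof: the paper likewise conjugates a trivializing cochain by a lift $\tilde g$, setting $\epsilon^{\tilde g}(\sigma,\tau)=\epsilon(\tilde g^{-1}\sigma\tilde g,\tilde g^{-1}\tau\tilde g)$, notes that it trivializes $\chi_{g(\alpha)}\cup\chi_{g(\beta)}\cup\chi_{g(\gamma)}$ and factors through $G_{K,g\cdot S}$, and then uses $\inv_v(\epsilon)=\inv_{g\cdot v}(\epsilon^{\tilde g})$. The paper simply asserts this last equality, whereas your version spells out the local bookkeeping behind it: that $\xi_v\circ\tilde g^{-1}$ gives a $K$-embedding for the place $g\cdot v$, independence of the choice of $\xi$, and functoriality of $\inv$ under the isomorphism $K_v\cong K_{g\cdot v}$.
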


\begin{proof}
For each $g \in \Gal(K/\mathbb{F}_q(T))$, it is immediate that $(g(\alpha), g(\beta), g(\gamma))$ is R\'edei admissible. 

Let now $\epsilon \in C^2(G_{K, S}, \mathbb{F}_2)$ be such that
$$
\mathrm{d} \epsilon = \chi_\alpha \cup \chi_\beta \cup \chi_\gamma.
$$
Take a lift $\tilde{g}$ of $g$ to $G_{\mathbb{F}_q(T)}$. Then $\epsilon^{\tilde{g}}(\sigma, \tau) := \epsilon(\tilde{g}^{-1} \sigma \tilde{g}, \tilde{g}^{-1} \tau \tilde{g})$ satisfies 
$$
\mathrm{d} \epsilon^{\tilde{g}} = \chi_{g(\alpha)} \cup \chi_{g(\beta)} \cup \chi_{g(\gamma)}.
$$
Furthermore, $\epsilon^{\tilde{g}}$ visibly factors through $G_{K,gS}$. Therefore we can use $\epsilon^{\tilde{g}}$ to compute the R\'edei symbol of $(g(\alpha), g(\beta), g(\gamma))$. Then we have that
$$
\inv_v(\epsilon) = \inv_{g \cdot v}(\epsilon^{\tilde{g}}),
$$
which gives the desired conclusion. 
\end{proof}

\subsection{Trivializations}
The purpose of this subsection is to provide instances where we have particularly nice choices of the cochain $\epsilon$ and can obtain more explicit formulae for the R\'edei symbol. If $\rho \colon G_K \to \mathbb{F}_2$ is a $1$-cochain satisfying $\mathrm{d} \rho = \chi_\alpha \cup \chi_\beta$, then 
$$
\epsilon' := \rho \cup \chi_\gamma
$$
satisfies $\mathrm{d} \epsilon' = \chi_\alpha \cup \chi_\beta \cup \chi_\gamma$. In case $\rho$ factors through $G_{K,S}$, then $\epsilon'$ will as well and thus $\epsilon'$ is a valid trivialization of $\chi_\alpha \cup \chi_\beta \cup \chi_\gamma$. 

\begin{proposition} 
\label{prop: trivialization in general}
Let $(\alpha, \beta, \gamma) \in (K^\ast/K^{\ast 2})^3$ be a triple that is R\'edei admissible over $K$. Let $S'$ be a set of places of $K$ containing $S$. Suppose that there exists $\rho \in C^1(G_{K, S'}, \mathbb{F}_2)$ such that $\mathrm{d} \rho = \chi_\alpha \cup \chi_\beta$. Also suppose that for every $v \in S' - \textup{Ram}(\chi_\gamma)$, the character $\chi_\gamma$ is trivial locally at $v$. Then
$$
[\alpha, \beta, \gamma]= \sum_{v \in \textup{Ram}(\chi_\gamma)} \inv_v(\rho \cup \chi_\gamma).
$$
\end{proposition}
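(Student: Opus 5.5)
The plan is to use the cochain $\epsilon' := \rho \cup \chi_\gamma$ as the trivialization in the definition of the R\'edei symbol, and then check that every local invariant outside $\textup{Ram}(\chi_\gamma)$ vanishes. Since $\mathrm{d}\rho = \chi_\alpha \cup \chi_\beta$ and $\chi_\gamma$ is a cocycle, we get
$$
\mathrm{d}(\rho \cup \chi_\gamma) = \mathrm{d}\rho \cup \chi_\gamma = \chi_\alpha \cup \chi_\beta \cup \chi_\gamma,
$$
with the sign irrelevant over $\mathbb{F}_2$. The one technical point is that $\epsilon'$ only factors through $G_{K,S'}$, not $G_{K,S}$, so it is not literally an admissible choice in Proposition \ref{prop: well-defined}. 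I would handle this with the same argument as in the proof of Theorem \ref{theorem: Redei reciprocity}$(2)$. Fix an honest $\epsilon \in C^2(G_{K,S},\mathbb{F}_2)$ with $\mathrm{d}\epsilon = \chi_\alpha\cup\chi_\beta\cup\chi_\gamma$. Then $\epsilon + \epsilon'$ is a $2$-cocycle on $G_{K,S'}$ (inflating $\epsilon$), so Hilbert reciprocity \cite[Proposition 8.3.11(iii)]{NSW} gives
$$
\sum_{v\in S'} \inv_v(\epsilon+\epsilon') = 0 .
$$

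For the local terms, I first note that each of $\epsilon$ and $\epsilon'$ is a local cocycle at every $v \in S'$. For $\epsilon'$ this holds because admissibility (for $v\in S$) or triviality of $\chi_\gamma$ (for $v\notin \textup{Ram}(\chi_\gamma)$) kills $\chi_\alpha\cup\chi_\beta\cup\chi_\gamma$ locally. So the invariants split as $\inv_v(\epsilon)+\inv_v(\epsilon')$. For $v \in S'-S$, the cochain $\epsilon$ is unramified, and $\mathrm{d}\epsilon$ vanishes locally because $\chi_\gamma$ is locally trivial there. Hence $\epsilon$ locally comes from $H^2$ of the residue field Galois group, and $\inv_v(\epsilon)=0$. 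This yields
$$
[\alpha,\beta,\gamma] = \sum_{v\in S}\inv_v(\epsilon) = \sum_{v\in S'}\inv_v(\epsilon').
$$

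It remains to compute $\inv_v(\epsilon') = \inv_v(\rho\cup\chi_\gamma)$. For $v \in S' - \textup{Ram}(\chi_\gamma)$, the hypothesis says $\chi_\gamma$ restricts to zero on $G_{K_v}$ as a function, so $\rho\cup\chi_\gamma$ is literally the zero cochain locally and its invariant is $0$. Only the places $v\in\textup{Ram}(\chi_\gamma)$ survive, and $\textup{Ram}(\chi_\gamma) \subseteq S \subseteq S'$. This gives the stated formula. At those places $\rho\cup\chi_\gamma$ is a local cocycle, since $\chi_\alpha\cup\chi_\beta$ restricts to zero there by admissibility.

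The step I expect to need the most care is the bookkeeping that each local restriction is an honest cocycle, so that $\inv_v$ makes sense and is additive. In particular I must be precise about "trivial locally" meaning the restricted cochain is identically zero rather than merely zero in cohomology; the hypothesis on $\chi_\gamma$ is stated exactly so that this works. Beyond that, the argument is a direct reprise of the reciprocity computations already used in Section~\ref{Section: redei}.
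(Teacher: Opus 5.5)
Your proposal is correct and matches the paper's proof essentially step for step. The paper also fixes an honest $\epsilon \in C^2(G_{K,S},\mathbb{F}_2)$ and applies Hilbert reciprocity to the $2$-cocycle $\epsilon - \rho \cup \chi_\gamma$ on $G_{K,S'}$. It then discards $\inv_v(\epsilon)$ for $v \in S' - S$ by unramifiedness, and discards $\inv_v(\rho \cup \chi_\gamma)$ for $v \in S' - \textup{Ram}(\chi_\gamma)$ by local triviality of $\chi_\gamma$. Your extra bookkeeping, checking via admissibility and the hypothesis on $\chi_\gamma$ that both cochains restrict to cocycles at every $v \in S'$, is exactly what the paper states without elaboration.
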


\begin{proof}
Let $\epsilon \in C^2(G_{K, S}, \mathbb{F}_2)$ be such that
$$
\mathrm{d} \epsilon = \chi_\alpha \cup \chi_\beta \cup \chi_\gamma.
$$
Also define $\epsilon' := \rho \cup \chi_\gamma$. The difference $\epsilon - \epsilon'$ is a $2$-cocycle in $Z^2(G_{K, S'}, \mathbb{F}_2)$. Moreover, both $\epsilon$ and $\epsilon'$ restrict to $2$-cocycles at all places $v \in S'$, and $\epsilon$ is unramified at all places in $S' - S$ so $\inv_v(\epsilon) = 0$ for such places. Hence we have
\begin{align*}
[\alpha, \beta, \gamma] &= \sum_{v \in S} \inv_v(\epsilon) = \sum_{v \in S'} \inv_v(\epsilon) = \sum_{v \in S'} \inv_v(\epsilon') \\
&= \sum_{v \in S'} \inv_v(\rho \cup \chi_\gamma) = \sum_{v \in \textup{Ram}(\chi_\gamma)} \inv_v(\rho \cup \chi_\gamma)
\end{align*}
by~\cite[Proposition 8.3.11(iii)]{NSW} and the triviality of $\gamma$ at all places in $S' - \textup{Ram}(\chi_\gamma)$.
\end{proof}

We shall also need a variant of the above trivialization lemma.

\begin{proposition} 
\label{prop: trivialization in general2}
Let $(\alpha, \beta, \gamma) \in (K^\ast/K^{\ast 2})^3$ be a triple that is R\'edei admissible over $K$. Suppose that there exists $\rho \in C^1(G_{K, S}, \mathbb{F}_2)$ such that $\mathrm{d} \rho = \chi_\alpha \cup \chi_\beta$. Also suppose that for every $v \in S - \textup{Ram}(\chi_\gamma)$, the character $\chi_\gamma$ is trivial locally at $v$ or $\rho$ restricts to an unramified quadratic character of $G_{K_v}$. Then
$$
[\alpha, \beta, \gamma] = \sum_{v \in \textup{Ram}(\chi_\gamma)} \inv_v(\rho \cup \chi_\gamma).
$$
\end{proposition}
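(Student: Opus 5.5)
We will follow the proof of Proposition \ref{prop: trivialization in general} and compare a global trivialization $\epsilon$ with the explicit cochain $\epsilon' := \rho \cup \chi_\gamma$. Since $\mathrm{d}\rho = \chi_\alpha \cup \chi_\beta$, we get $\mathrm{d}\epsilon' = \chi_\alpha \cup \chi_\beta \cup \chi_\gamma$. Because $\rho$ factors through $G_{K,S}$, so does $\epsilon'$, and therefore $\epsilon'$ is itself an admissible choice in the definition of the R\'edei symbol. By Proposition \ref{prop: well-defined}, this gives
$$
[\alpha, \beta, \gamma] = \sum_{v \in S} \inv_v(\rho \cup \chi_\gamma),
$$
provided $\rho \cup \chi_\gamma$ restricts to a $2$-cocycle at every $v \in S$.

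First we check this local cocycle condition. Locally at $v$ we have $\mathrm{d}(\rho \cup \chi_\gamma) = \chi_\alpha \cup \chi_\beta \cup \chi_\gamma$ as cochains. R\'edei admissibility says that one of $\chi_\alpha, \chi_\beta, \chi_\gamma$ vanishes on $G_{K_v}$, so this triple cup product is the zero function there.

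It then remains to show that $\inv_v(\rho \cup \chi_\gamma) = 0$ for every $v \in S - \textup{Ram}(\chi_\gamma)$, which we do in two cases.
\begin{itemize}
\item If $\chi_\gamma$ is trivial locally at $v$, then $\rho \cup \chi_\gamma$ is literally zero on $G_{K_v}$, and its invariant vanishes.
\item Otherwise, by hypothesis $\rho$ restricts to an unramified quadratic character of $G_{K_v}$. Since $v \notin \textup{Ram}(\chi_\gamma)$, the character $\chi_\gamma$ is unramified at $v$ as well. Hence $\rho \cup \chi_\gamma$ is inflated from $H^2(G_{k_v}, \mathbb{F}_2)$, where $k_v$ is the finite residue field, and this group vanishes because $G_{k_v} \cong \widehat{\Z}$ has cohomological dimension $1$. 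So the local class is zero and $\inv_v = 0$.
\end{itemize}

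Summing over $S$ then leaves only the places in $\textup{Ram}(\chi_\gamma)$, which gives the claimed formula. The one point that needs care is the second case: we must know that $\rho$ restricts to an honest homomorphism on $G_{K_v}$, so that the cup product is the inflation of a cup of characters rather than just a cochain whose differential is locally trivial. This is exactly what the hypothesis provides, and it is also where this statement differs from Proposition \ref{prop: trivialization in general}, where $\chi_\gamma$ was assumed trivial on all of $S' - \textup{Ram}(\chi_\gamma)$.
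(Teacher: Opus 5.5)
Your proof is correct and matches the paper's argument: you take $\rho \cup \chi_\gamma$ directly as the trivializing cochain (legitimate because $\rho$ factors through $G_{K,S}$), and you kill the local invariants at $v \in S - \textup{Ram}(\chi_\gamma)$ with the same two cases, the second being a cup product of two unramified characters. Your explicit check, via R\'edei admissibility, that $\rho \cup \chi_\gamma$ is a local $2$-cocycle at each $v \in S$ is a detail the paper leaves implicit.
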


\begin{proof}
We take $\epsilon = \rho \cup \chi_\gamma$ in order to compute the symbol. Then we have
$$
[\alpha, \beta, \gamma] = \sum_{v \in S} \inv_v(\rho \cup \chi_\gamma) = \sum_{v \in \textup{Ram}(\chi_\gamma)} \inv_v(\rho \cup \chi_\gamma).
$$
We now show that $\inv_v(\rho \cup \chi_\gamma) = 0$ for all $v \in S - \textup{Ram}(\chi_\gamma)$. If $\chi_\gamma$ is trivial locally, then this certainly holds. But if $\chi_\gamma$ is not trivial, then $\chi_\gamma$ is unramified (as $v \in S - \textup{Ram}(\chi_\gamma)$) and so is $\rho$ by assumption. Hence we obtain the cup product of two unramified characters, which vanishes.
\end{proof}

Before we state another result of the same type, we need a convenient formula for the invariant map.

\begin{lemma}
\label{lInvariantMap}
Let $v := q^n$ be an odd integer and let $M := \mathbb{F}_{q^n}((T))$ be a local field. Let $F$ be any lift of Frobenius to $G_M(2)$ and let $\sigma$ be a topological generator of the image of inertia in $G_M(2)$. Then the map $f: Z^2(G_M(2), \mu_2) \rightarrow \mu_2$ given by 
$$
c \mapsto c(F, \sigma) - c(\sigma^v, F) - \sum_{i = 1}^{v - 1} c(\sigma, \sigma^i)
$$
has $B^2(G_M(2), \mu_2)$ in its kernel, where we have used additive notation for $\mu_2$. Moreover, $f$ equals $\exp(2\pi i \cdot \mathrm{inv}_M)$.
\end{lemma}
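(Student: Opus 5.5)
The plan has two parts: show that the functional $f$ kills coboundaries, and then identify it with the invariant map.

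\emph{Coboundaries lie in the kernel.} Take $c = \mathrm{d}\phi$, so that $c(x,y) = \phi(x) + \phi(y) - \phi(xy)$ in additive notation. Each term of $f(c)$ is then a signed sum of values of $\phi$.
\begin{itemize}
\item The inertia sum telescopes: $\sum_{i=1}^{v-1}\mathrm{d}\phi(\sigma,\sigma^i) = (v-1)\phi(\sigma) + \phi(\sigma) - \phi(\sigma^v) = v\phi(\sigma) - \phi(\sigma^v)$.
\item The two Frobenius terms give $\phi(F) + \phi(\sigma) - \phi(F\sigma) - \phi(\sigma^v) - \phi(F) + \phi(\sigma^v F)$.
\item By the defining relation $F\sigma F^{-1} = \sigma^{v}$ from Proposition \ref{Structure of tame inertia}, we have $\sigma^v F = F\sigma$, so $\phi(\sigma^vF)$ and $\phi(F\sigma)$ cancel.
\end{itemize}
Adding these, the total is $\phi(\sigma) - v\phi(\sigma) = (1-v)\phi(\sigma)$, up to sign bookkeeping. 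This vanishes in $\mu_2$ because $v$ is odd. Hence $f$ descends to a homomorphism $H^2(G_M(2),\mu_2) \to \mu_2$.

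\emph{Reduction to one class.} Since $q$ is odd, $\mu_2 \subseteq M$, so $H^2(G_M,\mu_2) = \mathrm{Br}(M)[2] \cong \mathbb{Z}/2$. By Proposition \ref{pGv}-style arguments, or standard facts on the maximal pro-$2$ quotient of a local field with $p \neq 2$, inflation $H^2(G_M(2),\mu_2) \to H^2(G_M,\mu_2)$ is an isomorphism. Both $f$ and $\exp(2\pi i\cdot\mathrm{inv}_M)$ are then homomorphisms $\mathbb{Z}/2 \to \mu_2$. It therefore suffices to check that they agree on a single class of nonzero invariant.

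\emph{Evaluation on $\chi_u \cup \chi_\pi$.} Take the class $\chi_u \cup \chi_\pi$, with $u$ a nonsquare unit (e.g.\ $\epsilon_{\mathbb{F}_{q^n}}$) and $\pi = T$. Its invariant is the Hilbert symbol $(u,\pi)$, which is $-1$ because $u$ is not a square in the residue field. The characters behave as follows: $\chi_u(\sigma) = 0$, $\chi_u(F) = 1$, and $\chi_\pi(\sigma) = 1$. Using the cocycle $c(x,y) = \chi_u(x)\chi_\pi(y)$, the first term is $c(F,\sigma) = 1\cdot 1 = 1$. The second term is $c(\sigma^v,F) = \chi_u(\sigma^v)\chi_\pi(F) = 0$. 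The inertia terms all vanish since $\chi_u(\sigma) = 0$. So $f = 1$, the nontrivial element, matching the invariant.

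\emph{Main obstacle.} The step I expect to need care is fixing sign and order conventions: the cup product convention $(a\cup b)(x,y) = a(x)\,b(y)$, and the normalization of $\mathrm{inv}$ against the Hilbert symbol. Since everything is $2$-torsion and the Hilbert symbol $(u,\pi)$ is symmetric, these ambiguities should not affect the answer. I will check this to make sure the cancellation in the first part is genuinely independent of conventions.
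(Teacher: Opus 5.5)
Your proposal is correct and follows the paper's proof. The coboundary check uses exactly the relation $F\sigma = \sigma^v F$, the telescoping of the inertia sum, and the parity of $v$. The identification with $\exp(2\pi i \cdot \mathrm{inv}_M)$ is reduced, as in the paper, to finding one class where $f$ is nonzero, namely the cup product of an unramified and a ramified character; your explicit evaluation $f(\chi_u \cup \chi_\pi) = 1$ spells out what the paper states in one line.
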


\begin{proof}
For the last part, one just needs to check that $f$ is not identically zero, as in that case $f$ and $\exp(2\pi i \cdot \mathrm{inv}_M)$ both become non-zero homomorphisms $H^2(G_M(2), \mu_2) \rightarrow \mu_2$. Taking the cup product of a ramified cocycle in $H^1(G_M(2), \mu_2)$ with a non-trivial unramified cocycle in $H^1(G_M(2), \mu_2)$ shows that $f$ is indeed not zero, as desired.

Therefore it remains to verify that $f$ vanishes on $B^2(G_M(2), \mu_2)$. In this case, we see that
$$
c(g, h) = y(g) + y(h) - y(gh)
$$
for some $1$-cochain $y: G_M(2) \rightarrow \mu_2$. Plugging this in, we obtain that
\begin{multline}
\label{eSigmaFrob}
c(F, \sigma) - c(\sigma^v, F) - \sum_{i = 1}^{v - 1} c(\sigma, \sigma^i) = \\
y(\sigma) - y(\sigma^v) - y(F \sigma) + y(\sigma^v F) - \sum_{i = 1}^{v - 1} \left(y(\sigma) + y(\sigma^i) - y(\sigma^{i + 1})\right).
\end{multline}
By a well-known theorem of Iwasawa \cite[Theorem 7.5.3]{NSW} we have $F \sigma = \sigma^v F$, and hence $y(F \sigma) = y(\sigma^v F)$. Moreover, we may evaluate the sum by a telescoping trick and using that $v - 1 \equiv 0 \bmod 2$
$$
\sum_{i = 1}^{v - 1} \left(y(\sigma) + y(\sigma^i) - y(\sigma^{i + 1})\right) = (v - 1) y(\sigma) + \sum_{i = 1}^{v - 1} \left(y(\sigma^i) - y(\sigma^{i + 1})\right) = y(\sigma) - y(\sigma^v).
$$
Therefore the bottom expression in equation \eqref{eSigmaFrob} equals zero, as desired.
\end{proof}

\begin{proposition} 
\label{prop: trivialization in general3}
Let $(\alpha, \beta, \gamma) \in (K^\ast/K^{\ast 2})^3$ be a triple that is R\'edei admissible over $K$. Let $S \subseteq S'$. Suppose that there exists $\rho \in C^1(G_{K, S'}(2), \mathbb{F}_2)$ such that $\mathrm{d} \rho = \chi_\alpha \cup \chi_\beta$. Moreover, assume that the character $\chi_\gamma$ is trivial locally at all $v \in S - \textup{Ram}(\chi_\gamma)$. Then
$$
[\alpha, \beta, \gamma] = \sum_{v \in \textup{Ram}(\chi_\gamma)} \inv_v(\rho \cup \chi_\gamma)+ \sum_{v \in S' - S} \rho(\sigma_v) \cdot \chi_{\gamma}(\Frob_v),
$$ 
where $(\sigma_v, \Frob_v)_{v \in S' - S}$ are any choices of inertia and Frobenius elements for $G_{K_v}$. 
\end{proposition}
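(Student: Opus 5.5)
The plan is to mimic the proofs of Propositions \ref{prop: trivialization in general} and \ref{prop: trivialization in general2}, with the extra places $S' - S$ handled by Lemma \ref{lInvariantMap}.

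\textbf{Comparison of trivializations.} Choose $\epsilon \in C^2(G_{K,S}(2), \mathbb{F}_2)$ with $\mathrm{d}\epsilon = \chi_\alpha \cup \chi_\beta \cup \chi_\gamma$; this is possible by \cite[Corollary 10.4.8]{NSW}. Put $\epsilon' := \rho \cup \chi_\gamma \in C^2(G_{K,S'}(2), \mathbb{F}_2)$. Then $\epsilon - \epsilon'$ is a $2$-cocycle on $G_{K,S'}(2)$. Inflating to $G_{K,S'}$ and applying Hilbert reciprocity \cite[Proposition 8.3.11(iii)]{NSW}, we obtain
\[
\sum_{v\in S'} \inv_v(\epsilon) = \sum_{v \in S'} \inv_v(\epsilon').
\]
Here $\epsilon$ and $\epsilon'$ need not be cocycles locally at places of $S' - S$. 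To make sense of each side, I will restrict to $G_{K_v}(2)$ and evaluate by the explicit functional $f$ of Lemma \ref{lInvariantMap}. That functional is defined on cochains, and the difference of its values is the invariant of the cocycle $\epsilon - \epsilon'$. So I will interpret the identity as
\[
\sum_{v\in S'} f_v(\epsilon - \epsilon') = 0.
\]

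\textbf{Places $v \in S$.} For these, $\epsilon$ and $\epsilon'$ are both local cocycles, and the computation of Proposition \ref{prop: trivialization in general} goes through unchanged. Since $\chi_\gamma$ is locally trivial on $S - \mathrm{Ram}(\chi_\gamma)$, these places contribute $[\alpha,\beta,\gamma] = \sum_{v \in \mathrm{Ram}(\chi_\gamma)} \inv_v(\rho\cup\chi_\gamma)$.

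\textbf{Places $v \in S' - S$.} Here $\epsilon$ is inflated from the unramified quotient, because $v \notin S$. Hence $\epsilon(\sigma_v, \cdot) = \epsilon(\cdot, \sigma_v) = 0$, so $f_v(\epsilon) = 0$. It remains to compute $f_v(\epsilon')$ with $F = \Frob_v$ and $\sigma = \sigma_v$. The character $\chi_\gamma$ is unramified at $v$, so $\chi_\gamma(\sigma_v^j) = 0$ for all $j$. This gives:
\begin{itemize}
\item the term $\epsilon'(F, \sigma) = \rho(F)\chi_\gamma(\sigma)$ vanishes;
\item each term $\epsilon'(\sigma, \sigma^i) = \rho(\sigma)\chi_\gamma(\sigma^i)$ vanishes;
\item the remaining term is $\epsilon'(\sigma^{q_v}, F) = \rho(\sigma^{q_v})\chi_\gamma(F)$.
\end{itemize}
Locally at $v$ the classes $\chi_\alpha$ and $\chi_\beta$ are unramified, so $\mathrm{d}\rho$ vanishes on pairs of inertia elements. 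Thus $\rho$ restricted to inertia is a homomorphism, and $\rho(\sigma^{q_v}) = q_v \rho(\sigma) = \rho(\sigma)$ because $q_v$ is odd. Therefore $f_v(\epsilon') = \rho(\sigma_v)\chi_\gamma(\Frob_v)$, up to sign, which is irrelevant in $\mathbb{F}_2$.

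Combining the three paragraphs gives the formula in the statement. The same computation also shows that the answer does not depend on the choices of $\sigma_v$ and $\Frob_v$, as claimed.

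\textbf{Main obstacle.} The delicate point is justifying that Hilbert reciprocity, when $\epsilon - \epsilon'$ is a global cocycle, can be split place by place as $f_v(\epsilon) - f_v(\epsilon')$ even though the individual local terms are not cocycles at $S' - S$. I will handle this by noting that $f_v$ is linear on all $2$-cochains and agrees with $\inv_v$ on cocycles, so $f_v(\epsilon - \epsilon') = f_v(\epsilon) - f_v(\epsilon')$. This requires Lemma \ref{lInvariantMap} to be applied to $G_{K_v}(2)$; that is why $\rho$ and $\epsilon$ are taken on $G_{K,S'}(2)$, which makes the local restrictions factor through $G_{K_v}(2)$.
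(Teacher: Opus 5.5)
Your proposal is correct and is essentially the paper's proof: both compare $\epsilon$ with $\rho \cup \chi_\gamma$ via Hilbert reciprocity on $G_{K,S'}(2)$ and evaluate the places of $S' - S$ with Lemma \ref{lInvariantMap}, and the differences are cosmetic. You split the functional $f_v$ linearly instead of evaluating the cocycle $\theta = \epsilon - \rho \cup \chi_\gamma$ directly, and you obtain $\rho(\sigma_v^{q_v}) = \rho(\sigma_v)$ because $\rho$ is a homomorphism on inertia, where the paper instead re-chooses the generator of inertia. One small fix: $\epsilon(\sigma_v, \cdot) = \epsilon(\cdot, \sigma_v) = 0$ requires normalizing $\epsilon(\mathrm{id}, \mathrm{id}) = 0$, as the paper does (this is harmless since constant $2$-cochains are cocycles). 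Alternatively, $f_v(\epsilon) = 0$ holds anyway, because $\epsilon(\mathrm{id}, \cdot)$ and $\epsilon(\cdot, \mathrm{id})$ are constant and $q_v - 1$ is even.
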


\begin{proof}
Fix $\epsilon \in C^2(G_{K, S}(2), \mathbb{F}_2)$ such that $\mathrm{d}\epsilon = \chi_\alpha \cup \chi_\beta \cup \chi_\gamma$. Observe that, without loss of generality, we may and will assume, for the rest of the proof, that $\epsilon(\text{id}, \text{id}) = 0$: indeed we can add to a given choice of $\epsilon_0$ the constant function on $G_{K, S}(2) \times G_{K, S}(2)$ given by $\epsilon_0(\text{id}, \text{id})$. 

The difference $\theta := \epsilon - \rho \cup \chi_\gamma$ is a $2$-cocycle in $Z^2(G_{K, S'}(2), \mathbb{F}_2)$. We apply ~\cite[Proposition 8.3.11(iii)]{NSW} and get 
$$
\sum_{v \in S'} \inv_v(\theta) = 0. 
$$
Unfolding the left hand side gives us the relation
$$
[\alpha, \beta, \gamma] = \sum_{v \in \textup{Ram}(\chi_\gamma)} \inv_v(\rho \cup \chi_\gamma) + \sum_{v \in S' - S} \inv_v(\epsilon - \rho \cup \chi_\gamma)
$$
because the terms $\inv_v(\rho \cup \chi_\gamma)$ for $v \in S - \text{Ram}(\chi_\gamma)$ vanish by hypothesis. Since $\epsilon$ is chosen with $\epsilon(\text{id}, \text{id}) = 0$, we see that the $2$-cocycle $\theta$ is normalized, i.e.~it satisfies $\theta(\text{id}, \text{id}) = 0$. It follows that $\theta(\text{id}, g) = \theta(g, \text{id}) = 0$ for each $g \in G_{K, S'}$. We next observe that for each $v \in S' - S$ and for each $g \in G_{K_v}$, the map $\theta(g, -)$ factors through $G_{K_v}/I_v$: this is obviously true for $\epsilon$ and it holds for $\rho \cup \chi_\gamma$ because $S' - S$ is disjoint from $\text{Ram}(\chi_{\gamma})$. Combining these observations we deduce that $\theta(-, \sigma) = 0$ for each $\sigma \in I_v$. Using this along with Lemma \ref{lInvariantMap}, we deduce that 
$$
\inv_v(\theta) = \rho(\sigma_v^{q_v}) \cdot \chi_\gamma(\Frob_v),
$$
for each topological generator $\sigma_v$ of tame inertia. Since $x \mapsto x^{q_v}$ is a bijection on the set of topological generators of tame inertia, we may simply rewrite this as
$$
\inv_v(\theta) = \rho(\sigma_v) \cdot \chi_\gamma(\Frob_v),
$$
as desired. 
\end{proof}
\section{Gridding} \label{Section: large sieve}
\subsection{Prime divisors}
Let $D \in \FF_q[T]$ be a monic, squarefree polynomial of degree $n$. Denote by $\pi_1, \dots, \pi_r$ its monic irreducible factors ordered such that $\deg(\pi_i) \leq \deg(\pi_j)$ for $i \leq j$. The factorization pattern of $D$ is by definition the tuple $(\deg(\pi_1), \dots, \deg(\pi_r))$.

\begin{mydef}
\label{dGrid}
Given an integer $n \geq 2$, we say that a factorization pattern $(n_1, \dots, n_r)$ (with $n = n_1 + \cdots + n_r$) is admissible if:
\begin{enumerate}
\item[(1)] we have 
$$
r \leq (\log n)^2,
$$
\item[(2)] if $j \neq k$ and $n_j = n_k$, then $n_j \leq n^{1/100}$,
\item[(3)] we have
$$
\# \{1 \leq i \leq r : n_i \leq n^{1/100}\} < \frac{\log n}{20},
$$
\item[(4)] we have
$$
\# \{1 \leq i \leq r: n_i \equiv 0 \bmod 2\} \geq 0.4 \log n.
$$
\end{enumerate}
\end{mydef}

\begin{mydef}
Let $n \in \Z_{\geq 2}$. Then a grid for $n$ is a product space $X_1 \times \dots \times X_r$ with the following properties:
\begin{itemize}
\item every set $X_i$ is a subset of monic irreducible polynomials of degree $n_i$ such that $n_1 \leq n_2 \leq \cdots \leq n_r$ and such that $n_1 + \cdots + n_r = n$,
\item if $n_i \leq n^{1/100}$, then $|X_i| = 1$, while if $n_i > n^{1/100}$, then $X_i$ consists of all irreducible polynomials of degree $n_i$,
\item we have $X_j = X_k$ if and only if $j = k$,
\item every element $x \in X_1 \times \dots \times X_r$ (viewed as a polynomial by multiplying out coordinates) has an admissible factorization pattern.
\end{itemize}
\end{mydef}

\begin{theorem}
\label{tGrid}
There is an absolute constant $c > 0$ such that the number of monic, squarefree polynomials of degree $n$ over $\mathbb F_q$ that have an inadmissible factorization pattern is bounded from above by $q^n/n^c$ for all sufficiently large $n$.
\end{theorem}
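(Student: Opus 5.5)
We bound separately the number of monic squarefree $D$ of degree $n$ violating each of the four conditions of Definition~\ref{dGrid}, and show each count is $O(q^n/n^{c})$ uniformly in $q$. Throughout we use the comparison with random permutations. The number of monic irreducibles of degree $k$ is at most $q^k/k$. Hence the number of monic $D$ of degree $n$ having a prescribed multiset of factor degrees is bounded by the corresponding cycle-type count, namely $\prod_k (q^k/k)^{m_k}/m_k! \le q^n\prod_k k^{-m_k}/m_k!$. Summing over patterns with a given property therefore gives $q^n$ times the probability that a uniformly random permutation of $S_n$ has that property. So it suffices to prove that each of (1)--(4) fails for a random permutation with probability $O(n^{-c})$, where the number of $k$-cycles is $C_k$.

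For (1), note that $r=\sum_k C_k$ has mean $H_n\approx\log n$. The exponential moment $\mathbb{E}[e^{r}]\le \exp(\sum_{k\le n}(e-1)/k)\ll n^{e-1}$ is bounded using the Poisson comparison $\mathbb{E}\prod_k z_k^{C_k}\le\prod_k\exp((z_k-1)/k)$, valid for $z_k\ge1$ because the generating function has nonnegative coefficients. Markov's inequality then gives $\Pr[r>(\log n)^2]\le n^{e-1}e^{-(\log n)^2}$, which is negligible.

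For (3), the number of cycles of length $\le n^{1/100}$ has mean about $\frac{1}{100}\log n$. We apply the same exponential moment with $z_k=z>1$ for $k\le n^{1/100}$ and $z_k=1$ otherwise, which gives $\mathbb{E}[z^{X}]\ll n^{(z-1)/100}$. Markov's inequality then yields $\Pr[X\ge \log n/20]\ll n^{(z-1)/100-\log z/20}$. Choosing $z=5$ gives the exponent $0.04-0.0805<0$, so this case is fine.

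For (4), the number of even-length cycles has mean $\frac12\log n$, and we need a lower tail at $0.4\log n$. Lower tails require $z<1$, where the Poisson comparison inequality reverses direction. Instead we use the exact identity $\mathbb{E}\prod_k z_k^{C_k}=[x^n]\exp(\sum_k z_kx^k/k)$. With $z_k=z$ for even $k$ and $z_k=1$ for odd $k$, this becomes $[x^n](1+x)^{1/2}(1-x)^{-1/2}(1-x^2)^{-z/2}$, a function with singularity type $(1-x)^{-(1+z)/2}$ at $x=1$. Its coefficients are $\ll n^{(z-1)/2}$ by singularity analysis or by elementary coefficient bounds. Applying Markov with $z^{-0.4\log n}$ gives $\Pr\ll n^{(z-1)/2-0.4\log z}$. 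At $z=0.8$ the exponent is $-0.1+0.089<0$, giving a power saving.

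For (2), the bad event is that two distinct factors share a common degree $k>n^{1/100}$. The expected number of such pairs is $\mathbb{E}\binom{C_k}{2}\le \frac{1}{2k^2}$ for permutations, since a count of $q^{2k}/(2k^2)$ dominates. Summing over $k>n^{1/100}$ gives $\ll n^{-1/100}$. Since $D$ is squarefree, equal degrees here genuinely means two distinct primes, so no diagonal issue arises.

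Uniformity in $q$ comes for free from the comparison $\pi_q(k)\le q^k/k$. The requirement ``sufficiently large $n$'' absorbs the implied constants, and $c$ is taken to be the minimum of the four exponents. The main obstacle is (4): the lower-tail moment bound needs the exact generating-function coefficient estimate, because the naive Poisson domination inequality goes the wrong way for $z<1$, and the margin $0.4$ versus $0.5$ leaves only a small exponent.
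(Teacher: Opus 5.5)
Your proposal is correct and follows essentially the same route as the paper. Both arguments bound each factorization-pattern count by $q^n$ times the corresponding cycle-type probability for a uniform permutation, take a union bound over the four conditions of Definition~\ref{dGrid}, and control each condition by a moment-plus-Markov argument, with the even-cycle lower tail handled through the generating function $(1-x^2)^{-(z-1)/2}/(1-x)$ at some $z<1$.

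The differences are cosmetic. For (1) and (3) you use Poisson-domination exponential moments, where the paper uses the exact formula $\mathbb{E}(u^K)=u(u+1)\cdots(u+n-1)/n!$ and factorial moments of order $\lceil (\log n)/20\rceil$. For (4) you extract the coefficient asymptotics by singularity analysis with $z=0.8$, rather than via Abel's theorem with $s=0.99$.
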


Set $N := \lfloor n^{1/100}\rfloor$, and denote by $I_q(d)$ the number of monic irreducible polynomials in $\mathbb F_q[T]$ of degree $d$. For a partition $\sum_{d \ge 1} d a_d = n$ of $n$, the number of monic squarefree degree $n$ polynomials in $\mathbb F_q[T]$ having the corresponding factorization pattern is
\[
\prod_{d \ge 1} \binom{I_q(d)}{a_d} \le \prod_{d \ge 1} \frac{I_q(d)^{a_d}}{a_d!}.
\]
Because $I_q(d) \le \frac{q^d}{d}$, the number of monic squarefree polynomials with this factorization pattern is at most
\[
\prod_{d \ge 1}\frac{I_q(d)^{a_d}}{a_d!}
\le \prod_{d \ge 1}\frac{(q^d/d)^{a_d}}{a_d!}
= q^n\prod_{d \ge 1}\frac{1}{d^{a_d}a_d!}.
\]

On the other hand, if $\sigma_n$ is a uniformly random permutation of $\{1,\dots,n\}$, then the probability that $\sigma_n$ has the corresponding cycle structure is
\[
\prod_{d \ge 1}\frac{1}{d^{a_d}a_d!}
\]
so it remains to sum it over the inadmissible partitions.

For $d \ge 1$, let $A_d$ be the number of $d$-cycles of $\sigma_n$, and put
\[
S := \sum_{1 \le d \le N} A_d,
\qquad
E := \sum_{d \equiv 0 \bmod 2} A_d, 
\qquad
K := \sum_{d \ge 1} A_d.
\]
The cycle type is inadmissible if at least one of the events
\begin{align*}
&E_1 := \{K > \log^2 n\}, \\
&E_2 := \{\textup{there exists } d > N \text{ with } A_d \ge 2\}, \\
&E_3 := \{S \geq (\log n)/20\}, \\
&E_4 := \{E < 0.4 \log n\}
\end{align*}
occurs, so the union bound for probabilities gives
\begin{equation}
\label{eq:badunion}
\mathbb P(\sigma_n\text{ is inadmissible}) \le \mathbb P(E_1) + \mathbb P(E_2) + \mathbb P(E_3) + \mathbb P(E_4).
\end{equation}

\begin{lemma}
\label{genK}
For every real $u>0$ we have
\begin{equation*}
\mathbb E(u^K) = \frac{u(u+1) \cdots (u+n-1)}{n!}.
\end{equation*}
\end{lemma}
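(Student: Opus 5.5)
We plan to prove the identity $\mathbb E(u^K)=\binom{u+n-1}{n}$-style product formula by the standard generating function for cycle counts of a uniformly random permutation. The number of permutations of $\{1,\dots,n\}$ with exactly $k$ cycles is the unsigned Stirling number of the first kind $c(n,k)$, so
\[
\mathbb E(u^K) = \frac{1}{n!}\sum_{k=0}^{n} c(n,k)\, u^k ,
\]
and it suffices to establish the polynomial identity $\sum_{k} c(n,k) u^k = u(u+1)\cdots(u+n-1)$ for all real $u$ (indeed as polynomials).

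The cleanest route is an inductive bijective argument, i.e.\ the ``Chinese restaurant'' or Feller coupling construction. We build a permutation of $\{1,\dots,n\}$ from a permutation $\tau$ of $\{1,\dots,n-1\}$ by either making $n$ a fixed point (creating a new cycle) or inserting $n$ immediately after one of the $n-1$ elements $j$ in its cycle (i.e.\ setting $\sigma(j)=n$, $\sigma(n)=\tau(j)$), which does not change the number of cycles. This map from $S_{n-1}\times\{0,1,\dots,n-1\}$ to $S_n$ is a bijection (the inverse removes $n$ from its cycle), so with $F_n(u):=\sum_{\sigma\in S_n}u^{K(\sigma)}$ we get the recursion
\[
F_n(u) = (u + n - 1)\,F_{n-1}(u), \qquad F_1(u)=u,
\]
which immediately gives $F_n(u)=u(u+1)\cdots(u+n-1)$, and dividing by $n!$ yields the lemma.

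Alternatively, one could use the exponential formula: $\sum_{n\ge0}\frac{t^n}{n!}F_n(u)=\exp\bigl(u\sum_{d\ge1} t^d/d\bigr)=(1-t)^{-u}$, and compare coefficients with the binomial series $(1-t)^{-u}=\sum_n \frac{u(u+1)\cdots(u+n-1)}{n!}t^n$. This is consistent with the product formula $\prod_d \frac{1}{d^{a_d}a_d!}$ for cycle-type probabilities used just above, since summing $u^{\sum a_d}\prod_d \frac{1}{d^{a_d}a_d!}$ over $\sum d a_d=n$ is exactly the $t^n$ coefficient of $\prod_d \exp(ut^d/d)$. There is no real obstacle; the only point requiring care is verifying that the insertion map is a bijection (or, in the second approach, justifying the formal power series manipulation), both of which are routine.
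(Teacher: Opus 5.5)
Your proof is correct and follows the paper's argument exactly: the insertion construction (either adjoin $n$ as a new fixed point, or place $n$ after one of the $n-1$ existing elements) gives $F_n(u)=(u+n-1)F_{n-1}(u)$ with $F_1(u)=u$, and induction finishes. Your explicit inverse map, which makes the bijectivity precise, and the exponential-formula cross-check are correct additions but not needed.
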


\begin{proof}
Let $S_n$ be the collection of permutations of $\{1, \dots, n\}$, and put
\[
F_n(u) = \sum_{\sigma\in S_n} u^{K(\sigma)}.
\]
Given a permutation of $\{1, \dots, n-1\}$, one can construct a permutation of $\{1, \dots, n\}$ either by adding $(n)$ as a new 1-cycle (which increases the number of cycles by $1$), or by inserting $n$ into one of the $n-1$ slots inside the existing cycle decomposition (which leaves the number of cycles unchanged). Hence
\[
F_n(u) = (u + n - 1) F_{n - 1}(u),
\qquad F_1(u) = u.
\]
Induction gives
\[
F_n(u) = u (u + 1) \cdots (u + n - 1),
\]
and dividing by $n!$, the number of permutations of $\{1, \dots, n\}$, we get the result.
\end{proof}

By Markov's inequality and Lemma \ref{genK} with $u=2$ we have
\[
\mathbb P(E_1)
=\mathbb P\bigl(2^K>2^{\log^2 n}\bigr)
\le 2^{-\log^2 n} \mathbb E(2^K)
= 2^{-\log^2 n}\frac{2\cdot 3\cdots (n+1)}{n!}
= \frac{n + 1}{2^{\log^2 n}},
\]
which decays faster than any negative power of $n$. In particular,
\begin{equation}
\label{eq:E1}
\mathbb P(E_1)\ll n^{-100}.
\end{equation}

\begin{lemma} 
\label{factorialmoments}
For every sequence $(m_d)_{d \ge 1}$ of nonnegative integers such that $\sum_{d \ge 1} d m_d\le n$, we have
\begin{equation*}
\mathbb E\!\Bigl[\prod_{d \ge 1} (A_d)_{m_d}\Bigr]
=\prod_{d \ge 1} d^{-m_d},
\end{equation*}
where $(x)_m=x(x-1)\cdots (x-m+1)$. 
\end{lemma}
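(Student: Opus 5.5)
We will prove the formula by a generating-function computation, exactly parallel to the proof of Lemma \ref{genK}. The cycle counts of a uniform permutation have the joint generating function given by the exponential formula (Cauchy's formula for the number of permutations of a given cycle type):
\[
\sum_{n \ge 0} \frac{t^n}{n!} \sum_{\sigma \in S_n} \prod_{d \ge 1} x_d^{A_d(\sigma)} = \exp\Bigl(\sum_{d \ge 1} \frac{x_d t^d}{d}\Bigr).
\]
This is immediate from the count $n!/\prod_d d^{a_d} a_d!$ of permutations with $a_d$ cycles of length $d$, already used above. We will first record this identity.

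To extract factorial moments, we substitute $x_d = 1 + y_d$ and use that $\prod_d (1+y_d)^{A_d} = \sum_{(m_d)} \prod_d \binom{A_d}{m_d} y_d^{m_d}$. Thus the coefficient of $\prod_d y_d^{m_d}$ in $\sum_{\sigma} \prod_d (1+y_d)^{A_d(\sigma)}$ equals $\sum_\sigma \prod_d (A_d)_{m_d}/m_d!$.

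On the generating-function side, we have
\[
\exp\Bigl(\sum_d \frac{(1+y_d)t^d}{d}\Bigr) = \frac{1}{1-t} \prod_d \exp\Bigl(\frac{y_d t^d}{d}\Bigr),
\]
since $\sum_d t^d/d = -\log(1-t)$. The coefficient of $\prod_d y_d^{m_d}$ in the product of exponentials is $\prod_d t^{d m_d}/(d^{m_d} m_d!)$. Multiplying by $(1-t)^{-1} = \sum_k t^k$, the coefficient of $t^n$ is $\prod_d 1/(d^{m_d} m_d!)$ precisely when $\sum_d d m_d \le n$.

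Comparing coefficients of $t^n y^m/n!$ gives
\[
\frac{1}{n!}\sum_{\sigma \in S_n} \prod_d (A_d)_{m_d} = \prod_d d^{-m_d}.
\]
This is the claim.

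The only point requiring care is justifying the formal manipulations. Only finitely many $m_d$ are nonzero, so we may set $y_d = 0$ for $d > n$ and work in the formal power series ring in finitely many variables. There all identities are formal, and there is no real obstacle.

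Alternatively, a combinatorial proof would count the ordered tuples of $m_d$ disjoint $d$-cycles (over all $d$) together with a permutation of the remaining $n - \sum_d d m_d$ points. There are $n!/\prod_d d^{m_d}$ such tuples, which yields the same result. We would use whichever version is shorter to write.
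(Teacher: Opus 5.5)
Your proof is correct, but your main argument takes a different route from the paper's. The paper's proof is exactly the double count you sketch in your last paragraph. It counts pairs consisting of $\sigma \in S_n$ and an ordered family of $m_d$ disjoint $d$-cycles of $\sigma$ for each $d$, obtaining $\frac{(n)_M}{\prod_d d^{m_d}} \cdot (n-M)! = \frac{n!}{\prod_d d^{m_d}}$ with $M = \sum_d d m_d$, and then divides by $n!$.

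Your generating-function argument takes Cauchy's cycle-type count as input; this is equivalent to the exponential formula, which the paper itself uses later for the event $E_4$. You substitute $x_d = 1 + y_d$ to turn monomials into $\prod_d \binom{A_d}{m_d}$, and read off the coefficient from the factorization $\frac{1}{1-t}\prod_d \exp(y_d t^d/d)$. Every step checks out, including the cancellation of $\prod_d m_d!$ and the formal justification, since the $t^n$-coefficient only involves $y_1, \dots, y_n$.

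The paper's double count is more elementary and self-contained. It needs neither Cauchy's formula (whose usual proof is itself a double count of the same kind) nor formal power series. Your route is more mechanical and produces all joint factorial moments at once. It also shows where the hypothesis $\sum_d d m_d \le n$ enters: through the truncation of the coefficients of $(1-t)^{-1}$, which correctly gives $0$ when $\sum_d d m_d > n$.
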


\begin{proof}
Set $M = \sum_{d \ge 1} d m_d$. The random variable $\prod_d (A_d)_{m_d}$ counts the number of ordered choices of $m_d$ distinct $d$-cycles of $\sigma_n$, for each $d$. We count the average of this quantity by double counting.

First count the total number of ordered families of disjoint cycles of the prescribed sizes on $\{1,\dots,n\}$. Choose an ordered list of $M$ distinct elements of $\{1,\dots,n\}$; there are $(n)_M=n!/(n-M)!$ choices. Break this list into consecutive blocks of lengths $d,d,\dots,d$ repeated $m_d$ times for each $d$. Each block of length $d$ defines a $d$-cycle, and each $d$-cycle is represented exactly $d$ times by cyclic rotation. Therefore the number of ordered families of disjoint cycles is
\[
\frac{(n)_M}{\prod_d d^{m_d}}.
\]
Once such a family is fixed, the remaining $n-M$ elements may be permuted arbitrarily, so exactly $(n-M)!$ permutations contain that family. Thus the total number of pairs
\[
(\sigma,\text{ ordered family of cycles contained in }\sigma)
\]
is
\[
\frac{(n)_M}{\prod_d d^{m_d}}\,(n-M)!
=\frac{n!}{\prod_d d^{m_d}}.
\]
Dividing by $n!$ gives the result.
\end{proof}

If $A_d \ge 2$, then $\binom{A_d}{2}\ge 1$, so
\[
\mathbb P(A_d \ge 2)\le \mathbb E\binom{A_d}{2}.
\]
Now if $2d\le n$, then applying Lemma \ref{factorialmoments} with $m_d=2$ and all other $m_j=0$ gives
\[
\mathbb E\binom{A_d}{2} = \frac{1}{2}\mathbb E\bigl((A_d)_2\bigr) = \frac{1}{2d^2}.
\]
Hence, since $A_d \ge 2$ is impossible for $d>n/2$,
\begin{equation} 
\label{eq:E2}
\mathbb P(E_2)
\le \sum_{N < d \le n/2} \mathbb P(A_d \ge 2)
\le \sum_{N < d \le n/2} \mathbb E\binom{A_d}{2}
\le \frac12 \sum_{d > N} \frac{1}{d^2}
\ll \frac{1}{N}
\ll n^{-1/100}.
\end{equation}
We have the identity
\begin{equation}
\label{eFallingRearrange}
\left(\sum_{d \le N} A_d\right)_k
= \sum_{\substack{m_1, \ldots, m_N \ge 0 \\ m_1 + \cdots + m_N = k}}
\frac{k!}{\prod_{d \le N} m_d!}
\prod_{d \le N} (A_d)_{m_d}.
\end{equation}
Set $k := \left\lceil (\log n)/20 \right\rceil$, so $kN \le n$ for sufficiently large $n$. Applying equation \eqref{eFallingRearrange} and Lemma \ref{factorialmoments}, we find
\[
\mathbb E\bigl((S)_k\bigr)
=\sum_{\substack{m_1, \ldots, m_N \ge 0 \\ m_1 + \cdots + m_N = k}}
\frac{k!}{\prod_{d \le N} m_d!}
\prod_{d \le N} d^{-m_d}
= \left(\sum_{d \le N} \frac1d\right)^k \leq (1 + \log N)^k.
\]
Also, if $S \ge k$ then $(S)_k \ge k!$, so Markov's inequality yields
\begin{equation} 
\label{eq:E3}
\mathbb P(E_3) 
\le \mathbb P(S \ge k)
\le \frac{\mathbb E((S)_k)}{k!}
\leq \frac{(1 + \log N)^k}{k!}
\le \left(\frac{e + e \log N}{k}\right)^k \ll n^{-c}
\end{equation}
for some small absolute constant $c > 0$. 

Next, we handle $E_4$. Denote by $Z_n(\sigma)$ the number of even cycles of $\sigma \in S_n$. Using the power series expansions
$$
\sum_{k \ge 1} \frac{u^k}{k} = - \log(1-u), \quad
\sum_{k \text{ even}} \frac{u^k}{k} = \frac{1}{2} \log\frac{1}{1-u^2}, \quad
\sum_{k \text{ odd}} \frac{u^k}{k} = - \log(1-u) - \frac{1}{2} \log\frac{1}{1-u^2},
$$
the exponential formula in combinatorics yields the identity
\[
\sum_{n \ge 0} \frac{u^n}{n!} \sum_{\sigma \in S_n} s^{Z_n(\sigma)}
= \exp\Bigg( \sum_{k \text{ odd}} \frac{1}{k} u^k + \sum_{k \text{ even}} \frac{s}{k} u^k \Bigg)
= \frac{(1 - u^2)^{-(s - 1)/2}}{1 - u} =: F(u, s).
\]
Hence we have $\mathbb{E}[s^{Z_n}] = [u^n] F(u, s)$, so we now evaluate $[u^n] F(u, s)$. Note that
$$
F(u, s) = \frac{\sum_{k = 0}^\infty \frac{(-1)^k \cdot \left(-(s - 1)/2\right)_k}{k!} \cdot u^{2k}}{1 - u} =: \frac{\sum_{k = 0}^\infty c_{k, s} u^{2k}}{1 - u}.
$$
We now fix a real number $0 < s < 1$. Set $\alpha := -(s - 1)/2 > 0$. Then $c_{k, s}$ is negative for all $k \geq 1$, and moreover
$$
c_{k, s} = \frac{-\alpha}{k} \prod_{i = 1}^{k - 1} \frac{i - \alpha}{i} = \frac{-\alpha}{k} \prod_{i = 1}^{k - 1} \left(1 - \frac{\alpha}{i}\right) \sim \frac{C(\alpha)}{k^{1 + \alpha}}
$$
for some constant $C(\alpha)$ as $k$ goes to infinity. Hence $\sum_{k = 0}^\infty c_{k, s}$ converges and thus equals $\lim_{u \rightarrow 1} (1 - u^2)^\alpha = 0$ by Abel's theorem. But $[u^n]F(u, s)$ is nothing more than
$$
\mathbb{E}[s^{Z_n}] = [u^n]F(u, s) = \sum_{k = 0}^{\lfloor n/2 \rfloor} c_{k, s} = \sum_{k = 0}^\infty c_{k, s} - \sum_{k > \lfloor n/2 \rfloor} c_{k, s} = - \sum_{k > \lfloor n/2 \rfloor} c_{k, s} = O_s(n^{-\alpha}).
$$
We now pick $s := 0.99$ and apply Markov's inequality to conclude that
\begin{align}
\mathbb{P}(E_4) 
&= \mathbb{P}(Z_n < 0.4 \log n) = \mathbb{P}(0.99^{Z_n} > 0.99^{0.4 \log n}) \le 0.99^{-0.4 \log n} \mathbb{E}[0.99^{Z_n}] \nonumber \\
&\ll 0.99^{-0.4 \log n} n^{-0.005} \leq n^{-0.0009}. \label{eq:E4}
\end{align}
We can now combine the various ingredients to prove Theorem \ref{tGrid}.

\begin{proof}[Proof of Theorem \ref{tGrid}]
From \eqref{eq:badunion}, \eqref{eq:E1}, \eqref{eq:E2}, \eqref{eq:E3} and \eqref{eq:E4}, we obtain
\[
\mathbb P(\sigma_n\text{ has an inadmissible cycle type}) \ll n^{-c},
\]
as desired.
\end{proof}

\subsection{The large sieve}
Our next lemma is a version of the large sieve sufficient for our purposes; it is likely that stronger results can be proved.

\begin{lemma}
\label{lLS}
There exists an absolute constant $C > 0$ such that the following statement holds. 

Let $K/\mathbb{F}_q(T)$ be a finite extension, let $n \in \Z_{\geq 1}$ and let $\chi_1, \dots, \chi_n \in \Hom(G_K, \{\pm 1\})$. Define
$$
D := \{1 \leq i, j \leq n : \chi_i = \chi_j \textup{ or } K(\chi_i \chi_j)/\mathbb{F}_q(T) \textup{ is not geom.}\}.
$$
Write $G$ for the maximum genus among all the $K(\chi_i \chi_j)/\mathbb{F}_q(T)$. Let $d \geq 1$, and let $Q_1, \dots, Q_s$ be the complete list of places of $K$ of degree $d$. Assume that each $Q_j$ is unramified over $\mathbb{F}_q(T)$ and unramified in each $K(\chi_i)$. Then we have for all complex numbers $\alpha_1, \dots, \alpha_n \in \mathbb{C}$ of magnitude at most $1$ and all complex numbers $\beta_1, \dots, \beta_s \in \mathbb{C}$ of magnitude at most $1$
$$
\left| \sum_{i = 1}^n \sum_{k = 1}^s \alpha_i \beta_k \chi_i(\Frob_{Q_k}) \right| \leq s |D|^{1/2} + C s^{1/2} n (G + [K : \mathbb{F}_q(T)])^{1/2}  q^{d/4}.
$$
\end{lemma}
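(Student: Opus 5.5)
The plan is the standard duality argument for the large sieve, with the Weil bound supplying the cancellation. Write $\Sigma$ for the sum in question. By Cauchy--Schwarz in $k$ and the bound $|\beta_k| \leq 1$,
$$
|\Sigma|^2 \leq s \sum_{k = 1}^s \left| \sum_{i = 1}^n \alpha_i \chi_i(\Frob_{Q_k}) \right|^2 = s \sum_{i, j = 1}^n \alpha_i \overline{\alpha_j} \sum_{k = 1}^s (\chi_i \chi_j)(\Frob_{Q_k}),
$$
using that $\chi_j$ is real-valued, so $\overline{\chi_j} = \chi_j$. The unramifiedness hypothesis on the $Q_k$ ensures that every $\Frob_{Q_k}$ value is well defined. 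It therefore suffices to bound the inner sum over $k$ for each pair $(i, j)$.

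The pairs split into two classes. For $(i, j) \in D$ we use only the trivial bound $s$ on the inner sum. This gives a contribution of at most $s \cdot s \cdot |D|$, which is dominated by $s^2 |D|$. For $(i, j) \notin D$ the character $\psi := \chi_i\chi_j$ is non-trivial, and $K(\psi)/K$ is a quadratic extension that is geometric, meaning it has the same constant field as $K$. The sum $\sum_k \psi(\Frob_{Q_k})$ runs over all degree-$d$ places of $K$. It equals the difference between the number of degree-$d$ places of $K$ that split in $K(\psi)$ and the number that are inert. Equivalently, up to the places of $K(\psi)$ of degree $d$ versus those of degree $2d$, it is $\#\{\text{degree } d \text{ places of } K(\psi)\} - \#\{\text{degree } d \text{ places of } K\}$ adjusted appropriately. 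A cleaner route is to read it as the coefficient sum of the $L$-function $L(u, \psi)$ over $K$. By Weil's Riemann hypothesis, $L(u, \psi)$ is a polynomial of degree at most $2g_{K(\psi)} - 2 + \ldots$ whose inverse roots have absolute value $q^{1/2}$. The explicit formula then gives
$$
\left|\sum_{\deg Q = d} \psi(\Frob_Q)\right| \ll \frac{1}{d}(G + [K:\mathbb{F}_q(T)])\, q^{d/2} + (\text{contributions from } Q \text{ of degree } d/e,\ e \geq 2).
$$
The contributions from lower-degree places are bounded by $O(q^{d/2})$ times the number of places, which is controlled through $[K:\mathbb{F}_q(T)]$. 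The degree of the $L$-function is bounded by $2 g_{K(\psi)} + O(\#\text{ramified places})$, and both quantities are $\ll G + [K:\mathbb{F}_q(T)]$ by Riemann--Hurwitz. Here we note that $G$ is defined as the genus of $K(\chi_i\chi_j)$, so this step should go through directly. Summing over the at most $n^2$ pairs with $|\alpha_i \overline{\alpha_j}| \leq 1$ gives
$$
|\Sigma|^2 \leq s^2|D| + C^2 s\, n^2 (G + [K:\mathbb{F}_q(T)])\, q^{d/2}.
$$
Taking square roots and applying $\sqrt{x + y} \leq \sqrt{x} + \sqrt{y}$ yields the claimed bound.

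The main obstacle is the Weil-bound step, specifically getting the prime-sum bound uniformly with a constant independent of $K$ and $d$. This requires handling places of $K$ with degree dividing $d$, as well as ramified places. I would first express the sum via $\frac{d}{du}\log L(u,\psi)$. The degree-$d$ coefficient is then $\sum_{e\mid d} (d/e)\cdot(\text{places of degree } d/e)$ with value $-\sum \omega^d$, where the $\omega$ run over the inverse roots. I would isolate $e = 1$ and bound the remaining terms by the trivial count $\ll q^{d/2}[K:\mathbb{F}_q(T)]$. This is acceptable because the statement has room, since the error carries only $q^{d/4}$ after taking the square root.
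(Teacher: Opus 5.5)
Your proposal is correct and is essentially the paper's proof. Both use Cauchy--Schwarz over $k$, the trivial bound $s$ for pairs in $D$, and square-root cancellation for the non-trivial geometric quadratic character $\chi_i\chi_j$ on the remaining pairs. The only difference is packaging: the paper gets that last bound from two applications of the effective Chebotarev density theorem \cite[Proposition 7.4.8]{FJ} (to the identity and to the non-trivial element of $\Gal(K(\chi_i\chi_j)/K)$), while you derive it directly from the Riemann hypothesis for $L(u,\chi_i\chi_j)$ via the explicit formula, which is the input behind that Chebotarev estimate anyway.
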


\begin{proof}
We apply the triangle inequality and Cauchy--Schwarz to obtain the bounds
\begin{align*}
\left| \sum_{i = 1}^n \sum_{k = 1}^s \alpha_i \beta_k \chi_i(\Frob_{Q_k}) \right| 
&\leq \sum_{k = 1}^s \left| \sum_{i = 1}^n \alpha_i \chi_i(\Frob_{Q_k}) \right| \\
&\leq s^{1/2} \left( \sum_{1 \leq i_1, i_2 \leq n} \alpha_{i_1} \overline{\alpha_{i_2}} \sum_{k = 1}^s \chi_{i_1} \chi_{i_2}(\Frob_{Q_k}) \right)^{1/2}.
\end{align*}
The terms $(i_1, i_2) \in D$ lead to a contribution of size bounded by $s |D|^{1/2}$. For the terms $(i_1, i_2) \not \in D$, we apply for each inner sum the Chebotarev density theorem \cite[Proposition 7.4.8]{FJ} twice to the quadratic extension $K(\chi_i \chi_j)/K$; the first application takes the resulting conjugacy class to be the identity element, while the second application uses the unique non-trivial element of $\Gal(K(\chi_i \chi_j)/K)$.
\end{proof}

\subsection{Estimates for the 4-rank}
We start by parametrizing $W_0^{(h)}$ over a grid $X$. For this subsection, assume that $q \equiv 1 \bmod 8$, assume that the total degree $n$ is odd and fix $h \in \{1, -1\}$ throughout.

\begin{mydef}
\label{dGenusPar}
Let $X = X_1 \times \dots \times X_r$ be a grid with factorization pattern $(n_1, \dots, n_r)$. Take $W_0^{(h)}$ to be the subspace of $w = (w_1, \dots, w_r) \in \mathbb{F}_2^r$ such that
$$
\sum_{i = 1}^r w_i n_i \equiv 0 \bmod 2.
$$
For each $x \in X$ and each $h \in \{1, -1\}$ we define the $\mathbb F_2$-linear injection $f_x \colon W_0^{(h)} \xhookrightarrow{} C_0^{(h)}(\chi_x)^\circ$ given by sending $w = (w_1, \dots, w_r) \in W_0^{(h)}$ to 
$$
f_x(w) = \sum_{i = 1}^r w_i \chi_{\mathrm{pr}_i(x)}.
$$
Note that $f_x$ is the restriction of the map $f_D$ from Remark \ref{rmk: identifying spaces} to $W_0^{(h)}$. Denote by $i_{\textup{med}}$ the smallest number $i$ such that $n_i > n^{1/100}$.
\end{mydef}

Our next theorem gives us sufficient control over $C_1^{(h)}(\chi_x)^\circ$ to prove Theorem \ref{tChowla}. With a tad more work, it is possible to obtain the distribution of $\dim_{\FF_2} C_1^{(h)}(\chi_x)^\circ$. Since such a result would not improve the quality of the savings in our main theorem, we have opted for the following simpler result instead.

\begin{theorem}
\label{t4Rank}
There exists an absolute constant $C > 0$ such that the following holds. Let $X = X_1 \times \dots \times X_r$ be a grid with factorization pattern $(n_1, \dots, n_r)$ such that $n := n_1 + \dots + n_r$ is odd. Then $f_x(W_0^{(h)})$ contains $C_1^{(h)}(\chi_x)^\circ$.

Moreover, let $\mathbf{w} \in W_0^{(h)}$ be non-zero. 
\begin{enumerate}
\item[(a)] Suppose that either $\mathrm{pr}_i(\mathbf{w}) = 0$ for all $i \geq i_{\textup{med}}$ or $\mathrm{pr}_i(\mathbf{w}) = 1$ for all $i \geq i_{\textup{med}}$. Then we have
$$
|\{x \in X : f_x(\mathbf{w}) \in C_1^{(h)}(\chi_x)^\circ\}| \leq \frac{C |X|}{2^{0.35 \log n}}.
$$
\item[(b)] Suppose that for all $k \in \FF_2$ there exists $i \geq i_{\textup{med}}$ such that $\mathrm{pr}_i(\mathbf{w}) = k$. Then we have
$$
|\{x \in X : f_x(\mathbf{w}) \in C_1^{(h)}(\chi_x)^\circ\}| \leq \frac{C |X|}{2^r}.
$$
\end{enumerate}
\end{theorem}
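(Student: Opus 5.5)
The first claim is formal. By Remark \ref{rmk: identifying spaces} and Proposition \ref{pRedei}, every element of $C_1^{(h)}(\chi_x)$ is $f_x$ of a vector in the left kernel of $R^{(h)}(\chi_x)$. The elements of $C_1^{(h)}(\chi_x)^\circ$ have even degree and are monic, so their preimages lie in $\{w : \sum_i w_i n_i \equiv 0 \bmod 2\} = W_0^{(h)}$. Since $C_1^{(h)}(\chi_x) \subseteq C_0^{(h)}(\chi_x)$, the statement $f_x(W_0^{(h)}) \supseteq C_1^{(h)}(\chi_x)^\circ$ follows.

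For the counting statements I fix $\mathbf{w} \neq 0$ and put $I := \{i : w_i = 1\}$. The condition $f_x(\mathbf{w}) \in C_1^{(h)}(\chi_x)^\circ$ says that $\mathbf{w}$ lies in the left kernel of $R^{(h)}(\chi_x)$. The last coordinate is automatic by the parity condition defining $W_0^{(h)}$. For each $j \in [r]$ it gives a linear condition
$$
\sum_{i \in I} R_{i,j}(\chi_x) + \delta_{h,-1} w_j n_j = 0 .
$$
For $j \notin I$ this reads $\prod_{i \in I} \left(\frac{\pi_i}{\pi_j}\right) = 1$. For $j \in I$, using that the columns of $\text{R\'edei}$ sum to zero, it reads $\prod_{i \notin I} \left(\frac{\pi_i}{\pi_j}\right) = \pm 1$ with a prescribed sign depending only on $n_j$ and $h$. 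Thus every $j$ imposes one Legendre-symbol condition. I expand the indicator of the intersection of these conditions as
$$
2^{-r} \sum_{\mathbf{u} \in \FF_2^r} \prod_j (\text{symbol}_j)^{u_j} .
$$
The $\mathbf{u} = 0$ term contributes $|X|/2^r$. Every other term is a product $\prod_{i \in I, j \notin I} \left(\frac{\pi_i}{\pi_j}\right)^{u_i + u_j}$ (by symmetry of the symbols, as $q \equiv 1 \bmod 4$), times harmless sign factors. By quadratic reciprocity such a term is a bilinear character sum between coordinates in $I$ and in $I^c$.

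In case (b) there are indices $i_0 \in I$ and $j_0 \notin I$ with $i_0, j_0 \geq i_{\textup{med}}$, so both $X_{i_0}$ and $X_{j_0}$ are full sets of primes of large, distinct degrees. For each nonzero $\mathbf{u}$ I would pick such a pair with $u_{i_0} + u_{j_0} = 1$ (or use a variable linked to a large coordinate) and fix all other coordinates. Lemma \ref{lLS} over $K = \FF_q(T)$, applied with $X_{i_0}$ as the characters and $X_{j_0}$ as the places, then bounds the inner sum by a power saving in $q^{\min(n_{i_0}, n_{j_0})}$. The diagonal set $D$ is empty because the degrees differ, and the genus is $O(n)$. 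Summing over the $2^r \leq 2^{(\log n)^2}$ choices of $\mathbf{u}$ keeps the total error below $|X|/2^r$. A nonzero $\mathbf{u}$ for which no such pair exists makes the character trivial on all large coordinates. I would handle these by the second mechanism described next.

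Case (a) is the main obstacle, because there the large coordinates are all on one side and no oscillation from large primes is available. Here I would not expand everything. Instead I keep only the conditions for $j$ ranging over the large indices $j \geq i_{\textup{med}}$ with $n_j$ even, whose number is at least $0.4\log n - \log n/20 = 0.35\log n$ by Definition \ref{dGrid}(3),(4). For each such $j$, the condition is a single quadratic character in $\pi_j$ attached to the modulus $a := \prod_{i \in I} \pi_i$ or its complement; the relevant modulus is nontrivial because $\mathbf{w} \neq 0$ and $\mathbf{w} \neq$ the all-ones vector (the latter is excluded by $n$ odd). By Chebotarev over $X_j$ (the Weil bound for the character sum over primes of degree $n_j$), each condition holds for a proportion $1/2 + O(q^{-n_j/2} \deg a)$ of $\pi_j \in X_j$. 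The conditions are independent because they involve distinct coordinates with the rest fixed. This gives the bound $\ll |X| 2^{-0.35\log n}$. If all large coordinates equal $1$, the complementary modulus consists of small primes, which is still nontrivial provided $I^c \neq \emptyset$; this step, together with ensuring the sign twists for $h = -1$ do not ruin the half-density, is where I expect the care to be needed.
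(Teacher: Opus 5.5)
Your first claim and part (a) are correct and match the paper. In (a) you keep only the column conditions at the large indices. Each is a quadratic character of $\pi_j$ with a fixed, nontrivial modulus made of the frozen small primes, so Chebotarev in each large coordinate separately gives $2^{-0.35\log n}$. Your worry about the sign for $h=-1$ is void: for even $n_j$ the prescribed sign $(-1)^{n_j}$ is $+1$.

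The gap is in part (b), in the terms $\mathbf{u}\neq\mathbf{0}$ that are constant on the large indices, i.e.\ those with no large pair satisfying $u_{i_0}+u_{j_0}=1$.
\begin{itemize}
\item Your claim that these give a character trivial on all large coordinates is false. If $\mathbf{u}$ vanishes on every large index but $u_k=1$ for some small $k\in I$, the factor $(\pi_k/\pi_{j_0})$ survives, and it is a nontrivial character of the large variable $\pi_{j_0}$.
\item Your fallback cannot work. The case (a) argument bounds a count, not an individual term $2^{-r}\sum_x\prod(\cdot)$ of the expansion. It also saves only $2^{-0.35\log n}$, while (b) needs $2^{-r}$ with $r$ possibly as large as $(\log n)^2$.
\item These terms cannot be bounded trivially either. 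There are up to about $2\cdot 2^{(\log n)/20}$ of them, which would cost that factor over $C|X|/2^r$.
\end{itemize}

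The right dichotomy is constant versus non-constant $\mathbf{u}$. Fix any large $i_0\in I$ and large $j_0\notin I$, and freeze all other coordinates.
\begin{itemize}
\item As a function of $\pi_{i_0}$, your term is a constant times $\chi_m(\pi_{i_0})$ with $m=\prod_{j\notin I,\,u_j\neq u_{i_0}}\pi_j$.
\item As a function of $\pi_{j_0}$, it is a constant times $\chi_{m'}(\pi_{j_0})$ with $m'=\prod_{i\in I,\,u_i\neq u_{j_0}}\pi_i$.
\item If $m=m'=1$, then $u_j=u_{i_0}$ for all $j\notin I$ and $u_i=u_{j_0}=u_{i_0}$ for all $i\in I$, so $\mathbf{u}$ is constant.
\end{itemize}
Hence every $\mathbf{u}\notin\{\mathbf{0},\mathbf{1}\}$ is negligible by one-variable Chebotarev over $X_{i_0}$ or $X_{j_0}$. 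The modulus may consist of small primes only; that is fine, since it is squarefree, nonconstant and coprime to the variable by degree. Only $\mathbf{u}=\mathbf{0}$ and $\mathbf{u}=\mathbf{1}$ survive, and together they contribute at most $2|X|/2^r$. You omitted $\mathbf{u}=\mathbf{1}$, which contributes $\pm|X|/2^r$.

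This is exactly the paper's proof: choosing $i_0$ with $w_{i_0}=0$ and $i_1$ with $w_{i_1}=1$ forces $\mathbf{a}$ and $\mathbf{b}$ to be constant. A minor point: in Lemma \ref{lLS} the set $D$ always contains the diagonal pairs, so it is not empty. This is harmless, and the bilinear lemma is not needed here anyway.
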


\begin{proof}
For the first part, we note that any element of $C_0^{(h)}(\chi_x)^\circ$ is either of the shape $f_x(\mathbf{w})$ or $f_x(\mathbf{w}) + \chi_{\epsilon_{\mathbb{F}_q}}$. But the latter type of elements can not be in $C_1^{(h)}(\chi_x)^\circ$ by Remark \ref{rmk: identifying spaces}.

First suppose that $\mathrm{pr}_i(\mathbf{w}) = 0$ for all $i \geq i_{\textup{med}}$. Then $f_x(\mathbf{w})$ is unramified at $\mathrm{pr}_i(x)$ for all $i \geq i_{\textup{med}}$. Hence the condition $f_x(\mathbf{w}) \in C_1^{(h)}(\chi_x)^\circ$ implies that 
$$
f_x(\mathbf{w})(\Frob_{\mathrm{pr}_i(x)}) = 0
$$
for all $i \geq i_{\textup{med}}$. Since there are at least $0.35 \log n$ indices $i \geq i_{\textup{med}}$ by combining Definition \ref{dGrid}(3) and Definition \ref{dGrid}(4), part $(a)$ is a consequence of the Chebotarev density theorem \cite[Proposition 7.4.8]{FJ} and the bound on $r$ from Definition \ref{dGrid}(1), still under the assumption that $\mathrm{pr}_i(\mathbf{w}) = 0$ for all $i \geq i_{\textup{med}}$. 

In case $\mathrm{pr}_i(\mathbf{w}) = 1$ for all $i \geq i_{\textup{med}}$, we add $\chi_x$ for $h = 1$ and $\chi_x + \chi_{\epsilon_{\mathbb{F}_q}}$ for $h = -1$ to $f_x(\mathbf{w})$; this is allowed by Proposition \ref{prop: trivial elemts}. In this way we obtain a non-zero character that is unramified at $\mathrm{pr}_i(x)$ for all $i \geq i_{\textup{med}}$. Since $n$ is odd, it follows from the definition of $W_0^{(h)}$ that the resulting character is ramified at some finite place (and thus non-zero). Now we argue exactly as before and obtain again the bound $C |X|/2^{0.35 \log n}$. This ends the proof of $(a)$.
 
As for part $(b)$, write $\mathbf{w} = (w_1, \dots, w_r)$. We use Proposition \ref{pRedei} to deduce that $f_x(\mathbf{w}) \in C_1^{(h)}(\chi_x)^\circ$ if and only if the following conditions simultaneously hold:
\begin{itemize}
\item we have $\left( \frac{\prod_{j = 1}^r \mathrm{pr}_j(x)^{w_j}}{\mathrm{pr}_i(x)} \right) = 1$ for all $i$ such that $w_i = 0$,
\item we have $\left( \frac{\epsilon_{\FF_q}^{\frac{1 - h}{2}} \prod_{j = 1}^r \mathrm{pr}_j(x)^{w_j + 1}}{\mathrm{pr}_i(x)} \right) = 1$ for all $i$ such that $w_i = 1$.
\end{itemize}
Thus we have the following exact formula for $|\{x \in X : f_x(\mathbf{w}) \in C_1^{(h)}(\chi_x)^\circ\}|$
$$
\frac{1}{2^r} \sum_{x \in X} \prod_{i: w_i = 0} \left(1 + \left(\frac{\prod_{j = 1}^r \mathrm{pr}_j(x)^{w_j}}{\mathrm{pr}_i(x)}\right)\right) \prod_{i: w_i = 1} \left(1 + \left(\frac{\epsilon_{\FF_q}^{\frac{1 - h}{2}} \prod_{j = 1}^r \mathrm{pr}_j(x)^{w_j + 1}}{\mathrm{pr}_i(x)}\right)\right).
$$
We rewrite this as
$$
\frac{1}{2^r} \sum_{x \in X} \sum_{\mathbf{a} \in \FF_2^{\{i: w_i = 0\}}} \sum_{\mathbf{b} \in \FF_2^{\{i: w_i = 1\}}} \left(\frac{\prod_{j = 1}^r \mathrm{pr}_j(x)^{w_j}}{\prod_{i : w_i = 0, \mathrm{pr}_i(\mathbf{a}) = 1} \mathrm{pr}_i(x)}\right) \left(\frac{\epsilon_{\FF_q}^{\frac{1 - h}{2}} \prod_{j = 1}^r \mathrm{pr}_j(x)^{w_j + 1}}{\prod_{i : w_i = 1, \mathrm{pr}_i(\mathbf{b}) = 1} \mathrm{pr}_i(x)}\right).
$$
We swap summations and further rewrite this as
\begin{equation}
\label{e4RankSum}
\frac{1}{2^r} \sum_{\substack{\mathbf{a} \in \FF_2^{\{i : w_i = 0\}} \\ \mathbf{b} \in \FF_2^{\{i : w_i = 1\}}}} \sum_{x \in X} \prod_{i \neq j} \left(\frac{\mathrm{pr}_j(x)}{\mathrm{pr}_i(x)}\right)^{w_j (w_i + 1) \mathrm{pr}_i(\mathbf{a}) + (w_j + 1) w_i \mathrm{pr}_i(\mathbf{b})} \prod_{i : w_i = 1, \mathrm{pr}_i(\mathbf{b}) = 1} \left(\frac{\epsilon_{\FF_q}^{\frac{1 - h}{2}}}{\mathrm{pr}_i(x)}\right),
\end{equation}
where we use the convention that $\mathrm{pr}_i(\mathbf{a}) = 0$ for the $i$ with $w_i = 1$ and that $\mathrm{pr}_i(\mathbf{b}) = 0$ for the $i$ with $w_i = 0$. Using quadratic reciprocity to group the term $(i, j)$ with $(j, i)$ for all distinct $1 \leq i, j \leq r$, we see that the exponent of $(\mathrm{pr}_j(x)/\mathrm{pr}_i(x))$ is
$$
e(i, j) := w_j (w_i + 1) \mathrm{pr}_i(\mathbf{a}) + (w_j + 1) w_i \mathrm{pr}_i(\mathbf{b}) + w_i (w_j + 1) \mathrm{pr}_j(\mathbf{a}) + (w_i + 1) w_j \mathrm{pr}_j(\mathbf{b}).
$$
We now select an index $i_0 \geq i_{\text{med}}$ such that $w_{i_0} = 0$ and we also select an index $i_1 \geq i_{\text{med}}$ such that $w_{i_1} = 1$. Such indices exist by hypothesis. Now if there were to exist some $j \neq i_0$ such that
\begin{equation}
\label{eCons1}
w_j \mathrm{pr}_{i_0}(\mathbf{a}) + w_j \mathrm{pr}_j(\mathbf{b}) = e(i_0, j) = 1,  
\end{equation}
then we apply the Chebotarev density theorem to show that equation \eqref{e4RankSum} is negligible. If instead there were to exist some $j \neq i_1$ such that
\begin{equation}
\label{eCons2}
(w_j + 1) \mathrm{pr}_{i_1}(\mathbf{b}) + (w_j + 1) \mathrm{pr}_j(\mathbf{a}) = e(i_1, j) = 1,
\end{equation}
then the sum is again negligible. Now keeping in mind that $\mathbf{a}$ is indexed by $\{i : w_i = 0\}$ and that $\mathbf{b}$ is indexed by $\{i : w_i = 1\}$, the negation of equation \eqref{eCons2} implies that $\mathrm{pr}_j(\mathbf{a}) = \mathrm{pr}_k(\mathbf{a})$ for all $j, k$ inside the index set of $\mathbf{a}$ and the negation of equation \eqref{eCons1} implies that $\mathrm{pr}_j(\mathbf{b}) = \mathrm{pr}_k(\mathbf{b})$ for all $j, k$ inside the index set of $\mathbf{b}$. This leaves at most four options for the pair $(\mathbf{a}, \mathbf{b})$, and bounding trivially gives a contribution of size $4|X|/2^r$.
\end{proof}

\section{Reflection principles} 
\label{Section: reflection principles}
Let $q \equiv 1 \bmod 8$ be a prime power. We begin by introducing the combinatorial arrangements of squarefree polynomials that will be pivotal in our arguments. 

\begin{mydef}
Let $s$ be a positive integer. We call a set of the form
$$
\left(\prod_{i = 1}^s \{\pi_i(1),\pi_i(2)\}\right) \times \{a\} \times \{d\}
$$
a \emph{cube} in case:
\begin{itemize}
     \item $\pi_i(j)$ is an irreducible monic even degree element of $\mathbb{F}_q[T]$ for each $i \in [s]$ and $j \in \{1, 2\}$, 
     \item $a$ is a monic even degree squarefree polynomial in $\mathbb{F}_q[T]$, 
     \item $d$ is a monic odd degree squarefree polynomial in $\mathbb{F}_q[T]$,
     \item the set $\{\pi_i(j) : i \in [s], j \in \{1, 2\}\} \cup \{a, d\}$ has cardinality $2s + 2$ and its elements are pairwise coprime.
\end{itemize}
\end{mydef}

Observe that, upon taking the product of the entries, we can identify each point of a cube $C$ with a squarefree odd degree monic polynomial, so that $C$ can be viewed as a set consisting of $2^s$ distinct squarefree odd degree monic polynomials. We will often make this identification. We call $s$ the \emph{dimension} of the cube. 

Define $\mathrm{sg}(x) := (-1)^{|\{1 \leq i \leq s : \mathrm{pr}_i(x) = \pi_i(2)\}|}$. In the rest of the paper, we will denote by $x_0$ the vertex of $C$ given by
$$
x_0 := \left(\prod_{i = 1}^s \pi_i(1) \right) \cdot a \cdot d.
$$
For each $j \in [s]$, we will denote by $C_j$ the sub-cube of dimension $s - 1$ of $C$ consisting of the collection of $2^{s - 1}$ elements of $C$ whose $j$th coordinate equals $\pi_j(2)$. More generally, for each subset $T$ of $[s]$, we denote by $C_T$ the sub-cube of dimension $s - |T|$ given by the collection of $2^{s - |T|}$ elements of $C$ whose $j$th coordinate projection equals $\pi_j(2)$ for each $j \in T$. 

Given a cube $C$ of dimension $s$, we introduce the field 
$$
F(C) := \mathbb{F}_q(T)(\{\sqrt{\pi_i(1)\pi_i(2)} : i \in [s]\}).
$$
We will see that, under favorable conditions, one is able to control sums of Artin pairings along $C$ with a Legendre symbol in $F(C)$. A key observation, that is largely responsible for such results, is that the modules $N_h(x)$ are all isomorphic as $G_{F(C)}$-modules. 

Given $x \in C$ and $h \in \{1, -1\}$, we say that we have an $h$-\emph{raw cocycle} for $x$, in case we have a collection
$$
(\psi_i(x))_{1 \leq i \leq m},
$$
for some positive integer $m$ such that $\psi_i(x) \in B_h(\chi_{x})[2^i]$ and such that $2 \cdot \psi_{i + 1}(x) = \psi_i(x)$ for each $1 \leq i < m$. We will often conflate the raw cocycle $(\psi_i(x))_{1 \leq i \leq m}$ with its top cocycle $\psi_m(x)$ since the rest of the cocycles can be visibly reconstructed from this one. Observe that for each $h$-raw cocycle $\psi_m(x)$ for $x$, we have that $\psi_1(x)$ is naturally an element of $\Gamma_{\mathbb{F}_2}(\mathbb{F}_q(T))$. For a $1$-cochain $\phi\colon G_{\mathbb{F}_q(T)} \to N_h(x_0)$, we denote by $\mathrm{d}_{x_0}(\phi)$ the operator
$$
\mathrm{d}_{x_0}(\phi)(\sigma, \tau) = \sigma \cdot_{x_0} \phi(\tau) + \phi(\sigma) - \phi(\sigma \tau).
$$
The coming cocycle computation in Theorem \ref{tCocyComp} will be a key ingredient in our arguments and is based on earlier work of Smith \cite{Smi1}. To ease notation, we define for each subset $T \subseteq [s]$ the $1$-cochain $G_{\mathbb{F}_q(T)} \rightarrow \mathbb{F}_2$ by $\chi_T(\sigma) := \chi_{\{\pi_i(1) \pi_i(2) : i \in T\}}(\sigma)$.

\begin{theorem}
\label{tCocyComp}
Let $C$ be a cube of dimension $s$. Let $j \in \Z_{>0}$. Let $h \in \{1, -1\}$. For each $x \in C$, let $\psi_j(x)$ be an $h$-raw cocycle for $x$. Then
\begin{multline*}
\mathrm{d}_{x_0}\left(\sum_{x \in C} \mathrm{sg}(x) \psi_j(x)\right)(\sigma, \tau) = \\
\sum_{\emptyset \neq T \subseteq [s]} (h \cdot q_0)^{\chi_{\Frob}(\sigma)} \cdot (-1)^{|T| + 1 + \chi_{x_0}(\sigma)} \cdot \mathbf{1}_{\chi_T(\sigma) = 1} \cdot 
\left( \sum_{x \in C_T} \mathrm{sg}(x) \psi_{j - |T|}(x)(\tau) \right)
\end{multline*}
with the convention that $\psi_j(x) := 0$ for $j \leq 0$.
\end{theorem}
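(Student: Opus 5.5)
The plan is a direct cocycle computation: each $\psi_j(x)$ is a cocycle for its own twisted action, so we rewrite that action in terms of the action $\cdot_{x_0}$ and collect the defect terms by inclusion--exclusion over the cube. Concretely, for $x \in C$ let $T(x) := \{i \in [s] : \mathrm{pr}_i(x) = \pi_i(2)\}$. Then $\chi_x = \chi_{x_0} + \chi_{T(x)}$ as characters, since $\chi_{\pi_i(2)} = \chi_{\pi_i(1)} + \chi_{\pi_i(1)\pi_i(2)}$. Hence for $\sigma \in G_{\mathbb{F}_q(T)}$ and $n \in N_h$ we have
$$
\sigma \cdot_x n = (-1)^{\chi_{T(x)}(\sigma)} \, \sigma \cdot_{x_0} n = (1-2)^{\chi_{T(x)}(\sigma)}\, \sigma\cdot_{x_0} n ,
$$
where $\sigma \cdot_{x_0} n = (h q_0)^{\chi_{\Frob}(\sigma)}(-1)^{\chi_{x_0}(\sigma)} n$. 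The cocycle condition for $\psi_j(x)$ then gives
$$
\mathrm{d}_{x_0}(\psi_j(x))(\sigma,\tau) = \sigma\cdot_{x_0}\psi_j(x)(\tau) - \sigma\cdot_x \psi_j(x)(\tau) = \left(1 - (-1)^{\chi_{T(x)}(\sigma)}\right)\sigma\cdot_{x_0}\psi_j(x)(\tau).
$$

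The key algebraic step is to expand $1-(-1)^{\chi_{T(x)}(\sigma)}$ as a polynomial in the values $\chi_{\{i\}}(\sigma)\in\{0,1\}$ for $i \in T(x)$. Since $(-1)^{\chi_{T(x)}(\sigma)} = \prod_{i\in T(x)} (1-2\chi_{\{i\}}(\sigma))$, expanding gives
$$
1-(-1)^{\chi_{T(x)}(\sigma)} = -\sum_{\emptyset\neq T\subseteq T(x)} (-2)^{|T|}\prod_{i\in T}\chi_{\{i\}}(\sigma) = \sum_{\emptyset \neq T \subseteq T(x)} (-1)^{|T|+1}2^{|T|}\,\mathbf{1}_{\chi_{\{i\}}(\sigma)=1\ \forall i\in T}.
$$
The factor $2^{|T|}$ acting on $\psi_j(x)(\tau)$ produces $\psi_{j-|T|}(x)(\tau)$, using $2\psi_{i+1}(x) = \psi_i(x)$ and the convention $\psi_{\le 0} = 0$, which is consistent because $\psi_j(x)$ is $2^j$-torsion.

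Next we sum over $x$ with the weights $\mathrm{sg}(x) = (-1)^{|T(x)|}$ and swap the order of summation. For fixed nonempty $T$, the points $x$ with $T \subseteq T(x)$ form exactly the subcube $C_T$. This yields
$$
\sum_{\emptyset\ne T\subseteq[s]} (hq_0)^{\chi_{\Frob}(\sigma)}(-1)^{|T|+1+\chi_{x_0}(\sigma)}\,\mathbf{1}_{\chi_{\{i\}}(\sigma)=1\,\forall i\in T}\sum_{x\in C_T}\mathrm{sg}(x)\psi_{j-|T|}(x)(\tau).
$$

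The main obstacle is reconciling the indicator produced here, namely $\prod_{i\in T}\chi_{\{i\}}(\sigma)$, with the statement's $\mathbf{1}_{\chi_T(\sigma)=1}$, where $\chi_T$ is defined as the cochain $\chi_{\{\pi_i(1)\pi_i(2): i\in T\}}$. I would check the notation for $\chi_A$ with $A$ a set. If $\chi_A$ means the product of the values (consistent with the expansion-map conventions of \cite{KP}), the two indicators agree and the proof is complete. If $\chi_A$ instead means the sum, one reruns the inclusion--exclusion via Möbius inversion over subsets of $T(x)$ and verifies that the coefficients match. I expect the product interpretation, so this step should reduce to a notational check. The remaining care needed is only to track signs coherently in additive notation on $N_h$.
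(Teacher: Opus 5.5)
Your proposal is correct and follows the paper's proof essentially step for step: the paper also writes $\mathrm{d}_{x_0}(\psi_j(x))(\sigma,\tau)=(hq_0)^{\chi_{\Frob}(\sigma)}(-1)^{\chi_{x_0}(\sigma)}\bigl(1-(-1)^{\chi_{xx_0}(\sigma)}\bigr)\psi_j(x)(\tau)$, expands $1-(1-2)^{|T_x\cap T_\sigma|}$ binomially, absorbs $2^{|U|}$ via $2\psi_{i+1}(x)=\psi_i(x)$, and swaps the sums using that $U\subseteq T_x$ means $x\in C_U$. The notational point you flag is settled by the paper's convention $\chi_A:=\prod_{a\in A}\chi_a$ with the product taken in $\mathbb{F}_2$, so $\mathbf{1}_{\chi_T(\sigma)=1}$ is exactly your $\prod_{i\in T}\chi_{\pi_i(1)\pi_i(2)}(\sigma)$. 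Note also that your twisting character $\chi_{T(x)}$, read additively, is what the paper calls $\chi_{xx_0}$, so you are using one symbol with two meanings. Finally, under an additive reading of $\chi_T$ the identity would fail for $|T|\geq 2$, so the M\"obius fallback is not needed and would not work.
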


\begin{proof}
For every $\sigma, \tau \in G_{\FF_q(T)}$ and every $x \in C$, we compute
\begin{align*}
\mathrm{d}_{x_0}(\psi_j(x))(\sigma, \tau) &= \sigma \cdot_{x_0} \psi_j(x)(\tau) + \psi_j(x)(\sigma) - \psi_j(x)(\sigma \tau) \\
&= (hq_0)^{\chi_\Frob(\sigma)} \left((-1)^{\chi_{x_0}(\sigma)} - (-1)^{\chi_x(\sigma)}\right) \psi_j(x)(\tau) \\
&= (hq_0)^{\chi_\Frob(\sigma)} \cdot (-1)^{\chi_{x_0}(\sigma)} \left(1 - (-1)^{\chi_{x x_0}(\sigma)}\right) \psi_j(x)(\tau).
\end{align*}
We identify the image of $\sigma$ inside $\Gal(F(C)/\FF_q(T))$ with the largest $T \subseteq [s]$ such that $\chi_T(\sigma) = 1$. We denote this subset by $T_\sigma$. Similarly, we introduce, for each $x \in C$, the subset $T_x \subseteq [s]$ characterized by the property that $i \in T_x$ if and only if $\mathrm{pr}_i(x) = \pi_i(2)$. We now rewrite
$$
1 - (-1)^{\chi_{x x_0}(\sigma)} = 1 - (-1)^{|T_\sigma \cap T_x|} = 1 - (1 - 2)^{|T_\sigma \cap T_x|} = \sum_{\emptyset \neq U \subseteq T_x \cap T_\sigma} (-1)^{|U| + 1} 2^{|U|}.
$$
Using this for each $x \in C$, we conclude that $\mathrm{d}_{x_0}\left(\sum_{x \in C} \mathrm{sg}(x) \psi_j(x)\right)(\sigma, \tau)$ equals
\begin{align*}
&= \sum_{x \in C} \mathrm{sg}(x) \cdot (hq_0)^{\chi_\Frob(\sigma)} \cdot (-1)^{\chi_{x_0}(\sigma)} \left(1 - (-1)^{\chi_{x x_0}(\sigma)}\right) \psi_j(x)(\tau) \\
&= \sum_{x \in C} \mathrm{sg}(x) \cdot (hq_0)^{\chi_\Frob(\sigma)} \cdot (-1)^{\chi_{x_0}(\sigma)} \sum_{\emptyset \neq U \subseteq T_x \cap T_\sigma} (-1)^{|U| + 1} \psi_{j - |U|}(x)(\tau).
\end{align*}
We now exchange the order of summation. Noting that the condition $U \subseteq T_x \cap T_\sigma$ is equivalent to $x \in C_U$ and $\chi_U(\sigma) = 1$ ends the proof of the theorem.
\end{proof}

We now specialize further our definition of cubes, endowing them with extra structure. 

\begin{mydef} 
\label{def: acceptable cube}
Let $C$ be a cube of dimension $s$. We say that $C$ is \emph{pre-acceptable} in case the following demands hold:
\begin{enumerate}
    \item[$(A1)$] for each $i \in [s]$ we have that $\pi_i(1) \pi_i(2)$ is locally a square at each place where 
    $$
    \left(\prod_{j \in [s] -\{i\}} \pi_j(1)\pi_j(2)\right) \cdot a \cdot d
    $$
    has odd valuation,
    \item[$(A2)$] the map $\phi_{\{\pi_1(1)\pi_1(2), \ldots, \pi_s(1)\pi_s(2)\}; a}(\mathfrak{G})$ exists,
    \item[$(A3)$] for each $i \in [s]$, the field $L(\phi_{\{\pi_1(1)\pi_1(2), \ldots, \pi_s(1)\pi_s(2)\} - \{\pi_i(1) \pi_i(2)\}; a}(\mathfrak{G}))/\mathbb{F}_q(T)$ has residue field degree $1$ at each place $v \in \Omega_{\mathbb{F}_q(T)}$ such that $v(ad)$ is odd. Moreover, the extension 
    $$
    L(\phi_{\{\pi_1(1)\pi_1(2), \ldots, \pi_s(1)\pi_s(2)\}; a}(\mathfrak{G}))/\mathbb{F}_q(T)
    $$ 
    is unramified at infinity (and therefore trivial at infinity by definition of normalized expansion maps).
\end{enumerate}
We say that a cube $C$ is acceptable if $C$ is pre-acceptable and if moreover the field extension $L(\phi_{\{\pi_1(1)\pi_1(2), \ldots, \pi_s(1)\pi_s(2)\}; a}(\mathfrak{G}))/\mathbb{F}_q(T)$ has residue field degree $1$ at each place $v \in \Omega_{\mathbb{F}_q(T)}$ such that $v(ad)$ is odd. 

If $C$ is either acceptable or pre-acceptable, then we say that $C$ is \emph{profitable} if the map $\phi_{\{\pi_j(1)\pi_j(2) : j \in [s] - \{i\}\}; \pi_i(1)\pi_i(2)}(\mathfrak{G})$ exists for each $i \in [s]$ and moreover all the places $v$ in $\Omega_{\mathbb{F}_q(T)}$ such that $v(ad)$ is odd split completely in $L(\phi_{\{\pi_j(1)\pi_j(2) : j \in [s] - \{i\}\}; \pi_i(1)\pi_i(2)}(\mathfrak{G}))/\FF_q(T)$. 
\end{mydef}

We now introduce our third piece of extra structure. Define $r$ to be the total number of prime divisors of any element of $C$; by ordering the prime divisors of $a$ and $d$ in any fixed way we have natural projection maps $\mathrm{pr}_i$ for each $i \in [r]$. Given $\mathbf{t} \in N[2^{s + 1}]^{[r + 1]}$, such that the coordinates of $\mathbf{t}$ in $[s]$ are all in $N[2^s]$, a profitable cube $C$ and a subset $T \subseteq [s]$, we define
$$
\phi_T(C; \mathbf{t}) := \sum_{j \in T} \mathrm{qpr}_j(\mathbf{t}) \cdot \phi_{\{\pi_i(1) \pi_i(2) : i \in T - \{j\}\}; \pi_j(1) \pi_j(2)}(\mathfrak{G}),
$$
where $\mathrm{qpr}_j$ is the projection $\mathrm{pr}_j$ on the $j$th coordinate followed by the unique group epimorphism $N[2^{s}] \twoheadrightarrow \mathbb{F}_2$.

\begin{mydef} 
\label{desirable cube}
Let $h \in \{1, -1\}$, let $s$ be a positive integer and let $\mathbf{t} \in N[2^{s + 1}]^{[r + 1]}$. We say that a pre-acceptable/acceptable, profitable cube $C$ is \emph{of type} $\mathbf{t}$ if 
\begin{enumerate}
\item[$(T)$] for each $x \in C - \{x_0\}$ we have an $h$-raw cocycle $\psi_{s + 2}(x)$ with $\psi_1(x) = \chi_a$ and such that $\psi_{s + 1}(x)(\sigma_{\mathrm{pr}_i(x)}) = \mathrm{pr}_i(\mathbf{t})$ for all $i \in [r]$ and $\psi_{s + 1}(x)(\Frob_q) = \mathrm{pr}_{r + 1}(\mathbf{t})$. In this case, we will also say that $\psi_{s + 1}(x)$ is of type $\mathbf{t}$.
\end{enumerate}
We say that a pre-acceptable/acceptable, profitable cube $C$ is \emph{desirable of type} $\mathbf{t}$ if it satisfies $(T)$ and moreover satisfies
\begin{enumerate}
\item[$(D)$] for each $j \in [s]$ we have that
\begin{multline*}
\sum_{x \in C_j} \mathrm{sg}(x) \psi_{s + 1}(x)(\Frob_{\pi_j(1)}) = \\
\phi_{\{\pi_i(1)\pi_i(2) : i \in [s]\}; a}(\mathfrak{G})(\Frob_{\pi_j(1)}(\chi_{x_0})) + \phi_{[s]}(C; \mathbf{t})(\Frob_{\pi_j(1)}(\chi_{x_0})).
\end{multline*}
\end{enumerate}
\end{mydef}

Our definition of types is inspired by the notion of ``strict class'' from \cite[Section 8]{Smi1}.

For a cube $C$, we denote by $S(C)$ the subset of $v \in \Omega_{F(C)}$ where $v(ad)$ is odd. We make two important notational remarks that will help the reader navigate through the arguments in this section. We write $G(C) := \Gal(F(C)/\mathbb{F}_q(T))$. 

\begin{remark} 
\label{remark: phi g}
Let $C$ be desirable of type $\mathbf{t}$. Recall from Section \ref{section: expansion maps} that the continuous $1$-cochain
$$
\phi := \phi_{\{\pi_i(1) \pi_i(2) : i \in [s]\}; a}(\mathfrak{G}) + \phi_{[s]}(C; \mathbf{t})
$$
restricts to a quadratic character of $G_{F(C)}$, that is, an element of $\Gamma_{\mathbb{F}_2}(F(C))$. Recall that we have a natural action of $G(C)$ on $\Gamma_{\mathbb{F}_2}(F(C))$, where an element $g \in G(C)$ acts on a character $\rho \in \Gamma_{\mathbb{F}_2}(F(C))$ by sending it to the character
$$
\rho^g(\tau) := \rho(g^{-1} \tau g),
$$
where on the right hand side $g$ is viewed as an element of $G_{\mathbb{F}_q(T)}$ that projects to $g$: it is immediate to check that, since $\rho$ is a character, the choice of the lift for $g$ is immaterial yielding indeed a well-defined action of $G(C)$ on $\Gamma_{\mathbb{F}_2}(F(C))$.

Hence later, when we write $\phi^g$, we mean that we are restricting $\phi$ to $G_{F(C)}$, obtaining in this way a quadratic character of $G_{F(C)}$ in view of Section \ref{section: expansion maps}, and then we are acting upon $\phi$ with $g$ in the general way that we have just explained.  
\end{remark}

\begin{remark} 
In Section \ref{Section: Artin pairings}, our notation for the maps $\Pi_v$ and $\Pi$ did not keep track of the dependency on the character $\chi$, as that was fixed in that section. However, $\chi$ will vary in this section and hence it will be important to keep track of the dependence on $\chi$. For this reason, we will denote these maps by $\Pi_v(\chi)$ and $\Pi(\chi)$.
\end{remark}

\begin{proposition} 
\label{prop: sums of psi trivializes alpha alpha bar}
Let $h \in \{1, -1\}$. Let $s \in \Z_{\geq 2}$, let $\mathbf{t} \in N[2^{s + 1}]^{[r + 1]}$ and let 
$$
C := \left(\prod_{i = 1}^s \{\pi_i(1), \pi_i(2)\}\right) \times \{a\} \times \{d\}
$$
be an acceptable, profitable cube of type $\mathbf{t}$. Set 
$$
S := \{\pi_1(1)\pi_1(2), \ldots, \pi_s(1)\pi_s(2)\}. 
$$
Then the following conclusions hold:
\begin{enumerate}
\item[(i)] The map
\begin{equation}
\label{ex0}
\psi_{s + 1}(x_0) := \phi_{S; a}(\mathfrak{G}) + \phi_{[s]}(C; \mathbf{t}) - \sum_{x \in C - \{x_0\}} \mathrm{sg}(x) \psi_{s + 1}(x)
\end{equation}
defines an $h$-raw cocycle for $x_0$ of type $\mathbf{t}$. Furthermore, for each $i \in [s]$
\begin{multline}
\label{eGreatPi}
\Pi_{\pi_i(1)}(\chi_{x_0})(\psi_{s + 1}(x_0)) = \sum_{x \in C_i} \mathrm{sg}(x) \psi_{s + 1}(x)(\Frob_{\pi_i(1)}) + \\
\phi_{S; a}(\mathfrak{G})(\Frob_{\pi_i(1)}(\chi_{x_0})) + \phi_{[s]}(C; \mathbf{t})(\Frob_{\pi_i(1)}(\chi_{x_0})).
\end{multline}
\item[(ii)] If $C$ also satisfies $(D)$, then the element $\psi_{s + 1}(x_0)$ from part $(i)$ lies in $2 \cdot B_h(\chi_{x_0})$.
\item[(iii)] Keep assuming that $C$ satisfies $(D)$. Let $\psi_{s + 2}(x_0)$ be any choice of a cocycle guaranteed by part $(ii)$. Then the $1$-cochain
$$
\tilde{\phi} := \frac{\phi_{S; a}(\mathfrak{G}) + \phi_{[s]}(C; \mathbf{t})}{4} - \sum_{x \in C} \mathrm{sg}(x) \psi_{s + 2}(x)
$$
takes values in $N_h[2]$. Furthermore, the restriction of $\tilde{\phi}$ to $G_{F(C)}$ is an element of $C^1(G_{F(C), S(C)}, N_h[2])$ satisfying
\begin{align*}
\mathrm{d}\tilde{\phi} &= \left(\frac{(1 - h)}{2} \cdot \chi_{\epsilon_{\mathbb{F}_q}} + \chi_{\pi_1(1) \cdots \pi_s(1) d} + \sum_{g \in G(C) - \{1\}} (\phi_{S; a}(\mathfrak{G}) + \phi_{[s]}(C; \mathbf{t}))^g\right) \\
& \quad \cup (\phi_{S; a}(\mathfrak{G}) + \phi_{[s]}(C; \mathbf{t})).
\end{align*}
\end{enumerate}
\end{proposition}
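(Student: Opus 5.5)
The plan is to derive everything from the cocycle identity of Theorem \ref{tCocyComp}, applied at levels $j = s+1$ and $j = s+2$, together with the recursive equation for the normalized expansion maps $\phi_{S;a}(\mathfrak{G})$ and $\phi_{\{\pi_i(1)\pi_i(2): i \in T-\{j\}\};\pi_j(1)\pi_j(2)}(\mathfrak{G})$. Part (i) is proved first, then (ii) follows from Proposition \ref{prop: Pi detects 2}, and (iii) is again a cocycle computation.

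\textbf{Part (i).} We check that the right-hand side of \eqref{ex0} is a cocycle for the $x_0$-twisted action, i.e.\ that $\mathrm{d}_{x_0}$ of it vanishes. Applying Theorem \ref{tCocyComp} with $j = s+1$ gives $\mathrm{d}_{x_0}$ of the full signed sum as a sum over $\emptyset \neq T \subseteq [s]$ of $\mathbf{1}_{\chi_T(\sigma)=1}$ times signed sums $\sum_{x \in C_T}\mathrm{sg}(x)\psi_{s+1-|T|}(x)(\tau)$. These lower-level sums should be controlled by induction on the face dimension. On a face $C_T$ of dimension $s-|T|$, the alternating sum of $\psi_{s+1-|T|}$ should equal the expansion map $\phi_{S - S_T; a}$ plus the corresponding $\phi$-correction from the type $\mathbf{t}$, modulo lower terms. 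This uses $\psi_1(x) = \chi_a$ together with the type condition, which fixes the values on $\mathfrak{G}$. Since two cochains with the same differential and the same values on $\mathfrak{G}$ agree (Proposition \ref{Topological generators}), we can match
\[
\mathrm{d}_{x_0}\bigl(\phi_{S;a}(\mathfrak{G}) + \phi_{[s]}(C;\mathbf{t})\bigr)
\]
against these terms. The $\phi_{[s]}(C;\mathbf{t})$ term absorbs the contributions of $\mathrm{qpr}_j(\mathbf{t})$ coming from the top-dimensional faces $T = \{j\}$. Note that $x_0$ itself is excluded from the sum in \eqref{ex0}, and its missing contribution is exactly what the expansion-map term supplies. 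Next, unramifiedness outside $\mathrm{Ram}(\chi_{x_0})$ follows from Proposition \ref{normalized expansion maps are unramified}, and the type values at $\sigma_{\mathrm{pr}_i(x_0)}$ and $\Frob_q$ follow by evaluating on $\mathfrak{G}$, where normalized maps vanish. Finally, \eqref{eGreatPi} is obtained by evaluating the definition of $\Pi_{\pi_i(1)}$ at $\Frob_{\pi_i(1)}(\chi_{x_0})$. Here $\psi_{s+1}(x)(\Frob_{\pi_i(1)})$ vanishes when $\pi_i(1) \nmid x$, since $\psi$ is unramified there; hence only $x \in C_i$ contribute, and the correction factor multiplying $\psi(\sigma)$ disappears because of the type condition and because $\deg \pi_i(1)$ is even.

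\textbf{Part (ii).} By $(D)$, the quantity \eqref{eGreatPi} is zero for every $i \in [s]$. At the remaining ramified places of $x_0$, namely those dividing $ad$ and $\infty$, we show $\Pi_v = 0$ using acceptability. By $(A3)$ and the acceptable condition, residue degree $1$ at $v \mid ad$ makes the expansion-map terms vanish at Frobenius. At $\infty$, we use Proposition \ref{normalized expansion maps are unramified}(b), triviality at infinity, and the profitable condition. Alternatively, the last vanishing follows from Hilbert reciprocity, as in Proposition \ref{prop: trivial elemts}. Once $\Pi(\psi_{s+1}(x_0)) = 0$ is established, Proposition \ref{prop: Pi detects 2} gives the conclusion.

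\textbf{Part (iii).} Multiplying \eqref{ex0} by the lifting relation shows that $2\tilde{\phi} = 0$, so $\tilde{\phi}$ lands in $N_h[2]$. For the differential, we apply Theorem \ref{tCocyComp} at level $s+2$ and restrict to $G_{F(C)}$, where every $\chi_T$ vanishes. What survives is the twisting discrepancy: the $x_0$-action on $N_h[4]$ compared with the carry term in $\frac{\phi}{4}$, exactly as in Proposition \ref{prop: description of N[4] cocycles}. This carry term produces
\[
\Bigl(\tfrac{1-h}{2}\chi_{\epsilon_{\mathbb{F}_q}} + \chi_{x_0} + \phi\Bigr) \cup \phi.
\]
Over $F(C)$, $\chi_{x_0}$ equals $\chi_{\pi_1(1)\cdots\pi_s(1)d}$ plus $\chi_a$. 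Expanding the conjugate sum via \eqref{eConjugateExpansion}, $\sum_{g \neq 1}\phi^g$ equals $\phi + \chi_a$. This yields the stated formula. The unramifiedness of $\tilde{\phi}$ outside $S(C)$ follows from parts (i)–(ii) together with Proposition \ref{normalized expansion maps are unramified}.

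The main obstacle is part (i). It requires the inductive identification of the lower-face alternating sums with expansion maps, with all bookkeeping of signs and powers of $hq_0$ carried out consistently; this is a Smith-style computation. I would first try it for $s = 2$ and then set up the induction on $|T|$.
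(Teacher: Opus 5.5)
Your overall architecture matches the paper's. The cocycle property in (i) comes from Theorem \ref{tCocyComp} plus downward induction over the faces $C_T$, using uniqueness of cocycles on $\mathfrak{G}$; (ii) comes from Proposition \ref{prop: Pi detects 2}; and (iii) from Proposition \ref{prop: description of N[4] cocycles} applied over $G_{F(C)}$. However, the local computations in (i) and (ii) have a genuine gap. You never use that, by $(T)$, every vertex $x \neq x_0$ carries a raw cocycle of length $s+2$, so that $\psi_{s+1}(x) = 2\psi_{s+2}(x) \in 2 B_h(\chi_x)$. This is the only thing that controls the contributions of the other vertices to $\Pi_v(\chi_{x_0})(\psi_{s+1}(x_0))$. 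It is also why $(T)$ asks for $\psi_{s+2}(x)$ rather than just $\psi_{s+1}(x)$.

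Concretely, your derivation of \eqref{eGreatPi} is backwards. The vertices with $\pi_i(1) \nmid x$ are exactly those of $C_i$. For them, unramifiedness only kills the $\sigma_{\pi_i(1)}$-term; their Frobenius values do not vanish, and they are precisely the sum on the right of \eqref{eGreatPi}. What you must show is that the vertices $x \in C - C_i - \{x_0\}$, which are ramified at $\pi_i(1)$, contribute nothing. Neither the type condition nor the parity of $\deg \pi_i(1)$ does this. The coefficient $\frac{(hq_0)^{\deg \pi_i(1)}-1}{2}$ is only guaranteed to be divisible by $4$, while $\mathrm{pr}_i(\mathbf{t})$ can have order up to $2^s$. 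Moreover, the values $\psi_{s+1}(x)(\Frob_{\pi_i(1)}(\chi_{x_0}))$ are not controlled at all by your argument.

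The correct argument expands $\Pi_{\pi_i(1)}(\chi_{x_0})$ linearly over the terms of \eqref{ex0}. For such $x$, $(A1)$ shows that $\chi_x$ and $\chi_{x_0}$ agree locally at $\pi_i(1)$, so $\Frob_{\pi_i(1)}(\chi_x) = \Frob_{\pi_i(1)}(\chi_{x_0})$. Hence the contribution of $x$ is exactly $\Pi_{\pi_i(1)}(\chi_x)(\psi_{s+1}(x))$, which vanishes by Proposition \ref{prop: Pi detects 2}.

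The same omission breaks your part (ii) at finite $v \mid ad$. There you only dispose of the expansion-map terms, which needs $(A1)$, normalization, acceptability \emph{and} profitability for the $\phi_{[s]}(C;\mathbf{t})$ piece. But every vertex is ramified at $v$, so all the terms coming from $\psi_{s+1}(x)$, $x \neq x_0$, in the linear expansion of $\Pi_v(\chi_{x_0})$ are still present. They vanish only by the same argument: local agreement of $\chi_x$ with $\chi_{x_0}$ at $v$ from $(A1)$, plus divisibility by $2$. Your Hilbert-reciprocity shortcut at $\infty$ is fine once all finite places are handled.
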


\begin{proof}
To lighten up the notation, we introduce for each $T \subseteq [s]$ 
$$
\phi_T^{\text{m}} := \phi_{\{\pi_i(1)\pi_i(2) : i \in T\}; a}(\mathfrak{G}) + \phi_T(C; \mathbf{t}).
$$
We will first prove $(i)$. To this end, let us first verify that $\psi_{s + 1}(x_0)(\sigma_v) = 0$ for all $v$ such that $v(x_0)$ is even. Observe that such a $v$ must necessarily be a finite place. If $v$ does not divide $a \cdot d \cdot \prod_{j \in [s]} \pi_j(1)\pi_j(2)$, this is true for each of the $\psi_{s + 1}(x)$ for $x \in C - \{x_0\}$ by definition of $B_h(\chi_x)$ and it also holds for $\phi_{S; a}(\mathfrak{G})$, since $v$ is finite. Thus it remains to check that $\psi_{s + 1}(x_0)(\sigma_v) = 0$ for $v \in \{\pi_1(2), \dots, \pi_s(2)\}$, which follows from equation \eqref{ex0} and $(T)$. Finally, we also have $\psi_{s + 1}(x_0)(\sigma_{\mathrm{pr}_i(x_0)}) = \mathrm{pr}_i(\mathbf{t})$ for all $i \in [r]$ and $\psi_{s + 1}(x_0)(\Frob_q) = \mathrm{pr}_{r + 1}(\mathbf{t})$ as a consequence of equation \eqref{ex0} and $(T)$, so $\psi_{s + 1}(x_0)$ is of type $\mathbf{t}$.

We now verify that $\psi_{s + 1}(x_0) \in \text{Cocy}(G_{\mathbb{F}_q(T)}, N_h(\chi_{x_0}))$. To this end, let us take $\sigma, \tau \in G_{\mathbb{F}_q(T)}$. By Theorem \ref{tCocyComp} and \cite[Equation 2.3]{KP}, we have that $\mathrm{d}_{x_0}(\psi_{s + 1}(x_0))(\sigma, \tau)$ equals
$$
\sum_{\emptyset \neq T \subseteq [s]} (h \cdot q_0)^{\chi_{\Frob}(\sigma)} \cdot (-1)^{\chi_{x_0}(\sigma)} \cdot \chi_T(\sigma) \cdot \left( \phi_{[s] - T}^{\text{m}}(\tau) - \sum_{x \in C_T} \mathrm{sg}(x) \psi_{s + 1-|T|}(x)(\tau) \right).
$$
We claim that this equals zero. Indeed, we will show by downward induction that
$$
\phi_{[s] - T}^{\text{m}}(\tau) - \sum_{x \in C_T} \mathrm{sg}(x) \psi_{s + 1 - |T|}(x)(\tau) = 0
$$
for all $\emptyset \subset T \subseteq [s]$. For $T = [s]$, this follows from the equality $\psi_1(x) = \chi_a$. Define $x_T := (\prod_{j \in T} \pi_j(2)) \cdot (\prod_{j \in [s] - T} \pi_j(1)) \cdot a \cdot d$. For $\emptyset \subset T \subset [s]$, it then follows from Theorem \ref{tCocyComp} and induction that
$$
\mathrm{d}_{x_T}\left(\phi_{[s] - T}^{\text{m}} - \sum_{x \in C_T} \mathrm{sg}(x) \psi_{s + 1 - |T|}(x)\right) = 0,
$$
and hence $\phi_{[s] - T}^{\text{m}}(\tau) - \sum_{x \in C_T} \mathrm{sg}(x) \psi_{s + 1 - |T|}(x)(\tau)$ is a quadratic character. By using property $(T)$ from Definition \ref{desirable cube}, we see that this quadratic vanishes on every element of $\mathfrak{G}$, and hence must be zero, completing the induction.

Finally, we observe that
$$
\psi_1(x_0) = 2^s \cdot \left(\phi_{[s]}^{\text{m}} - \sum_{x \in C - \{x_0\}} \mathrm{sg}(x) \psi_{s + 1}(x)\right) = -(2^s-1) \cdot \chi_a = \chi_a,
$$
where in the second and third equality we have used that $s$ is a positive integer. We now want to show the final conclusion of part $(i)$. To this end, we first make a general observation that will also be repeatedly used in the rest of the proof. We compute for each place $v \in \text{Ram}(\chi_{x_0})$
\begin{align}
\Pi_v(\chi_{x_0})(\psi_{s + 1}(x_0)) &= \frac{(h \cdot q_0)^{\deg(v)} - 1}{2} \cdot \psi_{s + 1}(x_0)(\sigma_v) + \psi_{s + 1}(x_0)(\Frob_v(\chi_{x_0})) \nonumber \\
&= \frac{(h \cdot q_0)^{\deg(v)} - 1}{2} \cdot \left(\phi_{[s]}^{\text{m}}(\sigma_v) - \sum_{x \in C - \{x_0\}} \mathrm{sg}(x) \psi_{s + 1}(x)(\sigma_v)\right) \nonumber \\
&\quad + \left(\phi_{[s]}^{\text{m}} - \sum_{x \in C - \{x_0\}} \mathrm{sg}(x) \psi_{s + 1}(x) \right)(\Frob_v(\chi_{x_0})) \nonumber \\ 
&= \frac{(h \cdot q_0)^{\deg(v)} - 1}{2} \cdot \phi_{[s]}^{\text{m}}(\sigma_v) + \phi_{[s]}^{\text{m}}(\Frob_v(\chi_{x_0})) \nonumber \\
&\quad - \sum_{x \in C - \{x_0\}} \mathrm{sg}(x) \frac{(h \cdot q_0)^{\deg(v)} - 1}{2} \cdot \psi_{s + 1}(x)(\sigma_v) \nonumber \\
&\quad - \sum_{x \in C - \{x_0\}} \mathrm{sg}(x) \psi_{s + 1}(x)(\Frob_v(\chi_{x_0})). \label{ePix0}
\end{align}

Let now $i \in [s]$. Observe that $\Frob_{\pi_i(1)}$ is an element of $G_{F(C_i)}$ thanks to condition $(A1)$ of Definition \ref{def: acceptable cube}. We see that the first term is zero by the fact that $\pi_i(1)$ has even degree and $\phi_{[s]}^{\text{m}}$ is $2$-torsion. Furthermore, for each $x \in C - C_{i} - \{x_0\}$, the identity $\Frob_{\pi_i(1)}(\chi_x) = \Frob_{\pi_i(1)}(\chi_{x_0})$ shows that the corresponding contribution equals
$$
\Pi_{\pi_i(1)}(\chi_x)(\psi_{s + 1}(x)) = 0,
$$
where the equality follows from the existence of $\psi_{s + 2}(x)$ and Proposition \ref{prop: Pi detects 2}. For each $x \in C_i$ we have that $\psi_{s + 1}(x)(\sigma_{\pi_i(1)}) = 0$ in view of the definition of $B_h(\chi_x)$. This gives that
\begin{align*}
\Pi_{\pi_i(1)}(\chi_{x_0})(\psi_{s + 1}(x_0)) &= -\sum_{x \in C_i} \mathrm{sg}(x) \psi_{s + 1}(x)(\Frob_{\pi_i(1)}(\chi_{x_0})) + \phi_{[s]}^{\text{m}}(\Frob_{\pi_i(1)}(\chi_{x_0})) \\
&= -\sum_{x \in C_i} \mathrm{sg}(x) \psi_{s + 1}(x)(\Frob_{\pi_i(1)}) + \phi_{[s]}^{\text{m}}(\Frob_{\pi_i(1)}(\chi_{x_0})),
\end{align*}
where the last equality follows immediately from the fact that $\pi_i(1)$ is unramified in the compositum $\prod_{x \in C_i} L(\psi_{s + 1}(x))$. Since $\Pi_{\pi_i(1)}(\chi_{x_0})(\psi_{s + 1}(x_0))$ and $\phi_{S; a}(\mathfrak{G})(\Frob_{\pi_i(1)}(\chi_{x_0}))$ are in $\mathbb{F}_2$, we may omit the minus sign and we have thus established part $(i)$. 

We now verify part $(ii)$. For that purpose, in view of Proposition \ref{prop: Pi detects 2}, we need to check that for each place $v$ of $\text{Ram}(\chi_{x_0})$ 
$$
\Pi_v(\chi_{x_0})(\psi_{s + 1}(x_0)) = 0.
$$
Let us now distinguish two cases. 

\paragraph{Case 1.} Consider first the case that $v$ is in $\text{Ram}(\chi_{ad})$ (which always includes $v = \infty$). We start by analyzing the subcase where $v$ is finite. Observe that by definition of normalized expansion maps, the expansion map $\phi_{[s]}^{\text{m}}$ vanishes at $\sigma_v$. Furthermore, $\phi_{[s]}^{\text{m}}$ also vanishes at $\Frob_v(\chi_{x_0})$ by acceptability and profitability, as we now explain. 

Indeed, observe that $\phi_{[s]}^{\text{m}}$ restricts to a character of $G_{\mathbb{F}_q(T)_v}$ in view of $(A1)$ of Definition \ref{def: acceptable cube} and the defining equations for governing expansions as explained in Section \ref{section: expansion maps}. By definition of normalized expansion maps, this character vanishes on $\sigma_v$ (for finite $v$) and is hence unramified. Therefore we have that 
$$
\phi_{[s]}^{\text{m}}(\Frob_v(\chi_{x_0})) = \phi_{[s]}^{\text{m}}(\Frob_v).
$$
The right hand side needs to be $0$, since if it were non-zero, then the place $|\cdot|_v \circ i_v$ of $F(C)$ would be inert in the quadratic extension of $F(C)$ cut out by $\phi_{[s]}^{\text{m}}$, and hence the residue field degree of
$$
L(\phi_{[s]}^{\text{m}})/\mathbb{F}_q(T)
$$
at $v$ would be bigger than $1$, contradicting the acceptable condition of Definition \ref{def: acceptable cube} and profitability. This shows that the contribution from $\phi_{[s]}^{\text{m}}$ to equation \eqref{ePix0} vanishes. 

Next, we observe that $\Frob_v(\chi_x) = \Frob_v(\chi_{x_0})$, since $\chi_x$ and $\chi_{x_0}$ restrict to the same character locally at $v$ by $(A1)$ of Definition \ref{def: acceptable cube}. Combining Proposition \ref{prop: Pi detects 2} with the equation $\psi_{s + 1}(x) = 2 \cdot \psi_{s + 2}(x)$ for $x \in C - \{x_0\}$, we deduce that
\begin{equation}
\label{eLiftFinite}
\Pi_v(\chi_{x_0})(\psi_{s + 1}(x)) = \Pi_v(\chi_x)(\psi_{s + 1}(x)) = 0
\end{equation}
for all $x \in C - \{x_0\}$. Altogether we have shown that $\Pi_v(\chi_{x_0})(\psi_{s + 1}(x_0)) = 0$ for each finite place $v$ dividing $ad$. 

We now explain the necessary modifications for the infinite place $v = \infty$. Equation \eqref{eLiftFinite} remains valid without changes. The final part of $(A3)$ and profitability enforce vanishing of $\phi_{[s]}^{\text{m}}$ locally at infinity.

\paragraph{Case 2.} We are now left with the case that $v$ is in $\text{Ram}(\chi_{x_0})$ but not in $\text{Ram}(\chi_{ad})$. This means that $v := \pi_j(1)$ for some $j \in [s]$. Then the desired vanishing follows immediately from equation \eqref{eGreatPi} and part $(D)$ of Definition \ref{desirable cube}, ending the proof of part $(ii)$.

We now proceed with the proof of part $(iii)$. We will first show that $2 \cdot \tilde{\phi} = 0$. But indeed, we have
$$
2 \cdot \tilde{\phi} = \phi_{[s]}^{\text{m}} - \sum_{x \in C} \mathrm{sg}(x) \psi_{s + 1}(x) = 0,
$$
where the last equality follows directly from the definition of $\psi_{s + 1}(x_0)$ made in part $(i)$. 

We next verify that $\tilde{\phi}$ restricted to $G_{F(C)}$ factorizes through $G_{F(C), S(C)}$. To this end, we claim that $L(\tilde{\phi})/\mathbb{F}_q(T)$ has ramification order $2$ at the $2s$ places $\{\pi_j(1), \pi_j(2) : j \in [s]\}$ and is unramified for all places $v$ outside of $\text{Ram}(\chi_{ad}) \cup \{\pi_j(1), \pi_j(2) : j \in [s]\}$. Let us first explain why the claim gives the desired factorization. Indeed, $F(C)/\mathbb{F}_q(T)$ is ramified at the $2s$ places $\{\pi_j(1), \pi_j(2) : j \in [s]\}$, hence the claim shows that $L(\tilde{\phi})/F(C)$ is unramified outside $S(C)$. Since the restriction of $\tilde{\phi}$ to $G_{F(C)}$ clearly factorizes through $\Gal(L(\tilde{\phi})/F(C))$, this shows that the claim is sufficient to give the desired factorization.

We now prove the claim. Observe that $L(\tilde{\phi})$ is contained in
$$ 
L(\phi_{[s]}^{\text{m}}) \cdot \prod_{x \in C} L(\psi_{s + 2}(x)).
$$
The elements $\sigma_{\pi_i(1)}$ and $\sigma_{\pi_i(2)}$ project to an involution of $\Gal(L(\phi_{[s]}^{\text{m}})/\mathbb{F}_q(T))$ for each $i \in [s]$, while $\sigma_v$ projects to the trivial element for every other finite place $v$ not in $\{\pi_j(1), \pi_j(2) : j \in [s]\} \cup \text{Ram}(\chi_{ad})$. Moreover, observe that for each $x \in C$ and each $v \in \text{Ram}(\chi_x)$, we have that $\sigma_v$ is an involution in $\Gal(L(\psi_{s + 2}(x))/\mathbb{F}_q(T))$ and is the identity element otherwise. This establishes the claim. 

To complete the proof of $(iii)$, we only need to compute $\mathrm{d} \tilde{\phi}$ when restricted to $G_{F(C)}$, which we do next. Observe that the modules $N_h(\chi_{x})$, as $x$ varies in $C$, become all isomorphic as $G_{F(C)}$-modules, since $\chi_x$ all restrict to the same common character in $\Gamma_{\mathbb{F}_2}(F(C))$. It follows that all the $\psi_{s + 2}(x)$ are elements of $\text{Cocy}(G_{F(C)}, N_h(\chi_{x_0}))$. Hence we have that
\begin{equation}
\label{eTPsiDef}
\tilde{\psi} := \sum_{x \in C} \mathrm{sg}(x) \psi_{s + 2}(x) \in \text{Cocy}(G_{F(C)}, N_h(\chi_{x_0})[4])
\end{equation}
with $2 \cdot \tilde{\psi} = \phi_{[s]}^{\text{m}}$. Thus, invoking Proposition \ref{prop: description of N[4] cocycles}, we write
\begin{equation}
\label{eTPsiFormula}
\tilde{\psi} = \frac{\phi_{[s]}^{\text{m}}}{4} + \tilde{\phi},
\end{equation}
with $\tilde{\phi}$ an $N_h[2]$-valued cochain satisfying
$$
\mathrm{d} \tilde{\phi}(\sigma,\tau) = \left(\frac{1 - h}{2} \cdot \chi_{\epsilon_{\mathbb{F}_q}}(\sigma) + \chi_{x_0}(\sigma) + \phi_{[s]}^{\text{m}}(\sigma) \right) \cdot \phi_{[s]}^{\text{m}}(\tau).
$$
Using the relation $\chi_a = \sum_{g \in G(C)} {(\phi_{[s]}^{\text{m}})}^g$ (which follows for example from equation \eqref{eConjugateExpansion}), we obtain the desired conclusion. 
\end{proof}

We are now ready to state and prove our first reflection principle. In this section, we will abuse notation by writing
$$
\textup{Art}_{D, n + 1}^{(i)} \circ (\textup{id} \times \textup{Sym}(\chi_D)) \colon C_n^{(i)}(\chi_D)^\circ \times C_n^{(i)}(\chi_D)^\circ \to \mathbb{F}_2
$$
simply as $\textup{Art}_{D, n + 1}^{(i)}$, leaving the application of $\textup{Sym}(\chi_D)$ implicit. We refer the reader to Remark \ref{remark: phi g} for clarifying the notation in the next theorem statement.

\begin{theorem} 
\label{theorem: reflection principle}
Let $h \in \{1, -1\}$. Let $s \in \Z_{\geq 2}$ and let $\mathbf{t} \in N[2^{s + 1}]^{[r + 1]}$. Let  
$$
C := \left(\prod_{i = 1}^s \{\pi_i(1), \pi_i(2)\}\right) \times \{a\} \times \{d\}
$$
be an acceptable, profitable, desirable cube of type $\mathbf{t}$. Set $\phi := \phi_{\{\pi_i(1) \pi_i(2) : i \in [s]\}; a}(\mathfrak{G}) + \phi_{[s]}(C; \mathbf{t})$. Then the following conclusions hold:
\begin{enumerate}
\item[1.] The triple $\left(\frac{(1 - h)}{2} \cdot \chi_{\epsilon_{\mathbb{F}_q}} + \chi_{\pi_1(1) \cdots \pi_s(1) d} + \sum_{g \in G(C) - \{1\}} \phi^g, \phi, \phi \right)$ is R\'edei admissible over $F(C)$. 
\item[2.] There holds
$$
\sum_{x \in C} \mathrm{Art}_{x, s + 2}^{(h)}(\chi_a, \chi_a) = \left[\frac{(1 - h)}{2} \cdot \chi_{\epsilon_{\mathbb{F}_q}} + \chi_{\pi_1(1) \cdots \pi_s(1)d} + \sum_{g \in G(C) - \{1\}} \phi^g, \phi, \phi \right]_{F(C)}.
$$
\end{enumerate}
\end{theorem}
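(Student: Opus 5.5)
The plan is to build everything from Proposition \ref{prop: sums of psi trivializes alpha alpha bar}. By part (ii) we may choose $\psi_{s+2}(x_0)$, and for $x \neq x_0$ the lifts $\psi_{s+2}(x)$ are given by $(T)$. Part (iii) then produces $\tilde{\phi}\in C^1(G_{F(C),S(C)},\mathbb{F}_2)$ with
$$
\mathrm{d}\tilde{\phi}=\chi_\alpha\cup\chi_\beta, \qquad \chi_\alpha:=\tfrac{1-h}{2}\chi_{\epsilon_{\mathbb{F}_q}}+\chi_{\pi_1(1)\cdots\pi_s(1)d}+\sum_{g\neq 1}\phi^g, \qquad \chi_\beta=\phi .
$$

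\textbf{Part 1 (admissibility).} Once restricted to $G_{F(C)}$, $\chi_\alpha$ is a quadratic character, and the existence of $\tilde\phi$ trivializes $\chi_\alpha\cup\phi$. The cup $\phi\cup\phi=\phi\cup\chi_{-1}$ vanishes because $-1$ is a square. For local triviality, at each place of $S$ (the ramification of the three characters together with the places above $\infty$) we show that $\phi$ itself is locally trivial.
\begin{itemize}
\item At places above a prime dividing $ad$, and at places above $\infty$, acceptability and profitability give residue degree $1$. The argument is the one in Case 1 of the proof of part (ii).
\item Places where $\phi$ or its conjugates ramify lie only above $ad$ and $\infty$, by Proposition \ref{normalized expansion maps are unramified}.
\item $\chi_{\pi_1(1)\cdots\pi_s(1)d}$ restricted to $F(C)$ equals $\chi_{x_0}$ up to $\chi_a$, so its ramification also lies above $ad\infty$.
\end{itemize}
Hence $\phi$ is locally trivial on all of $S$, and the triple is admissible.

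\textbf{Part 2 (computing the symbol).} We use Proposition \ref{prop: trivialization in general} with $\rho=\tilde\phi$ and $\gamma=\phi$, taking $S'=S(C)\cup\{\text{places above }\infty\}$. The hypothesis holds because $\phi$ is trivial at every place of $S'$, and in particular $\phi$ is unramified there. This gives $\mathrm{Ram}(\phi)=\emptyset$, so the direct formula would return $0$. That formula is therefore too crude, and we should instead apply Proposition \ref{prop: trivialization in general3} to the reordered triple $[\phi,\phi,\chi_\alpha]$, which is allowed by Theorem \ref{theorem: Redei reciprocity}(1). It is probably cleaner, though, to compute directly with $\epsilon=\tilde\phi\cup\phi$. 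This $\epsilon$ satisfies $\mathrm{d}\epsilon=\chi_\alpha\cup\phi\cup\phi$, and $\tilde\phi$ is unramified outside $S(C)$. We then get
$$
[\chi_\alpha,\phi,\phi]=\sum_{w\in S}\mathrm{inv}_w(\tilde\phi\cup\phi).
$$
At each such $w$, $\phi$ is locally trivial, yet $\tilde\phi$ need not be a local character. The remedy is to evaluate each local term via Lemma \ref{lInvariantMap} in terms of values of $\tilde\phi$ on $\sigma_w$ and $\Frob_w$.

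\textbf{Linking to the Artin pairings.} Recall $\tilde\phi=\tfrac{\phi_{S;a}+\phi_{[s]}(C;\mathbf t)}{4}-\sum_x \mathrm{sg}(x)\psi_{s+2}(x)$. For each $x$, $\mathrm{Art}^{(h)}_{x,s+2}(\chi_a,\chi_a)=\sum_{v\mid a}\Pi_v(\chi_x)(\psi_{s+2}(x))$, since $\mathrm{Sym}(\chi_a)=\sum_{v\mid a}e_v$. Summing over $x$ with signs (all signs are $1$ in $\mathbb{F}_2$) and pushing each local evaluation up to $F(C)$, we match the local invariants at places $w\mid a$ with the corresponding $\Pi_v$ terms. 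The quarter-term $\phi^{\mathrm m}/4$ contributes nothing, because $\phi^{\mathrm m}$ is locally trivial there. The main point is the compatibility of $\Pi_v$ with the $F(C)$-local formula: $\Pi_v$ combines $\tfrac{(hq_0)^{\deg v}-1}{2}\psi(\sigma_v)+\psi(\Frob_v(\chi))$, and $v$ splits into $2^s$ places of $F(C)$ with local degree $1$.

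Two contributions must then be shown to vanish. The first comes from places $w$ above $d$ and $\infty$; there we would use $\Pi_v(\chi_x)(\psi_{s+2}(x))=0$, since $\mathrm{Sym}$ of $\chi_a$ is not supported at those places, together with Hilbert reciprocity as in Proposition \ref{prop: trivial elemts}. The second is the overcounting coming from the $2^s$ places of $F(C)$ above each $v$; here Proposition \ref{prop: conjugating Redei} and the Galois symmetry should make all places above a fixed $v$ contribute equally, modulo the conjugates. I expect this bookkeeping of local invariants — making the identification $\mathrm{inv}_w(\tilde\phi\cup\phi)$ with $\sum_x\Pi_v(\chi_x)(\psi_{s+2}(x))$ exact, including the role of the $\tfrac{1-h}{2}\chi_{\epsilon}$ twist — to be the main obstacle.
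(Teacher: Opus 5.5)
There is a genuine gap, and it sits in the premise you use in both parts: that $\phi$ is locally trivial at every place of $S$, i.e.\ that $\phi|_{G_{F(C)}}$ is unramified. This is false.

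By Proposition \ref{normalized expansion maps are unramified}(a), applied with $a_{n+1}=a$, the restriction of $\phi_{S;a}(\mathfrak{G})$ to $G_{F(C)}$ ramifies at exactly one place above each $v\mid a$, namely $\mathrm{AUp}(v)$. The summand $\phi_{[s]}(C;\mathbf{t})$ is unramified over $F(C)$, so it does not cancel this. Acceptability (residue degree $1$) and profitability only tell you that, at each place above $ad\infty$, $\phi$ is locally either trivial or ramified. Case 1 of part (ii) of Proposition \ref{prop: sums of psi trivializes alpha alpha bar} uses $\phi(\sigma_v)=0$ for the fixed embedding $i_v$, i.e.\ at $\mathrm{Up}(v)$. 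That says nothing at $\mathrm{AUp}(v)\neq\mathrm{Up}(v)$.

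So $\phi$ is locally trivial on $S$ except at the places $\mathrm{AUp}(v)$ with $v\mid a$, and these places carry the whole symbol. The contradiction you hit (``$\mathrm{Ram}(\phi)=\emptyset$, so the formula returns $0$'') is a symptom of this. Under your premise, $\tilde\phi\cup\phi$ vanishes identically at every $w\in S$, so your alternative computation with $\epsilon=\tilde\phi\cup\phi$ also returns $0$, and Lemma \ref{lInvariantMap} cannot rescue it.

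Your Part 1 argument fails at $w=\mathrm{AUp}(v)$ for the same reason. There, admissibility has to come from $\chi_\alpha$. Its ramification comes from $\chi_{\pi_1(1)\cdots\pi_s(1)d}$ above $d\infty$ and from $\phi^g$ at $g\cdot\mathrm{AUp}(v)$ with $g\neq 1$, so it is disjoint from that of $\phi$. Hence $\chi_\alpha$ is unramified at $w$. Since $\chi_\alpha\cup\phi=\mathrm{d}\tilde\phi$ is locally trivial while $\phi$ is ramified there, $\chi_\alpha$ must be locally trivial at $w$.

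Once this is corrected, Proposition \ref{prop: trivialization in general} with $\rho=\tilde\phi$ and $\gamma=\phi$ is exactly the right tool. It gives $\sum_{v\mid a}\inv_{\mathrm{AUp}(v)}(\tilde\phi\cup\phi)$. There is no overcounting over the $2^s$ places above $v$ to control; if they all contributed equally, the total would be $0$. The places above $d$ and $\infty$ contribute $0$ on both sides with no appeal to Hilbert reciprocity.

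What is missing is the local matching at $w=\mathrm{AUp}(v)$:
\begin{itemize}
\item Compute $\Pi_v(\chi_x)(\psi_{s+2}(x))=\inv_v(\theta(\psi_{s+2}(x)))$ via Proposition \ref{prop: detecting locally}, using an embedding that induces $w$. This is legitimate because the invariant does not see conjugation of the decomposition group.
\item Summing over $x$ gives $\tfrac{(hq_0)^{\deg v}-1}{2}\tilde\psi(\sigma_w)+\tilde\psi(\Frob_w(\chi_{x_0}))$, with $\tilde\psi=\tfrac{\phi}{4}+\tilde\phi$.
\item Contrary to your claim, the term $\tfrac{\phi}{4}$ matters here, because $\phi(\sigma_w)=1$. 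It contributes an extra $\mathbf{1}_{\deg v\ \mathrm{odd},\ q_0\equiv 5 \bmod 8}$, which disappears only after summing over $v\mid a$, using that $\deg a$ is even.
\item What remains is $\tilde\phi(\Frob_w(\chi_{x_0}+\tfrac{1-h}{2}\chi_{\epsilon_{\mathbb{F}_q}}))$. This step needs $\tilde\phi$ to be a local character at $w$, which follows from the local triviality of $\chi_\alpha$ established above.
\item Locally at $w$ one has $\phi=\chi_{x_0}+\tfrac{1-h}{2}\chi_{\epsilon_{\mathbb{F}_q}}$: both are ramified and both kill the same Frobenius lift. The alternating local pairing then identifies the remaining value with $\inv_w(\tilde\phi\cup\phi)$.
\end{itemize}
This argument, not a bookkeeping of overcounting, is what is needed in place of the step you flagged as the main obstacle.
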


\begin{proof}
We now prove part $1$ and show that the triple 
$$
\left(\frac{(1 - h)}{2} \cdot \chi_{\epsilon_{\mathbb{F}_q}} + \chi_{\pi_1(1) \cdots \pi_s(1) d} + \sum_{g \in G(C) - \{1\}} \phi^g, \phi, \phi \right)
$$ 
is R\'edei admissible over $F(C)$. Note that the cup product 
$$
\left(\frac{(1 - h)}{2} \cdot \chi_{\epsilon_{\mathbb{F}_q}} + \chi_{\pi_1(1) \cdots \pi_s(1) d} + \sum_{g \in G(C) - \{1\}} \phi^g \right) \cup \phi
$$
is trivial in $H^2(G_{F(C)}, \mathbb{F}_2)$, since we have proved in Proposition \ref{prop: sums of psi trivializes alpha alpha bar} that it equals $\mathrm{d} \tilde{\phi}$. Moreover, we know that the ramification loci of the above two characters are disjoint (which, combined with the triviality in $H^2(G_{F(C)}, \mathbb{F}_2)$, gives the desired vanishing at all places except for the places above infinity). Since $\phi$ is also locally trivial at infinity by part $(A3)$ of Definition \ref{def: acceptable cube} and profitability, this gives R\'edei admissibility, ending the proof of part $1$.

We now prove part $2$. By definition, we have
\begin{align}
\sum_{x \in C} \mathrm{Art}_{x, s + 2}^{(h)}(\chi_a, \chi_a) &= \sum_{x \in C} \sum_{v \in \text{Ram}(\chi_a)} \Pi_v(\chi_x)(\psi_{s + 2}(x)) \nonumber \\ 
&= \sum_{v \in \text{Ram}(\chi_a)} \sum_{x \in C} \Pi_v(\chi_x)(\psi_{s + 2}(x)). \label{eArtRewrite1}
\end{align}
Observe that, when restricted to $v \in \text{Ram}(\chi_a)$, the character $\chi_x$ does not depend on the choice of $x \in C$ by part $(A1)$ of Definition \ref{def: acceptable cube}. Let $v \in \text{Ram}(\chi_a)$ and let $w \in \text{Ram}(\phi|_{G_{F(C)}})$ be any place above $v$ (so $w = \mathrm{AUp}(v)$ by Proposition \ref{normalized expansion maps are unramified}). We claim that
\begin{equation}
\label{eArtRewrite2}
\sum_{x \in C} \Pi_v(\chi_x)(\psi_{s + 2}(x)) = \mathbf{1}_{\deg(w) \equiv 1 \bmod 2, q_0 \equiv 5 \bmod 8} + \tilde{\phi}\left(\Frob_w\left(\chi_{x_0} + \frac{1 - h}{2} \cdot \chi_{\epsilon_{\mathbb{F}_q}}\right)\right).
\end{equation}
To show the validity of equation \eqref{eArtRewrite2}, we fix an embedding, which we will denote by $i_w$, of $\mathbb{F}_q(T)^{\text{sep}}$ into $\mathbb{F}_q(T)_v^{\text{sep}}$ that induces on $F(C)$ precisely the place $w$. The map $i_w$ induces, by Krasner's lemma, an embedding
$$
i_w^\ast: G_{\mathbb{F}_q(T)_v} \to G_{\mathbb{F}_q(T)},
$$
where the image of $i_w^\ast$ will be a conjugate subgroup of the image of $i_v^\ast$ (the embedding we fixed at the beginning of the paper in Section \ref{ssNot}). Recall that $\Pi_v(\chi_x)(\psi_{s + 2}(x))$ computes precisely $\inv_v(\theta(\psi_{s + 2}(x)))$, viewed as an element of the $2$-torsion of $H^2(G_{\mathbb{F}_q(T)_v}, \overline{\mathbb{F}_q(T)_v}^\ast)$. The value of this invariant does not depend on the choice of the embedding $i_w$, given that the resulting subgroups are all conjugate. Therefore by Proposition \ref{prop: detecting locally}
$$
\Pi_v(\chi_x)(\psi_{s + 2}(x)) = \inv_v(\theta(\psi_{s + 2}(x))) = \frac{(h \cdot q_0)^{\deg(v)} - 1}{2} \cdot \psi_{s + 2}(x)(\sigma_w) + \psi_{s + 2}(x)(\Frob_w(\chi_x)).
$$
Here $\Frob_w(\chi)$ and $\sigma_w$ are taken to be the conjugates of $\Frob_v(\chi)$ and $\sigma_v$ under the automorphism $g$ in $G_{\mathbb{F}_q(T)}$ that intertwines $i_v$ and $i_w$. 

Summing all of the contributions and using that the restriction of $\chi_x$ to $i_w^\ast(G_{\mathbb{F}_q(T)_v})$ equals $\chi_{x_0}$ for all $x \in C$ (in view of part $(A1)$ of Definition \ref{def: acceptable cube}), we have that
\begin{align}
\label{eSumRewrite}
\sum_{x \in C} \Pi_v(\chi_x)(\psi_{s + 2}(x)) &= \sum_{x \in C} \mathrm{sg}(x) \left(\frac{(h \cdot q_0)^{\deg(v)} - 1}{2} \cdot \psi_{s + 2}(x)(\sigma_w) + \psi_{s + 2}(x)(\Frob_w(\chi_x)) \right) \nonumber \\
&= \frac{(h \cdot q_0)^{\deg(v)} - 1}{2} \cdot \tilde{\psi}(\sigma_w) + \tilde{\psi}(\Frob_w(\chi_{x_0})),
\end{align}
where we recall the definition of $\tilde{\psi}$ from \eqref{eTPsiDef}. Let us also recall from equation \eqref{eTPsiFormula} that 
\begin{equation}
\label{ePsiExpand}
\tilde{\psi} = \frac{\phi}{4} + \tilde{\phi}.
\end{equation}
We also record the observation 
\begin{equation}
\label{eFrobTrivial}
\phi\left(\Frob_w\left(\chi_{x_0} + \frac{1 - h}{2} \cdot \chi_{\epsilon_{\mathbb{F}_q}}\right)\right) = 0
\end{equation}
in view of acceptability and profitability; here we use that $\chi_a$ is locally in the span of $\chi_{x_0} + \frac{1 - h}{2} \cdot \chi_{\epsilon_{\mathbb{F}_q}}$. In order to prove \eqref{eArtRewrite2}, we now distinguish five cases.

\paragraph{Case 1: $v$ has even degree.} In this case the term $\frac{(h \cdot q_0)^{\deg(v)} - 1}{2}$ is divisible by $4$ and therefore the only remaining term in \eqref{eSumRewrite} is
$$
\sum_{x \in C} \Pi_v(\chi_x)(\psi_{s + 2}(x)) = \tilde{\psi}(\Frob_w(\chi_{x_0})).
$$
Combining \eqref{eFrobTrivial} with the observation that $\chi_{\epsilon_{\mathbb{F}_q}}$ is locally trivial at $w$, we deduce that $\phi(\Frob_w(\chi_{x_0})) = 0$. So from \eqref{ePsiExpand}, we get precisely $\tilde{\phi}(\Frob_w(\chi_{x_0}))$ as was claimed in \eqref{eArtRewrite2}.

\paragraph{Case 2: $v$ has odd degree, $q_0 \equiv 1 \bmod 8$ and $h = 1$.} In Case 2, the exact same logic as Case 1 applies.

\paragraph{Case 3: $v$ has odd degree, $q_0 \equiv 1 \bmod 8$ and $h = -1$.} Using \eqref{eSumRewrite} as before, we get in this case
$$
\sum_{x \in C} \Pi_v(\chi_x)(\psi_{s + 2}(x)) = -\tilde{\psi}(\sigma_w) + \tilde{\psi}(\Frob_w(\chi_{x_0})).
$$
Expanding \eqref{ePsiExpand} in coordinates we get
$$
\phi(\sigma_w) + \tilde{\phi}(\sigma_w) + \frac{\phi(\sigma_w)}{4} + \tilde{\phi}(\Frob_w(\chi_{x_0})) + \frac{\phi(\Frob_w(\chi_{x_0}))}{4}.
$$
Notice also that since $w$ is in $\text{Ram}(\phi|_{G_{F(C)}})$, we have established already that $\tilde{\phi}$ is a character locally at $w$ when we checked that $(\phi + \chi_{x_0} + \frac{1 - h}{2} \cdot \chi_{\epsilon_{\mathbb{F}_q}}, \phi, \phi)$ is R\'edei admissible. It follows from this that
$$
\tilde{\phi}(\sigma_w) + \tilde{\phi}(\Frob_w(\chi_{x_0})) = \tilde{\phi}\left(\Frob_w\left(\chi_{x_0} + \frac{1 - h}{2} \cdot \chi_{\epsilon_{\mathbb{F}_q}}\right)\right),
$$
where in this equality we used that $w$ is of odd degree and therefore $\chi_{\epsilon_{\mathbb{F}_q}}$ restricts locally at $w$ to $\chi_{\epsilon_{\mathbb{F}_{q_w}}}$. Using the above and carrying out the addition modulo $4$ we get in this case precisely
$$
\tilde{\phi}(\Frob_w(\chi_{x_0} + \chi_{\epsilon_{\mathbb{F}_q}})),
$$
since the term $\phi(\sigma_w) + \phi(\sigma_w) \cdot \phi(\Frob_w(\chi_{x_0})) = 0$ disappears due to equation \eqref{eFrobTrivial}. This matches the claim from equation \eqref{eArtRewrite2}.

\paragraph{Case 4: $v$ has odd degree, $q_0 \equiv 5 \bmod 8$ and $h = 1$.} Upon combining \eqref{eSumRewrite} and \eqref{ePsiExpand}, we are left with
$$
\sum_{x \in C} \Pi_v(\chi_x)(\psi_{s + 2}(x)) = 2 \cdot \tilde{\psi}(\sigma_w) + \tilde{\psi}(\Frob_w(\chi_{x_0})) = \phi(\sigma_w) + \tilde{\phi}(\Frob_w(\chi_{x_0})) + \frac{\phi(\Frob_w(\chi_{x_0}))}{4}.
$$
Equation \eqref{eFrobTrivial} becomes $\phi(\Frob_w(\chi_{x_0})) = 0$ in this case. Moreover, $\phi$ ramifies at $w$, and thus $\phi(\sigma_w) = 1$. Hence we get
$$
1 + \tilde{\phi}(\Frob_w(\chi_{x_0})),
$$
which matches equation \eqref{eArtRewrite2}.

\paragraph{Case 5: $v$ has odd degree, $q_0 \equiv 5 \bmod 8$ and $h = -1$.} In this case equation \eqref{eSumRewrite} becomes
$$
\sum_{x \in C} \Pi_v(\chi_x)(\psi_{s + 2}(x)) = \tilde{\psi}(\sigma_w) + \tilde{\psi}(\Frob_w(\chi_{x_0})).
$$
Expanding in coordinates with equation \eqref{ePsiExpand} and using \eqref{eFrobTrivial}, we get
\begin{multline*}
\tilde{\phi}(\sigma_w) + \frac{\phi(\sigma_w)}{4} + \tilde{\phi}(\Frob_w(\chi_{x_0})) + \frac{\phi(\Frob_w(\chi_{x_0}))}{4} \\
= \tilde{\phi}(\sigma_w) + \tilde{\phi}(\Frob_w(\chi_{x_0})) + \phi(\Frob_w(\chi_{x_0})) \cdot \phi(\sigma_w) 
= \tilde{\phi}(\sigma_w) + \tilde{\phi}(\Frob_w(\chi_{x_0})) + \phi(\sigma_w).
\end{multline*}
Now recalling once more that $\tilde{\phi}$ is a character locally at $w$ and that $w$ is of odd degree, we obtain
$$
\tilde{\phi}(\sigma_w) + \tilde{\phi}(\Frob_w(\chi_{x_0})) = \tilde{\phi}(\Frob_w(\chi_{x_0} + \chi_{\epsilon_{\mathbb{F}_q}})),
$$
precisely as explained above. Furthermore, once more, as $\phi$ ramifies at $w$, we have that $\phi(\sigma_w) = 1$. All in all, we are left with 
$$
1 + \tilde{\phi}(\Frob_w(\chi_{x_0} + \chi_{\epsilon_{\mathbb{F}_q}})),
$$
as desired. This concludes the proof of all cases, and thus the proof of \eqref{eArtRewrite2}. 

We now further claim
\begin{equation}
\label{eArtRewrite3}
\tilde{\phi}\left(\Frob_w\left(\chi_{x_0} + \frac{1 - h}{2} \cdot \chi_{\epsilon_{\mathbb{F}_q}}\right)\right) = \inv_w(\tilde{\phi} \cup \phi).
\end{equation}
Indeed locally at $w$, $\phi$ must coincide with $\chi_{x_0} + \frac{1 - h}{2} \cdot \chi_{\epsilon_{\mathbb{F}_q}}$ in virtue of acceptability and profitability. Recalling that the local Tate pairing on $H^1(G_{F(C)_w}, \mathbb{F}_2)$ is alternating, since $q \equiv 1 \bmod 4$. Thus we see that both sides of \eqref{eArtRewrite3} measure whether $\tilde{\phi}$ is in the span of $\chi_{x_0} + \frac{1 - h}{2} \cdot \chi_{\epsilon_{\mathbb{F}_q}}$.

Having shown the claim, we put equations \eqref{eArtRewrite1}, \eqref{eArtRewrite2} and \eqref{eArtRewrite3} together to conclude that
\begin{align*}
\sum_{x \in C} \mathrm{Art}_{x, s + 2}^{(h)}(\chi_a, \chi_a) &= \sum_{\substack{w \in \text{Ram}(\phi|_{G_{F(C)}})}} \left(\mathbf{1}_{\deg(w) \equiv 1 \bmod 2, q_0 \equiv 5 \bmod 8} + \inv_w(\tilde{\phi} \cup \phi)\right) \\
&= \sum_{\substack{w \in \text{Ram}(\phi|_{G_{F(C)}})}} \inv_w(\tilde{\phi} \cup \phi) \\
&= \left[\frac{(1 - h)}{2} \cdot \chi_{\epsilon_{\mathbb{F}_q}} + \chi_{\pi_1(1) \cdots \pi_s(1) d} + \sum_{g \in G(C) - \{1\}} \phi^g, \phi, \phi \right]
\end{align*}
by Proposition \ref{prop: trivialization in general}. This is the desired conclusion and ends the proof. 
\end{proof}

To explain our next step, we shall introduce some notation. Let $F$ be a global function field containing a primitive fourth root of unity that is Galois over $\mathbb{F}_q(T)$. Let $H_2(F)/F$ be the largest abelian exponent $2$ extension of $F$ that is unramified everywhere. Let $S_\emptyset$ be a $\Gal(F/\mathbb{F}_q(T))$-invariant set of places of $F$ whose Frobenius elements generate $\Gal(H_2(F)/F)$. Notice that, by Nakayama's lemma, this implies that they generate also $\Gal(H_4(F)/F)$, where $H_4(F)/F$ is the largest abelian exponent $4$ extension of $F$ that is unramified everywhere. 

Moreover, for each place $v \in \Omega_F - S_\emptyset$ we fix once and for all an element $\psi_{S_\emptyset, v}$ in $\Gamma_{\Z/4\Z}(F)$ totally ramified at $v$ and unramified at all places in $\Omega_F - S_\emptyset - \{v\}$. We further normalize this choice as follows. Fix a primitive fourth root of unity $\zeta \in F$. We demand that $\psi_{S_\emptyset, v}$ satisfies $\psi_{S_\emptyset, v}(\sigma) = 1$ for the topological generator $\sigma$ of tame inertia satisfying $\sigma(\sqrt[4]{\pi_v})/\sqrt[4]{\pi_v} = \iota_v(\zeta)$ for all uniformizers $\pi_v$. 

Note that $\psi_{S_\emptyset, v}$ exists thanks to the vanishing of $\Sha^1(G_{F, S_\emptyset}, \Z/4\Z) = 0$ and two applications of \cite[Theorem 8.7.9]{NSW} (with respectively $S_\emptyset$ and $S_\emptyset \cup \{v\}$). We put $\chi_{S_\emptyset, v} := 2 \cdot \psi_{S_\emptyset, v}$. For each character $\chi$ ramified only at $S_\emptyset$, we define $\xi_{S_\emptyset}(\chi)$ to be any element of $\Gamma_{\Z/4\Z}(F)$ ramified only at $S_\emptyset$ and satisfying $2 \cdot \xi_{S_\emptyset}(\chi) = \chi$. 
Our next step is to apply Theorem \ref{theorem: Redei reciprocity} to the output of Theorem \ref{theorem: reflection principle} and obtain
\begin{multline*}
\left[ \frac{(1 - h)}{2} \cdot \chi_{\epsilon_{\mathbb{F}_q}} + \chi_{\pi_1(1) \cdots \pi_s(1) d} + \sum_{g \in G - \{1\}} \phi^g, \phi, \phi \right] = \\
\left[ \phi , \phi, \frac{(1 - h)}{2} \cdot \chi_{\epsilon_{\mathbb{F}_q}} + \chi_{\pi_1(1) \cdots \pi_s(1) d} + \sum_{g \in G - \{1\}} \phi^g \right].
\end{multline*}
The right hand side is often called a quartic spin symbol in the literature. Our idea here is that if one is able to break $a = a_0rs$ for two suitable primes $r$ and $s$, then one can \emph{factorize} such a quartic symbol, in multiplicative notation, as 
$$
\lambda_r\cdot \lambda_s \cdot \left(\frac{\alpha_r}{\alpha_s}\right)_{F(C),2},
$$
that is, as a bilinear Legendre symbol expression over $F(C)$, where $\alpha_r, \alpha_s$ can be intuitively thought as primes of $F(C)$ lying above $r$ and $s$. This desire to split the quartic spin symbol in bilinear Legendre symbols is formalized in the following result. 

Now, for each $C := \{\pi_1(1), \pi_1(2)\} \times \cdots \times \{\pi_s(1), \pi_s(2)\}$, we fix once and for all a set $S_\emptyset$ for $F := F(C)$ as above. Finally, for each finite place $r$ in $\Omega_{\mathbb{F}_q(T)}^{\text{fin}}$ not lying below $S_\emptyset$, we put 
$$
\psi_{S_\emptyset, r} := \psi_{S_\emptyset, \text{AUp}(r)}, \quad \quad \chi_{S_\emptyset, r} := \chi_{S_\emptyset, \text{AUp}(r)}.
$$

\begin{theorem} 
\label{theorem: Applying quartic reciprocity}
Let $C := \{\pi_1(1), \pi_1(2)\} \times \cdots \times \{\pi_s(1), \pi_s(2)\}$ be a cube of $2s$ distinct monic even degree irreducible polynomials and let $\mathbf{t} \in \mathbb{F}_2^{[s]}$. Let $S_\emptyset \subseteq \Omega_{F(C)}$ be any $G(C)$-invariant set of finite places generating the Galois group $\Gal(H_2(F(C))/F(C))$. Let $a'$ be a squarefree monic polynomial coprime with $\pi_1(1), \pi_1(2), \dots, \pi_s(1), \pi_s(2)$ and $S_\emptyset$, and let $S(a')$ be the set of places of $F(C)$ above an irreducible polynomial dividing $a'$. Fix choices 
$$
(L_{i, v})_{v \in S_\emptyset \cup S(a')} \in \prod_{v \in S_\emptyset \cup S(a')} \Gamma_{\Z/4\Z}(F(C)_v)
$$
for each $i \in \{1, 2, 3\}$. Define
$$
\Delta_C(r) := \sum_{g \in G(C) - \{1\}} \psi_{S_\emptyset, r}^g(\Frob_{\mathrm{AUp}(r)}).
$$
Then, for each $\mathbf{a}, \mathbf{b} \in \mathbb{F}_2^{S_\emptyset}$, there is a function
$$
\lambda_{\mathbf{a}, \mathbf{b}, a', S_\emptyset} \colon \prod_{v \in S_\emptyset \cup S(a')} \Gamma_{\Z/4\Z}(F(C)_v)^3 \rightarrow \frac{1}{4}\Z/\Z
$$
such that for all $r_1, r_2, r_3$ satisfying the conditions
\begin{enumerate}
\item the $r_i$ are monic irreducible polynomials that are pairwise coprime, coprime to $a'$, coprime to $\pi_1(1) \pi_1(2), \dots, \pi_s(1) \pi_s(2)$ and do not divide any place in $S_\emptyset$,
\item defining $\phi := \phi_{C; a'r_1r_2r_3}(\mathfrak{G}) + \sum_{j \in [s]} \mathrm{qpr}_j(\mathbf{t}) \cdot \phi_{\{\pi_i(1) \pi_i(2) : i \in [s] - \{j\}\}; \pi_j(1) \pi_j(2)}(\mathfrak{G})$, the triple 
$$
\left( \sum_{g \in G(C) - \{1\}} \phi^g, \phi, \phi \right)_{F(C)} 
$$
is R\'edei admissible and $\phi$ is locally trivial at $\infty$,
\item the characters $\psi_{S_\emptyset, r_i}$ restrict respectively to $L_{i, v}$ locally at each $v \in S_\emptyset \cup S(a')$,
\item we have $\phi(\Frob_v) = \mathrm{pr}_v(\mathbf{a})$ and $\sum_{g \in G(C) - \{1\}} \phi^g(\Frob_v) = \mathrm{pr}_v(\mathbf{b})$ for all $v \in S_\emptyset$,
\end{enumerate}
we have 
\begin{multline*}
\left[ \sum_{g \in G(C) - \{1\}} \phi^g, \phi, \phi \right]_{F(C)} - \lambda_{\mathbf{a}, \mathbf{b}, a', S_\emptyset}((L_{i, v})_{i \in [3], v \in S_\emptyset \cup S(a')}) - \sum_{i = 1}^3 \Delta_C(r_i) = \\ \sum_{g \in G(C) - \{1\}} \sum_{1 \leq i < j \leq 3} \inv_{\mathrm{AUp}(r_j)}\left(\chi_{S_\emptyset, r_i}^g \cup \chi_{S_\emptyset, r_j}\right).
\end{multline*}
\end{theorem}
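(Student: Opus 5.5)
Write $F := F(C)$, $G := G(C)$, $\beta := \sum_{g \in G - \{1\}} \phi^g$. The plan is to compute the symbol $[\beta, \phi, \phi]_F$ via the trivialization results of Section \ref{Section: redei}, after using Theorem \ref{theorem: Redei reciprocity}(1) to rewrite it as $[\phi, \phi, \beta]_F$, a quartic spin symbol.

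\textbf{Step 1: choose an explicit trivializing cochain.} Since $\phi$ is a quadratic character of $G_F$ whose ramification is (by Proposition \ref{normalized expansion maps are unramified}) exactly at $\mathrm{AUp}(r)$ for the primes $r \mid a'r_1r_2r_3$ (and possibly at $\infty$, where it is trivial), I will write
$$
\phi = \sum_{r \mid a'r_1r_2r_3} \chi_{S_\emptyset, r} + \eta,
$$
with $\eta$ unramified outside $S_\emptyset$. Then a natural trivialization of $\phi \cup \phi$ is
$$
\rho := \sum_r \psi_{S_\emptyset, r} + \xi_{S_\emptyset}(\eta) + \sum_{r < r'} \chi_{S_\emptyset, r} \cdot \chi_{S_\emptyset, r'},
$$
read as a $\frac14\Z/\Z$-valued cochain reduced mod the carry. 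Here $\mathrm{d}(\text{lift of } \phi/4)$ produces $\phi \cup \phi$ up to cup products of the pieces, and the cross terms are absorbed by products of the characters. I will then apply Proposition \ref{prop: trivialization in general3} with $S' = S \cup S_\emptyset$, which expresses the symbol as
$$
\sum_{w \in \mathrm{Ram}(\beta)} \inv_w(\rho \cup \beta)
$$
plus correction terms $\rho(\sigma_v)\beta(\Frob_v)$ at $v \in S_\emptyset$. Condition 4 fixes $\beta(\Frob_v) = \mathrm{pr}_v(\mathbf{b})$ there, and $\rho(\sigma_v)$ depends only on the local data $L_{i,v}$. These contributions go into $\lambda$.

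\textbf{Step 2: localize.} $\mathrm{Ram}(\beta)$ consists of the conjugates $g\,\mathrm{AUp}(r)$ with $g \neq 1$. At each such place, $\beta$ restricts to a ramified character, and the local invariant becomes evaluation of $\rho$ at a Frobenius. By Proposition \ref{prop: conjugating Redei} I can transport $g\,\mathrm{AUp}(r)$ back to $\mathrm{AUp}(r)$, which turns the sum into terms
$$
\sum_{g \neq 1} \rho^{g}(\Frob_{\mathrm{AUp}(r)}).
$$
Expanding $\rho$ gives three kinds of terms.
\begin{itemize}
\item Diagonal terms $\psi_{S_\emptyset, r}^g(\Frob_{\mathrm{AUp}(r)})$ for $r = r_i$ sum to $\Delta_C(r_i)$.
\item Cross terms $\psi_{S_\emptyset, r_i}^g(\Frob_{\mathrm{AUp}(r_j)})$ with $i \neq j$ are symmetrized by quadratic (or quartic) reciprocity in $F$. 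Since $\zeta_4 \in F$ and $q \equiv 1 \bmod 8$, Hilbert reciprocity applied to $\psi_{S_\emptyset,r_i}^g \cup \chi_{S_\emptyset,r_j}$ shows that the pair $(i,j)$ together with $(j,i)$ combine into $\inv_{\mathrm{AUp}(r_j)}(\chi^g_{S_\emptyset,r_i} \cup \chi_{S_\emptyset,r_j})$, plus contributions at $S_\emptyset$ and $S(a')$ that are controlled by the $L_{i,v}$.
\item Terms involving $a'$ and $\eta$ are functions only of the restrictions to $S(a') \cup S_\emptyset$, hence of the $L_{i,v}$ together with $\mathbf{a}$ and $\mathbf{b}$. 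Here condition 4 pins down $\eta$ through $\phi(\Frob_v)$ on $S_\emptyset$, because the Frobenii at $S_\emptyset$ generate $\Gal(H_4(F)/F)$.
\end{itemize}
All such terms are collected into $\lambda_{\mathbf{a},\mathbf{b},a',S_\emptyset}$.

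\textbf{Main obstacle.} The hard part is the bookkeeping in the last point: checking that $\eta$ and $\xi_{S_\emptyset}(\eta)$, and every leftover carry term, depend only on $(\mathbf{a},\mathbf{b},(L_{i,v}))$ and not on the $r_i$ themselves. I would handle this by noting that $\eta$ is determined by its values on Frobenii at $S_\emptyset$. Those values are $\mathrm{pr}_v(\mathbf{a})$ minus $\sum_r \chi_{S_\emptyset,r}(\Frob_v)$, and the latter is read off from the $L_{i,v}$. Also, the choice of $\xi$ only changes $\rho$ by a character unramified outside $S_\emptyset$, and its contribution is again local at $S_\emptyset$.
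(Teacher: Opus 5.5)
Your route is the paper's. You swap to $[\phi,\phi,\beta]_{F(C)}$ and write $\phi=\eta+\sum_r\chi_{S_\emptyset,r}$ with $\eta$ unramified outside $S_\emptyset$. You lift to the $\Z/4\Z$-character $\psi:=\xi_{S_\emptyset}(\eta)+\sum_r\psi_{S_\emptyset,r}$ with $2\psi=\phi$ and set $\rho:=\psi-\frac{\phi}{4}$. You then apply Proposition~\ref{prop: trivialization in general3}, using that at $v\in\mathrm{Ram}(\beta)$ the character $\phi$ is locally trivial and $\psi$ is unramified, so $\inv_v(\rho\cup\beta)=\psi(\Frob_v)$.

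Two small points on presentation. Step 1 is cleaner phrased this way than with explicit pointwise carries $\chi_{S_\emptyset,r}\chi_{S_\emptyset,r'}$, which are not additive over places. Passing from $g\,\mathrm{AUp}(r)$ to $\mathrm{AUp}(r)$ is just $\psi(\Frob_{g\,\mathrm{AUp}(r)})=\psi^g(\Frob_{\mathrm{AUp}(r)})$, not Proposition~\ref{prop: conjugating Redei}. The real problem is the symmetrization of the cross terms, which fails as you state it.

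\textbf{The cross-term step.} You apply Hilbert reciprocity to $\psi^g_{S_\emptyset,r_i}\cup\chi_{S_\emptyset,r_j}$. With coefficients $\Z/4\Z\otimes\Z/2\Z=\Z/2\Z$, this class only sees $\psi^g_{S_\emptyset,r_i}\bmod 2=\chi^g_{S_\emptyset,r_i}$. So it equals $\chi^g_{S_\emptyset,r_i}\cup\chi_{S_\emptyset,r_j}$, and reciprocity only gives quadratic reciprocity.

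Put $x:=\psi^g_{S_\emptyset,r_i}(\Frob_{\mathrm{AUp}(r_j)})$ and $y:=\psi^g_{S_\emptyset,r_j}(\Frob_{\mathrm{AUp}(r_i)})$ in $\frac14\Z/\Z$. Quadratic reciprocity pins down only $2x-2y$. Hence $x+y=2x+(y-x)$ is known only modulo $\frac12\Z/\Z$, which is exactly the bit the identity is about.

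\textbf{The fix.} Use the quartic class $\psi^g_{S_\emptyset,r_i}\cup\psi_{S_\emptyset,r_j}$, with values in $\Z/4\Z\otimes\Z/4\Z\cong\mu_4$ (valid because $\mu_4\subseteq\FF_q^\ast$). Outside $S_\emptyset$ it has nonzero invariants only at $\mathrm{AUp}(r_j)$ and $g\,\mathrm{AUp}(r_i)$, where exactly one factor ramifies. With the $\zeta$-normalization of the $\psi_{S_\emptyset,v}$ and graded commutativity of the cup product, these invariants are $x$ and $-y$. The places of $S_\emptyset$ contribute a constant $D$ determined by the local data. Hence $x=D+y$, and so
\[
x+y=D+2x=D+\chi^g_{S_\emptyset,r_i}(\Frob_{\mathrm{AUp}(r_j)})=D+\inv_{\mathrm{AUp}(r_j)}\bigl(\chi^g_{S_\emptyset,r_i}\cup\chi_{S_\emptyset,r_j}\bigr).
\]
The last equality holds because $\chi^g_{S_\emptyset,r_i}$ is unramified and $\chi_{S_\emptyset,r_j}$ is ramified at $\mathrm{AUp}(r_j)$. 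This genuinely quartic input cannot be replaced by its quadratic shadow.

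\textbf{Your third bullet.} The same quartic pairing is what makes it work. The terms $\xi_{S_\emptyset}(\eta)(\Frob_{g\,\mathrm{AUp}(r_i)})$ and $\psi^g_{S_\emptyset,w}(\Frob_{\mathrm{AUp}(r_i)})$ for $w\mid a'$ depend on $r_i$ a priori. Pairing them against $\psi_{S_\emptyset,r_i}$ and applying reciprocity turns them into local data at $S_\emptyset\cup S(a')$; this is why the $L_{i,v}$ are prescribed at $S(a')$. Knowing only that $\eta$ is determined by local data is not enough.
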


\begin{proof}
Our first step is to apply Theorem \ref{theorem: Redei reciprocity} and rewrite our R\'edei symbol as
$$
\left[ \sum_{g \in G(C) - \{1\}} \phi^g, \phi, \phi \right]_{F(C)} = \left[ \phi, \phi, \sum_{g \in G(C) - \{1\}} \phi^g \right]_{F(C)}.
$$
We introduce
$$
\chi := \phi + \sum_{v \mid a'r_1r_2r_3} \chi_{S_\emptyset, v},
$$
and
$$
\psi := \xi_{S_\emptyset}(\chi) + \sum_{v \mid a'r_1r_2r_3} \psi_{S_\emptyset, v}.
$$
We claim that $\xi_{S_\emptyset}(\chi)$ is entirely determined by $a'$, $(L_{i, v})_{i \in [3], v \in S_\emptyset}$ and $\mathbf{a}$. Indeed, the restriction of $\chi$ locally at $S_\emptyset$ is certainly determined by this data. This then uniquely pins down $\chi$ thanks to $\Sha^1(G_{F(C), S_\emptyset}, \FF_2) = 0$ and the fact that $\chi$ is unramified outside of $S_\emptyset$ by Proposition \ref{normalized expansion maps are unramified}$(a)$.

Observe that $2 \cdot \psi = \phi$. Invoking Proposition \ref{prop: description of N[4] cocycles}, we can write $\psi = \frac{\phi}{4} + \rho$, where $\rho$ is a $\mathbb{F}_2$-valued continuous cochain satisfying the equation
$$
\mathrm{d} \rho = \phi \cup \phi.
$$
From what we have just shown, we see that the restriction of $\rho$ to $G_{F(C)_v}$ (with $v \in S_\emptyset$) is entirely determined by $a'$, $(L_{i, v})_{i \in [3], v \in S_\emptyset}$ and $\mathbf{a}$. 

Define $T$ to be the set of places of $F(C)$ at which $\sum_{g \in G(C) - \{1\}} \phi^g$ ramifies. We now invoke Proposition \ref{prop: trivialization in general3} and obtain 
$$
\left[ \phi, \phi, \sum_{g \in G(C) - \{1\}} \phi^g \right]_{F(C)} = \sum_{v \in T} \inv_v \left(\rho \cup \sum_{g \in G(C) - \{1\}} \phi^g \right) + \sum_{g \in G(C) - \{1\}} \sum_{v \in S_\emptyset} \rho(\sigma_v) \cdot \phi^g(\Frob_v).
$$
By construction, we have that for each $v \in T$, the quartic character $\psi$ is unramified at $v$ and furthermore $\phi(\Frob_v) = 0$. Hence we can rewrite for each such place $v \in T$
$$
\inv_v \left( \rho \cup \sum_{g \in G(C) - \{1\}} \phi^g \right) = \rho(\Frob_v) = \psi(\Frob_v).
$$
All in all we can rewrite
$$
\sum_{v \in T} \inv_v \left( \rho \cup \sum_{g \in G(C) - \{1\}} \phi^g \right) = \sum_{v \in T} \psi(\Frob_v).
$$
We unfold the definition of $\psi$ and $T$ to rewrite the right hand side as
$$
\sum_{v \in T} \xi_{S_\emptyset}(\chi)(\Frob_v) + \sum_{v \in T} \sum_{w \mid a'r_1r_2r_3} \psi_{S_\emptyset, w}(\Frob_v).
$$
Fixing $w \mid a'r_1r_2r_3$, we have the identity
$$
\sum_{v \in T} \psi_{S_\emptyset, w}(\Frob_v) = \sum_{u \mid a'r_1r_2r_3} \sum_{g \in G(C) - \{1\}} \psi_{S_\emptyset, w}^g(\Frob_{\text{AUp}(u)}),
$$
where for $\psi_0 \in \Gamma_{\Z/4\Z}(F(C))$ and $g \in G(C)$, we have set $\psi_0^g(\tau) := \psi_0(\tilde{g}^{-1} \tau \tilde{g})$ for any lift $\tilde{g}$ of $g$ to $G_{\mathbb{F}_q(T)}$ (this is independent of the choice of lift $\tilde{g}$).

Let $g \in G(C) - \{1\}$. Since $q \equiv 1 \bmod 8$, we apply Hilbert reciprocity to the cocycle $\psi_{S_\emptyset, w}^g \cup \psi_{S_\emptyset, u}$ and derive that for all distinct $u, w \mid r_1r_2r_3$
$$
\psi_{S_\emptyset,w}^g(\Frob_{\text{AUp}(u)}) = D + \psi_{S_\emptyset, u}^{g}(\Frob_{\text{AUp}(w)}),
$$
where the constant $D$ depends merely on $(L_{i, v})_{i \in [3], v \in S_\emptyset}$. All in all, we get that up to a function of the form $\lambda_{\mathbf{a}, \mathbf{b}, a', S_\emptyset}$, we can rewrite
$$
\left[ \phi, \phi, \sum_{g \in G(C) - \{1\}} \phi^g \right]_{F(C)} - \lambda_{\mathbf{a}, \mathbf{b}, a', S_\emptyset}((L_{i, v})_{i \in [3], v \in S_\emptyset \cup S(a')}) - \sum_{1 \leq i \leq 3} \Delta_C(r_i)
$$
as
$$
\sum_{1 \leq i < j \leq 3} \sum_{g \in G(C) - \{1\}} 2 \cdot \psi_{S_\emptyset, \mathrm{AUp}(r_i)}^g(\Frob_{\text{AUp}(r_j)}) = \sum_{1 \leq i<j \leq 3} \sum_{g \in G(C) - \{1\}} \chi_{S_\emptyset, r_i}^g(\Frob_{\text{AUp}(r_j)}).
$$
Considering that the character $\chi_{S_\emptyset, r_i}^g$ is unramified at all places above $r_j$ for each $g \in G(C) - \{1\}$, and that $\chi_{S_\emptyset, r_j}$ ramifies at $\text{AUp}(r_j)$ by definition, we obtain that
$$
\chi_{S_\emptyset, r_i}^g(\Frob_{\text{AUp}(r_j)}) = \inv_{\text{AUp}(r_j)}(\chi_{S_\emptyset, r_i}^g \cup \chi_{S_\emptyset, r_j}),
$$
as desired.
\end{proof}

Our next result takes care of computing the symbols that detect when a cube satisfies $(D)$.

\begin{theorem} 
\label{theorem: profitability}
Let $h \in \{1, -1\}$. Let $s \in \Z_{\geq 2}$ and let $\mathbf{t} \in N[2^{s + 1}]^{[r + 1]}$. For each $k_1, k_2 \in \{1, 2\}$, let  
$$
C(k_1, k_2) := \left(\prod_{i = 1}^s \{\pi_i(1), \pi_i(2)\}\right) \times \{r_{k_1}\} \times \{s_{k_2}\} \times \{a'\} \times \{d\}
$$
be a pre-acceptable, profitable cube of type $\mathbf{t}$. Set 
$$
S := \{\pi_i(1) \pi_i(2) : i \in [s]\}, \quad \quad \rho_i := \phi_{C_i; \pi_i(1) \pi_i(2)}(\mathfrak{G}).
$$
Define $C(k_1, k_2)_i$ to be the sub-cube of $C(k_1, k_2)$ consisting of elements projecting to $\pi_i(2)$ and define $C_i$ the resulting projection on the coordinates in $[s] - \{i\}$ (which does not depend on $k_1$ or $k_2$). Then $\phi_{C_i; r_1r_2}(\mathfrak{G})$ and $\phi_{C_i; s_1s_2}(\mathfrak{G})$ exist and the triples
$$
\left(\sum_{g \in G(C_i) - \{1\}} \phi_{C_i; r_1r_2}(\mathfrak{G})^g, \phi_{C_i; s_1s_2}(\mathfrak{G}), \rho_i\right), \quad \left(\sum_{g \in G(C_i) - \{1\}} \phi_{C_i; s_1s_2}(\mathfrak{G})^g, \phi_{C_i; r_1r_2}(\mathfrak{G}), \rho_i\right)
$$
are R\'edei admissible. Moreover, setting $\phi(k_1, k_2)_i := \phi_{S; a' r_{k_1} s_{k_2}}(\mathfrak{G}) + \phi_{[s]}(C(k_1, k_2); \mathbf{t})$, we have
\begin{multline*}
\sum_{k_1, k_2 \in \{1, 2\}} \left(\sum_{x \in C(k_1, k_2)_i} \mathrm{sg}(x) \psi_{s + 1}(x)(\Frob_{\pi_i(1)}) + \phi(k_1, k_2)_i(\Frob_{\pi_i(1)}(\chi_{x_0}))\right) = \\
\left[\sum_{g \in G(C_i) - \{1\}} \phi_{C_i; r_1r_2}(\mathfrak{G})^g, \phi_{C_i; s_1s_2}(\mathfrak{G}), \rho_i\right] + \left[\sum_{g \in G(C_i) - \{1\}} \phi_{C_i; s_1s_2}(\mathfrak{G})^g, \phi_{C_i; r_1r_2}(\mathfrak{G}), \rho_i\right].
\end{multline*}
\end{theorem}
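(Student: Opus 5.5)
The plan is to evaluate each of the four bracketed expressions with equation \eqref{eGreatPi} from Proposition \ref{prop: sums of psi trivializes alpha alpha bar}(i), and then sum over $(k_1,k_2)$. That proposition only needs pre-acceptability in part (i); the computation of $\Pi_{\pi_i(1)}$ there uses only $(A1)$, property $(T)$, and the fact that $\psi_{s+2}(x)$ exists for $x \ne x_0$. It therefore applies to each $C(k_1,k_2)$. So each summand equals $\Pi_{\pi_i(1)}(\chi_{x_0(k_1,k_2)})(\psi_{s+1}(x_0(k_1,k_2)))$, where $\psi_{s+1}(x_0(k_1,k_2))$ is the cocycle built in \eqref{ex0}. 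The left-hand side is thus a sum of four local invariants $\inv_{\pi_i(1)}(\theta(\cdot))$ at the same place, taken over a ``square'' in the $r,s$ directions.

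\textbf{Step 1: collapse the square.} Summing \eqref{ex0} over $k_1,k_2$ with signs, the contributions of $\phi_{[s]}(C(k_1,k_2);\mathbf{t})$ cancel in pairs, since they do not depend on $(k_1,k_2)$. By the additivity of expansion maps (Proposition \ref{pExpansionAdditive}, in the pointer entry), the sum $\sum_{k_1,k_2}\phi_{S;a'r_{k_1}s_{k_2}}(\mathfrak{G})$ is a combination of expansion maps pointed at $r_1r_2$ and $s_1s_2$ and a genuinely ``second order'' piece. I would show, mimicking Theorem \ref{tCocyComp} with the two extra coordinates $r,s$ as additional cube directions, that the relevant combination trivializes a cup product $\phi_{C_i;r_1r_2}\cup\phi_{C_i;s_1s_2}$ on $G_{F(C_i)}$, with corrections given by the conjugates. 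This should be packaged by a $\hat\phi$-map in the sense of Section \ref{Section: phi hat}. Existence of $\phi_{C_i;r_1r_2}(\mathfrak{G})$ and $\phi_{C_i;s_1s_2}(\mathfrak{G})$ follows from Proposition \ref{pExpansionAdditive} applied to the existing maps $\phi_{S;a'r_{k_1}s_{k_2}}$ (after dropping index $i$ via profitability), together with Proposition \ref{inductively construct expansion maps}.

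\textbf{Step 2: admissibility.} $\rho_i$ ramifies only above $\pi_i(1)\pi_i(2)$ and is locally trivial above $r_k,s_k$ and at $\infty$ by profitability. The conjugate sums $\sum_{g\ne 1}\phi_{C_i;r_1r_2}^g$ ramify only at non-$\mathrm{AUp}$ places above $r_1r_2$, by Proposition \ref{normalized expansion maps are unramified}(a). Their cup products vanish globally by the same cocycle identity as in Proposition \ref{prop: sums of psi trivializes alpha alpha bar}(iii): $\sum_g\phi^g\cup\phi'$ is a coboundary of a $\hat\phi$-type cochain, and $\phi\cup\rho_i$ is killed by local splitting plus Hilbert reciprocity. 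This is checked place by place, as in Definition \ref{def: Redei admissible}.

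\textbf{Step 3: identify invariants with R\'edei symbols.} With the trivializing cochain $\hat\rho$ of $\sum_g\phi_{C_i;r_1r_2}^g\cup\phi_{C_i;s_1s_2}$ from Step 1, Proposition \ref{prop: trivialization in general} writes the first symbol as $\sum_{v\in\mathrm{Ram}(\rho_i)}\inv_v(\hat\rho\cup\rho_i)$. These places lie above $\pi_i(1),\pi_i(2)$. By the argument of \eqref{eArtRewrite3}, the local cup reduces to evaluating $\hat\rho$ at $\Frob_{\pi_i(1)}(\chi_{x_0})$, using that the Tate pairing is alternating since $q\equiv1\bmod 4$. The places above $\pi_i(2)$ contribute nothing, because the cocycles there are normalized to vanish on $\sigma_{\pi_i(2)}$. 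The second symbol, with $r$ and $s$ swapped, comes from the symmetric half of the cup-product decomposition, which splits into two terms as in the proof of Theorem \ref{theorem: Redei reciprocity}.

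\textbf{Main obstacle.} I expect the hardest part to be Step 1: producing the explicit trivialization of the second-order combination and verifying that its value at $\Frob_{\pi_i(1)}(\chi_{x_0})$ matches the summed $\Pi$-values. This requires tracking the carry terms from Proposition \ref{prop: description of N[4] cocycles} and the type-$\mathbf{t}$ normalizations.
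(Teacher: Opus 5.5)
There is a genuine gap in your Step 1, which is where the whole content of the theorem lies; you flag it yourself as the main obstacle. Routing through \eqref{eGreatPi} and \eqref{ex0} makes no progress. By additivity one has $\sum_{k_1,k_2}\phi(k_1,k_2)_i=0$ exactly, so there is no ``second order piece'' in $\sum_{k_1,k_2}\phi_{S;a'r_{k_1}s_{k_2}}(\mathfrak{G})$. The vertices outside $C(k_1,k_2)_i$ contribute nothing at $\pi_i(1)$, because $\psi_{s+2}(x)$ exists. So summing \eqref{ex0} over the square returns you to the left-hand side of the theorem.

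What is missing is the cochain that turns this left-hand side into R\'edei symbols. It must be built from the given cocycles $\psi_{s+1}(x)$, not from expansion maps. The paper proceeds as follows.
\begin{itemize}
\item All $\chi_x$ with $x\in C(k_1,k_2)_i$ agree on $G_{F(C_i)}$. Hence $\tilde\psi(k_1,k_2)_i=\sum_{x\in C(k_1,k_2)_i}\mathrm{sg}(x)\psi_{s+1}(x)$ is a cocycle on $G_{F(C_i)}$ with values in an $N_h[4]$-module.
\item By Theorem \ref{tCocyComp} and $(T)$, its double is $\phi'_{k_1,k_2}:=\phi_{C_i;a'r_{k_1}s_{k_2}}(\mathfrak{G})+\phi_{[s]-\{i\}}(C(k_1,k_2);\mathbf{t})$.
\item Proposition \ref{prop: description of N[4] cocycles} writes $\tilde\psi(k_1,k_2)_i=\gamma(k_1,k_2)_i+\frac{\phi'_{k_1,k_2}}{4}$. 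The carry term gives $\mathrm{d}\gamma(k_1,k_2)_i=(\chi+\sum_{g\neq1}(\phi'_{k_1,k_2})^g)\cup\phi'_{k_1,k_2}$, with $\chi$ independent of $(k_1,k_2)$. This is the only source of anything quadratic.
\item The four coefficient modules are different twists over $G_{F(C_i)}$, so you cannot add the $\tilde\psi$'s. You can add the $\mathbb{F}_2$-valued $\gamma$'s. The identity $\sum_{k_1,k_2}(\chi_1+k_1t_1+k_2t_2)\cup(\chi_2+k_1u_1+k_2u_2)=t_1\cup u_2+t_2\cup u_1$ then shows that $\sum_{k_1,k_2}\gamma(k_1,k_2)_i$ trivializes exactly the sum of the two cup products in the statement.
\item Local triviality of $\phi'_{k_1,k_2}$ above $\pi_i(1)$ gives $\tilde\psi(\Frob_{\pi_i(1)})=\gamma(\Frob_{\pi_i(1)})$. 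Once the places above $\pi_i(2)$ are handled, Proposition \ref{prop: trivialization in general2} finishes.
\end{itemize}

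The substitute you propose, a $\hat\phi$-map, cannot fill this role. The maps of Section \ref{Section: phi hat} require disjoint index sets, whereas here both would be $C_i$. They trivialize $\phi_A\cup\phi_B$, not $\sum_{g\neq1}\phi^g\cup\phi'$. Being built only from expansion maps, they also carry no information about the Frobenius values of the $\psi_{s+1}(x)$ that make up the left-hand side. In the paper they appear only in the next theorem, to evaluate the R\'edei symbols. For admissibility in your Step 2 no trivializing cochain is needed at all: \eqref{ePhiTriviali}, $(A3)$ and profitability give it place by place.

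Your treatment of $\pi_i(2)$ is also incorrect. For $x\in C(k_1,k_2)_i$ the cocycles $\psi_{s+1}(x)$ are ramified at $\pi_i(2)$, not normalized to vanish there. Even an unramified cochain would still contribute its Frobenius value to $\inv_{w(2)}(\cdot\cup\rho_i)$, since $\rho_i$ ramifies at $w(2)=\mathrm{AUp}(\pi_i(2))$. The paper disposes of this place in two steps. First, $(T)$ makes $\sum_{k_1,k_2}\gamma(k_1,k_2)_i$ unramified above $\pi_i(2)$. Second, the vanishing of $\Pi_{\pi_i(2)}(\chi_x)(\psi_{s+1}(x))$, which follows from the existence of $\psi_{s+2}(x)$, kills the Frobenius value; see \eqref{eLocalGamma}.
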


\begin{proof}
The character $\phi_{C_i; a'r_{k_1} s_{k_2}}(\mathfrak{G})$ is locally trivial at the places above $\pi_i(1)$ and $\pi_i(2)$, i.e.
\begin{equation}
\label{ePhiTriviali}
\res_w(\phi_{C_i; a'r_{k_1} s_{k_2}}(\mathfrak{G})) = 0 \quad \quad \text{ for } w \in \Omega_{F(C_i)} \text{ with } w \mid \pi_i(1) \pi_i(2)
\end{equation}
due to part $(A1)$ and $(A2)$ of Definition \ref{def: acceptable cube} combined with Proposition \ref{inductively construct expansion maps}: these contain all places where $\rho_i$ ramifies by Proposition \ref{normalized expansion maps are unramified}. Using $(A3)$, we also find that
$$
\res_w(\phi_{C_i; r_1 r_2}(\mathfrak{G})) = 0 \quad \quad \text{ for } w \in \Omega_{F(C_i)} \text{ with } w \mid s_1s_2
$$
and vice versa (with the roles of $r_1r_2$ and $s_1s_2$ reversed). Hence the first part of the theorem is a direct consequence of the fact that $C(k_1, k_2)$ is profitable.

It remains to prove the last part of the theorem. Define $x_0(k_1, k_2) = r_{k_1} s_{k_2} a' d \prod_{i = 1}^s \pi_i(1)$. We claim that
$$
\tilde{\psi}(k_1, k_2)_i := \sum_{x \in C(k_1, k_2)_i} \mathrm{sg}(x) \psi_{s + 1}(x)
$$
restricts to an element of $Z^1(G_{F(C_i)}, N_h(\chi_{x_0(k_1, k_2) \pi_i(1) \pi_i(2)})[4])$ with
\begin{equation}
\label{eDoubleTildePsi}
2 \cdot \tilde{\psi}(k_1, k_2)_i = \phi_{C_i; a'r_{k_1} s_{k_2}}(\mathfrak{G}) + \phi_{[s] - \{i\}}(C(k_1, k_2); \mathbf{t}).
\end{equation}
To see this, simply observe that all of the restricted characters $\res_{G_{F(C_i)}} \chi_x$ equal the character $\chi_{x_0(k_1, k_2) \pi_i(1) \pi_i(2)}$ for all $x \in C(k_1, k_2)_i$. Therefore the $G_{F(C_i)}$-modules given by $N_h(\chi_x)$, with $x$ in $C(k_1, k_2)_i$, are all isomorphic to $N_h(\chi_{x_0(k_1, k_2) \pi_i(1) \pi_i(2)})$. This shows that 
$$
\tilde{\psi}(k_1, k_2)_i \in Z^1(G_{F(C_i)}, N_h(\chi_{x_0(k_1, k_2) \pi_i(1) \pi_i(2)})). 
$$
On the other hand, arguing as in the proof of Proposition \ref{prop: sums of psi trivializes alpha alpha bar} (using property $(T)$ and Theorem \ref{tCocyComp}), we obtain
$$
2 \cdot \tilde{\psi}(k_1, k_2)_i = \phi_{C_i; a'r_{k_1} s_{k_2}}(\mathfrak{G}) + \phi_{[s] - \{i\}}(C(k_1, k_2); \mathbf{t}),
$$
thus proving the claim \eqref{eDoubleTildePsi}.

Invoking Proposition \ref{prop: description of N[4] cocycles}, we expand 
$$
\tilde{\psi}(k_1, k_2)_i = \gamma(k_1, k_2)_i + \frac{\phi_{C_i; a'r_{k_1} s_{k_2}}(\mathfrak{G}) + \phi_{[s] - \{i\}}(C(k_1, k_2); \mathbf{t})}{4},
$$
where $\gamma(k_1, k_2)_i$ is an $\mathbb{F}_2$-valued continuous $1$-cochain for $G_{F(C_i)}$ such that
\begin{multline}
\label{ePhiiTriv}
\hspace{-0.3cm} \mathrm{d} (\gamma(k_1, k_2)_i) = \bigg(\chi_{\pi_1(1) \cdots \pi_i(2) \cdots \pi_s(1) d \epsilon_{\mathbb{F}_q}^{\frac{1 - h}{2}}} + \hspace{-0.1cm} \sum_{g \in G(C_i) - \{1\}} (\phi_{C_i; a'r_{k_1} s_{k_2}}(\mathfrak{G})^g + \phi_{[s] - \{i\}}(C(k_1, k_2); \mathbf{t})^g) \bigg) \\
\cup \bigg( \phi_{C_i; a'r_{k_1} s_{k_2}}(\mathfrak{G}) + \phi_{[s] - \{i\}}(C(k_1, k_2); \mathbf{t}) \bigg).
\end{multline}
Remark \ref{remark: degenerate phi-maps} shows that
\begin{equation}
\label{ePhiCoset}
\sum_{k_1, k_2 \in \{1, 2\}} \phi(k_1, k_2)_i = 0.
\end{equation}
By applying equation \eqref{ePhiCoset} plugged in at $\Frob_{\pi_i(1)}(\chi_{x_0(k_1, k_2)}) = \Frob_{\pi_i(1)}(\chi_{x_0(1, 1)})$, we can now write
\begin{multline}
\sum_{k_1, k_2 \in \{1, 2\}} \left(\sum_{x \in C(k_1, k_2)_i} \mathrm{sg}(x) \psi_{s + 1}(x)(\Frob_{\pi_i(1)}) + \phi(k_1, k_2)_i(\Frob_{\pi_i(1)}(\chi_{x_0(k_1, k_2)}))\right) \\
= \sum_{k_1, k_2 \in \{1, 2\}} \tilde{\psi}(k_1, k_2)_i(\Frob_{\pi_i(1)}). \label{ew1I}
\end{multline}
Using equation \eqref{ePhiTriviali} and the profitable assumption, we deduce that
\begin{equation}
\label{ew2II}
\tilde{\psi}(k_1, k_2)_i(\Frob_{\pi_i(1)}) = \gamma(k_1, k_2)_i(\Frob_{\pi_i(1)}).
\end{equation}
Moreover, setting $x_0'(k_1, k_2) := x_0(k_1, k_2) \pi_i(1) \pi_i(2)$, we have that 
\begin{align}
0 &= \sum_{x \in C(k_1, k_2)_i} \mathrm{sg}(x) \cdot \bigg( \psi_{s + 1}(x)\big(\Frob_{\pi_i(2)}\big(\chi_{x_0'(k_1, k_2)}\big)\big) + \frac{(hq_0)^{\deg(\pi_i(2))} - 1}{2} \cdot \psi_{s + 1}(x)(\sigma_{\pi_i(2)}) \bigg) \nonumber \\ 
&= \gamma(k_1, k_2)_i\left(\Frob_{\pi_i(2)}\left(\chi_{x_0'(k_1, k_2)}\right)\right). \label{eLocalGamma}
\end{align}
The first equality follows at once from the existence of $\psi_{s + 2}(x)$ and the condition $(A1)$ of Definition \ref{def: acceptable cube}, which tells us that $\Frob_{\pi_i(2)}(\chi_x)$ stays constant as $x$ varies in $C(k_1, k_2)_i$. The second equality follows by combining the following facts: 
\begin{itemize}
    \item Firstly, $\deg(\pi_i(2))$ is even and therefore $\frac{(hq_0)^{\deg(\pi_i(2))} - 1}{2}$ is divisible by $4$. Recalling that $\tilde{\psi}(k_1, k_2)_i$ takes values in $N[4]$, we may now remove the $\sigma_{\pi_i(2)}$ term.
    \item Secondly, invoking equation \eqref{ePhiTriviali} and the profitable assumption yields the equality
$$
\frac{(\phi_{C_i; a'r_{k_1} s_{k_2}}(\mathfrak{G}) + \phi_{[s] - \{i\}}(C(k_1, k_2); \mathbf{t}))\left(\Frob_{\pi_i(2)}\left(\chi_{x_0'(k_1, k_2)}\right)\right)}{4} = 0.
$$
\end{itemize} 
Let now $w(k)$ be the unique place above $\pi_i(k)$ where $\rho_i$ ramifies, i.e.~$w(k) = \mathrm{AUp}(\pi_i(k))$ by Proposition \ref{normalized expansion maps are unramified}. By $(T)$, we conclude that $\sum_{k_1, k_2 \in \{1, 2\}} \gamma(k_1, k_2)_i$ is unramified at $\pi_i(2)$, and hence $\sum_{k_1, k_2 \in \{1, 2\}} \gamma(k_1, k_2)_i$ is the trivial character locally at $\pi_i(2)$ by equation \eqref{eLocalGamma}. Therefore we conclude that
\begin{equation}
\label{eVanishingw2}
\sum_{k_1, k_2 \in \{1, 2\}} \inv_{w(2)}(\gamma(k_1, k_2)_i \cup \rho_i) = 0.
\end{equation}
All in all, we deduce from equations \eqref{ew1I}, \eqref{ew2II} and \eqref{eVanishingw2}
\begin{multline}
\sum_{k_1, k_2 \in \{1, 2\}} \left(\sum_{x \in C(k_1, k_2)_i} \mathrm{sg}(x) \psi_{s + 1}(x)(\Frob_{\pi_i(1)}) + \phi(k_1, k_2)_i(\Frob_{\pi_i(1)}(\chi_{x_0(k_1, k_2)}))\right) \\
= \sum_{k_1, k_2 \in \{1, 2\}} \sum_{w \in \text{Ram}(\rho_i)} \inv_w(\gamma(k_1, k_2)_i \cup \rho_i), \label{eSumPsiToTilde}
\end{multline}
since $\rho_i$ ramifies only at $w(1)$ and $w(2)$. 

Observe that 
\begin{equation}
\label{eTrivFact1}
\sum_{k_1, k_2 \in \{1, 2\}} \gamma(k_1, k_2)_i \in C^1(G_{F(C_i), S}, \mathbb{F}_2)
\end{equation}
where $S$ is the set of places dividing $r_1r_2s_1s_2$ and $\infty$. Let $S'$ be the union of $S$ with $\mathrm{AUp}(\pi_i(1))$ and $\mathrm{AUp}(\pi_i(2))$.

We also have a general identity
$$
\sum_{k_1, k_2 \in \FF_2} (\chi_1 + k_1t_1 + k_2t_2) \cup (\chi_2 + k_1u_1 + k_2u_2) = t_1 \cup u_2 + t_2 \cup u_1
$$
valid for all $\chi_1, \chi_2, t_1, t_2, u_1, u_2 \in \Hom(G_{F(C_i)}, \mathbb{F}_2)$. Thus, summing the terms in \eqref{ePhiiTriv}, we get
\begin{multline}
\label{eTrivFact2}
\mathrm{d}\left(\sum_{k_1, k_2 \in \{1, 2\}} \gamma(k_1, k_2)_i\right) = \left(\sum_{g \in G(C_i) - \{1\}} \phi_{C_i; r_1r_2}(\mathfrak{G})^g\right) \cup \phi_{C_i; s_1s_2}(\mathfrak{G}) + \\
\left(\sum_{g \in G(C_i) - \{1\}} \phi_{C_i; s_1s_2}(\mathfrak{G})^g\right) \cup \phi_{C_i; r_1r_2}(\mathfrak{G}),
\end{multline}
where we used the general identity (which applies thanks to Proposition \ref{pExpansionAdditive}).

For each place $v$ of $S'$ not dividing $\pi_i(1) \pi_i(2)$ we have that $\rho_i$ is trivial locally at $v$, since $C$ is a profitable cube. For the places $v$ of $S'$ dividing $\pi_i(1) \pi_i(2)$, we have already proven that $\sum_{k_1, k_2 \in \{1, 2\}} \gamma(k_1, k_2)_i$ is an unramified quadratic character. Combining \eqref{eSumPsiToTilde}, \eqref{eTrivFact1} and \eqref{eTrivFact2} with Proposition \ref{prop: trivialization in general2} yields the theorem.
\end{proof}

Our next theorem plays for Theorem \ref{theorem: profitability} the same role that Theorem \ref{theorem: Applying quartic reciprocity} played for Theorem \ref{theorem: reflection principle}, allowing us to split the relevant R\'edei symbol in terms of a factorization of $a := a' \cdot r_1 \cdot r_2$. In this analogy, the role played by auxiliary $\Z/4\Z$-expansion maps to break bilinearly the relevant R\'edei symbol will be played by auxiliary $\hat{\phi}$-expansion maps. 

\begin{theorem} 
\label{theorem: splitting the Redei symbol}
Let $s \in \Z_{\geq 2}$. Let $r_1, r_2, s_1, s_2$ be irreducible polynomials. Let 
$$
C := \{\pi_1(1), \pi_1(2)\} \times \dots \times \{\pi_s(1), \pi_s(2)\}.
$$
Assume that the set $\{\pi_1(1), \pi_1(2), \dots, \pi_s(1), \pi_s(2), r_1, r_2, s_1, s_2\}$ consists of $2s + 4$ distinct monic even degree irreducible polynomials. Let $i$ be an element of $[s]$.  

Assume that the following conditions are simultaneously satisfied:
\begin{enumerate}
\item The map $\rho_i := \phi_{C_i; \pi_i(1)\pi_i(2)}(\mathfrak{G})$ exists. Moreover, the infinite place $\infty$ and $r_1, r_2, s_1, s_2$ split completely in $L(\rho_i)/\FF_q(T)$. The maps $\phi_{C; r_1r_2}(\mathfrak{G})$ and $\phi_{C; s_1s_2}(\mathfrak{G})$ exist and are locally trivial at $\infty$. 
\item For each disjoint proper subsets $S_1, S_2 \subseteq [s] - \{i\}$ we have that the map
$$
\hat{\phi}_{\{\pi_{j_1}(1)\pi_{j_1}(2) : j_1 \in S_1\}; \{\pi_{j_2}(1)\pi_{j_2}(2) : j_2 \in S_2\}; \pi_i(1)\pi_i(2), r_1r_2}(\mathfrak{G})
$$ 
exists.
\item The primes $\pi_i(1)$ and $\pi_i(2)$ split completely in $$L(\phi_{C_i; r_1r_2}(\mathfrak{G})) \cdot L(\phi_{C_i; s_1s_2}(\mathfrak{G}))/\mathbb{F}_q(T), 
$$
the primes $r_1,r_2$ split completely in $L(\phi_{C_i; s_1s_2}(\mathfrak{G}))/\FF_q(T)$ and the primes $s_1,s_2$ split completely in $L(\phi_{C_i; r_1r_2}(\mathfrak{G}))/\FF_q(T)$.
\end{enumerate}
Then the triples 
$$\left(\sum_{g \in G(C_i) - \{1\}} \phi_{C_i;r_1r_2}(\mathfrak{G})^g, \phi_{C_i;s_1s_2}(\mathfrak{G}), \rho_i \right), \quad \left(\sum_{g \in G(C_i) - \{1\}} \phi_{C_i;s_1s_2}(\mathfrak{G})^g, \phi_{C_i;r_1r_2}(\mathfrak{G}), \rho_i \right)
$$
are R\'edei admissible over $F(C_i)$ and moreover
\begin{multline*}
\big[\sum_{g \in G(C_i) - \{1\}} \hspace{-0.3cm} \phi_{C_i;r_1r_2}(\mathfrak{G})^g, \phi_{C_i;s_1s_2}(\mathfrak{G}), \rho_i\big]_{F(C_i)} + \big[\sum_{g \in G(C_i) - \{1\}} \hspace{-0.3cm} \phi_{C_i;s_1s_2}(\mathfrak{G})^g, \phi_{C_i;r_1r_2}(\mathfrak{G}), \rho_i\big]_{F(C_i)} \\
= \hat{\phi}_i(r_1r_2)(\Frob_{\mathrm{AUp}(s_1)}) + \hat{\phi}_i(r_1r_2)(\Frob_{\mathrm{AUp}(s_2)}),
\end{multline*}
where 
$$
\hat{\phi}_i(r_1r_2) := \phi_{C; r_1r_2}(\mathfrak{G}) + \sum_{\substack{S_1 \sqcup S_2 \subseteq [s] - \{i\} \\ S_1, S_2 \subset [s] - \{i\}}} \hat{\phi}_{S_1; S_2; \pi_i(1)\pi_i(2), r_1r_2}(\mathfrak{G}).
$$
\end{theorem}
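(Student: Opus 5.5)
Throughout, write $\Phi_r := \phi_{C_i;r_1r_2}(\mathfrak{G})$, $\Phi_s := \phi_{C_i;s_1s_2}(\mathfrak{G})$ and $\bar\Phi_r := \sum_{g \in G(C_i)-\{1\}} \Phi_r^g$ (similarly $\bar\Phi_s$). We first settle R\'edei admissibility over $F(C_i)$, then evaluate both symbols through Proposition \ref{prop: trivialization in general}. The trivializing cochain will be built from the $\hat{\phi}$-maps, which reduces the sum of the two symbols to Frobenius values of $\hat{\phi}_i(r_1r_2)$ at places above $s_1,s_2$.

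\emph{Admissibility.} Over $F(C_i)$, $\Phi_r$ and $\Phi_s$ are quadratic characters. By Proposition \ref{normalized expansion maps are unramified}, $\Phi_r$ ramifies only at $\mathrm{AUp}(r_1),\mathrm{AUp}(r_2)$, so $\bar\Phi_r$ ramifies at the other places above $r_1r_2$. Likewise $\rho_i$ ramifies only above $\pi_i(1)\pi_i(2)$. Assumption 3 and assumption 1 give the required local triviality at the ramified places and at $\infty$. Specifically, $\Phi_r$ and $\bar\Phi_r$ are trivial above $s_1,s_2,\pi_i(1),\pi_i(2)$; $\rho_i$ is trivial above $r_j,s_j,\infty$; and $\Phi_s$ behaves symmetrically. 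The global vanishing of the three pairwise cup products then follows from Hilbert reciprocity, as in the proof of Theorem \ref{theorem: profitability}. The cup products involving $\rho_i$ are in addition trivialized by Proposition \ref{prop: d hat phi equals cup of phi's}.

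\emph{Trivialization.} By Theorem \ref{theorem: Redei reciprocity}(1), each symbol equals $[\rho_i,\cdot,\cdot]$ with $\rho_i$ moved to the front. Consider $\hat{\phi} := \sum_{S_1\sqcup S_2} \hat{\phi}_{S_1;S_2;\pi_i(1)\pi_i(2),r_1r_2}(\mathfrak{G})$, summed over all splittings of $[s]-\{i\}$. The proper-subset terms exist by assumption 2. The two extreme terms ($S_1$ or $S_2$ empty) should be governed by Proposition \ref{prop: criterion of existence for hat phi} together with assumptions 1 and 3; if they fail to exist, the combination $\hat{\phi}_i(r_1r_2)$ is designed to absorb this, using $\phi_{C;r_1r_2}(\mathfrak{G})$ as a correction term. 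Restricted to $G_{F(C_i)}$ and using the recursion \eqref{eHatRecursion}, $\mathrm{d}\hat{\phi}_i(r_1r_2)$ should equal $\rho_i \cup \Phi_r$ plus terms that are coboundaries of characters. Put differently, $\hat{\phi}_i(r_1r_2)$ trivializes $\rho_i \cup (\Phi_r+\bar\Phi_r)$ after the conjugation identity \eqref{eConjugateExpansion} is applied to the pieces $\phi_{C_i\text{-subsets};r_1r_2}$. This is the key computation. It needs Proposition \ref{prop: d hat phi equals cup of phi's}, extended from the subgroup $\mathcal H$ to $G_{F(C_i)}$, with the $\chi_{U_1\cup U_2}$ contributions telescoping after summation over all $(S_1,S_2)$. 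I expect this bookkeeping to be the main obstacle, namely confirming that exactly the combination $\bar\Phi_r$ (and not $\Phi_r$ alone) emerges. One will likely need to pair the first symbol with the cup product $\rho_i\cup\Phi_r\cup\Phi_s$, which is linear in the first slot by Theorem \ref{theorem: Redei reciprocity}(2).

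\emph{Evaluation.} Using $\mathrm{d}\hat{\phi}_i(r_1r_2) = \rho_i\cup(\text{first slot})$, Proposition \ref{prop: trivialization in general} (or Proposition \ref{prop: trivialization in general3} if extra places are needed) gives the symbol as $\sum_{v\in\mathrm{Ram}(\Phi_s)} \mathrm{inv}_v(\hat{\phi}_i(r_1r_2)\cup \Phi_s)$. The ramification set is $\{\mathrm{AUp}(s_1),\mathrm{AUp}(s_2)\}$. At these places $\hat{\phi}_i(r_1r_2)$ is an unramified character, by assumption 3 together with Proposition \ref{prop: field of definition of hat phi}(b), while $\Phi_s$ is ramified. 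The local invariant is therefore $\hat{\phi}_i(r_1r_2)(\mathrm{Frob}_{\mathrm{AUp}(s_j)})$. For the second symbol, with $r$ and $s$ swapped, we run the same argument with the roles of the first and second slots exchanged. By the symmetry of Theorem \ref{theorem: Redei reciprocity}(1), its contribution combines with the first into the single displayed expression. The residual terms $\phi_{C;r_1r_2}(\mathfrak{G})$ are locally trivial at $\infty$ and cancel against Hilbert reciprocity.
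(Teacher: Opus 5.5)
There is a genuine gap. It sits exactly at the step you flag as the main obstacle, and the identity you propose there is wrong.

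Write $\phi_U(1) := \phi_{\{\pi_j(1)\pi_j(2) : j \in U\}; \pi_i(1)\pi_i(2)}(\mathfrak{G})$, and let $\phi_U(2)$ be the analogous map pointed at $r_1r_2$. On $G_{F(C_i)}$ the recursions give
\[
\mathrm{d}\phi_{C;r_1r_2}(\mathfrak{G}) = \chi_{\pi_i(1)\pi_i(2)} \cup \Phi_r, \qquad \mathrm{d}\hat{\phi}_{U_1;U_2;\pi_i(1)\pi_i(2),r_1r_2}(\mathfrak{G}) = \phi_{U_1}(1) \cup \phi_{U_2}(2).
\]
Here $(U_1,U_2)$ ranges over all pairs of disjoint proper subsets of $[s]-\{i\}$, not only partitions. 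So $\phi_{C;r_1r_2}(\mathfrak{G})$ is not a correction for non-existent terms. It stands in for the excluded pair $(\emptyset,[s]-\{i\})$, and its Frobenius values at $s_1,s_2$ are part of the answer rather than cancelling.

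Using \eqref{eConjugateExpansion}, one finds on $G_{F(C_i)}$
\[
\mathrm{d}\hat{\phi}_i(r_1r_2) = \rho_i \cup \bar\Phi_r + \sum_{g \in G(C_i)-\{1\}} (\rho_i \cup \Phi_r)^g,
\]
not $\rho_i \cup (\Phi_r+\bar\Phi_r)$. This is not a single cup product $\rho_i \cup \chi_\beta$, so Proposition \ref{prop: trivialization in general} cannot be applied to $\hat{\phi}_i(r_1r_2)$ directly.

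Your handling of the second symbol also fails. Permutation symmetry (Theorem \ref{theorem: Redei reciprocity}(1)) does not relate $[\bar\Phi_s,\Phi_r,\rho_i]$ to the first symbol. Swapping $r$ and $s$ would need $\hat{\phi}$-maps pointed at $s_1s_2$, which are neither assumed nor present in the conclusion.

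The missing ingredient is Galois-conjugation invariance, Proposition \ref{prop: conjugating Redei}. Since each $g \in G(C_i)$ is an involution, it gives $[\Phi_s^g,\Phi_r,\rho_i] = [\Phi_r^g,\Phi_s,\rho_i^g]$. By linearity the left-hand side is then $\sum_{g\neq 1}[\Phi_r^g,\Phi_s,\rho_i+\rho_i^g]$. Now write $\Phi_r^g = \Phi_r + (\Phi_r+\Phi_r^g)$.

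For the first part, $\sum_{g \neq 1}(\rho_i+\rho_i^g) = \sum_g \rho_i^g = \chi_{\pi_i(1)\pi_i(2)}$, because $2^{s-1}-1$ is odd. So this part collapses to $[\Phi_r,\Phi_s,\chi_{\pi_i(1)\pi_i(2)}]$.

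For the second part, expand both $\Phi_r+\Phi_r^g$ and $\rho_i+\rho_i^g$ via \eqref{eConjugateExpansion}. Each pair $(T_1,T_2)$ of nonempty subsets occurs for $2^{s-1-|T_1\cup T_2|}$ elements $g$. So only the pairs with $T_1\cup T_2=[s]-\{i\}$ survive, leaving $\sum_{U_1,U_2}[\phi_{U_1}(1),\phi_{U_2}(2),\Phi_s]$.

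Every term is now a single admissible triple. The first is trivialized by $\phi_{C;r_1r_2}(\mathfrak{G})$, and the others by the maps $\hat{\phi}_{U_1;U_2}$; Proposition \ref{prop: d hat phi equals cup of phi's} applies because all $\chi_{\pi_j(1)\pi_j(2)}$ with $j\neq i$ vanish on $G_{F(C_i)}$. Your evaluation at $\mathrm{AUp}(s_1),\mathrm{AUp}(s_2)$ then finishes.

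If you prefer to keep a single trivializer, you would subtract $\sum_{g\neq 1}\epsilon^g$ from $\hat{\phi}_i(r_1r_2)$, where $\mathrm{d}\epsilon=\rho_i\cup\Phi_r$. Identifying the resulting correction terms with the second symbol again requires Proposition \ref{prop: conjugating Redei}.

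A side remark: no $\hat{\phi}$-map trivializes $\rho_i\cup\Phi_r$, since both have support $C_i$. This does not hurt your admissibility argument, which follows from the local conditions alone.
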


\begin{proof}
We start by showing that the triples 
$$
\left(\sum_{g \in G(C_i) - \{1\}} \phi_{C_i;r_1r_2}(\mathfrak{G})^g, \phi_{C_i;s_1s_2}(\mathfrak{G}), \rho_i \right), \quad \left(\sum_{g \in G(C_i) - \{1\}} \phi_{C_i;s_1s_2}(\mathfrak{G})^g, \phi_{C_i;r_1r_2}(\mathfrak{G}), \rho_i \right)
$$
are R\'edei admissible over $F(C_i)$. To this end, we notice that for each $g \in G(C_i) - \{1\}$ and for each $\{x, y\}=\{\phi_{C_i; r_1r_2}(\mathfrak{G}), \phi_{C_i; s_1s_2}(\mathfrak{G})\}$ we have that the triple
\begin{equation}
\label{eRRxyrho}
(x^g, y, \rho_i)
\end{equation}
is R\'edei admissible, which is an immediate consequence of condition $1$ and $3$.

Having shown this, we observe that this implies that the two original triples 
$$
\left(\sum_{g \in G(C_i) - \{1\}} \phi_{C_i;r_1r_2}(\mathfrak{G})^g, \phi_{C_i;s_1s_2}(\mathfrak{G}), \rho_i \right), \quad \left(\sum_{g \in G(C_i) - \{1\}} \phi_{C_i;s_1s_2}(\mathfrak{G})^g, \phi_{C_i;r_1r_2}(\mathfrak{G}), \rho_i \right)
$$
are R\'edei admissible in view of Theorem \ref{theorem: Redei reciprocity}$(2)$.

Furthermore, by Theorem \ref{theorem: Redei reciprocity}$(2)$, it suffices to show the desired conclusion for the R\'edei symbol
$$
\sum_{g \in G(C_i) - \{1\}} \left[ \phi_{C_i; r_1r_2}(\mathfrak{G})^g, \phi_{C_i; s_1s_2}(\mathfrak{G}), \rho_i \right ]_{F(C_i)} + \left[\phi_{C_i; s_1s_2}(\mathfrak{G})^g, \phi_{C_i; r_1r_2}(\mathfrak{G}), \rho_i  \right ]_{F(C_i)}.
$$
Invoking Proposition \ref{prop: conjugating Redei} and Theorem \ref{theorem: Redei reciprocity}$(1)$, we can rewrite this as 
$$
\sum_{g \in G(C_i) - \{1\}} \left[ \phi_{C_i; r_1r_2}(\mathfrak{G})^g, \phi_{C_i; s_1s_2}(\mathfrak{G}), \rho_i \right ]_{F(C_i)} + \left[ \phi_{C_i; r_1r_2}(\mathfrak{G})^g, \phi_{C_i; s_1s_2}(\mathfrak{G}), \rho_i^g \right]_{F(C_i)}.
$$
Applying Theorem \ref{theorem: Redei reciprocity}$(2)$, we can rewrite this as 
$$
\sum_{g \in G(C_i) - \{1\}} \left[ \phi_{C_i; r_1r_2}(\mathfrak{G})^g, \phi_{C_i; s_1s_2}(\mathfrak{G}), \rho_i + \rho_i^g \right ]_{F(C_i)}.
$$
We now add and subtract on each summand the symbol $\left[ \phi_{C_i; r_1r_2}(\mathfrak{G}), \phi_{C_i; s_1s_2}(\mathfrak{G}), \rho_i + \rho_i^g \right]_{F(C_i)}$. Using that $2^{s - 1} - 1$ is odd, we use equation \eqref{eConjugateExpansion} (applied with every $g \in G(C_i)$)
$$
\sum_{g \in G(C_i) - \{1\}} (\rho_i + \rho_i^g) = \sum_{g \in G(C_i)} \rho_i^g = \chi_{\pi_i(1)\pi_i(2)}
$$
to rewrite the desired symbol as 
\begin{multline*}
\left[ \phi_{C_i; r_1r_2}(\mathfrak{G}), \phi_{C_i; s_1s_2}(\mathfrak{G}), \chi_{\pi_i(1)\pi_i(2)} \right]_{F(C_i)} + \\
\sum_{g \in G(C_i) - \{1\}} \left[ \phi_{C_i; r_1r_2}(\mathfrak{G}) + \phi_{C_i; r_1r_2}(\mathfrak{G})^g, \phi_{C_i; s_1s_2}(\mathfrak{G}), \rho_i + \rho_i^g \right ]_{F(C_i)}.
\end{multline*}
We now focus on the first summand. We apply both parts of Theorem \ref{theorem: Redei reciprocity} to rewrite it as
$$
\left[ \chi_{\pi_i(1)\pi_i(2)}, \phi_{C_i; r_1r_2}(\mathfrak{G}), \phi_{C_i; s_1s_2}(\mathfrak{G}) \right]_{F(C_i)}.
$$
Observe that the map $\phi_{C; r_1r_2}(\mathfrak{G})$ exists thanks to our assumption $1$. Once restricted to $G_{F(C_i)}$ this gives a $1$-cochain satisfying
$$
\mathrm{d} \phi_{C; r_1r_2}(\mathfrak{G})(\sigma, \tau) = \chi_{\pi_i(1)\pi_i(2)}(\sigma) \cdot \phi_{C_i; r_1r_2}(\mathfrak{G})(\tau).
$$
In view of assumption $1$, we are in position to apply Proposition \ref{prop: trivialization in general}.

Having established this, we see that Proposition \ref{prop: trivialization in general} yields 
$$
\left[ \chi_{\pi_i(1)\pi_i(2)}, \phi_{C_i; r_1r_2}(\mathfrak{G}), \phi_{C_i; s_1s_2}(\mathfrak{G}) \right]_{F(C_i)} = \phi_{C; r_1r_2}(\mathfrak{G})(\Frob_{s_1}) + \phi_{C; r_1r_2}(\mathfrak{G})(\Frob_{s_2}).
$$
We now focus on the second summand
$$
\sum_{g \in G(C_i) - \{1\}} \left[ \phi_{C_i; r_1r_2}(\mathfrak{G}) + \phi_{C_i; r_1r_2}(\mathfrak{G})^g, \phi_{C_i; s_1s_2}(\mathfrak{G}), \rho_i + \rho_i^g \right]_{F(C_i)}.
$$

Moreover, we observe that equation \eqref{eRRxyrho} readily implies that 
$$
\left( \phi_{C_{U_1 \cup \{i\}}, r_1r_2}(\mathfrak{G}), \phi_{C_i; s_1s_2}(\mathfrak{G}), \phi_{C_{U_2 \cup \{i\}}; \pi_i(1)\pi_i(2)}(\mathfrak{G}) \right)
$$
is R\'edei admissible over $F(C_i)$ for all subsets $U_1, U_2 \subseteq [s] - \{i\}$. For the rest of the proof, we denote by
$$
\phi_T(1) := \phi_{\{\pi_j(1)\pi_j(2) : j \in T\};\pi_i(1)\pi_i(2)}(\mathfrak{G}), \quad \quad \phi_T(2) := \phi_{\{\pi_j(1)\pi_j(2) : j \in T\}; r_1r_2}(\mathfrak{G})
$$
for each subset $T$ of $[s] - \{i\}$. Furthermore, equation \eqref{eConjugateExpansion} states
$$
\phi_{[s] - \{i\}}(k) + \phi_{[s] - \{i\}}^g(k) = \sum_{\emptyset \neq T \subseteq \text{Supp}(g)} \phi_{[s] - \{i\} - T}(k)
$$
for both values of $k \in \{1, 2\}$. Here $\text{Supp}(g)$ denotes the set of $j \in [s] - \{i\}$ such that $\chi_{\pi_j(1)\pi_j(2)}(g) \neq 0$. Therefore, invoking Theorem \ref{theorem: Redei reciprocity}, we have that 
\begin{multline*}
\sum_{g \in G(C_i) - \{1\}} \left[ \phi_{C_i; r_1r_2}(\mathfrak{G}) + \phi_{C_i; r_1r_2}(\mathfrak{G})^g, \phi_{C_i; s_1s_2}(\mathfrak{G}), \rho_i + \rho_i^g \right ]_{F(C_i)} = \\
\sum_{g \in G(C_i) - \{1\}} \sum_{\emptyset \neq T_1, T_2 \subseteq [s] - \{i\}} \mathbf{1}_{T_1, T_2 \subseteq \text{Supp}(g)} \left[\phi_{[s] - \{i\} - T_1}(1), \phi_{[s] - \{i\} - T_2}(2), \phi_{C_i; s_1s_2}(\mathfrak{G})\right]_{F(C_i)}.
\end{multline*}
Switching order of summation yields
$$
\sum_{\emptyset \neq T_1, T_2 \subseteq [s]-\{i\}} 2^{s - 1 - |T_1 \cup T_2|} \cdot \left[\phi_{[s] - \{i\} - T_1}(1), \phi_{[s] - \{i\} - T_2}(2), \phi_{C_i; s_1s_2}(\mathfrak{G})\right]_{F(C_i)}.
$$
Hence the only summands left are those where $T_1 \cup T_2 = [s] - \{i\}$. We relabel $U_k := [s] - \{i\} - T_k$. Thus we can rewrite the sum as
$$
\sum_{\substack{U_1 \sqcup U_2 \subseteq [s] - \{i\} \\ U_1 \neq [s] - \{i\}, U_2 \neq [s] - \{i\}}} \left[ \phi_{U_1}(1), \phi_{U_2}(2),\phi_{C_i; s_1s_2}(\mathfrak{G}) \right].
$$
We now invoke part $2$ of our assumptions combined with Proposition \ref{prop: field of definition of hat phi} and Proposition \ref{prop: trivialization in general} and rewrite the symbol as
\begin{multline*}
\sum_{\substack{U_1 \sqcup U_2 \subseteq [s] - \{i\} \\ U_1, U_2 \neq [s] - \{i\}}} \inv_{\text{AUp}(s_1)}(\hat{\phi}_{\{\pi_{j_1}(1)\pi_{j_1}(2) : j_1 \in U_1\}, \{\pi_{j_2}(1)\pi_{j_2}(2) : j_2 \in U_2\}; \pi_i(1)\pi_i(2), r_1r_2} \cup \phi_{C_i; s_1s_2}(\mathfrak{G})) \\
+ \sum_{\substack{U_1 \sqcup U_2 \subseteq [s] - \{i\} \\ U_1, U_2 \neq [s] - \{i\}}} \inv_{\text{AUp}(s_2)}(\hat{\phi}_{\{\pi_{j_1}(1)\pi_{j_1}(2) : j_1 \in U_1\}, \{\pi_{j_2}(1)\pi_{j_2}(2) : j_2 \in U_2\}; \pi_i(1)\pi_i(2), r_1r_2} \cup \phi_{C_i; s_1s_2}(\mathfrak{G})).
\end{multline*}
Invoking Proposition \ref{prop: field of definition of hat phi}, we see that the first character in the cup product is unramified at $\text{AUp}(s_1)$ and $\text{AUp}(s_2)$. This ends the proof.
\end{proof}

We will now provide some intuition on the role of the group-theoretic Proposition \ref{prop: linear independence of hat phi and phi}. In our next section we will combine Theorem \ref{theorem: profitability} with Theorem \ref{theorem: splitting the Redei symbol} to detect whether a pre-acceptable, profitable cube of fixed type satisfies $(D)$. We then need to show that the resulting symbol does not cancel the symbol coming from the Artin pairing (this symbol is obtained by combining Theorem \ref{theorem: reflection principle} with Theorem \ref{theorem: Applying quartic reciprocity}). Proposition \ref{prop: linear independence of hat phi and phi} will ensure that there is always a surviving bilinear term in the pair $(r_i, s_j)$; and hence we will be able to bring Lemma \ref{lLS} into play.

\section{Repeated Cauchy--Schwarz}
\label{sCS}
\subsection{Abstract theory}
In this section, we develop some abstract analytic theory. The results here are directly inspired by the repeated Cauchy--Schwarz arguments found, for example, in additive combinatorics and Weyl differencing. 

\begin{mydef}
Let $Y$ be a finite set. Write $\mathcal{C}(Y)$ for the set of equivalence relations on $Y$. 

Let $X_1, \dots, X_n$ be finite sets. We say that $f_1, \dots, f_n$ is a functional sequence of equivalence relations if 
$$
f_i \in \mathrm{Map}\left(\prod_{j < i} X_j^2 \times \prod_{j > i} X_j, \mathcal{C}(X_i)\right)
$$
for each $i \in [n]$. The complexity of $f_1, \dots, f_n$ is the largest number of equivalence classes among all equivalence relations that are in the image of some $f_i$.
\end{mydef}

\begin{mydef}
Let $X_1, \dots, X_n$ be finite sets and let $X = X_1 \times \cdots \times X_n$. Define 
$$
\mathrm{Cube}(X) := \prod_{i = 1}^n \mathrm{Map}(\{1, 2\}, X_i).
$$
We shall often view $\mathrm{Cube}(X)$ implicitly as a subset of $\mathrm{Map}(\{1, 2\}^n, X)$ in the natural way.

We say that $F = (F_1, \dots, F_n) \in \mathrm{Cube}(X)$ satisfies a functional sequence of equivalence relations $f_1, \dots, f_n$ if for all $1 \leq i \leq n$ and $\bar{x} = ((F_j(1), F_j(2)))_{1 \leq j \leq i - 1}$ and all $\bar{y} \in X_{i + 1} \times \cdots \times X_n$ satisfying $\mathrm{pr}_j(\bar{y}) \in \{F_j(1), F_j(2)\}$ for all $i + 1 \leq j \leq n$, we have
$$
F_i(1) \sim_{f_i(\bar{x}, \bar{y})} F_i(2).
$$
\end{mydef}

\begin{lemma}[Repeated Cauchy--Schwarz]
\label{lRCS}
Let $n \in \Z_{\geq 2}$. Let $X_1, \dots, X_n$ be finite sets and let $X = X_1 \times \cdots \times X_n$. Let $f_1, \dots, f_n$ be a functional sequence of complexity bounded by $B \geq 1$. Let $g: X \rightarrow \R$. Then we have
$$
\left| \sum_{x \in X} g(x) \right| \leq B |X|^{\frac{2^{n - 1} - 1}{2^{n - 1}}} \left( \sum_{\substack{F \in \mathrm{Cube}(X) \\ F \textup{ sat. } f_1, \dots, f_n}} \prod_{\mathbf{a} \in \{1, 2\}^n} g(F(\mathbf{a})) \right)^{1/2^n}.
$$
\end{lemma}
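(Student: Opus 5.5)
I will argue by induction on $n$, applying one Cauchy--Schwarz per coordinate and processing the coordinates in order $1,2,\dots,n$. The equivalence relation $f_i$ is used at step $i$ to restrict the pair $(F_i(1),F_i(2))$ to a single class. The structure of the domain of $f_i$ fixes the order: it depends on the already doubled coordinates $j<i$ and on the still single coordinates $j>i$. The final bound must hold whatever sign $g$ has. To handle this, I will check at each stage that the inner quantities are nonnegative, namely squares or sums of products that arise as squares.

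\textbf{Step 1 (first coordinate).} Write $\sum_x g(x) = \sum_{\bar y \in X_2\times\cdots\times X_n} \sum_{x_1} g(x_1,\bar y)$. For each fixed $\bar y$, split $X_1$ into the at most $B$ classes of $f_1(\bar y)$. Then
$$
\Big|\sum_{x_1} g(x_1,\bar y)\Big| \le \sum_{c} \Big|\sum_{x_1\in c} g\Big| \le B^{1/2}\Big(\sum_c \big(\sum_{x_1 \in c} g\big)^2\Big)^{1/2}.
$$
The inner quantity equals $\sum_{x_1\sim x_1'} g(x_1,\bar y)g(x_1',\bar y)$. Applying Cauchy--Schwarz again over $\bar y$ gives
$$
|S| \le B^{1/2}\,|X_2\cdots X_n|^{1/2}\Big(\sum_{\bar y}\sum_{F_1 \text{ sat. } f_1(\bar y)} g(F_1(1),\bar y)g(F_1(2),\bar y)\Big)^{1/2}.
$$

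\textbf{Step 2 (iteration).} The new sum has the same shape as the original one. The summation variable is now $(F_1, x_2,\dots,x_n)$ and the summand is $g_1(F_1,x_2,\dots):=g(F_1(1),\dots)g(F_1(2),\dots)$. I would run the same argument on coordinate $2$, with the outer variables being $F_1$ (ranging over pairs compatible with $f_1$) and $x_3,\dots,x_n$, and with $f_2(F_1, x_3,\dots,x_n)$ supplying the partition.

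There is a subtlety here. The constraint $F_1(1)\sim_{f_1(\bar y)}F_1(2)$ involves $\bar y$, and $\bar y$ contains $x_2$, which is being doubled at this step. After doubling, the constraint must hold for both $x_2=F_2(1)$ and $x_2=F_2(2)$. This is exactly what the definition of "satisfies" requires, since it quantifies over all $\bar y$ with $\mathrm{pr}_j(\bar y)\in\{F_j(1),F_j(2)\}$. To make this work I would move the indicator $\mathbf 1_{F_1(1)\sim_{f_1(\bar y)}F_1(2)}$ into the summand, summing over all pairs $F_1\in X_1^2$. The summand becomes $\tilde g_1 = \mathbf 1\cdot g_1$. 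Since indicators are idempotent, the product of the two copies of $\tilde g_1$ over $x_2\in\{F_2(1),F_2(2)\}$ carries the conjunction of the constraints. This costs a larger outer cardinality, $|X_1|^2$ in place of $|X_1|$, in the Cauchy--Schwarz normalisation.

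\textbf{Step 3 (bookkeeping).} This step is the main obstacle: tracking the powers of $B$ and $|X|$. The expected exponent $\frac{2^{n-1}-1}{2^{n-1}}$ on $|X|$ is smaller than what the naive count gives. This suggests the last coordinate should be handled without the outer Cauchy--Schwarz. Instead, at step $n$, the sum over the class partition should be bounded directly using nonnegativity. The earlier $f_j$-indicators are squares, so the sum restricted to a class is at most the full doubled sum. That bound comes without any $|X|$ factor.

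I would first verify the case $n=2$ by hand, tuning which steps use the $B$-partition and which step drops its normalising factor. I would then compare the accumulated exponents $B^{1/2+1/4+\cdots}\le B$ and $|X|^{1/2+\cdots+1/2^{n-1}}$ against the stated bound. If the quadratic growth of the outer variables breaks the count, I would instead normalise with $|X_1|$ by summing only over constrained pairs. The cost of doing so is bounded by $|X_1|$ times the number of classes.
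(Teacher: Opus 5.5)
Your Steps 1 and 2 are the paper's proof. You apply Cauchy--Schwarz once per coordinate, in the order $1,\dots,n$, over the pairs (outer variables, class of $f_i$). The earlier constraints are carried as indicators in the summand, and each $F_j$ ranges over all of $X_j^2$. The gap is Step 3, where the argument has to be closed: its premise is false, and the remedies you propose would not work. Carry out the count you call naive. The $i$th Cauchy--Schwarz costs
\[
\Bigl(B\prod_{j<i}|X_j|^2\prod_{j>i}|X_j|\Bigr)^{1/2^i},
\]
and the coordinate being doubled at step $i$ does not appear in it. Hence, for every $j$, the total exponent of $|X_j|$ is
\[
\sum_{i<j}2^{-i}+\sum_{i>j}2^{1-i}=\sum_{k=1}^{n-1}2^{-k}=\frac{2^{n-1}-1}{2^{n-1}},
\]
and the exponent of $B$ is $1-2^{-n}\le 1$. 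For $n=2$ this reads $(B|X_2|)^{1/2}(B|X_1|^2)^{1/4}=B^{3/4}|X|^{1/2}$. The step-$n$ Cauchy--Schwarz over $(F_1,\dots,F_{n-1},c)$ is already included in this count. Nonnegativity needs no extra care: each new sum is a sum of squares, and the next step only uses the triangle inequality.

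Dropping the outer Cauchy--Schwarz at step $n$ is impossible, not merely unnecessary. To reach the $2^n$-fold product you must square the $x_n$-sums and pull the square root outside the sum over $(F_1,\dots,F_{n-1})$. For fixed $F_1,\dots,F_{n-1}$ these $x_n$-sums need not be nonnegative. The loss is also unavoidable: take $g\equiv 1$ and one-class relations. Then the sum before step $n$ is $\prod_{j<n}|X_j|^2\cdot|X_n|$, while the square root of the sum after it is $\prod_{j<n}|X_j|\cdot|X_n|$. In that case the lemma is an equality, so there is no slack to recover.

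Your fallback estimate is also wrong. An equivalence relation on $X_1$ with at most $B$ classes has $\sum_c|c|^2\ge|X_1|^2/B$ related pairs, not at most $B|X_1|$. Moreover, the constraint on $F_1$ depends on $x_2$, the variable summed at step 2, so it cannot be used to shrink the outer range.

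The correct completion is to run Steps 1--2 unchanged through coordinate $n$ and do the exponent count above. That is exactly what the paper does.
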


\begin{proof}
We proceed by applying Cauchy--Schwarz over every variable exactly once. More precisely, we start with the inequality
\begin{align*}
\left| \sum_{x \in X} g(x) \right| 
&= \left| \sum_{x = (x_1, \dots, x_n) \in X} g(x) \right| \leq \sum_{(x_2, \dots, x_n) \in X_2 \times \cdots \times X_n} \left| \sum_{x_1 \in X_1} g(x_1, \dots, x_n) \right| \\
&\leq \sum_{(x_2, \dots, x_n) \in X_2 \times \cdots \times X_n} \sum_{c \in f_1(x_2, \dots, x_n)} \left| \sum_{\substack{x_1 \in X_1 \\ x_1 \in c}} g(x_1, \dots, x_n) \right|.
\end{align*}
Hence, by the Cauchy--Schwarz inequality we have
$$
\left| \sum_{x \in X} g(x) \right| \leq \left( \sum_{\substack{(x_2, \dots, x_n) \in X_2 \times \cdots \times X_n \\ c \in f_1(x_2, \dots, x_n)}} 1 \right)^{1/2} \left(\sum_{\substack{(x_2, \dots, x_n) \in X_2 \times \cdots \times X_n \\ c \in f_1(x_2, \dots, x_n)}} \left| \sum_{\substack{x_1 \in X_1 \\ x_1 \in c}} g(x_1, \dots, x_n) \right|^2\right)^{1/2}.
$$
The first factor is at most $B^{1/2} |X_2|^{1/2} \cdots |X_n|^{1/2}$. The second factor can be rewritten as
$$
\left(\sum_{F_1: \{1, 2\} \rightarrow X_1} \sum_{\substack{(x_2, \dots, x_n) \in X_2 \times \cdots \times X_n \\ F_1(1) \sim_{f_1(x_2, \dots, x_n)} F_1(2)}} \prod_{i \in \{1, 2\}} g(F_1(i), x_2, \dots, x_n)\right)^{1/2}.
$$
Proceeding exactly as before, we can bound this second factor by
$$
\left(\sum_{F_1: \{1, 2\} \rightarrow X_1} \sum_{\substack{(x_3, \dots, x_n) \in X_3 \times \cdots \times X_n \\ c \in f_2((F_1(1), F_1(2)), x_3, \dots, x_n)}} \left| \sum_{\substack{x_2 \in X_2, \ x_2 \in c \\ F_1(1) \sim_{f_1(x_2, \dots, x_n)} F_1(2)}} \prod_{i \in \{1, 2\}} g(F_1(i), x_2, \dots, x_n) \right|\right)^{1/2}
$$
and we apply Cauchy--Schwarz to upper bound our second factor as
\begin{multline*}
B^{1/4} |X_1|^{1/2} |X_3|^{1/4} \cdots |X_n|^{1/4} \times \\
\left(\sum_{\substack{F_1: \{1, 2\} \rightarrow X_1 \\ F_2: \{1, 2\} \rightarrow X_2}} \sum_{\substack{(x_3, \dots, x_n) \in X_3 \times \cdots \times X_n \\ \forall j \in \{1, 2\} : F_1(1) \sim_{f_1(F_2(j), \dots, x_n)} F_1(2) \\ F_2(1) \sim_{f_2((F_1(1), F_1(2)), x_3, \dots, x_n)} F_2(2)}} \prod_{i, j \in \{1, 2\}} g(F_1(i), F_2(j), x_3, \dots, x_n) \right)^{1/4}.
\end{multline*}
Collecting exponents, our total upper bound so far is
\begin{multline*}
B^{3/4} |X_1|^{1/2} |X_2|^{1/2} |X_3|^{3/4} \cdots |X_n|^{3/4} \times \\
\left(\sum_{\substack{F_1: \{1, 2\} \rightarrow X_1 \\ F_2: \{1, 2\} \rightarrow X_2}} \sum_{\substack{(x_3, \dots, x_n) \in X_3 \times \cdots \times X_n \\ \forall j \in \{1, 2\} : F_1(1) \sim_{f_1(F_2(j), \dots, x_n)} F_1(2) \\ F_2(1) \sim_{f_2((F_1(1), F_1(2)), x_3, \dots, x_n)} F_2(2)}} \prod_{i, j \in \{1, 2\}} g(F_1(i), F_2(j), x_3, \dots, x_n) \right)^{1/4}.
\end{multline*}
Continuing in this way, the lemma follows.
\end{proof}

\begin{lemma}[Removing boundaries]
\label{lBoundaries}
Let $n \in \Z_{\geq 2}$. Let $X_1, \dots, X_n$ be finite sets and let $X = X_1 \times \cdots \times X_n$. For each $1 \leq i \leq n$, let $p_i \colon X \rightarrow \C$ be a function such that
$$
p_i(x_1, \dots, x_{i - 1}, x_i, x_{i + 1}, \dots, x_n) = p_i(x_1, \dots, x_{i - 1}, y_i, x_{i + 1}, \dots, x_n)
$$
for all $x_i, y_i \in X_i$ and all $x_j \in X_j$ (with $j \neq i$). Assume that $|p_i(x)| \leq 1$ for all $1 \leq i \leq n$ and all $x \in X$. Let $g: X \rightarrow \R$. Then we have
$$
\left| \sum_{x \in X} g(x) \prod_{i = 1}^n p_i(x) \right| \leq |X|^{\frac{2^{n - 1} - 1}{2^{n - 1}}} \left( \sum_{F \in \mathrm{Cube}(X)} \prod_{\mathbf{a} \in \{1, 2\}^n} g(F(\mathbf{a})) \right)^{1/2^n}.
$$
\end{lemma}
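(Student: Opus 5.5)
The plan is to run the same argument as in Lemma \ref{lRCS}, taking every equivalence relation to be trivial (so $B = 1$). The one new ingredient is that at the $i$th Cauchy--Schwarz step the weight $p_i$ drops out. This works because $p_i$ does not depend on $x_i$, and $x_i$ is exactly the variable being doubled at that step.

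I will argue by induction on the steps $k = 0, 1, \dots, n$. After step $k$ I want a bound of the form
$$
\left|\sum_{x\in X} g(x)\prod_{i=1}^n p_i(x)\right| \leq \prod_{j\le k}|X_j|^{e_j}\prod_{j>k}|X_j|^{e'_j}\left(\sum_{\substack{F_1,\dots,F_k\\ x_{k+1},\dots,x_n}}\prod_{\mathbf a\in\{1,2\}^k} g(F(\mathbf a),x_{>k})\prod_{i>k}\prod_{\mathbf a} p_i(F(\mathbf a),x_{>k})\right)^{1/2^k},
$$
with the same exponents as in the proof of Lemma \ref{lRCS}. In this bound the weights $p_1,\dots,p_k$ have already been eliminated, and the weights $p_i$ with $i>k$ still appear as products over the $2^k$ vertices of the partial cube.

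For step $k+1$ I take the sum over $x_{k+1}$ as the inner sum. I first pull out the factor $\prod_{\mathbf a}p_{k+1}(F(\mathbf a),x_{k+1},x_{>k+1})$. It is independent of $x_{k+1}$ by hypothesis, so it can be moved outside the inner sum. I then use the triangle inequality together with $|p_{k+1}|\le 1$ to discard it. Next I apply Cauchy--Schwarz over the outer variables $(F_1,\dots,F_k,x_{k+2},\dots,x_n)$. This doubles $x_{k+1}$ into $F_{k+1}$, and every remaining $p_i$ with $i>k+1$ is carried along as a product over the $2^{k+1}$ vertices.

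One point needs care. Before Cauchy--Schwarz the inner expression is complex-valued, so squaring gives $|\cdot|^2 = (\cdot)\overline{(\cdot)}$, and this brings in conjugates of the $p_i$ with $i > k+1$. This does no harm: those conjugated factors are still bounded by $1$ and still independent of the variables they are independent of, so the same removal applies to them at their own steps. In particular, $p_{k+2}$ and its conjugate are both constant in $x_{k+2}$, and when step $k+2$ arrives they are pulled out and dropped in exactly the same way. Since $g$ is real, the main terms $\prod g(F(\mathbf a))$ are not affected by conjugation.

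Once step $n$ is done, all weights are gone and we are left with the full cube sum of $g$. The prefactor $|X|^{(2^{n-1}-1)/2^{n-1}}$ comes from the same exponent bookkeeping as in Lemma \ref{lRCS}, now with $B=1$.

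The main obstacle I expect is making sure the removal of $p_{k+1}$ takes place \emph{before} the Cauchy--Schwarz step at which $x_{k+1}$ is doubled. If that order is reversed, $x_{k+1}$ is no longer a free variable and the doubled product of $p_{k+1}$ cannot be discarded. The other thing to check is that the triangle inequality used to drop $p_{k+1}$ leaves the remaining summand in a form where the next Cauchy--Schwarz still applies. It does, because that summand is an absolute value of an inner sum, which is exactly the form required.
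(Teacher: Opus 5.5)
Your proposal is correct and is essentially the paper's proof: you run the repeated Cauchy--Schwarz of Lemma \ref{lRCS} with trivial relations, and you discard $p_{k+1}$ (bounded by $1$ and constant in $x_{k+1}$) just before doubling $x_{k+1}$, with the same exponent bookkeeping. The only cosmetic point is that your displayed inductive bound should show the conjugations explicitly. The paper does this by writing $p_j^{\mathbf{k}}$, conjugated exactly when $\mathbf{k}$ has an odd number of coordinates equal to $2$, and your text already accounts for this.
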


\begin{proof}
We proceed by repeated Cauchy--Schwarz in the same way as Lemma \ref{lRCS}. After applying this for the first $i$ sets $X_1, \dots, X_i$, we get the following upper bound
\begin{multline*}
\prod_{1 \leq j \leq i} |X_j|^{\frac{2^{i - 1} - 1}{2^{i - 1}}} \cdot \prod_{i < j \leq n} |X_j|^{\frac{2^i - 1}{2^i}} \cdot \\
\left(\sum_{\substack{F_1: \{1, 2\} \rightarrow X_1 \\ \vspace{0.1cm} \vdots \vspace{0.1cm} \\ F_i: \{1, 2\} \rightarrow X_i}} \sum_{\mathbf{x} = (x_{i + 1}, \dots, x_n)} \prod_{\mathbf{k} \in \{1, 2\}^i} g(F_1(k_1), \dots, F_i(k_i), \mathbf{x}) \prod_{i < j \leq n} p_j^{\mathbf{k}}(F_1(k_1), \dots, F_i(k_i), \mathbf{x}) \right)^{\frac{1}{2^i}}
\end{multline*}
from this procedure. Here $p_j^{\mathbf{k}}$ means that we have to apply complex conjugation exactly when $|\{1 \leq j \leq i : k_j = 2\}|$ is odd. Continuing this procedure until $i = n$ gives the stated bound.
\end{proof}

\subsection{Equidistribution of Artin pairings}
Our next definition pulls back the Artin pairing to a common overarching space.

\begin{mydef}
Let $X = X_1 \times \dots \times X_r$ be a grid with factorization pattern $(n_1, \dots, n_r)$. Recall the definition of $W_0^{(h)}$ and $f_x$ from Definition \ref{dGenusPar}.

By pulling back under $f_x$, we also have spaces 
$$
W_i^{(h)}(x) := f_x^{-1}(C_i^{(h)}(\chi_x)^\circ).
$$
Define for $\mathbf{v}, \mathbf{w} \in W_i^{(h)}(x)$ the pairing
$$
\mathrm{Art}_{x, i + 1}^{(h)}(\mathbf{v}, \mathbf{w}) = \mathrm{Art}_{x,i + 1}^{(h)}(f_x(\mathbf{v}), \mathrm{Sym}(\chi_x)(f_x(\mathbf{w}))).
$$
\end{mydef}

By Theorem \ref{thm: symmetry}, we have the important relation
$$
\mathrm{Art}_{x, i + 1}^{(h)}(\mathbf{v}, \mathbf{w}) = \mathrm{Art}_{x, i + 1}^{(h)}(\mathbf{w}, \mathbf{v}),
$$
which explains our focus on symmetric pairings.

Our next definition describes a generic element $\mathbf{v} \in W_0^{(h)}$ to which our methods apply. In the absence of such an element, we shall be able to estimate our sum in a more trivial manner.

\begin{mydef}
\label{dGeneric}
Let $m$ and $n$ be integers. We say that $\mathbf{v} \in W_0^{(h)}$ is $(m, n)$-generic if: 
\begin{itemize}
    \item there is a set $S_{\textup{var}} \subseteq [r]$ of cardinality $m - 2$ such that for all $i \in S_{\text{var}}$ the following properties hold: $i > \log(n)/20$, $n_i$ is even and $\mathrm{pr}_i(\mathbf{v}) = 0$, and
    \item there is a set $S_{\textup{spl}} \subseteq [r]$ of cardinality $3$ such that for all $i \in S_{\textup{spl}}$ the following properties hold: $i > \log(n)/20$, $n_i$ is even and $\mathrm{pr}_i(\mathbf{v}) = 1$.
\end{itemize} 
\end{mydef}

\noindent For each $x \in X$, define
$$
\mathcal{T}_m(x, \mathbf{v}) := \left\{\mathbf{t} : \textup{there exists } \psi \in B_h(\chi_x)[2^m], 2^{m - 1} \psi = f_x(\mathbf{v}), 2\psi \text{ is of type } \mathbf{t}\right\}.
$$
For each $\mathbf{t} \in \mathcal{T}_m(x, \mathbf{v})$, fix an $h$-raw cocycle $\psi_m(x; \mathbf{t})$ lifting $f_x(\mathbf{v})$ such that $\psi_{m - 1}(x; \mathbf{t})$ is of type $\mathbf{t}$. Define 
$$
g(x) = 
\begin{cases}
0 &\text{if } f_x(\mathbf{v}) \not \in C_{m - 1}^{(h)}(\chi_x)^\circ, \\
(-1)^{\mathrm{Art}_{x, m}^{(h)}(\mathbf{v}, \mathbf{v})} &\text{if } f_x(\mathbf{v}) \in C_{m - 1}^{(h)}(\chi_x)^\circ.
\end{cases}
$$

\begin{theorem}
\label{tReduction}
There exist absolute constants $c > 1$ and $c_1, c_2 > 0$ such that the following holds. 

Let $n \in \Z_{\geq 3}$ be odd, let $X = X_1 \times \cdots \times X_r$ be a grid for $n$, let $m \in \Z_{\geq 5}$, let $h \in \{1, -1\}$, let $R: W_0^{(h)} \times W_0^{(h)} \rightarrow \mathbb{F}_2$ be symmetric and let $\mathbf{v} \in W_0^{(h)}$ be $(m, n)$-generic. Then we have
$$
\left| \sum_{x \in X, \ \mathrm{Art}_{x, 1}^{(h)} = R} g(x) \right| \leq \frac{c_1 |X|}{\exp(n^{c_2}/c^m)}.
$$
\end{theorem}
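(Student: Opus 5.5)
First I would reduce to a fixed type sequence. For each $x$ with $f_x(\mathbf{v}) \in C_{m-1}^{(h)}(\chi_x)^\circ$, the set $\mathcal{T}_{m}(x,\mathbf{v})$ is nonempty. The indicator of the event $f_x(\mathbf{v}) \in C_{m-1}^{(h)}(\chi_x)^\circ$, together with the choice of a type, can be expanded through an interpolating polynomial in the indicators $\mathbf{1}_{\mathbf{t} \in \mathcal{T}_j(x,\mathbf{v})}$ for $j \le m$. The number of possible types is at most $2^{O(m r)}$ with $r \le (\log n)^2$. This writes $g$ as a combination, with at most $\exp(O(m(\log n)^2))$ terms of bounded coefficients, of sums over sets $Y(R,\mathbf{T},\mathbf{v})$. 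On each such set the lifts $\psi_j(x;\mathbf{t}_j)$ have prescribed types at every level. This loss is affordable against the target $\exp(n^{c_2}/c^m)$. On such a set the value $(-1)^{\mathrm{Art}_{x,m}^{(h)}(\mathbf{v},\mathbf{v})}$ is a well-defined function $g$ on a product space.

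Next I restrict the variable coordinates. I use $S_{\mathrm{var}}$ (size $m-2$) as the cube directions $\pi_1,\dots,\pi_s$ with $s=m-2$, and $S_{\mathrm{spl}}$ (size $3$) as $r_1,r_2,r_3$ (equivalently $r_k,s_k$) dividing $a$. Because $\mathrm{pr}_i(\mathbf{v})=1$ on $S_{\mathrm{spl}}$ and $0$ on $S_{\mathrm{var}}$, the character $f_x(\mathbf{v})$ plays the role of $\chi_a$, with $a = a' r_1 r_2 r_3$. Since these indices exceed $\log(n)/20$, Definition~\ref{dGrid}(3) guarantees $n_i > n^{1/100}$, so $X_i$ contains all primes of degree $n_i$. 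I fix the remaining coordinates and apply Lemma~\ref{lRCS} in the $s$ cube variables, ordered so that $S_{\mathrm{spl}}$ comes last, with Lemma~\ref{lBoundaries} absorbing factors depending on fewer variables. The functional sequence of equivalence relations encodes Frobenius splitting data of the fields $L(\phi_{\ldots})$ and $L(\hat\phi_{\ldots})$. This forces each surviving cube to satisfy (A1)--(A3), profitability, and the existence conditions of Propositions~\ref{inductively construct expansion maps} and~\ref{prop: criterion of existence for hat phi}. The complexity of the sequence is bounded by the number of possible local restrictions, roughly $2^{2^{O(m)}}$, and this bound is what produces the $c^m$ in the final estimate. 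The $2^s$-fold product $\prod_{\mathbf{a}} g(F(\mathbf{a}))$ then equals $(-1)^{\sum_{x\in C}\mathrm{Art}_{x,m}}$, restricted to cubes satisfying $(D)$.

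Now I evaluate the cube sums. By Theorem~\ref{theorem: reflection principle} followed by Theorem~\ref{theorem: Applying quartic reciprocity}, $\sum_{x\in C}\mathrm{Art}$ equals a function of fixed local data, plus $\sum_i \Delta_C(r_i)$, plus the bilinear quadratic terms $\inv_{\mathrm{AUp}(r_j)}(\chi^g_{S_\emptyset,r_i}\cup\chi_{S_\emptyset,r_j})$. I insert the indicator of $(D)$ as $\tfrac12(1+(-1)^{\cdots})$ for each $j\in[s]$. Theorems~\ref{theorem: profitability} and~\ref{theorem: splitting the Redei symbol} convert the resulting symbols into the characters $\hat\phi_i(r_1r_2)$ evaluated at $\Frob_{\mathrm{AUp}(s_k)}$. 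After expanding, every term is a sum over two of the prime variables $r,s$ of $\alpha_r\beta_s\chi_r(\Frob_{Q_s})$ over $F(C)$ or over the field cut out by $\mathcal{H}$. Proposition~\ref{prop: linear independence of hat phi and phi}, which needs $s\ge3$ and hence $m\ge5$, shows the resulting character is never trivial. The quartic single-variable terms $\Delta_C(r_i)$ are absorbed into the coefficients $\alpha_r,\beta_s$. Lemma~\ref{lLS} then gives a saving of $q^{-n_i/4}$ times genus factors of size $q^{O(2^m \sum_{j\in S_{\mathrm{var}}} n_j)}$. Taking the $2^s$-th root and using $n_i>n^{1/100}$ yields $\exp(-n^{c_2}/c^m)$.

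The main obstacle is the bookkeeping in the middle step. The functional sequence must be chosen so that cubes satisfying it are exactly the acceptable, profitable cubes of type $\mathbf{t}$, with the lower-face conditions holding automatically. It must also keep the genus of the fields entering Lemma~\ref{lLS} small relative to $q^{n_i/2}$ for the split variables. I would handle this by ordering the variables by degree and applying Lemma~\ref{lLS} only to the two largest split primes.
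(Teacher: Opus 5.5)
There is a genuine gap in your argument: you never difference in the split variables, and without that differencing the phase is not bilinear.

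In the paper, the first repeated Cauchy--Schwarz runs over $S_{\mathrm{var}}$, with the functional sequence of Lemma~\ref{lFuncSeqEq}, and the condition $x_0\in Y(R,\mathbf{T},\mathbf{v})$ is then traded for $(D)$. After that comes a \emph{second} Cauchy--Schwarz over two of the three split coordinates $X_{i_1},X_{i_2}$. Its functional sequence imposes $(F1)$--$(F3)$: existence and local triviality of $\phi_{C;r_1r_1'}(\mathfrak{G})$; existence of the $\hat\phi$-maps; and $\chi_{T(C),r_1}+\chi_{T(C),r_1'}=\phi_{C;r_1r_1'}(\mathfrak{G})$ on $G_{F(C)}$. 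Only then can Theorem~\ref{theorem: profitability} be used, because it is a statement about the sum over the four cubes $C(k_1,k_2)$. Likewise, the pairs $r_1r_2$ and $s_1s_2$ of Theorem~\ref{theorem: splitting the Redei symbol} are the two \emph{doubled} split coordinates, not the three split primes ``equivalently $r_k,s_k$''.

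Before this differencing, the phase contains three kinds of terms you have not accounted for.
\begin{itemize}
\item The piece $[\tfrac{1-h}{2}\chi_{\epsilon_{\mathbb{F}_q}}+\chi_{d\prod_i Q_{i,1}},\phi,\phi]_{F(C)}$ of the symbol in Theorem~\ref{theorem: reflection principle}. Theorem~\ref{theorem: Applying quartic reciprocity} does not treat it, and it vanishes only when summed over a $2\times 2$ box.
\item The $(D)$-detectors. These involve the chosen lifts $\psi_{s+1}(x;\mathbf{t})$ and have no closed form on a single cube.
\item The characters detecting acceptability at the primes of $a=a'r_1r_2r_3$, which couple the split primes.
\end{itemize}
The acceptability detectors are removed only because one chooses, by pigeonhole among the \emph{three} split indices, two that lie both inside or both outside the detector set $I$. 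Hilbert reciprocity then cancels them after differencing. Otherwise they perturb the main bilinear character in a way Proposition~\ref{prop: linear independence of hat phi and phi} does not control. This is why $|S_{\mathrm{spl}}|=3$, which your sketch leaves unexplained.

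The constraints that the vertices lie in $Y(R,\mathbf{T},\mathbf{v})$ also need separate treatment. These constraints assert the existence of lifts to level $2^m$ of prescribed types, which is not detectable by Frobenius characters, so they cannot be fed into Lemma~\ref{lLS}. The paper removes them by applying Lemma~\ref{lBoundaries} in the variables $(Q_{i,1})_{i\in S_{\mathrm{var}}}, r_{i_1,1}, r_{i_2,1}$, \emph{after} the split differencing. This works only because, once $x_0$ has been replaced by $(D)$ and the point $(1,\dots,1)$ removed, each remaining constraint is independent of at least one of these coordinates. The resulting phase then lives on the cube $(Q_1,Q_1')$, and the large sieve runs over pairs $(r_{i_1,1},r_{i_1,1}')$ against $r_{i_2,1}$. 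Placing Lemma~\ref{lBoundaries} inside the first repeated Cauchy--Schwarz, as you do, does not accomplish this, since at that stage there are no boundary factors to absorb.

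Finally, your genus estimate is wrong. All ramification is tame, so by Riemann--Hurwitz the genus of the fields entering Lemma~\ref{lLS} is $\ll [K:\mathbb{F}_q(T)]\cdot n$, which is $2^{2^{O(m)}}n$, not $q^{O(2^m\sum n_j)}$. With your bound the argument would genuinely fail. Your proposed remedy is also unavailable, since genericity does not make the split primes the largest. With the correct bound no ordering is needed, because $n_i>n^{1/100}$ for every $i>\log(n)/20$.
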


\begin{proof}[Proof that Theorem \ref{tReduction} implies Theorem \ref{tChowla}.]
Let $c_1, c_2, c > 0$ be the real numbers guaranteed by Theorem \ref{tReduction}. Set $K := \min\left(\lfloor 0.01 \log(n) \rfloor, \lfloor \frac{c_2 \log(n)}{2 \log(c)} \rfloor\right)$. We claim the existence of $C_1, C_2 > 0$ such that
\begin{equation}
\label{eBigClaim}
\# \left\{x \in X : \, C_K^{(h)}(\chi_x)^\circ \neq \{0\}\right\} \leq \frac{C_1|X|}{n^{C_2}}
\end{equation}
for every $h \in \{1, -1\}$ and for every grid $X$ for $n$.

Once the claim is proven, then Theorem \ref{tChowla} holds. Indeed, by Proposition \ref{pNonCrit2} (and Remark \ref{rSwitching} to reduce to the monic case) we have
\begin{multline*}
\# \left\{f \in \FF_q[T] : \deg(f) = n, \ f \textup{ is squarefree}, \ L\left(\tfrac{1}{2}, \chi_f\right) = 0\right\} \leq \\
(q - 1) \sum_{h \in \{1, -1\}} \# \left\{f \in \FF_q[T] : \deg(f) = n, \ f \textup{ is monic and squarefree}, \ C_K^{(h)}(\chi_f)^\circ \neq \{0\}\right\}.
\end{multline*}
By Theorem \ref{tGrid}, the polynomials $f$ that do not lie in a grid $X$ for $n$ fit into the error term of Theorem \ref{tChowla}, and for the remaining $f$, we partition over all grids $X$ for $n$ and apply the bound in equation \eqref{eBigClaim}, thus establishing Theorem \ref{tChowla}.

Therefore it remains to prove equation \eqref{eBigClaim}. Since $n$ is odd, we record that
\[
\ker(R) = \ker\bigl(\mathrm{Art}^{(h)}_{x, 1}\bigr) = W_1^{(h)}(x)
\]
thanks to Remark \ref{rmk: identifying spaces}, which we will use repeatedly without further mention. We say that a bilinear pairing $R: W_0^{(h)} \times W_0^{(h)} \rightarrow \mathbb{F}_2$ is bad if:
\begin{enumerate}
\item[(1)] there exists a non-zero element $\mathbf{v} \in W_0^{(h)}$ such that $\mathbf{v}$ is not $(K, n)$-generic and such that $\mathbf{v}$ is in the left kernel of $R$,
\item[(2)] the kernel of $R$ has dimension larger than $K/10$.
\end{enumerate}
We will also say that $R$ is bad for reason $(1)$ respectively for reason $(2)$, taking the obvious meaning. 

Let us first estimate the number of $x \in X$ for which $\mathrm{Art}_{x, 1}^{(h)}$ is bad for reason $(1)$. Using Definition \ref{dGrid}(4) and using Hoeffding's inequality, it follows that the number of vectors $\mathbf{v} \in W_0^{(h)}$, which are not $(K, n)$-generic, is at most $2^r/n^{C_3}$ for some absolute constant $C_3 > 0$. By Theorem \ref{t4Rank}$(a)$ and Theorem \ref{t4Rank}$(b)$ and the bound $i_{\text{med}} - 1 \leq 0.05\log(n)$ from Definition \ref{dGrid}(3), we deduce that
\begin{align*}
\# \left\{x \in X : \mathrm{Art}_{x, 1}^{(h)} \text{ bad for reason (1)} \right\} &\leq \sum_{\mathbf{v} \neq \mathbf{0} \text{ not} (K, n)\text{-generic}} \# \left\{x \in X : f_x(\mathbf{v}) \in C_1^{(h)}(\chi_x)^\circ\right\} \\
&= O\left(\frac{2^{0.05 \log(n)} |X|}{2^{0.35 \log(n)}} + \frac{2^r}{n^{C_3}} \frac{|X|}{2^r}\right).
\end{align*}
Next, we rule out that $\mathrm{Art}_{x, 1}^{(h)}$ is bad for reason $(2)$. It follows from Theorem \ref{t4Rank}$(a)$ and Theorem \ref{t4Rank}$(b)$ that
$$
\sum_{\mathbf{v} \in W_0^{(h)}} \sum_{x \in X} \mathbf{1}_{f_x(\mathbf{v}) \in C_1^{(h)}(\chi_x)^\circ} = O(|X|).
$$
Hence we conclude that
$$
\# \left\{x \in X : \mathrm{Art}_{x, 1}^{(h)} \text{ bad for reason (2)}\right\} = O\left(\frac{|X|}{2^{K/10}}\right)
$$
by Markov's inequality. By the union bound, we therefore have
$$
\# \left\{x \in X : \mathrm{Art}_{x, 1}^{(h)} \text{ bad}\right\} = O\left(\frac{|X|}{n^{C_4}}\right)
$$
for some absolute $C_4 > 0$. Thus, in order to establish equation \eqref{eBigClaim}, it suffices to prove the existence of $C_1, C_2 > 0$ such that
\begin{equation}
\label{eBigClaim2}
\# \left\{x \in X : \, C_K^{(h)}(\chi_x)^\circ \neq \{0\}, \mathrm{Art}_{x, 1}^{(h)} \text{ not bad}\right\} \leq \frac{C_1|X|}{n^{C_2}}.
\end{equation}
We split over all pairings $R: W_0^{(h)} \times W_0^{(h)} \rightarrow \mathbb{F}_2$ that are not bad. We then split over all non-zero $\mathbf{v}$ in the kernel of $R$. This gives
\begin{multline*}
\# \left\{x \in X : \, C_K^{(h)}(\chi_x)^\circ \neq \{0\}, \mathrm{Art}_{x, 1}^{(h)} \text{ not bad}\right\} \leq \\
\sum_{R \text{ not bad}} \sum_{\mathbf{v} \in \ker(R) - \{0\}} \# \left\{x \in X : \, f_x(\mathbf{v}) \in C_K^{(h)}(\chi_x)^\circ, \mathrm{Art}_{x, 1}^{(h)} = R\right\}.
\end{multline*}
By construction of $K$ and using that $R$ is not bad for reason (1), it follows from Theorem \ref{tReduction} that for all $4 \leq k < K$ 
\begin{multline*}
\# \left\{x \in X : \, f_x(\mathbf{v}) \in C_{k + 1}^{(h)}(\chi_x)^\circ, \mathrm{Art}_{x, 1}^{(h)} = R\right\} \leq \\
\frac{1}{2} \# \left\{x \in X : f_x(\mathbf{v}) \in C_k^{(h)}(\chi_x)^\circ, \mathrm{Art}_{x, 1}^{(h)} = R\right\} + \frac{c_1 |X|}{\exp(n^{c_2}/c^K)}.
\end{multline*}
Since $\dim_{\FF_2} C_4^{(h)}(\chi_x)^\circ \leq \dim_{\FF_2} \ker(R) \leq K/10$ by recalling that $R$ is not bad for reason (2), equation \eqref{eBigClaim2} follows from a small computation.
\end{proof}

Henceforth we fix $n, X, m, R, \mathbf{v}, h$ as in the statement of Theorem \ref{tReduction}. By genericity, we may fix a subset $S_{\text{var}}$ and $S_{\text{spl}}$ as in Definition \ref{dGeneric}. Define $s := |S_{\text{var}}|$. We shall often implicitly biject $[s]$ with $S_{\text{var}}$ (implicitly fixing one such choice for the rest of this paper).

We also fix an odd integer $(\log n)^{100} < j < 2(\log n)^{100}$ such that it is not of the form $n_i$ for any $1 \leq i \leq r$. Such an integer $j$ always exists for $n$ sufficiently large by definition of a grid. Moreover, we define $S_{\emptyset, \text{below}}$ to be the set of monic irreducible polynomials of degree $j$.

Given a cube $F \in \mathrm{Cube}(\prod_{i \in S_{\text{var}}} X_i)$ and a subset $S \subseteq S_{\text{var}}$, we define $\mathrm{Cube}(S, F)$ to be the natural element in $\mathrm{Cube}(\prod_{i \in S} X_i)$. We now consider a certain set of properties that a cube may or may not satisfy.

\begin{mydef} 
Let $\mathbf{v} \in W_0^{(h)}$ be $(m, n)$-generic, let $F = (F_1, \dots, F_s) \in \mathrm{Cube}(\prod_{i \in S_{\text{var}}} X_i)$ and let $P \in \prod_{i \in [r] - S_{\text{var}}} X_i$. Throughout this definition, we shall identify a pair $(F, P)$ with the product space
$$
C := \{F_1(1), F_1(2)\} \times \cdots \times \{F_s(1), F_s(2)\} \times \{P\},
$$
and we shall view for each $\mathbf{a} \in \{1, 2\}^s$ the pair $(F(\mathbf{a}), P)$ as an element of $C$. Set $P = (p_{s + 1}, \ldots, p_r)$ and
\[
a := \prod_{\substack{s < j \leq r \\ \operatorname{pr}_j(\mathbf{v}) = 1}} p_j,
\qquad
d := \prod_{\substack{s < j \leq r \\ \operatorname{pr}_j(\mathbf{v}) = 0}} p_j.
\]
We say that $(F, P)$ is wonderful if
\begin{enumerate}
\item[$(W1)$] $(F, P)$ is acceptable, and
\item[$(W2)$] $(F, P)$ is profitable.
\end{enumerate}
\end{mydef}

\begin{mydef}
Given a symmetric pairing $R: W_0^{(h)} \times W_0^{(h)} \rightarrow \mathbb{F}_2$, we define $Y(R, \mathbf{v})$ to be the subset of $x \in X$ such that $\mathrm{Art}_{x, 1}^{(h)} = R$ and $\mathbf{v} \in W_{m - 1}^{(h)}(x)$. For a collection $\mathbf{T}$ of types, define $Y(R, \mathbf{T}, \mathbf{v})$ to be the subset of $x \in Y(R, \mathbf{v})$ such that $\mathbf{T} \subseteq \mathcal{T}_m(x, \mathbf{v})$. 
\end{mydef}

\begin{lemma}
\label{lFuncSeqEq}
There exists an absolute constant $C > 0$ such that the following holds.

Let $R, \mathbf{T}, \mathbf{v}$ be given. For each element $P \in \prod_{i \not \in S_{\textup{var}}} X_i$, there exists a functional sequence $f_1, \dots, f_s$ of equivalence relations on $\prod_{i \in S_{\textup{var}}} X_i$ of complexity at most $\exp(C \cdot r^2 \cdot 2^s)$ such that the following statement holds:

For all cubes $F = (F_1, \dots, F_s) \in \mathrm{Cube}(\prod_{i \in S_{\textup{var}}} X_i)$ such that $(F(\mathbf{a}), P) \in Y(R, \mathbf{T}, \mathbf{v})$ for all $\mathbf{a} \in \{1, 2\}^{S_{\textup{var}}}$, we have that $F$ satisfies $f_1, \dots, f_s$ if and only if $(F, P)$ is wonderful.
\end{lemma}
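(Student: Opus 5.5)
We encode wonderfulness as a conjunction of conditions, each of which says that some Frobenius element or local restriction attached to a pair of points $F_i(1),F_i(2)$ lies in a prescribed class. The key point is that every condition appearing in $(W1)$ and $(W2)$ can be ordered by the largest index $i \in S_{\text{var}}$ whose two choices it involves. Under the hypothesis $(F(\mathbf{a}),P) \in Y(R,\mathbf{T},\mathbf{v})$, the first Artin pairing is fixed to be $R$. This already forces the quadratic residue symbols between the $F_j(k)$, and between the $F_j(k)$ and the primes of $P$, to lie in a fixed class. Consequently condition $(A1)$ is automatic except for the local conditions at the infinite place and at primes of even degree, which are determined by the degree parities. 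These parities are constant on $X_i$ for $i \in S_{\text{var}}$, since $n_i$ is even there.

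We proceed inductively on $i=1,\dots,s$, following Propositions \ref{inductively construct expansion maps} and \ref{prop: criterion of existence for hat phi}. Fix $\bar{x}=((F_j(1),F_j(2)))_{j<i}$ and $\bar{y}\in\prod_{j>i}X_j$. The relation $f_i(\bar{x},\bar{y})$ on $X_i$ declares $\pi\sim\pi'$ when, for every subcube $S\subseteq\{1,\dots,i-1\}$ built from $\bar{x}$ and for every choice of coordinates from $\bar{y}$ and $P$, the following data coincide for $\pi$ and $\pi'$:
\begin{enumerate}
\item[(a)] the Frobenius classes of $\pi$ in the fields $L(\phi_{\mathrm{Cube}(S,F);\,\cdot}(\mathfrak{G}))$;
\item[(b)] the splitting behaviour of $\pi$ in the compositum of the fields attached to the lower faces, among those whose existence has already been secured for indices $<i$;
\item[(c)] the local restrictions at $\pi$ of the characters $\chi_{p}$ for the primes $p$ of $\bar{y}$ and of $P$, and of the relevant expansion maps in the variables $\bar{y}$ and $P$.
\end{enumerate}
Each of these is the image of $\Frob_\pi$ in a $2$-group. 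These groups have order at most $2^{2^{s}}$ per field, and there are at most $r^2 2^s$ of them, so the number of classes is at most $\exp(C r^2 2^s)$, which gives the required complexity bound.

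To show that $F$ satisfies $f_1,\dots,f_s$ if and only if $(F,P)$ is wonderful, we check condition by condition, using the recursive existence criteria. Existence of $\phi_{\{\pi_j(1)\pi_j(2)\};a}$ at level $i$ (condition $(A2)$, and the profitability maps) holds, given existence at lower levels, exactly when $\pi_i(1),\pi_i(2)$ have the same Frobenius in the lower expansion fields. This uses additivity (Proposition \ref{pExpansionAdditive}): the obstruction for the product $\pi_i(1)\pi_i(2)$ is the difference of the Frobenius data of the two factors. The residue degree and splitting conditions in $(A3)$ and profitability at the primes of $ad$ are the same kind of Frobenius-equality condition, now read off from the side of the primes of $\bar{y}$ and $P$; reciprocity (the Hilbert reciprocity argument used for the symmetry of the Rédei matrix) transfers them to conditions on $\pi_i(k)$. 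The acceptability condition at $\infty$ is handled by Proposition \ref{normalized expansion maps are unramified}$(b)$, which reduces it to a linear condition of the same shape.

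The main obstacle is causality. A condition imposed on $F_i$ must depend only on the coordinates $F_j$ with $j<i$ and on single values for $j>i$, never on pairs with $j>i$. Every top-level condition involves all the $2s$ primes, so we must assign each condition to the largest index it pairs over and use multilinearity (additivity) to express the product over pairs of higher indices through single-point data. We would settle this first by writing the existence obstruction $\theta$ from Proposition \ref{inductively construct expansion maps} as a sum over the faces of the cube.
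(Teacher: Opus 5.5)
There is a genuine gap: the step where you say reciprocity transfers the conditions to conditions on $\pi_i(k)$. Your relation $f_i$ only asks that $\pi$ and $\pi'$ have the same Frobenius data in fields that do not depend on $\pi$. Most of what wonderfulness demands is of the opposite kind: conditions on the Frobenius of the \emph{other} primes in fields that \emph{do} depend on the pair $\pi\pi'$.
\begin{itemize}
\item By Proposition \ref{inductively construct expansion maps}, existence of $\phi_{\{F_j(1)F_j(2)\}_{j\in S}\cup\{\pi\pi'\};a}(\mathfrak{G})$ requires each $F_j(k)$ with $j\in S$ to split completely in $L(\phi_{\{F_l(1)F_l(2)\}_{l\in S-\{j\}}\cup\{\pi\pi'\};a}(\mathfrak{G}))$.
\item $(A3)$ and profitability ask for residue degree $1$ or complete splitting of the primes of $\bar y$, of $P$ and of $\infty$ in such fields.
\end{itemize}
Hilbert reciprocity for a cup product of two quadratic characters only exchanges Legendre symbols. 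Already for $\phi_{\pi\pi';a}(\mathfrak{G})(\Frob_p)$ you would need R\'edei reciprocity, which requires $\chi_a\cup\chi_p=0$, and $\mathrm{Art}^{(h)}_{x,1}=R$ does not give you that. For $|S|\ge 1$ you would need governing fields for higher expansion maps, which you have no way to produce.

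Two further points are off. First, Proposition \ref{inductively construct expansion maps} requires $\pi_i(1)$ and $\pi_i(2)$ \emph{each} to split completely in $L(\phi_{\{F_j(1)F_j(2)\}_{j<i};a}(\mathfrak{G}))$, not merely to have equal Frobenius there. Second, $r^2 2^s$ fields with up to $2^{2^s}$ Frobenius classes each give $\exp(O(r^2 4^s))$ classes, not $\exp(O(r^2 2^s))$.

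The missing idea is that the relation need not be the fibres of a Frobenius map of $\pi$. The paper declares $\pi_1\sim\pi_2$ directly by conditions on the pair. The maps $\phi_{\{F_j(1)F_j(2)\}_{j\in S}\cup\{\pi_1\pi_2\};a}$, $\phi_{\{F_j(1)F_j(2)\}_{j\in S};\pi_1\pi_2}$ and $\phi_{\{F_l(1)F_l(2)\}_{l\in S-\{j\}}\cup\{\pi_1\pi_2\};F_j(1)F_j(2)}$ must exist and split, or have residue degree $1$, at the primes of $F_{\mathrm{pre}}$, $P'_{\mathrm{post}}$ and $\infty$. Transitivity is then exactly Proposition \ref{pExpansionAdditive}.
\begin{itemize}
\item To have these maps available when using additivity, it refines in stages $\sim^{(1)}\supseteq\cdots\supseteq\sim^{(h+2)}$ by increasing $|S|$. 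Each stage supplies the splitting hypotheses of Proposition \ref{inductively construct expansion maps} for the next.
\item The individual splitting of each $\pi$ is inherited from the previous step. $f_{h+1}$ is nontrivial only on $Z(F_{\mathrm{pre}},P'_{\mathrm{post}})$, the set of $\pi$ for which the cube with $\pi$ in the higher slot already satisfies $f_1,\dots,f_h$; the complement is one class.
\item The inductive statement, $(C1)$ and $(C2)$ for every $P'$ with single higher coordinates, also removes your causality worry without any multilinearity. Each condition is attached to the largest pair index it involves, and conditions at higher coordinates are imposed for every single choice of $\bar y$, which is exactly what ``satisfies'' quantifies over.
\item For the complexity, fix a base point in each class; then every condition is one bit. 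So $f_{h+1}$ costs $O(r(h+1)2^h)$ bits and the complexity is $2^{O(rs2^s)}\le\exp(Cr^2 2^s)$.
\end{itemize}
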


\begin{proof}
In order to lighten the notation, we relabel the indices in $[r]$ in such a way that $S_{\text{var}}$ is identified with $[s]$.

We will construct the elements $f_h$ of the desired functional sequence $f_1, \ldots, f_s$ of equivalence relations inductively. This will be done in four separate subsections. In Subsection I, we will make a list of formal demands on our sequence $f_h$. In Subsection II, we prove the base case of the induction. In Subsection III, we focus on the inductive step. In Subsection IV, we finish the proof.

We recall that
\[
P = (p_{s + 1}, \ldots, p_r)
\]
and that
\[
a := \prod_{\substack{s < j \leq r \\ \operatorname{pr}_j(\mathbf{v}) = 1}} p_j,
\qquad
d := \prod_{\substack{s < j \leq r \\ \operatorname{pr}_j(\mathbf{v}) = 0}} p_j.
\]
Then $f_x(\mathbf{v}) = \chi_a$ for every $x = (F(\mathbf{a}), P)$. Since $\mathbf{v} \in W^{(h)}_0$, the polynomial $a$ has even degree. Since every prime in $S_{\text{var}}$ has even degree and the total degree is odd, the polynomial $d$ has odd degree.

\subsubsection*{I: Definition of helpful sequences} 
Let $h \in [s]$. We say that $f_1, \ldots, f_h$ is \emph{helpful} if for all cubes $F \in \mathrm{Cube}\left(\prod_{i \in [h]} X_i\right)$ and all points $P' \in \prod_{h < j \leq r} X_j$ that project to $P$ outside of $[s]$ and satisfy $(F(\mathbf{a}), P') \in Y(R, \mathbf{T}, \mathbf{v})$ for all $\mathbf{a} \in \{1, 2\}^{[h]}$, we have that $(F, P')$ satisfies $f_1, \ldots, f_h$ if and only if the following demands are met:
\begin{enumerate}
    \item[$(C1)$] The map $\phi_{\{F_i(1)F_i(2) : i \in [h]\}; a}(\mathfrak{G})$ exists and $L(\phi_{\{F_i(1)F_i(2) : i \in [h]\}; a}(\mathfrak{G}))/\mathbb{F}_q(T)$ has residue field degree $1$ at all the primes in $P'$ and splits completely at $\infty$.
    \item[$(C2)$] For each $i \in [h]$, we have that the map $\phi_{\{F_j(1)F_j(2) : j \in [h] - \{i\}\}; F_i(1)F_i(2)}(\mathfrak{G})$ exists and $L(\phi_{\{F_j(1)F_j(2) : j \in [h] - \{i\}\}; F_i(1)F_i(2)}(\mathfrak{G}))/\mathbb{F}_q(T)$ splits completely at all primes in $P'$ and at $\infty$.
\end{enumerate}
We will now inductively construct $(f_1, \ldots, f_s)$ in such a way that they form a helpful sequence. 

\subsubsection*{II: Construction of $f_1$} 
The goal of this subsection is to define $f_1$. For each $P' \in \prod_{2 \leq j \leq r} X_j$, put
\[
Z(P') := \{\pi \in X_1 : (\pi, P') \in Y(R, \mathbf{T}, \mathbf{v})\}.
\]
We will define two equivalence relations $\sim_{P'}^{(1)}$ and $\sim_{P'}^{(2)}$ on $Z(P')$, where $\sim_{P'}^{(2)}$ is a refinement of $\sim_{P'}^{(1)}$. Once this is done, we define $f_1(P')$ to be the relation $\sim_{P'}^{(2)}$ on $Z(P')$, and we extend this to $X_1$ by declaring all remaining elements in $X_1 - Z(P')$ to be equivalent.

To start, for $\pi_1, \pi_2 \in Z(P')$, we declare
$$
\pi_1 \sim_{P'}^{(1)} \pi_2 \Longleftrightarrow \left(\frac{\pi_1\pi_2}{\text{pr}_i(P')}\right) = 1 \text{ for all } 2 \leq i \leq r.
$$
By multiplicativity of the Legendre symbol, $\sim_{P'}^{(1)}$ is an equivalence relation. 

Observe that for $(\pi_1, P'), (\pi_2, P') \in Y(R, \mathbf{T}, \mathbf{v})$, the equivalence
$$
\pi_1 \sim_{P'}^{(1)} \pi_2
$$
forces the map $\phi_{\pi_1\pi_2; a}(\mathfrak{G})$ to exist. Indeed, to see this, we apply Proposition \ref{inductively construct expansion maps}: the local conditions at the prime divisors of $a$ are guaranteed by $\sim_{P'}^{(1)}$, while the local conditions at $\pi_1,\pi_2$ are respectively guaranteed by $\chi_a$ being in the kernel of $R$.

Having shown this, let now $\pi_1, \pi_2 \in Z(P')$ be such that $\pi_1 \sim_{P'}^{(1)} \pi_2$. We set 
\[
\pi_1\sim_{P'}^{(2)}\pi_2
\Longleftrightarrow 
L\bigl(\phi_{\pi_1\pi_2;a}(\mathfrak{G})\bigr)/\mathbb F_q(T)
\text{ has residue field degree }1
\text{ at }\operatorname{pr}_i(P')
\text{ for }2 \leq i \leq r,
\]
\[
\text{and splits completely at infinity.}
\]
We remark that this is an equivalence relation in view of Proposition \ref{pExpansionAdditive}.

We now claim that choosing $f_1 := \sim_{P'}^{(2)}$ is helpful. Take $\pi_1, \pi_2 \in X_1$ with $(\pi_1, P'), (\pi_2, P') \in Y(R, \mathbf{T}, \mathbf{v})$. Write $F_1: \{1, 2\} \rightarrow X_1$ for the corresponding map.

If $\pi_1\sim_{P'}^{(2)}\pi_2$, then $(C1)$ holds by construction. For $h = 1$, the map occurring in $(C2)$ is
\[
\phi_{\emptyset; \pi_1\pi_2}(\mathfrak{G}) = \chi_{\pi_1\pi_2}.
\]
The relation $\pi_1 \sim_{P'}^{(1)} \pi_2$ says precisely that $\mathbb{F}_q(T)(\chi_{\pi_1\pi_2})/\mathbb{F}_q(T)$ splits completely at every prime in $P'$. It also splits completely at $\infty$ because $\pi_1\pi_2$ is monic of even degree. Thus $(C2)$ holds.

Conversely, suppose that $(C1)$ and $(C2)$ hold. Condition $(C2)$ says that every prime in $P'$ splits completely in $\mathbb{F}_q(T)(\chi_{\pi_1\pi_2})/\mathbb F_q(T)$. Therefore $\pi_1\pi_2$ is a square modulo every prime occurring in $P'$, and hence $\pi_1 \sim_{P'}^{(1)} \pi_2$. The residue field one conditions and the condition at $\infty$ in $(C1)$ then give $\pi_1 \sim_{P'}^{(2)} \pi_2$.

\subsubsection*{III: The inductive step} 
Recall that $s \in \Z_{\geq 3}$, so $s - 2 \geq 1$. With this in mind, let $h \in [s - 1]$ and suppose that we have constructed a helpful functional sequence of equivalence relations $(f_1, \ldots, f_h)$. We are now going to construct $f_{h + 1}$. 

Let $F_{\text{pre}} \in \text{Cube}(\prod_{i \in [h]} X_i)$ and let $P_{\text{post}}' \in \prod_{h + 2 \leq i \leq r} X_i$. Put
\[
Z(F_{\mathrm{pre}}, P'_{\mathrm{post}})
:=
\left\{
\pi\in X_{h+1}:
\begin{array}{l}
(F_{\mathrm{pre}}(\boldsymbol\epsilon),\pi,
 P'_{\mathrm{post}})
 \in Y(R, \mathbf{T}, \mathbf{v})
 \text{ for every }
 \boldsymbol\epsilon\in\{1,2\}^{[h]},\\[2mm]
(F_{\mathrm{pre}},\pi,P'_{\mathrm{post}})
 \text{ satisfies } f_1, \ldots, f_h
\end{array}
\right\}.
\]
We will construct an equivalence relation on $Z(F_{\mathrm{pre}}, P'_{\mathrm{post}})$. We extend this to an equivalence relation on $X_{h + 1}$ by declaring the remaining elements $X_{h + 1} - Z(F_{\mathrm{pre}}, P'_{\mathrm{post}})$ to be one additional equivalence class. Take
\[
\pi_1, \pi_2 \in Z(F_{\mathrm{pre}}, P'_{\mathrm{post}}).
\]
We shall now define a sequence of equivalence relations on $Z(F_{\mathrm{pre}}, P'_{\mathrm{post}})$, which we denote by $\sim^{(k)}_{(F_{\mathrm{pre}}, P'_{\mathrm{post}})}$, where the $(k + 1)$-th relation is going to be a refinement of the $k$-th relation. We begin explaining the first step of this construction. We define
$$
\sim_{(F_{\text{pre}}, P_{\text{post}}')}^{(1)}
$$
to be the equivalence relation given by demanding that $\pi_1\pi_2$ is a square locally at all of the primes in $F_{\text{pre}}$ and $P_{\text{post}}'$. This is, as explained in Subsection II, an equivalence relation. Moreover, this already forces the following maps to exist by Proposition \ref{inductively construct expansion maps}
\begin{enumerate}
    \item $\phi_{\pi_1\pi_2; a}(\mathfrak{G})$, and
    \item $\phi_{F_i(1)F_i(2); \pi_1\pi_2}(\mathfrak{G})$ for each $i \in [h]$, and
    \item $\phi_{\pi_1\pi_2; F_i(1)F_i(2)}(\mathfrak{G})$ for each $i \in [h]$.
\end{enumerate}
We shall now explain how to inductively iterate this construction to construct an equivalence relation $\sim^{(k_0)}_{(F_{\mathrm{pre}},P'_{\mathrm{post}})}$ for every integer $1 \leq k_0 \leq h + 2$. By the induction hypothesis on $\sim^{(k_0)}_{(F_{\mathrm{pre}},P'_{\mathrm{post}})}$, we have the following conditions:
\begin{enumerate}
    \item $\phi_{\{F_i(1)F_i(2) : i \in S\} \cup \{\pi_1\pi_2\}; a}(\mathfrak{G})$ exists for each subset $S \subseteq [h]$ with $|S| \leq k_0 - 1$,
    \item $\phi_{\{F_j(1)F_j(2) : j \in S\}; \pi_1\pi_2}(\mathfrak{G})$ exists for each subset $S \subseteq [h]$ with $|S| \leq k_0$,
    \item $\{\phi_{\{F_i(1)F_i(2) : i \in S - \{j\}\} \cup \{\pi_1\pi_2\}; F_j(1)F_j(2)}(\mathfrak{G}) : j \in S\}$ exists for each subset $S \subseteq [h]$ with $|S| \leq k_0$,
\end{enumerate}
together with the following list of splitting requirements
\begin{enumerate}
    \item $L(\phi_{\{F_i(1)F_i(2) : i \in S\} \cup \{\pi_1\pi_2\}; a}(\mathfrak{G}))/\mathbb{F}_q(T)$ has residue field degree $1$ at each place of $F_{\text{pre}}$ and $P_{\text{post}}'$ different from $\{F_i(1), F_i(2) : i \in S\}$ and splits completely at $\infty$, for each subset $S \subseteq [h]$ with $|S| \leq k_0 - 2$, and
    \item $L(\phi_{\{F_j(1)F_j(2) : j \in S\}; \pi_1\pi_2}(\mathfrak{G}))/\mathbb{F}_q(T)$ splits completely at all of the primes in $F_{\text{pre}}$ different from $\{F_i(1), F_i(2) : i \in S\}$ and all of $P_{\text{post}}'$ and $\infty$, for each subset $S \subseteq [h]$ with $|S| \leq k_0 - 1$, and
    \item each of the fields $\{L(\phi_{\{F_i(1)F_i(2) : i \in S - \{j\}\} \cup \{\pi_1\pi_2\}; F_j(1)F_j(2)}(\mathfrak{G})) : j \in S\}$ splits completely at all of the primes in $F_{\text{pre}}$ different from $\{F_i(1), F_i(2) : i \in S\}$ and all of $P_{\text{post}}'$ and $\infty$, for each subset $S \subseteq [h]$ with $|S| \leq k_0 - 1$.
\end{enumerate}
For $1 \leq k_0 \leq h + 1$, assuming $\pi_1 \sim^{(k_0)}_{(F_{\text{pre}}, P_{\text{post}}')} \pi_2$, we now declare $\pi_1 \sim_{(F_{\text{pre}}, P_{\text{post}}')}^{(k_0 + 1)} \pi_2$ if and only if:
\begin{enumerate}
\item For every subset $S \subseteq [h]$ of cardinality $k_0 - 1$, the extension
\[
L\bigl(
\phi_{\{F_i(1)F_i(2) : i\in S\} \cup \{\pi_1\pi_2\}; a}(\mathfrak{G})
\bigr)/\mathbb F_q(T)
\]
has residue field degree $1$ at every prime occurring in $F_{\mathrm{pre}}$ or $P'_{\mathrm{post}}$ other than $F_i(1), F_i(2)$ for $i \in S$, and splits completely at $\infty$.
\item For every subset $S \subseteq [h]$ of cardinality $k_0$, the extension
\[
L\bigl(
\phi_{\{F_i(1)F_i(2):i\in S\};\pi_1\pi_2}(\mathfrak{G})
\bigr)/\mathbb F_q(T)
\]
splits completely at every prime occurring in $F_{\mathrm{pre}}$ or $P'_{\mathrm{post}}$ other than $F_i(1),F_i(2)$ for $i \in S$, and also splits completely at $\infty$.
\item For every subset $S \subseteq [h]$ of cardinality $k_0$ and every $j \in S$, the extension
\[
L\bigl(\phi_{\{F_i(1)F_i(2) : i\in S - \{j\}\} \cup \{\pi_1\pi_2\}; F_j(1)F_j(2)}(\mathfrak{G})\bigr)/\mathbb F_q(T)
\]
splits completely at every prime occurring in $F_{\mathrm{pre}}$ or $P'_{\mathrm{post}}$ other than $F_i(1), F_i(2)$ for $i \in S$, and also splits completely at $\infty$.
\end{enumerate}
Thanks to Proposition \ref{pExpansionAdditive}, this defines an equivalence relation. And enforcing
$$
\pi_1 \sim_{(F_{\text{pre}},P_{\text{post}}')}^{(k_0 + 1)} \pi_2
$$
precisely guarantees the splitting conditions for these maps allowing to continue the induction. We stop after constructing $\sim^{(h + 2)}_{(F_{\mathrm{pre}}, P'_{\mathrm{post}})}$, and define
\[
f_{h + 1}(F_{\mathrm{pre}}, P'_{\mathrm{post}}) := \sim^{(h + 2)}_{(F_{\mathrm{pre}}, P'_{\mathrm{post}})}
\]
on $Z(F_{\mathrm{pre}}, P'_{\mathrm{post}})$, extending it to $X_{h + 1}$ by declaring all elements in $X_{h + 1} - Z(F_{\mathrm{pre}}, P'_{\mathrm{post}})$ to be equivalent.

When $k_0 = h + 2$, the imposed conditions give exactly $(C1)$ and $(C2)$. Conversely, if $(C1)$ and $(C2)$ hold, the necessity
direction of Proposition \ref{inductively construct expansion maps} gives
\[
\pi_1 \sim^{(h + 2)}_{(F_{\mathrm{pre}}, P'_{\mathrm{post}})} \pi_2.
\]
Thus $f_1, \ldots, f_{h + 1}$ is helpful.

\subsubsection*{IV: End of the proof}
Let $F = (F_1, \ldots, F_s) \in \operatorname{Cube}\left(\prod_{i \in [s]} X_i\right)$ and suppose that
\[
(F(\boldsymbol{\epsilon}), P) \in Y(R, \mathbf{T}, \mathbf{v}) \qquad \text{for every } \boldsymbol{\epsilon} \in \{1, 2\}^{[s]}.
\]
Put $u_i := F_i(1)F_i(2)$ for $i \in [s]$, and let $C$ be the resulting $s$-dimensional cube. Suppose first that $F$ satisfies $f_1, \ldots, f_s$. Condition $(A1)$ holds, since it follows from the first equivalence relation imposed on each $X_h$. The remaining acceptable conditions follow exactly from $(C1)$ (applied with $h = s$). Moreover, $C$ is profitable exactly by $(C2)$. Thus $(F, P)$ is wonderful.

We now prove the converse. But indeed, this follows from the necessity direction of Proposition \ref{inductively construct expansion maps}; the conditions to apply this proposition hold by $(A1)$.

It remains to bound the complexity. At the construction of $f_{h + 1}$, we invoke $|S| + 2$ expansion maps for every fixed subset $S \subseteq [h]$. Each map is evaluated at $O(r)$ primes. Hence the number of bits needed to define $f_{h + 1}$ is 
$$
O\left( r \sum_{S \subseteq [h]} (2 + |S|) \right) = O\left( r \cdot (2^{h + 1} + h 2^{h - 1}) \right) = O \left(r \cdot (h + 1) \cdot 2^h \right).
$$
Consequently, for some absolute constants $C_1, C_2 > 0$,
\[
\operatorname{complexity}(f_1, \ldots, f_s) \leq 2^{C_1rs2^s} \leq \exp(C_2 \cdot r^2 \cdot 2^s).
\]
This completes the proof.
\end{proof}

We have now all the ingredients to prove Theorem \ref{tReduction}.

\begin{proof}[Proof of Theorem \ref{tReduction}]
We fix $n$, $X$, $m$, $R$, $\mathbf{v}$ and $h$ as in the statement of Theorem \ref{tReduction}. Recall that, given this data, we have fixed sets $S_{\text{var}}$ and $S_{\text{spl}}$, and recall that $s := |S_{\text{var}}| = m - 2$. 

For $x \in X$ satisfying $\mathrm{Art}_{x, 1}^{(h)} = R$, the condition $x \in Y(R, \mathbf{v})$ is equivalent to $\mathcal{T}_m(x, \mathbf{v}) \neq \emptyset$. Observe that the cardinality of $\mathcal{T}_m(x, \mathbf{v})$ is $0$ or $2^\alpha$ for some $0 \leq \alpha \leq (r + 1) m$. Let $P$ be the unique polynomial of degree $1 + (r + 1) m$ that satisfies
$$
P(0) = 0, \quad P(2^\alpha) = 1 \text{ for all } 0 \leq \alpha \leq (r + 1) m,
$$
so $P(X) = 1 - \prod_{0 \leq \alpha \leq (r + 1) m} \left(1 - \frac{X}{2^\alpha}\right)$. For $x \in X$ satisfying $\mathrm{Art}_{x, 1}^{(h)} = R$, it follows that
$$
\mathbf{1}_{x \in Y(R, \mathbf{v})} = \mathbf{1}_{\mathcal{T}_m(x, \mathbf{v}) \neq \emptyset} = P(|\mathcal{T}_m(x, \mathbf{v})|).
$$
Since $P(0) = 0$, we can write $P(X) = \sum_{i = 1}^{1 + (r + 1)m} a_i X^i$ with $|a_i| \leq 1 + 2^{1 + (r + 1) m}$. Therefore we have
$$
P(|\mathcal{T}_m(x, \mathbf{v})|) = \sum_{i = 1}^{1 + (r + 1)m} a_i |\mathcal{T}_m(x, \mathbf{v})|^i = \sum_{i = 1}^{1 + (r + 1)m} a_i \sum_{\mathbf{t}_1, \dots, \mathbf{t}_i} \mathbf{1}_{\{\mathbf{t}_1, \dots, \mathbf{t}_i\} \subseteq \mathcal{T}_m(x, \mathbf{v})}.
$$
As our first move, we fix $1 \leq i \leq 1 + (r + 1) m$ and we fix a collection of types $\mathbf{T} = \{\mathbf{t}_1, \dots, \mathbf{t}_i\}$. Since $i \geq 1$, we note that $\mathbf{T}$ is non-empty, so in particular we may fix one of its elements $\tilde{\mathbf{t}}$. By the triangle inequality, we then have to estimate
$$
\left| \sum_{x \in Y(R, \mathbf{T}, \mathbf{v})} g(x) \right|.
$$
As our next move, we apply H\"older's inequality to bound this sum by 
\begin{equation}
\label{eOpeningHolder}
\left(\prod_{i \not \in S_{\text{var}}} |X_i|\right)^{\frac{2^s - 1}{2^s}} \times \left(\sum_{\substack{P \in \prod_{i \not \in S_{\text{var}}} X_i}} \left| \sum_{\substack{Q \in \prod_{i \in S_{\text{var}}} X_i \\ (Q, P) \in Y(R, \mathbf{T}, \mathbf{v})}} g(Q, P) \right|^{2^s}\right)^{1/2^s},
\end{equation}
where we have applied the trivial bound to estimate the total number of choices for $P$ by $\prod_{i \not \in S_{\text{var}}} |X_i|$. We will now aim to bound the second term in \eqref{eOpeningHolder}.

For fixed $R$, $\mathbf{T}$, $\mathbf{v}$ and $P$, let $f_1, \dots, f_s$ be the functional sequence of equivalence relations from Lemma \ref{lFuncSeqEq}. We apply Lemma \ref{lRCS} for each such fixed $R$, $\mathbf{T}$, $\mathbf{v}$ and $P$ (with the finite sets being $X_i$ with $i \in S_{\text{var}}$). This shows that the second term in \eqref{eOpeningHolder} is at most
\begin{multline*}
\exp\big(C \cdot r^2 \cdot 2^s\big) \prod_{i \in S_{\text{var}}} |X_i|^{\frac{2^{s - 1} - 1}{2^{s - 1}}} \times \\
\left( \sum_{\substack{P \in \prod_{i \not \in S_{\text{var}}} X_i}} \sum_{\substack{F \in \mathrm{Cube}(\prod_{i \in S_{\text{var}}} X_i), \ (F, P) \text{ wonderful} \\ \forall \, \mathbf{a} \in  \{1, 2\}^{S_{\text{var}}} \, : \, (F(\mathbf{a}), P) \in Y(R, \mathbf{T}, \mathbf{v})}} \prod_{\mathbf{a} \in \{1, 2\}^{S_{\text{var}}}} g(F(\mathbf{a}), P) \right)^{1/2^s}
\end{multline*}
for some absolute constant $C$. We will focus again on the second expression, which we write as $\Sigma_1^{1/2^s}$. Define $S_{\text{mi}} := [r] - S_{\text{var}} - S_{\text{spl}}$. 
By the triangle inequality, we have the estimate
$$
\Sigma_1 \leq \sum_{\substack{P \in \prod_{i \in S_{\text{mi}}} X_i \\ Q_2 = (Q_{i, 2})_i \in \prod_{i \in S_{\text{var}}} X_i}} \left| \sum_{\substack{Q_1 = (Q_{i, 1})_i \in \prod_{i \in S_{\text{var}}} X_i \\ \mathbf{r} \in \prod_{i \in S_{\text{spl}}} X_i, \ \ ((Q_1, Q_2), P \times \mathbf{r}) \text{ wonderful} \\ \forall \, \mathbf{a} \in \{1, 2\}^{S_{\text{var}}} \, : \, (\prod_{i \in S_{\text{var}}} Q_{i, \mathrm{pr}_i(\mathbf{a})}, P \times \mathbf{r}) \in Y(R, \mathbf{T}, \mathbf{v})}} \hspace{-1.5cm} \prod_{\mathbf{a} \in \{1, 2\}^{S_{\text{var}}}} g(F(\mathbf{a}), P \times \mathbf{r}) \right|,
$$
where we have identified a pair $(Q_1, Q_2)$ with an element of $\mathrm{Cube}(\prod_{i \in S_{\text{var}}} X_i)$ in the natural way. Henceforth we write $C$ for the cube corresponding to $(Q_1, Q_2)$.

We will work towards an application of Proposition \ref{prop: sums of psi trivializes alpha alpha bar} and Theorem \ref{theorem: reflection principle}, which will affect our sum in two different but vital ways. Put
\[
a := \prod_{\substack{i \not \in S_{\text{var}} \\ \text{pr}_i(\mathbf{v}) = 1}} \text{pr}_i(x), \qquad
d := \prod_{\substack{i \not \in S_{\text{var}} \\ \text{pr}_i(\mathbf{v}) = 0}} \text{pr}_i(x).
\]
where these expressions are independent of the choice of $x\in C$.

We first observe that, under the current summation conditions, every vertex of $C$, including $x_0$, belongs to $Y(R, \mathbf{T}, \mathbf{v})$. We will define $C$ to be $2$-wonderful if it is also desirable of type $\mathbf{t}$ for each $\mathbf{t} \in \mathbf{T}$. We claim that all wonderful $C$ are automatically $2$-wonderful. Firstly, condition $(T)$ holds because every vertex belongs to $Y(R, \mathbf{T}, \mathbf{v})$.

Secondly, let $\Psi_{s + 1}(x_0; \mathbf{t})$ be the $1$-cochain defined by the right-hand side of equation \eqref{ex0}. Proposition \ref{prop: sums of psi trivializes alpha alpha bar} part $(i)$ shows that $\Psi_{s + 1}(x_0; \mathbf{t})$ is an $h$-raw cocycle of type $\mathbf{t}$ for each $\mathbf{t} \in \mathbf{T}$. On the other hand, membership of $x_0$ in $Y(R, \mathbf{T}, \mathbf{v})$ supplies an $h$-raw cocycle $\psi_{s + 2}(x_0; \mathbf{t})$, and $2\psi_{s + 2}(x_0; \mathbf{t})$ is also of type $\mathbf{t}$. The two cocycles agree on the topological generators from Proposition \ref{Topological generators}, and therefore
\[
\Psi_{s + 1}(x_0; \mathbf{t}) = 2\psi_{s + 2}(x_0; \mathbf{t}).
\]
Proposition \ref{prop: Pi detects 2} and equation \eqref{eGreatPi} now give condition $(D)$. Thus every wonderful cube occurring in the present sum is $2$-wonderful.

Now suppose that $C$ is $2$-wonderful and that all its vertices other than $x_0$ belong to $Y(R, \mathbf{T}, \mathbf{v})$. Proposition \ref{prop: sums of psi trivializes alpha alpha bar} part $(i)$ and $(ii)$ then produce an $h$-raw cocycle $\psi_{s + 2}(x_0; \mathbf{t})$ such that $\psi_{s + 1}(x_0; \mathbf{t})$ is of type $\mathbf{t}$ for each $\mathbf{t} \in \mathbf{T}$. Hence the condition $x_0 \in Y(R, \mathbf{T}, \mathbf{v})$ may be replaced by
\[
\operatorname{Art}_{x_0,1}^{(h)} = R.
\]
But this equality follows from
\[
\operatorname{Art}_{x_0Q_{i,1}Q_{i,2},1}^{(h)} = R
\]
for any fixed $i \in S_{\mathrm{var}}$, together with condition $(A1)$. We may therefore remove the condition $x_0\in Y(R, \mathbf{T}, \mathbf{v})$ from the summation conditions.

Define
$$
a' := \prod_{\substack{i \in S_{\text{mi}} \\ \text{pr}_i(\mathbf{v}) = 1}} \text{pr}_i(x).
$$ 
We also pick indices $i_1, i_2, i_3$ enumerating $S_{\text{spl}}$, i.e.~$S_{\text{spl}} := \{i_1, i_2, i_3\}$. Defining $C^- := \{1, 2\}^{S_{\text{var}}} - \{(1, \dots, 1)\}$, the second part of Theorem \ref{theorem: reflection principle} (applied to the type $\tilde{\mathbf{t}}$ we fixed at the start of the proof) allows us to rewrite our sum as 
$$
\sum_{\substack{P \in \prod_{i \in S_{\text{mi}}} X_i \\ Q_2 = (Q_{i, 2})_i \in \prod_{i \in S_{\text{var}}} X_i}} \left| \sum_{\substack{Q_1 = (Q_{i, 1})_i \in \prod_{i \in S_{\text{var}}} X_i \\ \mathbf{r} := (r_i)_{i \in S_{\text{spl}}} \in \prod_{i \in S_{\text{spl}}} X_i \\ ((Q_1, Q_2), P \times \mathbf{r}) \text{ 2-wonderful} \\ \forall \, \mathbf{a} \in C^- \, : \, (\prod_{i \in S_{\text{var}}} Q_{i, \mathrm{pr}_i(\mathbf{a})}, P \times \mathbf{r}) \in Y(R, \mathbf{T}, \mathbf{v})}} (-1)^{\mathrm{Re}(P \times \mathbf{r}, C)} \right|,
$$
where we set $\phi := \phi_{C; a' r_{i_1}r_{i_2}r_{i_3}}(\mathfrak{G}) + \phi_{[s]}(C; \tilde{\mathbf{t}})$ and
\begin{align*}
&\mathrm{Re}(P \times \mathbf{r}, C) := \mathrm{Re}_1(P \times \mathbf{r}, C) + \mathrm{Re}_2(P \times \mathbf{r}, C) \\
&\mathrm{Re}_1(P \times \mathbf{r}, C) := \left[ \sum_{g \in G(C) - \{1\}} \phi^g, \phi, \phi \right]_{F(C)} \\
&\mathrm{Re}_2(P \times \mathbf{r}, C) := \left[ \frac{(1 - h)}{2} \cdot \chi_{\epsilon_{\mathbb{F}_q}} + \chi_{d \prod_{i \in S_{\text{var}}} Q_{i, 1}}, \phi, \phi \right]_{F(C)}.
\end{align*}
We remark that this is a valid splitting of the symbol $\mathrm{Re}(P \times \mathbf{r}, C)$ by Theorem \ref{theorem: Redei reciprocity}, where an explicit verification establishes that both symbols are R\'edei admissible.

We will ultimately exploit bilinear oscillation over $r_{i_j}$ and $r_{i_k}$ for two suitably chosen (distinct) elements $j, k \in \{1, 2, 3\}$. In order to work towards this result, we must understand how our symbols change over the $r_{i_j}$ and $r_{i_k}$ axis, for which we now initiate the necessary preparations.

Define $S_\emptyset$ to be the set of places of $F(C)$ lying above a place in $S_{\emptyset, \text{below}}$ that splits completely in $F(C)/\mathbb{F}_q(T)$. The subgroup $M$ of $\Gal(H_2(F(C))/F(C))$ generated by the Frobenius elements in (any Galois stable subset of) $S_\emptyset$ forms a $\mathbb{F}_2[\Gal(F(C)/\FF_q(T))]$-module. By Nakayama's lemma, $M = \Gal(H_2(F(C))/F(C))$ if and only if $M$ generates modulo the augmentation ideal of $\mathbb{F}_2[\Gal(F(C)/\FF_q(T))]$. Let $K$ be the largest multiquadratic extension of $F(C)$  unramified  at all places of $F(C)$ such that $K$ is Galois over $\FF_q(T)$ and $\Gal(K/F(C))$ is a central subgroup of $\Gal(K/\FF_q(T))$. We claim that
\begin{equation}
\label{eboundforcentral}
[K : F(C)] \leq 2^{\binom{s + 2}{2}}.
\end{equation}
Indeed, by the inflation-restriction sequence, we have 
$$
\langle \{\chi_{Q_{i , 1}} : i \in S_{\text{var}}\} \cup \{\chi_{\epsilon_{\mathbb{F}_q}}\} \rangle \rightarrow \Gal(K/F(C))^\vee \rightarrow H^2(\Gal(F(C)/\mathbb{F}_q(T)), \FF_2).
$$
It is also well-known that $\dim_{\FF_2} \, H^2(\Gal(F(C)/\mathbb{F}_q(T)), \FF_2) = \binom{s + 1}{2}$. Combining this with the above exact sequence, we obtain \eqref{eboundforcentral}.

Writing $G := \Gal(K/\FF_q(T))$ and writing $\pi: G_{\mathbb{F}_q(T)} \rightarrow G$ for the corresponding homomorphism, we conclude that $M = \Gal(H_2(F(C))/F(C))$ generates if and only if
$$
\langle \{\pi(\Frob_v): v \in S_{\emptyset, \text{below}}, \ v \text{ splits completely in } F(C)/\FF_q(T)\} \rangle = \Gal(K/F(C)).
$$
But indeed, this holds by the Chebotarev density theorem, using that the degree of any element in $S_{\emptyset, \text{below}}$ is odd by definition. 

Therefore we can take a Galois stable subset $T(C) \subseteq S_\emptyset$ such that the Frobenius elements of the places in $T(C)$ span $\Gal(H_2(F(C))/F(C))$ and such that
\begin{equation}
\label{eNakaBound}
|T(C)| \leq \binom{s + 2}{2} \cdot 2^s.
\end{equation}
For each $w \in T(C) \cup S(a')$, we fix a choice of $\Frob_w$ and $\sigma_w$ in $G_{F(C)_w}$. For each $k \in [3]$, we also fix an element $(L_{k, w})_{w \in T(C) \cup S(a')} \in (\Z/4\Z \times \Z/4\Z)^{T(C) \cup S(a')}$.  We partition, for each $k \in [3]$, the set $X_{i_k}$ in such a way that 
$$
(\psi_{T(C), r_{i_k}}(\Frob_w), \psi_{T(C), r_{i_k}}(\sigma_w)) = L_{k, w}
$$
for all $w \in T(C) \cup S(a')$. Taking the notation from Theorem \ref{theorem: Applying quartic reciprocity}, we also freeze the behavior of $\phi$ and $\sum_{g \in G(C) - \{1\}} \phi^g$ locally at $w \in T(C)$, i.e.~we fix vectors $\mathbf{a}, \mathbf{b} \in \mathbb{F}_2^{T(C)}$ and impose the summation conditions
$$
(\phi(\Frob_w))_{w \in T(C)} = \mathbf{a}, \quad \left(\sum_{g \in G(C) - \{1\}} \phi^g(\Frob_w)\right)_{w \in T(C)} = \mathbf{b}.
$$
We now apply Theorem \ref{theorem: Applying quartic reciprocity} with the image of $\tilde{\mathbf{t}}$ (whose coordinates inside $S_{\text{var}}$ must live inside $N[2^s]$ by the equation $2^s \psi_{s + 1}(x; \tilde{\mathbf{t}}) = \chi_a$) inside $\mathbb{F}_2^{S_{\text{var}}}$ (using the natural quotient map $N[2^s] \rightarrow \mathbb{F}_2$) to break $\mathrm{Re}_1(P \times \mathbf{r}, C)$ over its various pieces. 
After abbreviating 
$$
\mu := \exp\left(2\pi i \cdot \bigg(\lambda_{\mathbf{a}, \mathbf{b}, a', T(C)}((L_{k, w})_{k \in [3], w \in T(C) \cup S(a')}) + \sum_{k = 1}^3 \Delta_C(r_{i_k})\bigg)\right),
$$
this leads to the sum 
$$
\sum_{\substack{P \in \prod_{i \in S_{\text{mi}}} X_i \\ Q_2 = (Q_{i, 2})_i \in \prod_{i \in S_{\text{var}}} X_i}} \left| \sum_{\substack{Q_1 = (Q_{i, 1})_i \in \prod_{i \in S_{\text{var}}} X_i, \ \mathbf{r} \in \prod_{i \in S_{\text{spl}}} X_i \\ (L_{k, w})_{k \in [3], w \in T(C) \cup S(a')}, \ \mathbf{a}, \mathbf{b} \in \mathbb{F}_2^{T(C)} \\ ((Q_1, Q_2), P \times \mathbf{r}) \text{ 2-wonderful}, \ \forall \, \mathbf{a} \in C^- \, : \, (\prod_{i \in S_{\text{var}}} Q_{i, \mathrm{pr}_i(\mathbf{a})}, P \times \mathbf{r}) \in Y(R, \mathbf{T}, \mathbf{v}) \\ \res_w \psi_{T(C), r_{i_k}} = L_{k, w}, \ (\phi(\Frob_w))_{w \in T(C)} = \mathbf{a}, \ (\sum_{g \in G(C) - \{1\}} \phi^g(\Frob_w))_{w \in T(C)} = \mathbf{b}}} \hspace{-2cm} \mu \cdot (-1)^{B(P \times \mathbf{r}, C)} \right|,
$$
where
$$
B(P \times \mathbf{r}, C) = \mathrm{Re}_2(P \times \mathbf{r}, C) + \sum_{g \in G(C) - \{1\}} \sum_{1 \leq k_1 < k_2 \leq 3} \inv_{\mathrm{AUp}(r_{i_{k_2}})}\left(\chi_{T(C), r_{i_{k_1}}}^g \cup \chi_{T(C), r_{i_{k_2}}}\right).
$$
We will now unwrap the $2$-wonderful condition for a cube $C$ with all of its vertices in $Y(R, \mathbf{T}, \mathbf{v})$ (except for possibly $x_0$). Such cubes still satisfy property $(T)$ for each $\mathbf{t} \in \mathbf{T}$, and thus 
\begin{align*}
\mathbf{1}_{((Q_1, Q_2), P \times \mathbf{r}) \text{ 2-wonderful}} &= \mathbf{1}_{((Q_1, Q_2), P \times \mathbf{r}) \text{ wonderful}} \prod_{\mathbf{t} \in \mathbf{T}} \prod_{i \in S_{\text{var}}} \frac{1}{2}\left(1 + (-1)^{\mathrm{Re}(P \times \mathbf{r}, C, i, \mathbf{t})}\right) \\
&= \frac{\mathbf{1}_{((Q_1, Q_2), P \times \mathbf{r}) \text{ wonderful}}}{2^{s \cdot |\mathbf{T}|}} \sum_{\mathfrak{T} \subseteq \mathbf{T} \times S_{\text{var}}} (-1)^{\sum_{(\mathbf{t}, i) \in \mathfrak{T}} \mathrm{Re}(P \times \mathbf{r}, C, i, \mathbf{t})}.
\end{align*}
where
\begin{multline*}
\mathrm{Re}(P \times \mathbf{r}, C, i, \mathbf{t}) := \sum_{x \in C_i} \mathrm{sg}(x) \psi_{s + 1}(x; \mathbf{t})(\Frob_{Q_{i, 1}}) + \\
\phi_{C; a}(\mathfrak{G})(\Frob_{Q_{i, 1}}(\chi_{x_0})) + \phi_{S_{\text{var}}}(C; \mathbf{t})(\Frob_{Q_{i, 1}}(\chi_{x_0})).
\end{multline*}
Before we proceed, we similarly detect the conditions 
$$
(\phi(\Frob_w))_{w \in T(C)} = \mathbf{a}, \quad \left(\sum_{g \in G(C) - \{1\}} \phi^g(\Frob_w)\right)_{w \in T(C)} = \mathbf{b}
$$
respectively by
\begin{align*}
&\frac{1}{2^{|T(C)|}} \sum_{\mathcal{I}_1 \subseteq T(C)} (-1)^{\sum_{w_1 \in \mathcal{I}_1} \left(\phi(\Frob_{w_1}) - \mathrm{pr}_{w_1}(\mathbf{a})\right)}, \\
&\frac{1}{2^{|T(C)|}} \sum_{\mathcal{I}_2 \subseteq T(C)} (-1)^{\sum_{w_2 \in \mathcal{I}_2} \left(\sum_{g \in G(C) - \{1\}} \phi^g(\Frob_{w_2}) - \mathrm{pr}_{w_2}(\mathbf{b)}\right)}.
\end{align*}
Next, we detect the acceptable condition at the places dividing $a'r_{i_1}r_{i_2}r_{i_3}$ via the expression
\begin{align*}
A(P \times \mathbf{r}, C) &:= \prod_{i \in [r], \mathrm{pr}_i(\mathbf{v}) = 1} \frac{1}{2} \left(1 + (-1)^{\phi_{C; a'r_{i_1}r_{i_2}r_{i_3}}(\mathfrak{G})(\Frob_{\mathrm{pr}_i(P \times \mathbf{r})}(\chi_{a'r_{i_1}r_{i_2}r_{i_3}}))}\right) \\
&= \frac{\sum_{I \subseteq \{i \in [r] : \mathrm{pr}_i(\mathbf{v}) = 1\}} (-1)^{\sum_{i \in I} \phi_{C; a'r_{i_1}r_{i_2}r_{i_3}}(\mathfrak{G})(\Frob_{\mathrm{pr}_i(P \times \mathbf{r})}(\chi_{a'r_{i_1}r_{i_2}r_{i_3}}))}}{2^{|\{i \in [r] : \mathrm{pr}_i(\mathbf{v}) = 1\}|}}.
\end{align*}
Call the exponent in the latter expression $A_1(P \times \mathbf{r}, C, I)$. 

We say that $((Q_1, Q_2), P \times \mathbf{r})$ is $3$-wonderful if it satisfies $(W2)$, is pre-acceptable, and $\phi_{C; a' r_{i_1}r_{i_2}r_{i_3}}(\mathfrak{G})$ has residue field degree $1$ at all places $v$ with $v(d) = 1$. We apply the triangle inequality to move the sums over $Q_1$, $L_{k, w}$, $\mathbf{a}$, $\mathbf{b}$, $\mathfrak{T}$, $I$, $\mathcal{I}_1$, $\mathcal{I}_2$ outside, which fixes $\lambda_{\mathbf{a}, \mathbf{b}, a', T(C)}((L_{k, w})_{k \in [3], w \in T(C) \cup S(a')})$. Therefore, summarizing our work so far, we have arrived at the sum
$$
\sum_{\substack{P \in \prod_{i \in S_{\text{mi}}} X_i \\ Q_1 = (Q_{i, 1})_i \in \prod_{i \in S_{\text{var}}} X_i \\ Q_2 = (Q_{i, 2})_i \in \prod_{i \in S_{\text{var}}} X_i \\ (L_{k, w})_{k \in [3], w \in T(C) \cup S(a')} \\ \mathbf{a}, \mathbf{b} \in \mathbb{F}_2^{T(C)}, \mathfrak{T}, I, \mathcal{I}_1, \mathcal{I}_2}} \left| \sum_{\substack{\mathbf{r} \in \prod_{i \in S_{\text{spl}}} X_i \\
((Q_1, Q_2), P \times \mathbf{r}) \text{ 3-wonderful}, \  \res_w \psi_{T(C), r_{i_k}} = L_{k, w} \\ \forall \, \mathbf{a} \in C^- \, : \, (\prod_{i \in S_{\text{var}}} Q_{i, \mathrm{pr}_i(\mathbf{a})}, P \times \mathbf{r}) \in Y(R, \mathbf{T}, \mathbf{v})}} \hspace{-1cm} (-1)^{B(P \times \mathbf{r}, C, \mathfrak{T}, I, \mathcal{I}_1, \mathcal{I}_2)} \right|,
$$
where
\begin{align*}
B(P \times \mathbf{r}, C, \mathfrak{T}, I, \mathcal{I}_1, \mathcal{I}_2) &= \mathrm{Re}_2(P \times \mathbf{r}, C) + \sum_{k = 1}^3 \Delta_C(r_{i_k}) \\
&+ \sum_{g \in G(C) - \{1\}} \sum_{1 \leq k_1 < k_2 \leq 3} \inv_{\mathrm{AUp}(r_{i_{k_2}})}\left(\chi_{T(C), r_{i_{k_1}}}^g \cup \chi_{T(C), r_{i_{k_2}}}\right) \\
&+ \sum_{w_1 \in \mathcal{I}_1} \left(\phi(\Frob_{w_1}) - \mathrm{pr}_{w_1}(\mathbf{a})\right) \\
&+ \sum_{w_2 \in \mathcal{I}_2} \left(\sum_{g \in G(C) - \{1\}} \phi^g(\Frob_{w_2}) - \mathrm{pr}_{w_2}(\mathbf{b})\right) \\
&+ A_1(P \times \mathbf{r}, C, I) + \sum_{(\mathbf{t}, i) \in \mathfrak{T}} \mathrm{Re}(P \times \mathbf{r}, C, i, \mathbf{t}).
\end{align*}
Since $S_{\text{spl}}$ has cardinality exactly $3$, there exist two indices $i_1, i_2 \in S_{\text{spl}}$ (which we may call $i_1$ and $i_2$ after relabeling if needed) that are either both contained in $I$ or are both contained in $[r] - S_{\text{var}} - I$. We apply Cauchy--Schwarz, in the sense of Lemma \ref{lRCS}, over the sets $X_{i_1}$ and $X_{i_2}$ (so $C$, $P$ and $X_{i_3}$ are fixed). In the process, we use the functional sequence of equivalence relations $f_1'$, $f_2'$ that precisely imposes the following conditions on $r_1, r_1' \in X_{i_1}$ and $r_2, r_2' \in X_{i_2}$:
\begin{enumerate}
\item[$(F1)$] the maps $\phi_{C; r_1r_1'}(\mathfrak{G})$ and $\phi_{C; r_2r_2'}(\mathfrak{G})$ exist and are locally trivial at all places $v$ with $v(d) = 1$,
\item[$(F2)$] for each $i \in S_{\text{var}}$ and for each pair of disjoint proper subsets $S_1, S_2 \subseteq S_{\text{var}} - \{i\}$, the map $\hat{\phi}_{\{Q_{j_1, 1}Q_{j_1, 2} : j_1 \in S_1\}, \{Q_{j_2, 1}Q_{j_2, 2} : j_2 \in S_2\}; Q_{i, 1}Q_{i, 2}, r_1r_1'}(\mathfrak{G})$ exists,
\item[$(F3)$] $\chi_{T(C), r_1} + \chi_{T(C), r_1'} = \res_{G_{F(C)}} \phi_{C; r_1r_1'}(\mathfrak{G})$ and $\chi_{T(C), r_2} + \chi_{T(C), r_2'} = \res_{G_{F(C)}} \phi_{C; r_2r_2'}(\mathfrak{G})$.
\end{enumerate}
Call such a cube $C \times \{r_1, r_1'\} \times \{r_2, r_2'\}$ prepared if it moreover satisfies the condition that $r_1, r_1', r_2, r_2'$ split completely in $\prod_{i \in S_{\text{var}}} L\left(\phi_{\mathrm{Cube}(S_{\text{var}} - \{i\}, C); Q_{i, 1} Q_{i, 2}}(\mathfrak{G})\right)/\mathbb{F}_q(T)$; note that this condition is implied by $(W2)$.

We will briefly explain how to adapt the proof of Lemma \ref{lFuncSeqEq} to prove that the conditions $(F1)$, $(F2)$, $(F3)$ can be expressed as a functional sequence of equivalence relations with complexity bounded by $\exp(c_1 \cdot r^2 \cdot 4^s)$ for some absolute $c_1 > 0$. Condition $(F1)$ can be treated in the same way as the acceptable condition was treated in Lemma \ref{lFuncSeqEq}. Condition $(F2)$ can be treated similarly to how $(A2)$ was treated in Lemma \ref{lFuncSeqEq}, except that we substitute Proposition \ref{prop: hat phi are additive} and Proposition \ref{prop: criterion of existence for hat phi} for Proposition \ref{pExpansionAdditive} and Proposition \ref{inductively construct expansion maps}. Condition $(F3)$ is visibly of the right shape but it is less clear how to bound the number of equivalence classes: to this end, we note that $\phi_{C; r_1r_1'}(\mathfrak{G}) - \chi_{T(C), r_1} - \chi_{T(C), r_1'}$ is unramified outside of $T(C)$, hence we can test whether this expression is zero by evaluating at $\sigma_w$ and $\Frob_w$ for all $w \in T(C)$; we recall that we have a good bound for $T(C)$ thanks to \eqref{eNakaBound}.

Now, as we sum $B(P \times \mathbf{r}, C, \mathfrak{T}, I, \mathcal{I}_1, \mathcal{I}_2)$ over the four natural subcubes $C \times \{r_{i_1, \alpha}\} \times \{r_{i_2, \alpha'}\}$ of $C \times \{r_{i_1, 1}, r_{i_1, 2}\} \times \{r_{i_2, 1}, r_{i_2, 2}\}$, we see that $\mathrm{Re}_2(P \times \mathbf{r}, C)$ sums to zero, and so do $\sum_{w_1 \in \mathcal{I}_1} \left(\phi(\Frob_{w_1}) - \mathrm{pr}_{w_1}(\mathbf{a})\right)$ and $\sum_{w_2 \in \mathcal{I}_2} \left(\sum_{g \in G(C) - \{1\}} \phi^g(\Frob_{w_2}) - \mathrm{pr}_{w_2}(\mathbf{b})\right)$. 

Critically, the term $A_1(P \times \mathbf{r}, C, I)$ also disappears, when summed over four such cubes. Indeed, if $i_1$ and $i_2$ are both contained in $[r] - S_{\text{var}} - I$, then this is clear. If instead $i_1, i_2$ are both contained in $I$, then the term 
$$
\phi_{C; r_1r_1'}(\mathfrak{G})(\Frob_{r_2}) + \phi_{C; r_1r_1'}(\mathfrak{G})(\Frob_{r_2'}) + \phi_{C; r_2r_2'}(\mathfrak{G})(\Frob_{r_1}) + \phi_{C; r_2r_2'}(\mathfrak{G})(\Frob_{r_1'}) = 0
$$
vanishes by Hilbert reciprocity applied to the cocycle $\phi_{C; r_1r_1'}(\mathfrak{G}) \cup \phi_{C; r_2r_2'}(\mathfrak{G})$ combined with the $3$-wonderful condition, which allows us to replace $\Frob_{r_i}$ with the Frobenius symbol of any place of $F(C)$ above $r_i$, so in particular with $\Frob_{\mathrm{AUp}(r_i)}$.

Finally, we apply both Theorem \ref{theorem: profitability} as well as Theorem \ref{theorem: splitting the Redei symbol} to simplify the sum over $\mathrm{Re}(P \times \mathbf{r}, C, i, \mathbf{t})$. Here we note that the final result of Theorem \ref{theorem: profitability} does not depend on the type $\mathbf{t}$. Moving the sum over $X_{i_3}$ to the outside and combining these manipulations gives the following sum
\begin{equation}
\label{ePreFinalHolder}
\sum_{\substack{\tilde{P} \in \prod_{i \in S_{\text{mi}} \cup \{i_3\}} X_i \\ Q_1 = (Q_{i, 1})_i \in \prod_{i \in S_{\text{var}}} X_i \\ Q_2 = (Q_{i, 2})_i \in \prod_{i \in S_{\text{var}}} X_i \\ (L_{k, w})_{k \in [3], w \in T(C) \cup S(a')} \\ \mathbf{a}, \mathbf{b} \in \mathbb{F}_2^{T(C)}, \mathfrak{T}, I, \mathcal{I}_1, \mathcal{I}_2}} \left| \sum_{\substack{
(r_{i_1, 1}, r_{i_1, 2}) \in X_{i_1}^2, \ (r_{i_2, 1}, r_{i_2, 2}) \in X_{i_2}^2 \\ 
((Q_1, Q_2), \tilde{P} \times \{r_{i_1, k_1}\} \times \{r_{i_2, k_2}\}) \text{ 3-wonderful}, \ \res_w \psi_{T(C), r_{i_\nu, \alpha}} = L_{\nu, w} \\
C \times \tilde{P} \times \{r_{i_1, 1}, r_{i_1, 2}\} \times \{r_{i_2, 1}, r_{i_2, 2}\} \text{ prepared} \\
\forall \, (\mathbf{a}, \alpha, \alpha') \in C^- \times \{1, 2\}^2 \, : \, (\prod_{i \in S_{\text{var}}} Q_{i, \mathrm{pr}_i(\mathbf{a})}, \tilde{P} \times \{r_{i_1, \alpha}\} \times \{r_{i_2, \alpha'}\}) \in Y(R, \mathbf{T}, \mathbf{v})}} 
\hspace{-1cm} (-1)^\xi \right|,
\end{equation}
where for some functions $\lambda_1$ and $\lambda_2$ which are respectively independent of the pair $(r_{i_2, 1}, r_{i_2, 2})$ and $(r_{i_1, 1}, r_{i_1, 2})$
\begin{multline*}
\xi := \lambda_1(r_{i_1, 1}, r_{i_1, 2}) + \lambda_2(r_{i_2, 1}, r_{i_2, 2}) + \hspace{-0.3cm} \sum_{\substack{j \in S_{\text{var}} \\ |\{\mathbf{t} : (\mathbf{t}, j) \in \mathfrak{T}\}| \equiv 1 \bmod 2}} \sum_{\alpha' \in \{1, 2\}} \hat{\phi}_{C, j}(r_{i_1, 1} r_{i_1, 2})(\Frob_{\mathrm{AUp}(r_{i_2, \alpha'})}) \\
+ \sum_{g \in G(C) - \{1\}} \sum_{\alpha' \in \{1, 2\}} \inv_{\mathrm{AUp}(r_{i_2, \alpha'})} \left(\phi_{C; r_{i_1, 1} r_{i_1, 2}}(\mathfrak{G})^g \cup \phi_{C; r_{i_2, 1} r_{i_2, 2}}(\mathfrak{G})\right),
\end{multline*}
where $\hat{\phi}_{C, j}$ denotes the map $\hat{\phi}_j$ from Theorem \ref{theorem: splitting the Redei symbol}.

We will also apply the triangle inequality to move $r_{i_1, 2}$ and $r_{i_2, 2}$ to the outer sum (note that we can conveniently combine them with $\tilde{P}$ to get a new element $P' := \tilde{P} \times \{r_{i_1, 2}\} \times \{r_{i_2, 2}\} \in \prod_{i \not \in S_{\text{var}}} X_i$). By introducing coefficients (depending only on $Q_1$ and $L_{k, w}$), we may move the sum over $Q_1$ and $L_{k, w}$ to the inside again. Considering the resulting inner sum in \eqref{ePreFinalHolder}, we will now work towards an application of Lemma \ref{lBoundaries}. We will apply Lemma \ref{lBoundaries} for each fixed value of the outer sum over $P'$ and $Q_2$. Critically, the condition 
$$
\forall \, (\mathbf{a}, \alpha, \alpha') \in C^- \times \{1, 2\}^2 \, : \, \left(\prod_{i \in S_{\text{var}}} Q_{i, \mathrm{pr}_i(\mathbf{a})}, \tilde{P} \times \{r_{i_1, \alpha}\} \times \{r_{i_2, \alpha'}\}\right) \in Y(R, \mathbf{T}, \mathbf{v})
$$
factors, for each fixed choice of $(\mathbf{a}, \alpha, \alpha')$, through at least one of the coordinates of the tuple $(Q_1, r_{i_1, 1}, r_{i_2, 1})$ (here it is essential that we have removed the point $(1, \dots, 1)$ from $C^-$). Thus, we can rewrite this condition as a product of the functions $p_i$ in Lemma \ref{lBoundaries}, and we can do so similarly for the condition $(W2)$ and the condition that $C$ is pre-acceptable and the condition that $\phi_{C; a' r_{i_1}r_{i_2}r_{i_3}}(\mathfrak{G})$ has residue field degree $1$ at all places $v$ with $v(d) = 1$. We can also absorb the condition $\res_w \psi_{T(C), r_{i_k}} = L_{k, w}$ in this way. Having done this, it follows from Lemma \ref{lBoundaries} and additivity that the inner sum in equation \eqref{ePreFinalHolder} is, up to acceptable losses, at most
$$
\prod_{i \in S_{\text{var}} \cup \{i_1, i_2\}} |X_i|^{\frac{2^{s + 1} - 1}{2^{s + 1}}} \sum_{\substack{P' \in \prod_{i \not \in S_{\text{var}}} X_i \\ Q_2 = (Q_{i, 2})_i \in \prod_{i \in S_{\text{var}}} X_i}} \left(\sum_{\substack{Q_1 = (Q_{i, 1})_i, Q_1' = (Q'_{i, 1})_i \in \prod_{i \in S_{\text{var}}} X_i \\ r_{i_1, 1}, r_{i_1, 1}' \in X_{i_1}, \ \ r_{i_2, 1}, r_{i_2, 1}' \in X_{i_2} \\ \text{ all cubes prepared}}} (-1)^{\xi'}\right)^{1/2^{s + 2}},
$$
where
\begin{multline*}
\xi' = \sum_{\substack{j \in S_{\text{var}} \\ |\{\mathbf{t} : (\mathbf{t}, j) \in \mathfrak{T}\}| \equiv 1 \bmod 2}} \left(\hat{\phi}_{C', j}'(r_{i_1, 1} r_{i_1, 1}')(\Frob_{\mathrm{AUp}(r_{i_2, 1})}) + \hat{\phi}_{C', j}'(r_{i_1, 1} r_{i_1, 1}')(\Frob_{\mathrm{AUp}(r_{i_2, 1}')})\right) \\
+ \sum_{g \in G(C') - \{1\}} \inv_{\mathrm{AUp}(r_{i_2, 1})} \left(\phi_{C'; r_{i_1, 1} r_{i_1, 1}'}(\mathfrak{G})^g \cup \phi_{C'; r_{i_2, 1} r_{i_2, 1}'}(\mathfrak{G})\right) \\
+ \sum_{g \in G(C') - \{1\}} \inv_{\mathrm{AUp}(r_{i_2, 1}')} \left(\phi_{C'; r_{i_1, 1} r_{i_1, 1}'}(\mathfrak{G})^g \cup \phi_{C'; r_{i_2, 1} r_{i_2, 1}'}(\mathfrak{G})\right)
\end{multline*}
and where $C'$ is the cube corresponding to $Q_1$ and $Q_1'$ and where $\hat{\phi}_{C', j}'$ is the map from Proposition \ref{prop: linear independence of hat phi and phi}.

We now bring the sum over $r_{i_1, 1}$, $r_{i_1, 1}'$ and $r_{i_2, 1}$ to the inside (keeping the variable $r_{i_2, 1}'$ outside of the sum). To end the proof of Theorem \ref{tReduction}, we will explain how to apply Lemma \ref{lLS} to this inner sum.

Firstly, we remark that
$$
\inv_{\mathrm{AUp}(r_{i_2, 1})} \left(\phi_{C'; r_{i_1, 1} r_{i_1, 1}'}(\mathfrak{G})^g \cup \phi_{C'; r_{i_2, 1} r_{i_2, 1}'}(\mathfrak{G})\right) = \phi_{C'; r_{i_1, 1} r_{i_1, 1}'}(\mathfrak{G})^g(\Frob_{\mathrm{AUp}(r_{i_2, 1})}) 
$$
and similarly
$$
\inv_{\mathrm{AUp}(r_{i_2, 1}')} \left(\phi_{C'; r_{i_1, 1} r_{i_1, 1}'}(\mathfrak{G})^g \cup \phi_{C'; r_{i_2, 1} r_{i_2, 1}'}(\mathfrak{G})\right) = \phi_{C'; r_{i_1, 1} r_{i_1, 1}'}(\mathfrak{G})^g(\Frob_{\mathrm{AUp}(r_{i_2, 1}')}).
$$
Secondly, setting
$$
K := \prod_{i \in S_{\text{var}}} L\left(\phi_{\mathrm{Cube}(S_{\text{var}} - \{i\}, C'); Q_{i, 1} Q'_{i, 1}}(\mathfrak{G})\right),
$$
we remark that $\phi_{C'; r_{i_1, 1} r_{i_1, 1}'}(\mathfrak{G})$ and every $\hat{\phi}_{C', j}'(r_{i_1, 1} r_{i_1, 1}')$ restrict to quadratic characters of $G_K$.

We take $n$ to be the number of pairs $(r_{i_1, 1}, r_{i_1, 1}') \in X_{i_1}^2$ such that $\phi_{C'; r_{i_1, 1} r_{i_1, 1}'}(\mathfrak{G})$ exists and $\hat{\phi}_{C', j}'(r_{i_1, 1} r_{i_1, 1}')$ exists for each $j$. To each such pair $(r_{i_1, 1}, r_{i_1, 1}')$, we attach the quadratic character 
$$
(-1)^{\sum\limits_{j \in S_{\text{var}}} |\{\mathbf{t} : (\mathbf{t}, j) \in \mathfrak{T}\}| \cdot \hat{\phi}_{C', j}'(r_{i_1, 1} r_{i_1, 1}') + \sum_{g \in G(C') - \{1\}} \phi_{C'; r_{i_1, 1} r_{i_1, 1}'}(\mathfrak{G})^g}.
$$
Then we can use Proposition \ref{prop: linear independence of hat phi and phi} (and ramification considerations as in Proposition \ref{normalized expansion maps are unramified} and Proposition \ref{prop: field of definition of hat phi}) to bound $|D|$ with $D$ as defined in Lemma \ref{lLS}. Choosing appropriate coefficients $\alpha_i$ and $\beta_j$ in Lemma \ref{lLS}, Theorem \ref{tReduction} follows.
\end{proof}


\begin{thebibliography}{39}
\bibitem{AK}
J. C. Andrade and J. P. Keating.
The mean value of $L(\frac{1}{2}, \chi)$ in the hyperelliptic ensemble.
\emph{J. Number Theory} 132 (2012), no. 12, 2793--2816.

\bibitem{BK}
S. Bloch and K. Kato.
$L$-functions and Tamagawa numbers of motives
The Grothendieck Festschrift, Vol. I, 333--400.
Progr. Math., 86
\emph{Birkh\"auser Boston, Inc., Boston, MA,} 1990.

\bibitem{BF18}
H. M. Bui and A. Florea.
Zeros of quadratic Dirichlet $L$-functions in the hyperelliptic ensemble.
\emph{Trans. Amer. Math. Soc.} 370 (2018), 8013--8045.

\bibitem{CDD}
C. Castillo, A. de Faveri and A. Dunn.
Non-vanishing for quartic Hecke $L$-functions and ranks of elliptic curves.
\emph{arXiv preprint:}2604.01316.

\bibitem{Cha97}
N. Chavdarov.
The generic irreducibility of the numerator of the zeta function in a family of curves with large monodromy. 
\emph{Duke Math. J.} 87 (1997), 151--180.

\bibitem{Cho65}
S. D. Chowla.
The Riemann Hypothesis and Hilbert’s Tenth Problem, Vol.~4.
\emph{Gordon and Breach Science Publishers, New York,} 1965, xv+119 pp.

\bibitem{CDLL}
A. Comeau-Lapointe, C. David, M. Lal\'in and W. Li.
On the vanishing of twisted $L$-functions of elliptic curves over rational function fields.
\emph{Res. Number Theory} 8 (2022), no. 4, Paper No. 76, 28 pp.

\bibitem{DDDS}
C. David, A. de Faveri, A. Dunn and J. Stucky.
Non-vanishing for cubic Hecke $L$-functions.
\emph{arXiv preprint:}2410.03048.

\bibitem{DFL}
C. David, A. Florea and M. Lal\'in.
Non-vanishing for cubic $L$--functions.
\emph{Forum Math. Sigma} 9 (2021), Paper No. e69, 58 pp.

\bibitem{DFL2}
C. David, A. Florea and M. Lal\'in.
Nonvanishing of $L$--functions associated to fixed order characters over function fields.
\emph{arXiv preprint:}2506.07815.

\bibitem{EVW16}
J. Ellenberg, A. Venkatesh and C. Westerland. 
Homological stability for Hurwitz spaces and the Cohen-Lenstra conjecture over function fields.
\emph{Ann. of Math. (2)} 183 (2016), 729--786.

\bibitem{ELS19} 
J. Ellenberg, W. Li and M. Shusterman.
Nonvanishing of hyperelliptic zeta functions over finite fields. 
\emph{Algebra Number Theory} 14 (2020), 1895--1909.

\bibitem{Flach}
M. Flach.
A generalisation of the Cassels-Tate pairing.
\emph{J. Reine Angew. Math}. 412 (1990), 113--127.

\bibitem{Florea}
A. Florea. 
The fourth moment of quadratic Dirichlet $L$-functions over function fields. 
\emph{Geom. Funct. Anal.} 27 (2017), no. 3, 541--595.

\bibitem{FK}
\'E. Fouvry and J. Kl\"uners.
On the $4$-rank of class groups of quadratic number fields.
\emph{Invent. Math.} 167 (2007), no. 3, 455--513.

\bibitem{FJ}
M. D. Fried and M. Jarden.
Field arithmetic.
Fourth edition. Revised by Moshe Jarden.
Ergeb. Math. Grenzgeb. (3), 11.
\emph{Springer, Cham,} 2023. xxxi+827 pp.

\bibitem{GZ}
P. Gao and L. Zhao. 
Moments of quadratic Dirichlet $L$-functions over function fields.
\emph{Finite Fields Appl.} 85 (2023), Paper No. 102113, 32 pp.

\bibitem{HB}
D. R. Heath-Brown.
The size of Selmer groups for the congruent number problem.
\emph{Invent. Math.} 111 (1993), no. 1, 171--195.

\bibitem{HB2}
D. R. Heath-Brown.
The size of Selmer groups for the congruent number problem. II.
\emph{Invent. Math.} 118 (1994), no. 2, 331--370.

\bibitem{ILS00}
H. Iwaniec, W. Luo and P. Sarnak. 
Low lying zeros of families of $L$-functions.
\emph{Inst. Hautes \'{E}tudes Sci. Publ. Math.} 91 (2000), 55--131.

\bibitem{Jut81}
M. Jutila.
On the mean value of $L\left( \frac{1}{2}, \chi \right)$ for real characters.
\emph{Analysis} 1 (1981), 149--161.

\bibitem{KS99}
N. Katz and P. Sarnak. 
Random matrices, Frobenius eigenvalues, and monodromy.
Amer. Math. Soc. Colloq. Publ., 45.
\emph{American Mathematical Society, Providence,} RI, 1999, xii+419 pp.

\bibitem{KP}
P. Koymans and C. Pagano.
On Stevenhagen's conjecture.
\emph{Acta Math.}, to appear.

\bibitem{KS}
P. Koymans and A. Smith.
Sums of rational cubes and the 3-Selmer group.
\emph{arXiv preprint:}2405.09311.

\bibitem{LOP}
M. Lal\'in, K.-H. Lee, T. Oliver and A. Pozdnyakov.
Murmurations of quadratic and cubic characters over function fields.
\emph{arXiv preprint:}2608.01337.

\bibitem{Li18}
W. Li.
Vanishing of hyperelliptic $L$-functions at the central point.
\emph{J. Number Theory} 191 (2018), 85--103.

\bibitem{LT19}
M. Lipnowski and J. Tsimerman.
Cohen--Lenstra heuristics for \'{e}tale group schemes and symplectic pairings.
\emph{Compos. Math.} 155 (2019), 758--775.

%
%
%
%

\bibitem{MS}
A. Morgan and A. Smith.
Field change for the Cassels-Tate pairing and applications to class groups.
\emph{Res. Number Theory} 10 (2024), Paper No. 61, 46 pp.

\bibitem{NSW}
J. Neukirch, A. Schmidt and K. Wingberg.
Cohomology of Number Fields. Second. Vol. 323. 
Grundlehren der Mathematischen Wissenschaften. 
\emph{Springer-Verlag, Berlin,} 2008.

\bibitem{OS99}
A. E. \"{O}zl\"{u}k and C. Snyder.
On the distribution of the nontrivial zeros of quadratic $L$-functions close to the real axis.
\emph{Acta Arith.} 91 (1999), 209--228.

\bibitem{Redei}
L. R\'edei.
Ein neues zahlentheoretisches Symbol mit Anwendungen auf die Theorie der quadratischen Zahlk\"orper. I. 
\emph{J. Reine Angew. Math.} 180 (1939), 1--43.

\bibitem{Rud10} 
Z. Rudnick.
Traces of high powers of the Frobenius class in the hyperelliptic ensemble.
\emph{Acta Arith.} 143 (2010), 81--99.

\bibitem{Rum93}
R. Rumely.
Numerical computations concerning the ERH. 
\emph{Math. Comp.} 61 (1993), 415--440.

\bibitem{Smi3}
A. Smith.
The Birch and Swinnerton-Dyer conjecture implies Goldfeld's conjecture.
\emph{arXiv preprint:}2503.17619.

\bibitem{Smi1}
A. Smith.
The distribution of $\ell^\infty$-Selmer groups in degree $\ell$ twist families I.
\emph{J. Amer. Math. Soc.} 39 (2026), no. 1, 1--72.

\bibitem{Smi2}
A. Smith.
The distribution of $\ell^\infty$-Selmer groups in degree $\ell$ twist families II.
\emph{J. Amer. Math. Soc.} 39 (2026), no. 2, 453--514.

\bibitem{Sou00}
K. Soundararajan.
Nonvanishing of quadratic Dirichlet $L$-functions at $s = \frac{1}{2}$.
\emph{Ann. of Math. (2)} 152 (2000), 447--488.

\bibitem{Ste}
P. Stevenhagen.
Redei reciprocity, governing fields and negative Pell.
\emph{Math. Proc. Cambridge Philos. Soc.} 172 (2022), no. 3, 627--654.

\bibitem{Stokvis}
J. Stokvis.
Governing fields for hyperelliptic function fields.
\emph{arXiv preprint:}2505.20117.
\end{thebibliography}
\end{document}